\newtheorem{definition}{Definition}
\newtheorem{theorem}{Theorem}
\newtheorem{prop}{Proposition}
\newtheorem{lemma}{Lemma}
\newtheorem{assumption}{Assumption}
\newtheorem*{remark}{Remark}
\title{Wasserstein Generative Adversarial Uncertainty Quantification
in Physics-Informed Neural Networks}
\author{Yihang Gao\thanks{Department of Mathematics, The University of Hong Kong,
Pokfulam, Hong Kong ({gaoyh@connect.hku.hk}).} \and 
Michael K. Ng\thanks{Department of Mathematics, The University of Hong Kong,
Pokfulam, Hong Kong ({mng@maths.hku.hk}). 
Research is supported by HKRGC GRF 12300218, 12300519, 17201020 and 17300021.}}
\date{}
\begin{document}

\maketitle
\begin{abstract}
In this paper, we study a physics-informed algorithm for Wasserstein Generative Adversarial Networks (WGANs) for uncertainty quantification in solutions of partial differential equations. By using groupsort activation functions in adversarial network discriminators, network generators are utilized to learn the uncertainty in solutions of partial differential equations observed from the initial/boundary data. Under mild assumptions, we show that the generalization error of the computed generator converges to the approximation error of the network with high probability, when the number of samples are sufficiently taken. According to our established error bound, we also find that our physics-informed WGANs have higher requirement for the capacity of discriminators than that of generators. Numerical results on synthetic examples of partial differential equations are reported to validate our theoretical results and demonstrate how uncertainty quantification can be obtained for solutions of partial differential equations and the distributions of  initial/boundary data. However, the quality or the accuracy of the uncertainty quantification theory in all the points in the interior is still the theoretical vacancy, and required for further research.
\end{abstract}

\section{Introduction}
Deep Learning becomes really popular in recent several years due to its extraordinary applications in computer vision \cite{krizhevsky2012imagenet, arjovsky2017wasserstein,goodfellow2014generative}, natural language processing \cite{vinyals2015grammar,bowman2015generating,bahdanau2014neural}, and healthcare \cite{miotto2018deep,liang2014deep,wang2014similarity} etc. Its excellent performance brings much confidence to researchers about the potential applications in scientific computing. In particular, it would be useful to deal with curse of dimensionality 
in solutions of partial differential equations.

Solving partial differential equations by deep learning is first studied by Psichogios \cite{psichogios1992hybrid} and Lagaris et al. \cite{lagaris1998artificial}. 
Recently, 
Raissi et al. \cite{raissi2019physics} proposed the physics-informed neural networks (PINNs) which regard residuals of differential equations as regularization terms. PINNs behave well in solving both the forward and inverse problems of various kinds of partial differential equations. 
Besides introducing physical information by residuals, classical methods in solving PDEs coexist well with deep neural networks, providing another interesting ways to apply deep learning in scientific computing. 
Sirignano et al. \cite{sirignano2018dgm} designed a Deep Galerkin Method algorithm to solve partial differential equations. 
Neural networks replace linear combinations of basis functions in classic Galerkin method due to advantages of data-driven methods and universal approximation capabilities of deep neural networks.
Deep learning has been widely applied in solving more complicated and specific problems, e.g., fractional PDEs \cite{pang2019fpinns}, stochastic PDEs \cite{zhang2019quantifying, yang2019adversarial,chen2021learning} as well as high dimensional problems \cite{han2020algorithms} etc.

Like most of deep learning models, PINNs suffer from training failure \cite{wang2020and,mishra2020estimates}, slow convergence\cite{jagtap2020adaptive,luo2020two,jagtap2020locally}, and curse of dimensionality \cite{darbon2020overcoming,jentzen2018proof,hutzenthaler2020overcoming,bolcskei2019optimal}. Jagtap et al. \cite{jagtap2020adaptive} introduced learnable parameters in activation functions to adaptively control the learning rate for PINNs. Wang et al. \cite{wang2020and} proposed a kind of adaptive hyper-parameters selection algorithm that can not only accelerate the convergence of PINNs but also improve its prediction accuracy.

\subsection{Uncertainty Quantification}

Uncertainties (e.g., experimental observations) are usually inevitable that the system may not strictly follow mathematical formulations due to the external interference and lack of suitable experimental conditions. Researchers may be most interested in how the system is affected without the need to repeat experiments and observe results numerously. Traditional methods like Gaussian process \cite{graepel2003solving, raissi2018numerical, bilionis2016probabilistic}, 
Monte Carlo sampling\cite{barth2011multi}, as well as statistical Bayesian inference \cite{yang2021b,stuart2010inverse,zhu2018bayesian} require strict assumptions and priors for modeling that they are only applicable in specific and limited problems. 
Yang et al. \cite{yang2021b} used Bayesian techniques to estimate solutions of PDEs with random boundary data, and prior information (e.g., Gaussian distribution) for the distribution of boundary data is required.  However, the explicit distribution of our collected data are usually unknown and incorrect prior may lead to worse results or even training failure. Zhang et al. \cite{zhang2019quantifying} adopted deep neural networks as a surrogate to 
solutions in polynomial chaos expansions in solving stochstic PDEs. 
In their setting, priors for noises (schochasticity and randomness) are also prerequisite. 

The recent explosive growth of deep generative models (e.g., VAEs \cite{kingma2013auto} and GANs \cite{goodfellow2014generative,arjovsky2017wasserstein}) greatly reduce prior information and 
formulations, and achieve empirical successes especially in image \cite{goodfellow2014generative,arjovsky2017wasserstein,odena2017conditional} and sentence generation\cite{bowman2015generating}. Yang et al. \cite{yang2019adversarial} applied GANs with PINNs to 
solve PDEs with uncertainty. Their method performs well empirically. 
Yang et al. \cite{yang2021capacity} recently proved the capacity of deep ReLU neural networks in approximating distributions in Wasserstein distance and maximum mean discrepancy. 
Wasserstein generative adversarial networks (WGANs) achieve much better results both empirically and theoretically \cite{arjovsky2017wasserstein} than traditional GANs especially for the data lying on low dimensional manifolds \cite{gulrajani2017improved}. 

\subsection{The Contribution}


The main aim of this paper is to study 
WGANs with PINNs for 
for uncertainty quantification in solutions of partial differential equations. 
We develop a probabilistic model that can learn the uncertainty (noise) of the boundary/initial data by WGANs, and propagates it to the interior domain with physical constraints by PINNs. 
By using groupsort activation functions in adversarial network
discriminators, network generators 
are utilized to learn the uncertainty in solutions of partial differential equations 
observed from the initial/boundary data. We analyze  
the generalization error of the computed generator, and show that the exact loss 
converges to the approximation error 
(the minimal error among all generators in the pre-defined class)
with high probability, 
when the number of samples are sufficiently taken. 
Moreover, we find in our established error bound that our physics-informed WGANs have higher requirement for the capacity of discriminators than that of generators. 

The outline of this paper is given as follows. In Section \ref{not_prel}, the reviews of WGANs and PINNs are introduced and the proposed generative model is presented. In Section \ref{conv}, we show the convergence of our proposed WGANs with PINNs. In Section \ref{num_results}, we present experimental results for uncertainty quantification of PDEs solutions to demonstrate our theoretical results. Finally, some concluding remarks are given in Section 5.

\subsection{Some Limitations}

It is necessary to mention some potential limitations in our study.


Firstly, we miss the theoretical guarantee for the quality of uncertainty propagation. In other words, we do not theoretically analyze the role of physics-informed (PINNs) term in uncertainty propagation. The accuracy of interior data matching is theoretically guaranteed for some classes of deterministic PDEs if with boundary/initial samples and governing equations \cite{shin2020convergence}. However, for the proposed probabilistic models, we encountered theoretical issues and technical difficulties to derive similar results. This also means that the propagated uncertainty can be entirely wrong and may not even converge to a correct one in the interior. Experimental results of the better distribution matching on the boundary than in the interior in section \ref{num_results} further imply the concern. We left it as a future research work for probabilistic models, which is interesting and unsolved, to the best of our knowledge.

Secondly, the performance of the proposed model on more complicated problems. We test the model on some well-known PDEs and show its capability in uncertainty quantification. However, strictly speaking, the estimation for uncertainty quantification in the interior domain is somewhat less than satisfactory even in some simple problems with non-Gaussian and nonlinear uncertainty distributions. Therefore, it is still an open problem and is left as a future research study to provide satisfactory uncertainty quantification in non-Gaussian multi-modal distributions.

\subsection{Notations}

Throughout the paper, we combine the spatial coordinates and the temporal coordinate together, simply denoted by $\mathbf{x}$. For two positive real numbers or functions $A$ and $B$, the term $A \vee B $ is equivalent to $\max \{A,B\}$ while $A \wedge B$ is equivalent to $\min\{A,B\}$. For $\mathbf{V}=(\mathbf{V}_{i,j})$,
we let $||\mathbf{V}||_{\infty}=\sup_{|| {\bf y}||_{\infty}=1}||\mathbf{V} {\bf y}||_{\infty}$. We 
also use the $(2,\infty)$ norm of $\mathbf{V}$, i.e., $||\mathbf{V}||_{2, \infty}=\sup_{||{\bf y}||_{2}=1}
||\mathbf{V} {\bf y}||_{\infty}$.

\section{The Generative Adversarial Model for Uncertainty Quantification}
\label{not_prel}

\subsection{Wassertein Generative Adversarial Networks} 

Different from the original model in \cite{goodfellow2014generative}, Generative Adversarial Networks (GANs) can be formulated in a more general form:
\begin{equation}
\label{general_GAN}
    \min_{g_{\theta} \in \mathcal{G}} \max_{f_{\alpha} \in \mathcal{F}} \mathbb{E}_{z \sim \pi} f_{\alpha}(g_{\theta}(z)) - \mathbb{E}_{x \in \nu} f_{\alpha}(x)
\end{equation}
where $\mathcal{G}$ and $\mathcal{F}$ are the generator class and the discriminator class respectively,
$\pi$ is the source distribution and $\nu$ is the target distribution to be approximated. 

Wasserstein GANs first proposed by Arjovsky et al. \cite{arjovsky2017wasserstein} adopts 1-Lipschitz functions as discriminators, i.e., 
$$
\mathcal{F} = \{f: \|f\|_{Lip} \leq 1\},
$$ 
where $\|f\|_{Lip}$ is the Lipschitz constant of $f$
and both the empirical and theoretical results show better performance of WGANs than GANs \cite{arjovsky2017wasserstein, arora2018gans,arora2017generalization, gulrajani2017improved}. 
In many WGANs applications, 
ReLU feedforward neural networks are used to approximate 1-Lipschitz functions.
However, they are not guaranteed to be 1-Lipschitz and the ReLU activation function is norm vanishing in half planes \cite{yarotsky2017error, tanielian2021approximating}. 
To overcome such shortcoming, Anil et al. \cite{anil2019sorting} used groupsort as the activation function which is norm preserving, and Tanielian et al. \cite{tanielian2021approximating} recently proved the approximation ability of groupsort 
neural networks to 1-Lipschitz functions. A groupsort neural network is defined as
\begin{equation}
\label{groupsortnn}
 \text{GS}(\mathbf{x}|W_f,D_f,\alpha) = \mathbf{V}_{D_f} \cdot  \sigma_{group_{size}}(\mathbf{V}_{D_f-1} \cdot \sigma_{group_{size}}(\cdots \sigma_{group_{size}}(\mathbf{V}_0 \cdot \mathbf{x} + \mathbf{v}_0)+ \cdots )+\mathbf{v}_{D_f-1})+\mathbf{v}_{D_f}    
\end{equation}
with constraints
\begin{equation}
\label{constraints}
\begin{aligned}
\left\|\mathbf{V}_{1}\right\|_{2, \infty} \leqslant 1 \hspace{1em} \text{ and  }& \hspace{1em} \max \left(\left\|\mathbf{V}_{2}\right\|_{\infty}, \ldots,\left\|\mathbf{V}_{D_f}\right\|_{\infty}\right) \leqslant 1 
\end{aligned}    
\end{equation}
where $\alpha=\{\mathbf{V}_i,\mathbf{v}_i\}_{i=0}^{D_f}$ denotes parameters of a groupsort neural network, $W_f$ is the width (number of neurons) of each layer, $D_f$ is the depth (number of layers) and $\sigma_{group_{size}}$ is the groupsort activation function with grouping size ($group_{size}$). A 
groupsort neural networks defined in (\ref{groupsortnn}) with constraints (\ref{constraints}) is proven to be 1-Lipschitz \cite{tanielian2021approximating}.  In the paper, we choose $group_{size}=2$ and the discriminator class can be 
defined as 
\begin{equation}
\label{discriminator_class}
    \mathcal{F}_{GS} = \{f_{\alpha}(\mathbf{x})=\text{GS}(\mathbf{x}|W_f,D_f,\alpha): \text{GS}(\mathbf{x}|W_f,D_f,\alpha) \text{ is a neural network of form (\ref{groupsortnn})} \}
\end{equation}

\subsection{Physics-Informed Neural Networks}

Physics-informed neural networks (PINNs) introduced by Raissi et al. \cite{raissi2019physics} is a data driven machine learning method to solve partial differential equations. 
For a partial differential equation
\begin{align}
    & \mathcal{L}\mathbf{u}(\mathbf{x}) = \mathbf{b}(\mathbf{x}) \hspace{1em} x \in \Omega\\
    & \mathcal{B}\mathbf{u}(\mathbf{x}) = \mathbf{c}(\mathbf{x})\hspace{1em} x \in \Gamma = \partial \Omega
\end{align}
where $\mathcal{L}$ is the differential operator in the interior domain and $\mathcal{B}$ is the operator on the boundary (includes initial conditions). Given the observed 
data $\{(\mathbf{x}_i, \mathbf{b}_i)\}_{i=1}^{k}$
and $\{(\mathbf{x}_i, \mathbf{c}_i)\}_{i=1}^{n}$  for both the differential equation and the boundary respectively, the empirical loss function for PINNs can be given by 
$$
\lambda \cdot \frac{1}{k} \sum_{i=1}^{k} \| \mathcal{L}g_{\theta}(\mathbf{x}_i) - \mathbf{b}_i \|^2 + \frac{1}{n} \sum_{i=1}^{n} 
\| \mathcal{B}g_{\theta}(\mathbf{x}_i) - \mathbf{c}_i \|^2
$$
where $g_{\theta}$ is the predefined deep neural networks parametrized by parameters $\theta$, $\mathcal{L}g_{\theta}$ is computed by automatic differentiation \cite{baydin2018automatic} and 
$\lambda$ is a positive number to balance the term for interior data points and 
the term for boundary data points.
Also the first term for differential equations acts as a regularization term for physical constraints. For Dirichlet boundary conditions, the operator $\mathcal{B}$ is the identity operator, meaning that we have observed solutions on the boundary. The optimization problem can be effectively and efficiently solved by stochastic gradient descent, see for instance \cite{kingma2014adam,raissi2019physics,liu1989limited}. 
The core idea of PINNs is to introduce the physical laws/information (differential equations) into the loss function as a regularization term to constrain the training process of neural networks. 

In the following discussion, we mainly focus on PDEs with Dirichlet boundary conditions, i.e.
\begin{align}
    & \mathcal{L}\mathbf{u}(\mathbf{x}) = \mathbf{b}(\mathbf{x}) \hspace{1em} \mathbf{x} \in \Omega \\
    & \mathbf{u}(\mathbf{x}) = \mathbf{c}(\mathbf{x}) \hspace{1em} \mathbf{x} \in \Gamma = \partial \Omega
\end{align}
where $\Omega \subset \mathbb{R}^{d}$ is a bounded domain and $\Gamma = \partial \Omega$ is its boundary. Note that we do not distinguish the initial and boundary conditions, which implies that the spatial variable $\mathbf{x}$ here contains both the spatial and the temporal variables of PDEs.
The domain is bounded by $M_x$, i.e., $\|\mathbf{x} \|_{\text{max}} \leq M_x$ for all $\mathbf{x} \in \Omega$. The right hand side $\mathbf{b}(\mathbf{x})$ is bounded in $\Bar{\Omega}$, i.e., $\| \mathbf{b}(\mathbf{x}) \|_2 \leq M_b$ for all $\mathbf{x} \in \Bar{\Omega}$ and $\mathbf{u}(\mathbf{x}) \in \mathbb{R}^{r}$ is the corresponding solution. Note that the proposed model can be studied similarly for the other boundary/initial conditions in PDEs.

Suppose that the following class $\mathcal{G}$ of neural networks are utilized to approximate the solution of PDEs,
\begin{equation}
    \label{generator_class}
    \mathcal{G}=\{g_{\theta}(\mathbf{x})= \text{NN}(\mathbf{x}|W_g,D_g,\theta): \lVert \mathbf{W}_{i} \rVert_{\text{max}} \leq M, 0 \leq i \leq D_g, ||\mathbf{w}_{D_g}||_{\text{max}} \leq M, \theta=\{\mathbf{W}_i,\mathbf{w}_i\}_{i=0}^{D_g} \} 
\end{equation}
where $g_{\theta}=\text{NN}(W_g,D_g,\theta): \mathbb{R}^{d} \to \mathbb{R}^r$ denotes neural networks with $D_g$ hidden layers (depth), $W_g$ neurons in each layer (width), and $\theta$ is the set of parameters. We use $\mathbf{W}_i$ to represent the matrix of linear transform between $(i-1)$-th and $i$-th layer and $\mathbf{w}_i$ is the bias vector.  The architecture of the neural networks is formulated as 
$$
\text{NN}(\mathbf{x}|W_g,D_g,\theta)=\mathbf{W}_{D_g} \cdot \sigma(\mathbf{W}_{D_g-1}\cdot \sigma( \cdots \sigma(\mathbf{W}_0 \cdot \mathbf{x}+\mathbf{w}_0)+ \cdots) +\mathbf{w}_{D_g-1})+\mathbf{w}_{D_g}
$$
where $\sigma$ is the activation function and $\theta=\{\mathbf{W}_i,\mathbf{w}_i\}_{i=0}^{D_g}$ is the set of parameters for the neural network. Due to the smoothness of PDEs solutions, $g_{\theta}$ is usually 
required to approximate solutions in $H^{q}(\Bar{\Omega})$ where $q$ depends on the differentiability of PDEs 
solutions. Several strategies are applied for the smoothness of $g_{\theta}$, usually to activation functions, e.g., 
$\sigma=tanh$ and $\sigma = ReLU^{q}$. Pinkus \cite{pinkus1999approximation} proved that one-hidden layer neural networks with a non-polynomial activation function which is $q$-order differentiable are dense in $C^q(\mathbb{R}^{d})$. In our paper, we adopt $tanh$ as the activation function. 

\subsection{The WGAN-PINNs Model}
\label{proposed_wgan_pinns_model}

Suppose that there exists uncertainty in the observation of boundary data in PDEs, 
i.e., $\mathbf{u}$ on $\mathbf{x} \in \partial \Omega$ can be interpreted as a random variable rather than a deterministic value. Our aim is to understand uncertainity quantification of PDEs solutions.
PINNs are not able to do uncertainty quantification, and they even perform worse especially for noisy data with 
non-Gaussian and spatially dependent noises, see for instance \cite{raissi2019physics, yang2021b}. 

For the generator $g_{\theta}$, besides the spatial (and temporal) coordinates $\mathbf{x}$, 
a random latent variable $\mathbf{z}$ is also input into the model to summarize the uncertainty and 
stochasticity of PDEs solutions.  We aim to obtain a generator that can provide a 
distribution of PDEs solutions well and also follows physical laws governed by given PDEs. 
Here, we study a kind of probabilistic model with physical constraints (i.e., the combination of WGANs and PINNs
into the model): 
\begin{equation}
\label{model_original}
    \begin{split}
        & \min_{g_{\theta} \in \mathcal{G}} \ 
        \mathbf{D}((\mathbf{x},g_{\theta}(\mathbf{x},\mathbf{z})), (\mathbf{x},\mathbf{u})), \hspace{1em} \mathbf{x} \sim p_{\Gamma}(\mathbf{x}), \ \mathbf{z} \sim p(\mathbf{z}) \\
        & s.t. \hspace{1em} \mathcal{L}g_{\theta}(\mathbf{x},\mathbf{z})=\mathbf{b}(\mathbf{x}), \hspace{0.5em} \forall \mathbf{x} \in \Omega  \text{ and } \mathbf{z} \sim p(\mathbf{z})
    \end{split}
\end{equation}
where $p_{\Gamma}(\mathbf{x})$ is the distribution of $\mathbf{x}$ on the boundary $\Gamma$ and $p(\mathbf{z})$ is the prior distribution of the latent variable $\mathbf{z}$. Also $\mathbf{D}$ is used to measure 
the distance between the joint distribution of generated data and PDE solutions. 
With a random latent variable $\mathbf{z}$, generators are able to generate data with uncertainty compatible with our observations and propagate the uncertainty into the interior domain by physics-informed neural networks with differential equations constraints. Compared with most classical methods of uncertainty quantification for PDEs,
our model works without some strong and special prior information. 
Note that when $\mathbf{D}$ is KL divergence or JS divergence, it is a model  proposed 
in \cite{yang2019adversarial}. However, KL divergence has some disadvantages. 
Firstly, it lacks generalization property with respect to finite data \cite{arjovsky2017wasserstein}. 
Moreover, it requires that the supports of two distributions must coincide well, otherwise, their KL divergence is infinitely 
large, see \cite{arjovsky2017towards}. 

In this paper, we propose to employ Wasserstein-1 distance in the model, and the resulting optimization is given by 
\begin{equation} \label{ww}
    \begin{split}
        & \min_{g_{\theta} \in \mathcal{G}} \ \mathbf{Wass}_1((\mathbf{x},g_{\theta}(\mathbf{x},\mathbf{z})), (\mathbf{x},\mathbf{u})), \hspace{1em} \mathbf{x} \sim p_{\Gamma}(\mathbf{x}), \mathbf{z} \sim p(\mathbf{z})  \\
        & s.t. \hspace{1em} \mathcal{L}g_{\theta}(\mathbf{x},\mathbf{z})=\mathbf{b}(\mathbf{x}), \hspace{0.5em} \forall \mathbf{x} \in \Omega  \text{ and } \mathbf{z} \sim p(\mathbf{z})
    \end{split}
\end{equation}

We can further re-formulate (\ref{ww}) into unconstrained optimization problem as follows: 
\begin{eqnarray}
\label{loss_func}
   &  & \min_{g_{\theta} \in \mathcal{G}} \
   \mathbf{Wass}_1((\mathbf{x},g_{\theta}(\mathbf{x},\mathbf{z})), (\mathbf{x},\mathbf{u})) + \lambda \cdot \mathbb{E}_{\mathbf{x}, \mathbf{z} \sim p_{\Omega}(\mathbf{x}),  p(\mathbf{z})} \| \mathcal{L}g_{\theta}(\mathbf{x},\mathbf{z})-\mathbf{b}(\mathbf{x}) \|_2^2 \nonumber \\
    & = & \min_{g_{\theta} \in \mathcal{G}} \max_{\|f \|_{\text{Lip}} \leq 1} \mathbb{E}_{\mathbf{x},\mathbf{z} \sim p_{\Gamma}(\mathbf{x}), p(\mathbf{z})} f(\mathbf{x}, g_{\theta}(\mathbf{x},\mathbf{z})) - \mathbb{E}_{(\mathbf{x},\mathbf{u}) \sim p_{\Gamma}(\mathbf{x},\mathbf{u})} f(\mathbf{x},\mathbf{u})\\
    & &  \hspace{5em} +  \lambda \cdot \mathbb{E}_{\mathbf{x}, \mathbf{z} \sim p_{\Omega}(\mathbf{x}),  p(\mathbf{z})} \| \mathcal{L}g_{\theta}(\mathbf{x},\mathbf{z})-\mathbf{b}(\mathbf{x}) \|_2^2
\end{eqnarray}
where $p_{\Gamma}(\mathbf{x},\mathbf{u})$ is the exact joint distribution of data $(\mathbf{x},\mathbf{u})$ whose marginal for $\mathbf{x}$ is $p_{\Gamma}(\mathbf{x})$ on the boundary, $p_{\Omega}(\mathbf{x})$ is the distribution of variable $\mathbf{x}$ in the interior domain $\Omega$ and the hyper-parameter $\lambda$ balances 
the term $\mathbf{Wass}_1$ distance and the regularization term governed by PDEs residual norm. 
By using the constraints posed in (\ref{generator_class}), for each generator $g \in \mathcal{G}$, we have 
$$
\|g(\mathbf{x},\mathbf{z})\|_2 \leq \sqrt{r} \cdot (M + W_g \cdot M),
$$
$$
\left \|\frac{\partial g_i(\mathbf{x},\mathbf{z})}{\partial \mathbf{x}} \right \|_2 \leq (W_g \cdot M)^{D_g},
$$
$$
\|\mathbf{b}(\mathbf{x})\|_2 \leq M_b,
$$
and all its derivatives are bounded depending on $M$, $W_g$ and $D_g$. Therefore, the bound for $\| \mathcal{L}g(\mathbf{x},\mathbf{z})-\mathbf{b}(\mathbf{x})\|_2^2$ depends on the operator $\mathcal{L}$, $M$, $W_g$, $D_g$ and $M_b$.

In practice, we work on the model with a finite number of data samples: 
$\{(\mathbf{x}_i, \mathbf{z}_i) \}_{i=1}^{m}$, $\{(\mathbf{x}_i, \mathbf{u}_i) \}_{i=1}^{n}$ on the boundary 
and $\{(\mathbf{x}_i, \mathbf{z}_i, \mathbf{b}_i)\}_{i=1}^{k}$ in the interior domain. Therefore, we deal with 
the following empirical loss given:
\begin{eqnarray}
\label{emp_loss_func}
\widehat{\text{Loss}} = 
\min_{g_{\theta} \in \mathcal{G}} \max_{f_{\alpha} \in \mathcal{F}_{GS}} & \hat{\mathbb{E}}^{m}_{\mathbf{x},\mathbf{z} \sim p_{\Gamma}(\mathbf{x}), p(\mathbf{z})}  f_{\alpha}(\mathbf{x}, g_{\theta}(\mathbf{x},\mathbf{z})) - \hat{\mathbb{E}}^{n}_{(\mathbf{x},\mathbf{u}) \sim p_{\Gamma}(\mathbf{x},\mathbf{u})} f_{\alpha}(\mathbf{x},\mathbf{u}) \\
& + \lambda \cdot \hat{\mathbb{E}}^{k}_{\mathbf{x}, \mathbf{z} \sim p_{\Omega}(\mathbf{x}),  p(\mathbf{z})} \| \mathcal{L}g_{\theta}(\mathbf{x},\mathbf{z})-\mathbf{b}(\mathbf{x}) \|_2^2,
\end{eqnarray}
where $\hat{\mathbb{E}}^{m}$ denotes the expectation on the empirical distribution of $m$ i.i.d. samples on random input to generator, 
$\hat{\mathbb{E}}^{n}$ denotes the expectation on the empirical distribution of 
$n$ i.i.d. samples on boundary data, and 
$\hat{\mathbb{E}}^{k}$ denotes the expectation on the empirical distribution of 
$k$ i.i.d. samples on interior data. 
The empirical loss of our model can be solved by learning $g_{\theta}$ and $f_{\alpha}$ 
based on 
stochastic gradient descent algorithm, see \cite{kingma2014adam,liu1989limited}. 

\section{Generalization and Convergence}
\label{conv}

In this section, we theoretically investigates how well the trained generator (obtained from (\ref{emp_loss_func})) is, 
with respect to the exact loss in (\ref{loss_func}). 
In the literature, 
Bai et al. \cite{bai2018approximability} designed some special discriminators with restricted approximability to let the trained generator approximate the target distribution in Wasserstein distance. However, their designs of discriminators are not applicable enough because it can only be applied to some specific statistical distributions (e.g., Gaussian distributions and exponential families) and generators that are invertible or injective neural networks. Liang \cite{liang2018well} proposed a kind of oracle inequality to develop the generalization error of GANs' learning in TV distance. However, several assumptions are required and KL divergence must be adopted to measure the distance between distributions. It may be inappropriate in generalization (convergence) analysis of GANs because many distributions in natural cases lie on low dimensional manifolds. The support of the generated distribution must coincide well with that of the target distribution, otherwise, the KL divergence between these two are infinite, which may be not helpful in generalization analysis.

Here our purpose is to show that the WGAN-PINNs model 
has good generalization and can converge to the best one among the whole generator class given large enough number of training data (i.e., $m$, $n$ and $k$) and strong discriminators (i.e., large enough $W_f$ and $D_f$) with a high probability.
Suppose that $(\Tilde{g}, \Tilde{f})$ is the obtained optimizer of (\ref{emp_loss_func}). 
For any 1-Lipschitz function $f$, we derive the following inequality:
\begin{align*}
    & \mathbb{E}_{\mathbf{x},\mathbf{z} \sim p_{\Gamma}(\mathbf{x}),p(\mathbf{z})}f(\mathbf{x},\Tilde{g}(\mathbf{x},\mathbf{z})) - \mathbb{E}_{(\mathbf{x},\mathbf{u})\sim p_{\Gamma}(\mathbf{x},\mathbf{u})} f(\mathbf{x},\mathbf{u})+ \lambda \cdot  \mathbb{E}_{\mathbf{x},\mathbf{z} \sim p_{\Omega}(\mathbf{x}),p(\mathbf{z})} \| \mathcal{L}\Tilde{g}(\mathbf{x},\mathbf{z})-\mathbf{b}(\mathbf{x})\|_2^2 \\
    = & \mathbb{E}_{\mathbf{x},\mathbf{z} \sim p_{\Gamma}(\mathbf{x}),p(\mathbf{z})}f(\mathbf{x},\Tilde{g}(\mathbf{x},\mathbf{z})) - \mathbb{E}_{\mathbf{x},\mathbf{z} \sim p_{\Gamma}(\mathbf{x}),p(\mathbf{z})}\Tilde{f}(\mathbf{x},\Tilde{g}(\mathbf{x},\mathbf{z}))\\
     & + \mathbb{E}_{\mathbf{x},\mathbf{z} \sim p_{\Gamma}(\mathbf{x}),p(\mathbf{z})}\Tilde{f}(\mathbf{x},\Tilde{g}(\mathbf{x},\mathbf{z})) - \hat{\mathbb{E}}^{m}_{\mathbf{x},\mathbf{z} \sim p_{\Gamma}(\mathbf{x}),p(\mathbf{z})}\Tilde{f}(\mathbf{x},\Tilde{g}(\mathbf{x},\mathbf{z}))\\
     & + \hat{\mathbb{E}}^{m}_{\mathbf{x},\mathbf{z} \sim p_{\Gamma}(\mathbf{x}),p(\mathbf{z})}\Tilde{f}(\mathbf{x},\Tilde{g}(\mathbf{x},\mathbf{z})) - \hat{\mathbb{E}}^{n}_{(\mathbf{x},u)\sim p_{\Gamma}(\mathbf{x},\mathbf{u})} \Tilde{f}(\mathbf{x},\mathbf{u})\\
     & + \hat{\mathbb{E}}^{n}_{(\mathbf{x},\mathbf{u})\sim p_{\Gamma}(\mathbf{x},\mathbf{u})} \Tilde{f}(\mathbf{x},\mathbf{u}) - \mathbb{E}_{(\mathbf{x},\mathbf{u})\sim p_{\Gamma}(\mathbf{x},\mathbf{u})} \Tilde{f}(\mathbf{x},\mathbf{u}) \\
     & + \mathbb{E}_{(\mathbf{x},\mathbf{u})\sim p_{\Gamma}(\mathbf{x},u)} \Tilde{f}(\mathbf{x},\mathbf{u}) - \mathbb{E}_{(\mathbf{x},\mathbf{u})\sim p_{\Gamma}(\mathbf{x},\mathbf{u})} f(\mathbf{x},\mathbf{u})\\
     & + \lambda \cdot  \mathbb{E}_{\mathbf{x},\mathbf{z} \sim p_{\Omega}(\mathbf{x}),p(\mathbf{z})} \| \mathcal{L}\Tilde{g}(\mathbf{x},\mathbf{z})-\mathbf{b}(\mathbf{x})\|_2^2 - \lambda \cdot \hat{\mathbb{E}}^{k}_{\mathbf{x},\mathbf{z} \sim p_{\Omega}(\mathbf{x}),p(\mathbf{z})} \| \mathcal{L}\Tilde{g}(\mathbf{x},\mathbf{z})-\mathbf{b}(\mathbf{x})\|_2^2 \\
     & + \lambda \cdot  \hat{\mathbb{E}}^{k}_{\mathbf{x},\mathbf{z} \sim p_{\Omega}(\mathbf{x}),p(\mathbf{z})} \| \mathcal{L}\Tilde{g}(\mathbf{x},\mathbf{z})-\mathbf{b}(\mathbf{x})\|_2^2\\
    \leq & \min_{\Tilde{f} \in \mathcal{F}_{GS}} \{\mathbb{E}_{\mathbf{x},\mathbf{z} \sim p_{\Gamma}(\mathbf{x}),p(\mathbf{z})}f(\mathbf{x},\Tilde{g}(\mathbf{x},\mathbf{z})) - \mathbb{E}_{\mathbf{x},\mathbf{z} \sim p_{\Gamma}(\mathbf{x}),p(\mathbf{z})}\Tilde{f}(\mathbf{x},\Tilde{g}(\mathbf{x},\mathbf{z}))  + \mathbb{E}_{(\mathbf{x},\mathbf{u})\sim p_{\Gamma}(\mathbf{x},\mathbf{u})} \Tilde{f}(\mathbf{x},\mathbf{u})\\
    & \hspace{3em} - \mathbb{E}_{(\mathbf{x},\mathbf{u})\sim p_{\Gamma}(\mathbf{x},\mathbf{u})} f(\mathbf{x},\mathbf{u}) \}\\
     & + \max_{{\Tilde{f} \in \mathcal{F}_{GS}}} \{ \mathbb{E}_{\mathbf{x},\mathbf{z} \sim p_{\Gamma}(\mathbf{x}),p(\mathbf{z})}\Tilde{f}(\mathbf{x},\Tilde{g}(\mathbf{x},\mathbf{z})) - \hat{\mathbb{E}}^{m}_{\mathbf{x},\mathbf{z} \sim p_{\Gamma}(\mathbf{x}),p(\mathbf{z})}\Tilde{f}(\mathbf{x},\Tilde{g}(\mathbf{x},\mathbf{z})) \} \\
     & + \max_{{\Tilde{f} \in \mathcal{F}_{GS}}} \{ \hat{\mathbb{E}}^{n}_{(\mathbf{x},\mathbf{u})\sim p_{\Gamma}(\mathbf{x},\mathbf{u})} \Tilde{f}(\mathbf{x},\mathbf{u}) - \mathbb{E}_{(\mathbf{x},\mathbf{u})\sim p_{\Gamma}(\mathbf{x},\mathbf{u})} \Tilde{f}(\mathbf{x},\mathbf{u}) \}\\
     & + \lambda \cdot  \left | \mathbb{E}_{\mathbf{x},\mathbf{z} \sim p_{\Omega}(\mathbf{x}),p(\mathbf{z})} \| \mathcal{L}\Tilde{g}(\mathbf{x},\mathbf{z})-\mathbf{b}(\mathbf{x})\|^2 - \lambda \cdot \hat{\mathbb{E}}^{k}_{\mathbf{x},\mathbf{z} \sim p_{\Omega}(\mathbf{x}),p(\mathbf{z})} \| \mathcal{L}\Tilde{g}(\mathbf{x},\mathbf{z})-\mathbf{b}(\mathbf{x})\|_2^2 \right | \\
     & + \max_{{\Tilde{f} \in \mathcal{F}_{GS}}} \{ \hat{\mathbb{E}}^{m}_{\mathbf{x},\mathbf{z} \sim p_{\Gamma}(\mathbf{x}),p(\mathbf{z})}\Tilde{f}(\mathbf{x},\Tilde{g}(\mathbf{x},\mathbf{z})) - \hat{\mathbb{E}}^{n}_{(\mathbf{x},\mathbf{u})\sim p_{\Gamma}(\mathbf{x},\mathbf{u})} \Tilde{f}(\mathbf{x},\mathbf{u}) + \lambda \cdot  \hat{\mathbb{E}}^{k}_{\mathbf{x},\mathbf{z} \sim p_{\Omega}(\mathbf{x}),p(\mathbf{z})} \| \mathcal{L}\Tilde{g}(\mathbf{x},\mathbf{z})-\mathbf{b}(\mathbf{x})\|_2^2 \}\\
     =: & I_1 + I_2 + I_3 + I_4 + I_5,
\end{align*}
i.e.,
\begin{equation} \label{five}
\mathbf{Wass}_1((\mathbf{x},\Tilde{g}(\mathbf{x},\mathbf{z}),(\mathbf{x},\mathbf{u})))+\lambda \cdot \mathbb{E}_{\mathbf{x}, \mathbf{z} \sim p_{\Omega}(\mathbf{x}),  p(\mathbf{z})} \| \mathcal{L}\Tilde{g}(\mathbf{x},\mathbf{z})-\mathbf{b}(\mathbf{x}) \|_2^2  \leq I_1 + I_2 + I_3 + I_4 + I_5
\end{equation}
Note that
$\widehat{\text{Loss}}$ in  
(\ref{emp_loss_func}) is equal to $I_5$.

Next we analyze the above five terms: the approximation of discriminators to 1-Lipschitz functions, the generalization of generated distributions and the target distribution (solution), the convergence of PINNs regularization term as well as the empirical loss respectively. The following five mild assumptions are 
imposed to the convergence guarantee of our algorithm. Note that Assumption \ref{decay}, \ref{large_capacity} and \ref{cong_sol_dis} are essential for our generalization bound while Assumption \ref{large_generator} and \ref{assump_pinns} are for better understanding of our theorem. We will illustrate the necessities and properties for each assumptions below.

\begin{assumption}[Decaying condition]
\label{decay}
There exist a constant $s \geq 2$ and a large enough $M_u>0$ such that for each $\mathbf{x} \in \Gamma$,
$$
\mathbb{P}(\|\mathbf{u}(\mathbf{x})\|_2 \geq M_u) \leq C \cdot M_u^{-s}.
$$
For simplicity, we set $C=1$ and further assume that $M_u \geq \sqrt{r} \cdot (M+W_g \cdot M)$ and $M_u \geq \sqrt{d} \cdot M_x$. 
\end{assumption}

\begin{remark}
Assumption \ref{decay} implies that the distribution of the solution at each location $\mathbf{x}$ has decaying tails (polynomial decay) after large enough value $M_u$. The condition is common for noisy data (from target distribution) in real applications. For example, if $\mathbf{u}(\mathbf{x})$ are from Gaussian distributions (or sub-Gaussian) with uniformly bounded means and deviations for all $\mathbf{x}$, 
the decaying condition is satisfied. The assumption is mild because it merely requires polynomial decay. Moreover, 
$$
M_u \geq \sqrt{r} \cdot (M+W_g \cdot M) \geq \|g(\mathbf{x},\mathbf{z}) \|_2
$$ 
indicates that the generated data is bounded by $M_u$, obviously satisfying the decaying condition. 
Note that the coincidence of supports of generators with the solution data is not required.
But we know there is a very low probability that the generator cannot cover the solution data.
\end{remark}

\begin{assumption}
\label{large_capacity}
Suppose $D_f$ and $W_f$ are large enough such that 
$$
M_u = O (2^{D_f/(2(d+r)^2)} \wedge  W_f^{1/(2(d+r)^2)}),
$$
where $O(\cdot)$ means that the two numbers are of the same order of magnitude, 
$M_u$ is defined in Assumption \ref{decay}.
\end{assumption}

\begin{remark}
The discriminators are strong (deep and wide) enough that they can discriminate the solution data. 
If a solution decays fast enough (e.g., Gaussian distributions with small deviations), then discriminators with small width and depth are sufficient. Furthermore, 
$$
\sqrt{r} \cdot (M+W_g \cdot M) \leq M_u = O(2^{D_f/(2C(d+r)^2)} \wedge  W_f^{1/(2(d+r)^2)})
$$ 
implies that the capacity of discriminators should be stronger than that of generators.
\end{remark}

\begin{assumption}
\label{large_generator}
We assume 
$ \sqrt{r} \cdot (W_g+1)\cdot M \geq \sqrt{d} \cdot M_x$ and 
the number of training data $m$ is large enough such that 
$$
(W_g+1) \cdot M \leq m^{1/2(d+r)}.
$$
\end{assumption}

\begin{remark}
This assumption requires the number of training data to be large enough such that the 
empirical generated distribution is close to the exact generated distribution. Otherwise, the limited and insufficient data will lead to the learned model $\Tilde{g}$ with large bias and variance. For more details and reasons of the assumption, please see the remark of Lemma \ref{bound_I2}.
\end{remark}

\begin{assumption}
\label{cong_sol_dis}
The exact solution data has finite 3-moment, i.e.,
$$
\mathbb{E}_{(\mathbf{x},\mathbf{u}) \sim p_{\Gamma}(\mathbf{x},\mathbf{u})} \|(\mathbf{x},\mathbf{u}) \|_2^3 < +\infty
$$
\end{assumption}

\begin{remark}
The boundedness of data $(\mathbf{x},\mathbf{u})$ in 3-moment is easy to be achieved. The spatial (with temporal) variable $\mathbf{x}$ is bounded by $M_x$, so we only focus on $\mathbf{u}(\mathbf{x})$. Similarly, if $\mathbf{u}(\mathbf{x})$ are from Gaussian (or sub-Gaussian) distributions with uniformly bounded mean and standard deviations for all $\mathbf{x}$, then the boundedness of 3-moment of data holds. Moreover, if Assumption \ref{decay} holds for a $M_u$ (fixed and independent with model architectures) with $s>3$, then the 3-moment of the data is bounded.
More precisely,
$$
\mathbb{E}_{(\mathbf{x},\mathbf{u}) \sim p_{\Gamma}(\mathbf{x},\mathbf{u})} \|(\mathbf{x},\mathbf{u})\|_2^3 \leq \mathbb{E}_{(\mathbf{x},\mathbf{u}) \sim p_{\Gamma}(\mathbf{x},\mathbf{u})} (\| \mathbf{x}\|_2 + \|\mathbf{u}(\mathbf{x})\|_2)^3
$$
and
$$
\mathbb{E}_{(\mathbf{x},\mathbf{u}) \sim p_{\Gamma}(\mathbf{x},\mathbf{u})} \|\mathbf{u}(\mathbf{x})\|_2^3 \leq M_u^3 + \mathbb{E}_{(\mathbf{x},\mathbf{u}) \sim p_{\Gamma}(\mathbf{x},\mathbf{u})} \{\|\mathbf{u}(\mathbf{x})\|_2^3 \cdot \mathbb{I}(\|\mathbf{u}(\mathbf{x})\|_2 \geq M_u)\} \leq M_u^3 + C \cdot M_u^{-s+3}
$$
imply the boundedness of the data.
\end{remark}

\begin{assumption}
\label{assump_pinns}
Assume that the number of training data $k$ is large enough such that for all $(\mathbf{x},\mathbf{z})$, 
the following inequality holds
$$
\| \mathcal{L}g(\mathbf{x},\mathbf{z})-\mathbf{b}(\mathbf{x})\|_2^2 \leq k^{1/4}
$$
for all $g \in \mathcal{G}$.
\end{assumption}

\begin{remark}
For a fixed predefined generator class $\mathcal{G}$ in (\ref{generator_class}), all the derivatives of $g \in \mathcal{G}$ are bounded by a constant depending on $M$, $W_g$ and $D_g$.
The reason is due to the boundedness of $tanh$ (and its derivatives) in $\mathcal{G}$. 
Therefore, 
\begin{equation}
\label{bound_pinns}
    \| \mathcal{L}\Tilde{g}(\mathbf{x},\mathbf{z})-\mathbf{b}(\mathbf{x})\|_2^2 \leq \bar{C}(\mathcal{L}, M, W_g, D_g, M_b),
\end{equation}
where $\bar{C}$ is a constant depending on model architectures and given PDEs. For other activation functions 
like $ReLU^q$, similar assumptions also hold for the boundedness of 
$\mathbb{E}_{\mathbf{z} \sim p(\mathbf{z})}\| \mathcal{L}\Tilde{g}(\mathbf{x},\mathbf{z})-\mathbf{b}(\mathbf{x})\|_2^2$ for all $\mathbf{x} \in \Omega$ with the decaying condition of $\mathbf{z}$ (e.g. sub-Gaussian distributions). For more details and reasons of the assumption, please see the remark of Lemma \ref{bound_I4}.
\end{remark}

\subsection{Convergence Analysis of Empirical Loss}


In this subsection, we establish the convergence of empirical loss, and 
show that with sufficiently large amount of training data, the 
empirical loss will converge to the approximation error with a high probability.

We first recall the following results for our analysis.


\begin{prop}[Theorem 2, \cite{tanielian2021approximating}]
\label{approx_dis}
Let $d \geq 2$, the grouping size $group_{size}=2$. For any $ f \in \operatorname{Lip}_{1}\left([-M_u,M_u]^{d}\right)$, where $\operatorname{Lip}_1$ denotes the class of 1-Lipschitz functions defined on $[-M_u,M_u]^{d}$, satisfying that 
$$
|f(\mathbf{x})-f(\mathbf{y})| \leq \| \mathbf{x}-\mathbf{y} \|_2, \hspace{1em} \mathbf{x}, \mathbf{y} \in [-M_u,M_u]^{d},
$$
there exists a neural network $\Tilde{f}$ of the form (\ref{groupsortnn}) with depth $D_f$ and width $W_f$ such that
\begin{equation*}
    \|f-\Tilde{f}\|_{L^{\infty}([-M_u,M_u]^{d})} \leq 2M_u \cdot C \cdot ( 2^{-D_f/d^2} \vee W_f^{-1/d^2}),
\end{equation*}
where the constant $C \approx 2 \sqrt{d}$.
\end{prop}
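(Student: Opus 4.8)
The plan is to establish the bound in two stages. First I would approximate the target function $f \in \operatorname{Lip}_1([-M_u,M_u]^d)$ by a continuous piecewise-linear (CPWL) function $\hat f$ that is itself (essentially) $1$-Lipschitz, and then show that such a $\hat f$ is represented \emph{exactly} by a groupsort network of the form (\ref{groupsortnn}) obeying the constraints (\ref{constraints}). Since the representation in the second stage carries no error, the whole $L^\infty$ error equals the CPWL approximation error from the first stage, and the rate in $D_f$ and $W_f$ then follows by expressing the mesh resolution in terms of the depth and width that the groupsort class can realize.

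For the first stage I would triangulate the cube $[-M_u,M_u]^d$ by a uniform mesh with $N$ subdivisions per coordinate (for instance a Kuhn--Freudenthal triangulation of each of the $N^d$ subcubes) and take $\hat f$ to be the piecewise-linear interpolant of $f$ on this mesh. On a simplex of diameter $h$ the interpolation error is controlled by the oscillation of $f$, giving $\|f-\hat f\|_{L^\infty} \lesssim h \lesssim \sqrt d\,M_u/N$; this is the source of the prefactor $2M_u$ and of part of the $\sqrt d$ in $C$. One caveat to handle carefully here is that linear interpolation of a $1$-Lipschitz function need not be exactly $1$-Lipschitz on a skewed simplex, since controlling the gradient projection along each edge only bounds $\|\nabla \hat f\|_2$ up to a conditioning factor of the simplex; using well-shaped (e.g.\ Kuhn) simplices keeps this factor a dimensional constant, which I would absorb into $C$.

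For the second stage I would invoke the lattice (max--min) representation of CPWL functions: every CPWL function can be written as $\max_i \min_j \ell_{ij}$ with affine pieces $\ell_{ij}(\mathbf{x}) = \langle \mathbf{a}_{ij},\mathbf{x}\rangle + b_{ij}$, and when $\hat f$ is $1$-Lipschitz each gradient satisfies $\|\mathbf{a}_{ij}\|_2 \le 1$. The affine pieces are produced by the first linear layer, whose rows then have $\ell_2$-norm at most $1$, i.e.\ $\|\mathbf{V}_1\|_{2,\infty} \le 1$; the $\max$ and $\min$ are realized by the group-size-$2$ sorting activation, which is $\ell_\infty$-norm preserving, composed with coordinate-selection matrices of $\ell_\infty$ operator norm at most $1$, i.e.\ $\|\mathbf{V}_i\|_\infty \le 1$. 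Hence the assembled network lies in the admissible class, is automatically $1$-Lipschitz, and reproduces $\hat f$ exactly.

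The main obstacle is the combinatorial counting that yields the exponents $1/d^2$. A naive pairwise max--min tree over the $\sim N^d$ affine pieces would suggest a rate of order $W_f^{-1/d}$ or $2^{-D_f/d}$; obtaining the stated $W_f^{-1/d^2}\vee 2^{-D_f/d^2}$ means one must account for the genuine cost of assembling the specific nested lattice form of the mesh interpolant by pairwise sorting, while keeping every intermediate weight matrix within the $\|\cdot\|_\infty \le 1$ constraint so that $1$-Lipschitzness is preserved at each layer. Bounding this blow-up — equivalently, how many groupsort layers and neurons are truly needed to express the interpolant, and then solving $N \sim 2^{D_f/d^2} \wedge W_f^{1/d^2}$ for the error — is the technical heart of the argument and the step I expect to be hardest.
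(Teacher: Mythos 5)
First, a structural point: the paper never proves this proposition at all --- it is imported wholesale as Theorem~2 of \cite{tanielian2021approximating}, and the citation is the paper's entire justification. Your attempt is therefore really an attempt to re-derive that external theorem, and measured as such it has a genuine gap, which you name yourself. Everything you actually carry out (piecewise-linear interpolation at mesh size $h \sim \sqrt{d}\,M_u/N$, the max--min lattice form of a CPWL function, realization of pairwise $\max$ and $\min$ by the group-size-$2$ sorting activation together with selection matrices satisfying $\|\mathbf{V}_i\|_\infty \le 1$) is the standard, qualitative part; the entire content of the statement is the quantitative rate $2M_u \cdot C \cdot (2^{-D_f/d^2} \vee W_f^{-1/d^2})$, and that is precisely the step you defer as ``the technical heart.'' Worse, your own naive count predicts exponents of order $1/d$, i.e.\ a \emph{stronger} rate than the one claimed, and you offer no argument reconciling the two: no bound on how many min-groups the lattice form of the mesh interpolant actually requires (this is where an extra factor of $d$ can enter the exponent, since the lattice representation of a CPWL function with $\sim N^d$ affine pieces is generally a max--min over a much larger family of groups, not over $O(N^d)$ terms). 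A proof must perform this accounting; flagging it as the hardest step does not discharge it.

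Second, the caveat you propose to ``absorb into $C$'' is not absorbable. The class $\mathcal{F}_{GS}$ in (\ref{discriminator_class}) is hard-constrained by (\ref{constraints}): the conditions $\|\mathbf{V}_1\|_{2,\infty} \le 1$ and $\|\mathbf{V}_i\|_\infty \le 1$ force every network in the class to be exactly $1$-Lipschitz. If your interpolant $\hat f$ has Lipschitz constant $\kappa > 1$, it is not represented by \emph{any} member of the class, so stage two's ``exact representation, zero error'' claim fails outright. The natural repair is to represent $\hat f / \kappa$ instead, which incurs the additional error $(1 - 1/\kappa)\,\|\hat f\|_{L^\infty}$; since a $1$-Lipschitz $f$ normalized by $f(\mathbf{0}) = 0$ (harmless for discriminators, which enter only through differences) can have $\|\hat f\|_{L^\infty} \asymp M_u \sqrt{d}$, and the conditioning factor $\kappa$ of a fixed simplex family does \emph{not} tend to $1$ under mesh refinement, this term does not decay with $N$ and would swamp the bound you are trying to prove. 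You would need an interpolation or representation scheme that is provably non-expansive in the $\ell_2$ sense; constructing one with controlled width and depth is essentially the difficulty that the construction in \cite{tanielian2021approximating} is engineered to overcome, so it cannot be waved into the constant.
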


\begin{prop}[Convergence in $\mathbf{Wass}_1$ distance \cite{NEURIPS2020_2000f632}]
\label{convg_W1}
Assume that the distribution $\nu$ (lies on $\mathbb{R}^{d}$) satisfies that $M_{3}=\mathbb{E}_{\mathbf{x} \sim \nu}\|\mathbf{x}\|_2^{3}<\infty .$ Then there exists a constant $C^{'}$ depending on $M_{3}$ such that
$$
\mathbb{E} \mathbf{Wass}_{1}\left(\nu, \hat{\nu}^m\right) \leq C^{'} \cdot\left\{\begin{array}{ll}
m^{-1 / 2}, & d=1 \\
m^{-1 / 2} \log m, & d=2 \\
m^{-1 / d}, & d \geq 3
\end{array}\right.
$$
where $\hat{\nu}^m$ denotes the empirical distribution of $m$ i.i.d. samples from the distribution $\nu$; the constant $C^{\prime}$ is proportional to $\sqrt[3]{M_3}$ but is independent of $d$ and $m$.
\end{prop}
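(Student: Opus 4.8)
The plan is to establish this classical empirical-measure concentration rate by a multiscale (dyadic) argument together with a moment-controlled truncation of the tails, in the spirit of Fournier and Guillin. First I would invoke Kantorovich--Rubinstein duality, $\mathbf{Wass}_1(\nu,\hat\nu^m)=\sup_{\|f\|_{\mathrm{Lip}}\le 1}\int f\,d(\nu-\hat\nu^m)$, and reduce the estimate to controlling how faithfully $\hat\nu^m$ reproduces the $\nu$-mass of a nested family of dyadic cubes. For each scale $k\ge 0$ let $\mathcal{D}_k$ partition a reference cube into $2^{kd}$ subcubes of side $2^{-k}$. Writing a one-Lipschitz $f$ as the telescoping sum of its successive dyadic averages, each increment is piecewise constant at its scale and has sup-norm $O(2^{-k})$; pairing this expansion against the signed measure $\nu-\hat\nu^m$ gives the deterministic bound
\[\mathbf{Wass}_1(\nu,\hat\nu^m)\ \lesssim\ \sum_{k\ge 0} 2^{-k}\sum_{Q\in\mathcal{D}_k}\bigl|\hat\nu^m(Q)-\nu(Q)\bigr|.\]

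Next I would take expectations and bound the fluctuation cube by cube. Because $m\,\hat\nu^m(Q)$ is a binomial sum, Jensen's inequality gives $\mathbb{E}\bigl|\hat\nu^m(Q)-\nu(Q)\bigr|\le\sqrt{\nu(Q)/m}$, and Cauchy--Schwarz over the $2^{kd}$ cubes at scale $k$ (using $\sum_Q\nu(Q)\le 1$) produces
\[\sum_{Q\in\mathcal{D}_k}\mathbb{E}\bigl|\hat\nu^m(Q)-\nu(Q)\bigr|\ \le\ \sqrt{\frac{2^{kd}}{m}}.\]
Everything then collapses to the geometric series $m^{-1/2}\sum_k 2^{k(d/2-1)}$, whose behaviour is dictated by the sign of $d/2-1$. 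For $d=1$ the series converges and the rate is $m^{-1/2}$; for $d=2$ the summands are constant, so truncating near level $K\sim\log m$ introduces the extra logarithm; for $d\ge 3$ the series diverges and I would truncate at an optimal $K$, balancing the approximation tail $\sum_{k>K}2^{-k}\lesssim 2^{-K}$ against the fluctuation head $m^{-1/2}2^{K(d/2-1)}$. Setting $2^{K}\sim m^{1/d}$ equalizes the two and gives the rate $m^{-1/d}$.

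To dispense with the compact-support assumption I would localize using the third moment: partition $\mathbb{R}^d$ into $B(0,1)$ and dyadic annuli $B(0,2^{n})\setminus B(0,2^{n-1})$, apply the rescaled compact-support estimate on each annulus (the rescaling inflates the local bound by the diameter $\sim 2^{n}$, since $\mathbf{Wass}_1$ scales linearly with length), and sum over $n$. Markov's inequality bounds the $\nu$-mass of the $n$-th annulus by $M_3\,2^{-3n}$, and this cubic decay is exactly what is needed both to make the weighted annulus series summable and to force the overall constant to scale like $\sqrt[3]{M_3}$, which carries the units of length that $\mathbf{Wass}_1$ demands.

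The hard part will be the interlocking bookkeeping of this final step: the annulus radii, the per-annulus rescaling, and the truncation level $K$ must be chosen so that every factor of $d$ is absorbed into the exponent of $m$ and none survives in the prefactor, yielding a constant genuinely independent of $d$. Here the choice $s=3$ is the exact threshold: controlling only $M_3$ pins the spatial extent of $\nu$ at the single scale $M_3^{1/3}$ regardless of dimension, so that $\mathbf{Wass}_1(\nu,\hat\nu^m)$ is a priori $O(M_3^{1/3})$ uniformly in $d$, and the dyadic analysis merely sharpens this ceiling to the stated rate. Verifying that a weaker moment would break either the convergence of the annulus sum or the dimension-freeness of the constant is the delicate point I would check most carefully.
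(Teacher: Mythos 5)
You are not comparing against a proof in the paper: the paper imports this proposition wholesale from the cited reference, and its only engagement with the internals is the appendix on constants, where $C' \approx \frac{64}{3}\sqrt[3]{M_3}$ is read off from Proposition 3.1 of that work together with Theorems 2.3 and 3.1 of Lei's paper, and only under the proviso $d \ll \log n$. Your sketch is therefore a from-scratch reconstruction of the classical Fournier--Guillin argument, and its core is sound: the dyadic telescoping bound, the variance estimate $\mathbb{E}\bigl|\hat\nu^m(Q)-\nu(Q)\bigr|\le\sqrt{\nu(Q)/m}$, the Cauchy--Schwarz step giving $\sqrt{2^{kd}/m}$ per scale, the three-case analysis of the geometric series (convergent for $d=1$, logarithmic truncation for $d=2$, optimal truncation $2^K\sim m^{1/d}$ for $d\ge 3$), and the moment-based tail treatment do produce the three stated rates with a constant proportional to $\sqrt[3]{M_3}$; the proportionality itself follows from your units argument.

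The genuine gap is the final clause of the statement --- that $C'$ is independent of $d$ --- which you correctly flag as ``the hard part'' but do not resolve, and which your construction as written cannot deliver. $\mathbf{Wass}_1$ here is taken with respect to the Euclidean metric, and a dyadic cube of side $2^{-k}$ has Euclidean diameter $\sqrt{d}\,2^{-k}$; hence every telescoping increment, and with it the entire bound, carries a prefactor $\sqrt{d}$ that sits \emph{outside} the optimization over the truncation level $K$ and cannot be traded into the exponent of $m$. Replacing cubes by Euclidean balls removes the $\sqrt{d}$ from the increments, but destroys nestedness of the hierarchy and inflates the per-scale cardinality to roughly $3^d 2^{kd}$; building a net-based tree in which that $3^d$ correction is absorbed into the rate rather than the constant is precisely the technical content of the refined analyses in the cited literature, and even there dimension-independence is obtained only in the regime $d \ll \log m$, as the paper's own appendix concedes. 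Two smaller points: (i) $\mathbf{Wass}_1$ does not decompose over a partition into annuli --- $\nu$ and $\hat\nu^m$ assign different masses to each annulus and mass may be transported across annuli --- so ``apply the rescaled compact estimate on each annulus and sum'' requires the specific restriction lemma of Fournier--Guillin rather than naive patching; (ii) your claim that $s=3$ is ``the exact threshold'' is incorrect: any $q\ge 2$ already yields the stated rates, since the tail contribution $m^{-(q-1)/q}$ is then dominated by the main term; the third moment is what makes the constant a clean length scale $M_3^{1/3}$, not what makes the sums converge.
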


Let us study the five terms in (\ref{five}) in the following analysis. 

\begin{lemma}
\label{bound_I1}
Suppose that Assumption \ref{decay} and \ref{large_capacity} hold. 
Then 
\begin{equation}
\label{up_bound_1}
    I_1 \leq C_1 \cdot ( 2^{-D_f/(2 (d+r)^2)} \vee W_f^{-1/2(d+r)^2}) 
\end{equation}
where the constant $C_1$ is independent of the model architecture. 
\end{lemma}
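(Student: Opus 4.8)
The plan is to exploit the additive structure
$$
I_1 = \min_{\tilde f \in \mathcal{F}_{GS}} \Bigl\{ \mathbb{E}_{\mathrm{gen}}[f - \tilde f] - \mathbb{E}_{\mathrm{target}}[f - \tilde f] \Bigr\},
$$
where $\mathbb{E}_{\mathrm{gen}}$ abbreviates the expectation over $(\mathbf{x},\tilde g(\mathbf{x},\mathbf{z}))$ with $\mathbf{x}\sim p_\Gamma,\ \mathbf{z}\sim p(\mathbf{z})$, and $\mathbb{E}_{\mathrm{target}}$ the expectation over $(\mathbf{x},\mathbf{u})\sim p_\Gamma(\mathbf{x},\mathbf{u})$. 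Since both are probability measures, any additive constant in $f$ and $\tilde f$ cancels, so I may normalize $f(\mathbf{0})=0$; it then suffices to exhibit one good $\tilde f\in\mathcal{F}_{GS}$ and bound the two expectations. First I would invoke Proposition \ref{approx_dis} in input dimension $d+r$ (the dimension of the pair $(\mathbf{x},\mathbf{u})$) to obtain $\tilde f$ with $\|f-\tilde f\|_{L^\infty([-M_u,M_u]^{d+r})}\le \epsilon$, where $\epsilon := 2 M_u C\,(2^{-D_f/(d+r)^2}\vee W_f^{-1/(d+r)^2})$.

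The key observation is that the \emph{generated} data stays inside the cube: $\|\mathbf{x}\|_2\le\sqrt{d}\,M_x\le M_u$ and $\|\tilde g(\mathbf{x},\mathbf{z})\|_2\le\sqrt{r}\,(M+W_g M)\le M_u$ by Assumption \ref{decay}, so $(\mathbf{x},\tilde g(\mathbf{x},\mathbf{z}))\in[-M_u,M_u]^{d+r}$ almost surely and hence $|\mathbb{E}_{\mathrm{gen}}[f-\tilde f]|\le\epsilon$. For the target term I would split on the event $\{\|\mathbf{u}\|_2<M_u\}$, where $(\mathbf{x},\mathbf{u})$ again lies in the cube and contributes at most $\epsilon$, and its complement. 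On the tail $\{\|\mathbf{u}\|_2\ge M_u\}$ I use that $f$ and $\tilde f$ are both $1$-Lipschitz and agree up to $\epsilon$ at the origin, so $|f-\tilde f|(\mathbf{x},\mathbf{u})\le 2\|(\mathbf{x},\mathbf{u})\|_2+\epsilon\lesssim\|\mathbf{u}\|_2+\epsilon$ there (using $\|\mathbf{x}\|_2\le M_u\le\|\mathbf{u}\|_2$). This reduces the tail to $\mathbb{E}_{\mathrm{target}}[\|\mathbf{u}\|_2\,\mathbb{I}(\|\mathbf{u}\|_2\ge M_u)]$ plus the lower-order piece $\epsilon\,\mathbb{P}(\|\mathbf{u}\|_2\ge M_u)$. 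The decaying condition of Assumption \ref{decay} together with the layer-cake identity $\mathbb{E}[X\,\mathbb{I}(X\ge a)]=a\,\mathbb{P}(X\ge a)+\int_a^\infty \mathbb{P}(X\ge t)\,dt$ then bounds the main tail term by $\tfrac{s}{s-1}M_u^{1-s}\le 2\,M_u^{-1}$, since $s\ge2$.

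Finally I would balance the three contributions using Assumption \ref{large_capacity}. Writing $\delta:=2^{-D_f/(2(d+r)^2)}\vee W_f^{-1/(2(d+r)^2)}$ for the target rate, one checks $2^{-D_f/(d+r)^2}\vee W_f^{-1/(d+r)^2}=\delta^2$ and, from Assumption \ref{large_capacity}, $M_u=O(\delta^{-1})$; hence $\epsilon=2C\,(M_u\delta)\,\delta=O(\delta)$ and likewise $M_u^{-1}=O(\delta)$. Summing the generated piece, the boxed target piece, and the tail piece yields $I_1\le C_1\,\delta$ with $C_1$ independent of the architecture, as claimed. The main obstacle—and the reason Assumptions \ref{decay} and \ref{large_capacity} are both indispensable—is that the approximation error in Proposition \ref{approx_dis} is inflated by the factor $M_u$, which must be taken large to render the target tail negligible. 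The resolution is precisely that the groupsort approximation rate is the \emph{square} of the desired rate, so that choosing $M_u$ of order $\delta^{-1}$ simultaneously suppresses the tail (as $M_u^{-1}$) and cancels one power of $M_u$ in $\epsilon$; correctly handling the unbounded support of the target law, where the polynomial-decay assumption enters, is the delicate point.
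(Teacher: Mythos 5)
Your proposal is correct and follows essentially the same route as the paper's proof: approximate $f$ on the cube $[-M_u,M_u]^{d+r}$ via Proposition \ref{approx_dis}, note that the generated data lies in this cube by Assumption \ref{decay}, split the target expectation into the bounded region and the tail $\{\|\mathbf{u}\|_2\ge M_u\}$, control the tail by Lipschitzness together with the polynomial decay, and balance the two error terms by taking $M_u$ of order $\delta^{-1}$ via Assumption \ref{large_capacity} with $s=2$. In fact your write-up makes explicit several details (normalization at the origin, the layer-cake bound giving the $\tfrac{s}{s-1}M_u^{1-s}$ tail constant, and the $M_u\delta=O(1)$ cancellation) that the paper only spells out in its appendix on constants.
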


\begin{proof}
By the definition of $I_1$, we obtain
\begin{align*}
    I_1 = & \min_{\Tilde{f} \in \mathcal{F}_{GS}} \{\mathbb{E}_{\mathbf{x},\mathbf{z} \sim p_{\Gamma}(\mathbf{x}),p(\mathbf{z})}f(\mathbf{x},\Tilde{g}(\mathbf{x},\mathbf{z})) - \mathbb{E}_{\mathbf{x},\mathbf{z} \sim p_{\Gamma}(\mathbf{x}),p(\mathbf{z})}\Tilde{f}(\mathbf{x},\Tilde{g}(\mathbf{x},\mathbf{z})) \\
    & + \mathbb{E}_{(\mathbf{x},\mathbf{u})\sim p_{\Gamma}(\mathbf{x},\mathbf{u})} \Tilde{f}(\mathbf{x},\mathbf{u}) - \mathbb{E}_{(\mathbf{x},\mathbf{u})\sim p_{\Gamma}(\mathbf{x},\mathbf{u})} f(\mathbf{x},\mathbf{u}) \}\\
    \leq & \min_{\Tilde{f} \in \mathcal{F}_{GS}} \{ \| \Tilde{f}-f \|_{L^{\infty}([-M_u,M_u]^{d+r})} \cdot \left (\mathbb{P}(\|\mathbf{u}(\mathbf{x})\|_2 \leq M_u) +1 \right ) \\
    & +  \mathbb{E}_{(\mathbf{x},\mathbf{u})\sim p_{\Gamma}(\mathbf{x},\mathbf{u})} [\Tilde{f}(\mathbf{x},\mathbf{u}) - f(\mathbf{x},\mathbf{u})] \cdot \mathbb{I}(\|\mathbf{u}(\mathbf{x})\|_2 \geq M_u) \}
\end{align*}
By using Assumption \ref{decay} and Proposition \ref{approx_dis}, we can conclude that
$$
I_1 \leq C_0 \cdot M_u \cdot  ( 2^{-D_f/(d+r)^2} \vee W_f^{-1/(d+r)^2}) + C_1 \cdot M_u^{-s+1}
$$
where $s \geq 2$, $C_0$ and $C_1$ are independent of the model architecture. 
Then substituting $M_u$ by $D_f$ and $W_f$ in Assumption \ref{large_capacity} with $s=2$, the upper bound (\ref{up_bound_1}) for $I_1$ can be obtained.
\end{proof}


The 
upper bound for $I_2$ is used to measure the convergence of the generated distribution by a large amount of but finite samples. The property is essential for theoretical analysis of algorithms and models because practically we can only access limited training data due to the storage and computation.

\begin{lemma}
\label{bound_I2}
For simplicity, we assume that $ \sqrt{r} \cdot (W_g+1)\cdot M \geq \sqrt{d} \cdot M_x$, then with probability of at least $1-m^{-1/4(d+r)}$ (over the choice of $m$ i.i.d. training samples), $I_2$ defined in (\ref{five}) satisfies the following inequality:
\begin{equation}
\label{error_bound_I2_1}
    I_2 \leq \mathbf{Wass}_1(p_{\Tilde{g}}(\mathbf{x},\mathbf{u}), \hat{p}^m_{\Tilde{g}}(\mathbf{x},\mathbf{u})) \leq C_2 \cdot ((W_g+1)\cdot M) \cdot \left\{\begin{array}{ll}
m^{-3 / 8} \log m, & d+r=2 \\
m^{-3 / 4(d+r)}, & d+r \geq 3
\end{array}\right.
\end{equation}
If we further assume that Assumption \ref{large_generator} holds for generators and training data, then with probability of at least $1-m^{-1/4(d+r)}$ (over the choice of $m$ i.i.d. training samples), $I_2$ defined in (\ref{five}) satisfies the following inequality:
\begin{equation}
\label{error_bound_I2_2}
    I_2 \leq \mathbf{Wass}_1(p_{\Tilde{g}}(\mathbf{x},\mathbf{u}), \hat{p}^m_{\Tilde{g}}(\mathbf{x},\mathbf{u})) \leq C_2 \cdot \left\{\begin{array}{ll}
m^{-1 / 8} \log m, & d+r=2 \\
m^{-1 / 4(d+r)}, & d+r \geq 3
\end{array}\right.
\end{equation}
where the constant $C_2$ is a universal constant, independent of the model architecture.
\end{lemma}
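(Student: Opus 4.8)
The plan is to recognize $I_2$ as a one-sided integral probability metric over the discriminator class, dominate it by a genuine Wasserstein-1 distance, and then combine Proposition~\ref{convg_W1} with a uniform moment estimate and Markov's inequality. First I would use the fact that every $f_\alpha\in\mathcal{F}_{GS}$ is $1$-Lipschitz, which holds by the construction (\ref{groupsortnn})--(\ref{constraints}). Writing $p_{\Tilde{g}}(\mathbf{x},\mathbf{u})$ for the law of $(\mathbf{x},\Tilde{g}(\mathbf{x},\mathbf{z}))$ with $\mathbf{x}\sim p_\Gamma$, $\mathbf{z}\sim p$, and $\hat{p}^m_{\Tilde{g}}(\mathbf{x},\mathbf{u})$ for its empirical counterpart built from the $m$ input samples, the maximization defining $I_2$ is taken over a subset of the $1$-Lipschitz functions, so by Kantorovich--Rubinstein duality
\[
I_2=\max_{f_\alpha\in\mathcal{F}_{GS}}\{\mathbb{E}_{p_{\Tilde{g}}}f_\alpha-\hat{\mathbb{E}}^m f_\alpha\}\le\sup_{\|f\|_{\text{Lip}}\le 1}\{\mathbb{E}_{p_{\Tilde{g}}}f-\mathbb{E}_{\hat{p}^m_{\Tilde{g}}}f\}=\mathbf{Wass}_1(p_{\Tilde{g}}(\mathbf{x},\mathbf{u}),\hat{p}^m_{\Tilde{g}}(\mathbf{x},\mathbf{u})),
\]
which establishes the first inequality in both (\ref{error_bound_I2_1}) and (\ref{error_bound_I2_2}).

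Next I would bound the third moment of $p_{\Tilde{g}}$ on $\mathbb{R}^{d+r}$ uniformly over $\mathcal{G}$. Using $\|\mathbf{x}\|_2\le\sqrt{d}\,M_x$ together with the output bound $\|g(\mathbf{x},\mathbf{z})\|_2\le\sqrt{r}\,(W_g+1)M$ recorded in Section~\ref{proposed_wgan_pinns_model}, and the hypothesis $\sqrt{r}\,(W_g+1)M\ge\sqrt{d}\,M_x$, I get $\|(\mathbf{x},\mathbf{u})\|_2\le 2\sqrt{r}\,(W_g+1)M$ almost surely, so $M_3=\mathbb{E}_{p_{\Tilde{g}}}\|(\mathbf{x},\mathbf{u})\|_2^3$ obeys $\sqrt[3]{M_3}\lesssim(W_g+1)M$. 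Since this estimate is uniform over the class, Proposition~\ref{convg_W1} in dimension $d+r$ yields, with a constant $C'\propto\sqrt[3]{M_3}\lesssim(W_g+1)M$,
\[
\mathbb{E}\,\mathbf{Wass}_1(p_{\Tilde{g}},\hat{p}^m_{\Tilde{g}})\le B,\qquad B=C'\cdot\Big\{m^{-1/2}\log m\ (d+r=2),\ \ m^{-1/(d+r)}\ (d+r\ge 3)\Big\}.
\]

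Finally I would convert this into the high-probability bound via Markov's inequality. With $\delta=m^{-1/4(d+r)}$ we have $\mathbb{P}(\mathbf{Wass}_1\ge B/\delta)\le\delta\,\mathbb{E}[\mathbf{Wass}_1]/B\le\delta$, so on the complementary event, of probability at least $1-m^{-1/4(d+r)}$,
\[
\mathbf{Wass}_1\le B/\delta\le C_2\,(W_g+1)M\cdot\Big\{m^{-3/8}\log m\ (d+r=2),\ \ m^{-3/4(d+r)}\ (d+r\ge 3)\Big\},
\]
the extra factor $m^{1/4(d+r)}$ shifting $m^{-1/2}\mapsto m^{-3/8}$ and $m^{-1/(d+r)}\mapsto m^{-3/4(d+r)}$; this is (\ref{error_bound_I2_1}). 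For (\ref{error_bound_I2_2}) I substitute Assumption~\ref{large_generator}, namely $(W_g+1)M\le m^{1/2(d+r)}$, which absorbs the prefactor $(W_g+1)M$ into an extra $m^{1/2(d+r)}$ and produces the rates $m^{-1/8}\log m$ and $m^{-1/4(d+r)}$.

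The step I expect to be the main obstacle is the circular dependence between the learned generator $\Tilde{g}$ and the very samples forming $\hat{p}^m_{\Tilde{g}}$: Proposition~\ref{convg_W1} presumes i.i.d. samples drawn from a \emph{fixed} target, whereas $\Tilde{g}$ is itself a function of the data. The feature that rescues the argument is that the moment estimate $\sqrt[3]{M_3}\lesssim(W_g+1)M$ is uniform over the entire class $\mathcal{G}$, so the constant $C'$ is the same for every generator that can be realized; this is precisely why the capacity factor $(W_g+1)M$ must appear in (\ref{error_bound_I2_1}), and why it can only be removed under the data-richness Assumption~\ref{large_generator}.
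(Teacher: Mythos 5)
Your proposal is correct and follows essentially the same route as the paper's own proof: bound $I_2$ by $\mathbf{Wass}_1(p_{\Tilde{g}},\hat{p}^m_{\Tilde{g}})$ via the $1$-Lipschitzness of $\mathcal{F}_{GS}$, use the boundedness of $(\mathbf{x},g(\mathbf{x},\mathbf{z}))$ (hence a third moment of order $(W_g+1)M$) to invoke Proposition~\ref{convg_W1}, then apply Markov's inequality with threshold $m^{-1/4(d+r)}$ and absorb the capacity factor via Assumption~\ref{large_generator}. The data-dependence of $\Tilde{g}$ on the samples that you flag as the main obstacle is handled in the paper exactly as you handle it --- by the uniformity of the moment bound over $\mathcal{G}$ --- so your proof and the paper's share both this argument and its limitation.
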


\begin{proof}
For each $g \in \mathcal{G}$ and for all $\mathbf{x} \in \Omega \cup \Gamma$, $\|g(\mathbf{x},\mathbf{z})\|_2 \leq \sqrt{r} \cdot (W_g+1) \cdot M$, therefore, $\| (\mathbf{x}, g(\mathbf{x},\mathbf{z})) \|_2 \leq \sqrt{d}\cdot M_x + \sqrt{r} \cdot (W_g+1)\cdot M$. Therefore, by properties of Wasserstein-1 distance, we have 
\begin{align*}
\mathbb{E} \mathbf{Wass}_1(p_{\Tilde{g}}(\mathbf{x},\mathbf{u}), \hat{p}^m_{\Tilde{g}}(\mathbf{x},\mathbf{u})) & \leq C^{\prime} \cdot (\sqrt{d}\cdot M_x +  \sqrt{r} \cdot (W_g+1)\cdot M) \cdot \left\{\begin{array}{ll}
m^{-1 / 2} \log m, & d+r=2 \\
m^{-1 / (d+r)}, & d+r \geq 3
\end{array}\right.\\
& \leq C_2 \cdot ((W_g+1)\cdot M) \cdot \left\{\begin{array}{ll}
m^{-1 / 2} \log m, & d+r=2 \\
m^{-1 / (d+r)}, & d+r \geq 3
\end{array}\right.
\end{align*}
where $C_2$ is independent of the model architecture and the numbers of training data. By Markov-Inequality, with probability of at least $1-m^{-1/4(d+r)}$,
\begin{equation}
    \mathbf{Wass}_1(p_{\Tilde{g}}(\mathbf{x},\mathbf{u}), \hat{p}^m_{\Tilde{g}}(\mathbf{x},\mathbf{u})) \leq C_2 \cdot ((W_g+1)\cdot M) \cdot \left\{\begin{array}{ll}
m^{-3 / 8} \log m, & d+r=2 \\
m^{-3 / 4(d+r)}, & d+r \geq 3
\end{array}\right.
\end{equation}

By Assumption \ref{large_generator}, we have
\begin{equation*}
\mathbb{E} \mathbf{Wass}_1(p_{\Tilde{g}}(\mathbf{x},\mathbf{u}), \hat{p}^m_{\Tilde{g}}(\mathbf{x},\mathbf{u})) \leq C_2 \cdot \left\{\begin{array}{ll}
m^{-1 / 4} \log m, & d+r=2 \\
m^{-1 / 2(d+r)}, & d+r \geq 3
\end{array}\right.
\end{equation*}
Similarly, by Markov-Inequality, with probability of at least $1-m^{-1/4(d+r)}$,
\begin{equation}
    \mathbf{Wass}_1(p_{\Tilde{g}}(\mathbf{x},\mathbf{u}), \hat{p}^m_{\Tilde{g}}(\mathbf{x},\mathbf{u})) \leq C_2 \cdot \left\{\begin{array}{ll}
m^{-1 / 8} \log m, & d+r=2 \\
m^{-1 / 4(d+r)}, & d+r \geq 3
\end{array}\right.
\end{equation}
\end{proof}

\begin{remark}
Error bound for $I_2$ in (\ref{error_bound_I2_1}) involves the generator capacity $(W_g+1)\cdot M$. If the number of training data $m$ is small w.r.t. the generator capacity (range of generated data), then the error will be large where $(W_g+1)\cdot M$ dominates the error. Conversely, if the range of generated data is small compared with the number of training data (Assumption \ref{large_generator}), then $(W_g+1) \cdot M$ can be ignored as is shown in (\ref{error_bound_I2_2}).
\end{remark}

Similar to the analysis of $I_2$, we analyze the convergence of the distribution of the solution by a large number of but 
a finite number of samples. 



\begin{lemma}
\label{bound_I3}
Suppose that Assumption \ref{cong_sol_dis} holds, then with probability of at least $1-n^{-1/2(d+r)}$ (over the choice of $n$ i.i.d. training samples), $I_3$ defined in (\ref{five}) satisfies the following inequality:
\begin{equation}
    I_3 \leq \mathbf{Wass}_1(p_{\Gamma}(\mathbf{x},\mathbf{u}),\hat{p}^n_{\Gamma}(\mathbf{x},\mathbf{u})) \leq C_3 \cdot \left\{\begin{array}{ll}
n^{-1 / 4} \log n, & d+r=2 \\
n^{-1 / 2(d+r)}, & d+r \geq 3,
\end{array}\right.
\end{equation}
where $\hat{p}^n_{\Gamma}(\mathbf{x},\mathbf{u}))$ is the empirical distribution of $p_{\Gamma}(\mathbf{x},\mathbf{u})$ with $n$ i.i.d. samples and the constant $C_3$ depends only on the 3-moment of the data on the boundary.
\end{lemma}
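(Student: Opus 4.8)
The plan is to reduce $I_3$ to a Wasserstein-1 distance between the true solution distribution and its empirical counterpart, and then invoke Proposition \ref{convg_W1} together with Markov's inequality. The argument parallels the proof of Lemma \ref{bound_I2} almost verbatim, the only difference being that here the relevant distribution is the exact solution law rather than the generated one.

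First, I would observe that every discriminator $\tilde{f} \in \mathcal{F}_{GS}$ is 1-Lipschitz by the construction in (\ref{groupsortnn})--(\ref{constraints}). Hence $\mathcal{F}_{GS} \subseteq \operatorname{Lip}_1$, so that
$$
I_3 = \max_{\tilde{f} \in \mathcal{F}_{GS}} \left\{ \hat{\mathbb{E}}^{n}_{(\mathbf{x},\mathbf{u})\sim p_{\Gamma}} \tilde{f}(\mathbf{x},\mathbf{u}) - \mathbb{E}_{(\mathbf{x},\mathbf{u})\sim p_{\Gamma}} \tilde{f}(\mathbf{x},\mathbf{u}) \right\} \leq \sup_{\|f\|_{\text{Lip}} \leq 1} \left\{ \hat{\mathbb{E}}^{n} f - \mathbb{E} f \right\}.
$$
By the Kantorovich--Rubinstein duality, the right-hand side is exactly $\mathbf{Wass}_1(p_{\Gamma}(\mathbf{x},\mathbf{u}), \hat{p}^n_{\Gamma}(\mathbf{x},\mathbf{u}))$, giving the first inequality of the lemma.

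Next I would control the expected Wasserstein distance. The joint variable $(\mathbf{x},\mathbf{u})$ lives in $\mathbb{R}^{d+r}$, and Assumption \ref{cong_sol_dis} guarantees that its 3-moment $\mathbb{E}\|(\mathbf{x},\mathbf{u})\|_2^3$ is finite, which is precisely the hypothesis required by Proposition \ref{convg_W1}. Applying that proposition with ambient dimension $d+r$ and sample size $n$ yields
$$
\mathbb{E}\,\mathbf{Wass}_1(p_{\Gamma}, \hat{p}^n_{\Gamma}) \leq C_3' \cdot \begin{cases} n^{-1/2}\log n, & d+r = 2,\\ n^{-1/(d+r)}, & d+r \geq 3, \end{cases}
$$
where $C_3'$ is proportional to the cube root of the 3-moment of the boundary data and is independent of $n$. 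Finally, I would pass to the claimed high-probability bound by Markov's inequality: choosing the threshold $t = \mathbb{E}\,\mathbf{Wass}_1(p_{\Gamma}, \hat{p}^n_{\Gamma}) \cdot n^{1/2(d+r)}$ gives $\mathbb{P}(\mathbf{Wass}_1 \geq t) \leq n^{-1/2(d+r)}$, so with probability at least $1 - n^{-1/2(d+r)}$ we have $\mathbf{Wass}_1 \leq t$; substituting the two cases of the expectation rate produces exactly $n^{-1/4}\log n$ for $d+r=2$ and $n^{-1/2(d+r)}$ for $d+r \geq 3$.

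The whole argument is essentially routine, so I do not expect a serious obstacle. The two points that require care are, first, correctly identifying the ambient dimension as $d+r$ (not $d$) when invoking Proposition \ref{convg_W1}, since it is the joint variable $(\mathbf{x},\mathbf{u})$ that is being sampled; and second, calibrating the Markov threshold so that the failure probability is precisely $n^{-1/2(d+r)}$, which costs exactly a factor $n^{1/2(d+r)}$ and explains why the high-probability rate is the square root of the expectation rate. The finite-3-moment hypothesis is what makes Proposition \ref{convg_W1} applicable at all, which is why Assumption \ref{cong_sol_dis} is flagged as essential.
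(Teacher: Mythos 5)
Your proposal is correct and matches the paper's intended argument exactly: the paper omits an explicit proof of this lemma, noting only that it is ``similar to the analysis of $I_2$,'' and your three steps---embedding $\mathcal{F}_{GS}$ into the $1$-Lipschitz class to invoke Kantorovich--Rubinstein duality, applying Proposition \ref{convg_W1} in ambient dimension $d+r$ under the finite $3$-moment hypothesis of Assumption \ref{cong_sol_dis}, and calibrating Markov's inequality with threshold factor $n^{1/2(d+r)}$---are precisely the steps used in the paper's proof of Lemma \ref{bound_I2}, transplanted to the boundary-data distribution. Your rate arithmetic ($n^{-1/2}\log n \cdot n^{1/4} = n^{-1/4}\log n$ for $d+r=2$ and $n^{-1/(d+r)}\cdot n^{1/2(d+r)} = n^{-1/2(d+r)}$ for $d+r\geq 3$) is also correct.
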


Now we mainly focus on the convergence of the PINNs regularization term. We hope that with sufficient samples, the empirical expectation will converge to the exact expectation. 

\begin{lemma}
\label{bound_I4}
For generators defined in (\ref{generator_class}), we have that with probability of at least $1-2 \cdot k^{-1}$ (over the choice of $k$ i.i.d. samples in the interior domain),
\begin{eqnarray}
\label{error_bound_I4_1}
    I_4 & = \lambda \cdot \left | \mathbb{E}_{\mathbf{x},\mathbf{z} \sim p_{\Omega}(\mathbf{x}),p(\mathbf{z})} \| \mathcal{L}\Tilde{g}(\mathbf{x},\mathbf{z})-\mathbf{b}(\mathbf{x})\|_2^2 - \hat{\mathbb{E}}^{k}_{\mathbf{x},\mathbf{z} \sim p_{\Omega}(\mathbf{x}),p(\mathbf{z})} \| \mathcal{L}\Tilde{g}(\mathbf{x},\mathbf{z})-\mathbf{b}(\mathbf{x})\|_2^2 \right | 
    \\
    & \leq \lambda \cdot C_4 \cdot (W_g \cdot M)^{\Tilde{C}D_g} \cdot  \sqrt{\frac{\log k}{2k}}.
\end{eqnarray}
where constant $C_4$ only depends on the differential operator $\mathcal{L}$ and $M_b$; constant $\Tilde{C}$ depends on the differential operator $\mathcal{L}$. Note that $\bar{C}:=C_4 \cdot (W_g \cdot M)^{\Tilde{C}D_g}$ defined in (\ref{bound_pinns}) is the upper bound for PINNs term with given generators class.
If we further assume that Assumption \ref{assump_pinns} holds for a large number of training data $k$, then with probability of at least $1-\frac{2}{k}$ (over the choice of $k$ i.i.d. samples in the interior domain),
\begin{equation}
\label{error_bound_I4_2}
    I_4 = \lambda \cdot \left | \mathbb{E}_{\mathbf{x},\mathbf{z} \sim p_{\Omega}(\mathbf{x}),p(\mathbf{z})} \| \mathcal{L}\Tilde{g}(\mathbf{x},\mathbf{z})-\mathbf{b}(\mathbf{x})\|_2^2 -  \hat{\mathbb{E}}^{k}_{\mathbf{x},\mathbf{z} \sim p_{\Omega}(\mathbf{x}),p(\mathbf{z})} \| \mathcal{L}\Tilde{g}(\mathbf{x},\mathbf{z})-\mathbf{b}(\mathbf{x})\|_2^2 \right | \leq \lambda \cdot \sqrt{\frac{\log k}{2}} \cdot  k^{-1/4}.
\end{equation}
\end{lemma}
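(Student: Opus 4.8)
The plan is to recognize $I_4/\lambda$ as a standard concentration-of-measure problem: it is the absolute deviation between the population mean and the empirical mean (over $k$ i.i.d. interior samples) of the single scalar random variable
$$
\xi := \|\mathcal{L}\Tilde{g}(\mathbf{x},\mathbf{z}) - \mathbf{b}(\mathbf{x})\|_2^2, \qquad (\mathbf{x},\mathbf{z}) \sim p_{\Omega}(\mathbf{x}),\, p(\mathbf{z}).
$$
Since the $k$ samples are i.i.d., the empirical expectation $\hat{\mathbb{E}}^{k}\xi$ is an average of $k$ i.i.d. copies of $\xi$, and Hoeffding's inequality applies as soon as a uniform bound on the range of $\xi$ is in hand.

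The heart of the argument, which I would carry out first, is to establish that $\xi$ is bounded by a constant $\bar{C} = C_4 \cdot (W_g \cdot M)^{\Tilde{C} D_g}$ depending only on the operator $\mathcal{L}$, the architecture parameters $M, W_g, D_g$, and the data bound $M_b$. Using the generator-class constraints in (\ref{generator_class}), every weight matrix satisfies $\|\mathbf{W}_i\|_{\max} \leq M$, so any $\mathbf{x}$-derivative of the $\tanh$ network $g$ expands, via the chain rule, into a product of at most $D_g+1$ weight matrices, each contributing a factor bounded by $W_g \cdot M$, multiplied by uniformly bounded derivatives of $\tanh$. A differential operator $\mathcal{L}$ whose highest order is reflected in $\Tilde{C}$ therefore produces derivatives of $g$ bounded by a power $(W_g \cdot M)^{\Tilde{C} D_g}$; combining with $\|\mathbf{b}(\mathbf{x})\|_2 \leq M_b$ and squaring, the constant and exponent can be collected into $C_4$ and $\Tilde{C}$ respectively, yielding $0 \leq \xi \leq \bar{C}$ for every $g \in \mathcal{G}$.

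Next I would apply Hoeffding's inequality to the bounded random variable $\xi \in [0,\bar{C}]$, obtaining
$$
\mathbb{P}\!\left(\left|\mathbb{E}\,\xi - \hat{\mathbb{E}}^{k}\xi\right| \geq t\right) \leq 2\exp\!\left(-\frac{2kt^2}{\bar{C}^2}\right).
$$
Setting the right-hand side equal to the target failure probability $2/k$ and solving for $t$ gives $t = \bar{C}\sqrt{(\log k)/(2k)}$, which, after multiplying through by $\lambda$, is exactly (\ref{error_bound_I4_1}). For the refined bound (\ref{error_bound_I4_2}), I would simply replace the architecture-dependent range $\bar{C}$ by the value $k^{1/4}$ furnished by Assumption \ref{assump_pinns}; the identical Hoeffding computation with $\bar{C}=k^{1/4}$ produces $t = k^{1/4}\sqrt{(\log k)/(2k)} = \sqrt{(\log k)/2}\cdot k^{-1/4}$, as claimed, again with probability at least $1 - 2k^{-1}$.

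The main obstacle is the boundedness step, specifically tracking how the order of $\mathcal{L}$ propagates through repeated differentiation of a depth-$D_g$ network to fix the exponent $\Tilde{C} D_g$. High-order mixed derivatives of $g$ generate combinatorially many terms (of Fa\`a di Bruno type), and one must verify that each term is dominated by a product of weight-matrix norms without the prefactor blowing up with $k$. The key observation making this clean is that $\tanh$ and all of its derivatives are uniformly bounded, so the inner factors are harmless and only the geometric growth $(W_g \cdot M)^{D_g}$ per differentiation survives; everything else is a routine application of Hoeffding's bound.
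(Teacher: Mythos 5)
Your proposal is correct and follows essentially the same route as the paper: the paper's proof is precisely an application of Hoeffding's inequality to the bounded random variable $\| \mathcal{L}\Tilde{g}(\mathbf{x},\mathbf{z})-\mathbf{b}(\mathbf{x})\|_2^2$, using the uniform bound $\bar{C}=C_4 \cdot (W_g \cdot M)^{\Tilde{C}D_g}$ from the generator-class constraints (or $k^{1/4}$ under Assumption \ref{assump_pinns}), with the failure probability $2/k$ solved for the deviation $t$ exactly as you do. The only difference is presentational: the paper relegates the boundedness-of-derivatives argument to the remarks surrounding Assumption \ref{assump_pinns} and Lemma \ref{bound_I4}, whereas you carry it out inside the proof.
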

\begin{proof}
It is a straightforward result by using Hoeffding's Inequality with Assumption \ref{assump_pinns}.
\end{proof}

\begin{remark}
Error bound for $I_4$ in (\ref{error_bound_I4_1}) involves the constant $\bar{C}:=C_4 \cdot (W_g \cdot M)^{\Tilde{C}D_g}$ which is related to the generator capacity ($W_g$, $D_g$ and $M$) as well as the given PDEs problem ($\mathcal{L}$ and $M_b$). If the number of training data $k$ is small w.r.t. $\bar{C}$, then the error will be large because $\bar{C}$ dominates the error. Conversely, if the training data is sufficient compared with the $\bar{C}$ (Assumption \ref{assump_pinns}), then $\bar{C}$ can be ignored as is shown in (\ref{error_bound_I4_2}). Here, we provide an example to depict how $\bar{C}$ is related to the PDEs problem. For the pedagogical example (\ref{exp1_ode}) in the first part of our numerical experiments. The given PDE is 
\begin{equation*}
        u_{xx} - u^2 u_x = b(x), \hspace{1em} x \in [-1,1]
\end{equation*}
with the pre-defined generator class (\ref{generator_class}). The right hand side function $b(x) = - \pi^2 \sin (\pi x) - \pi \cos (\pi x) \sin^2 (\pi x)$ is bounded by $M_b = \pi^2 + \pi$ in the domain $x \in [-1,1]$. By the definition of the generator class, $|u_{xx}| \leq 2(W_g \cdot M)^{2D_g}$, $|u_x| \leq (W_g \cdot M)^{D_g}$ and $|u| \leq (M + W_g \cdot M)$. Therefore, the constant $\bar{C}=2(W_g \cdot M)^{2D_g} + (W_g \cdot M)^{D_g} \cdot (M + W_g \cdot M)^2 + \pi^2 + \pi:=C_4(\mathcal{L},M_b)\cdot (W_g \cdot M)^{\Tilde{C}D_g}$, where constant $\Tilde{C}$ depends on the differential operator $\mathcal{L}$ and is equal to $2$ here. Note that $C_4 \cdot (W_g \cdot M)^{\Tilde{C}D_g}$ is a uniform upper bound and may be not tight in many examples. But we can imagine a worst case that the solution of a given PDE is the neural networks with $M$ as the value of elements of all transformation matrix. In most of the real applications, $\bar{C}$ is of the same order of magnitude with $M_b$ or smaller. Practically, parameters of neural networks are initialized by random samples from a Gaussian distribution (i.e. the values of $\mathcal{L}g(\mathbf{x},\mathbf{z})$ with initial parameters are small) and the stochastic gradient descent method gradually force $\mathcal{L}g(\mathbf{x},\mathbf{z})$ to be close to $\mathbf{b}(\mathbf{x})$. Therefore, during the training, $\|\mathcal{L}g(\mathbf{x},\mathbf{x}) -\mathbf{b}(\mathbf{x})\|_2$ will not be strikingly greater than $\| \mathbf{b}(\mathbf{x})\|_2$. In this sense, $\bar{C}$ is strict in applications while it is uniform for the whole class of generators theoretically.

\end{remark}

One important thing for machine learning is whether the empirical loss will converge to a (small) value with sufficient training data. Otherwise, the learning becomes meaningless without convergence guarantees. In this part, we study the convergence of the empirical loss. And we prove that with large amount of training data, our empirical loss will converge to the approximation error with a high probability.

\begin{theorem}[Convergence of the empirical loss]
Suppose that Assumption \ref{cong_sol_dis} holds for our problem and $(\Tilde{g},\Tilde{f})$ is the optimal solution by solving (\ref{emp_loss_func}), then with probability of at least $1-m^{-1/4(d+r)}-n^{-1/2(d+r)}-2\cdot k^{-1}$, the empirical loss satisfies the following inequality,
\begin{equation}
\label{bound_emp_loss_1}
\begin{split}
    \widehat{\text{Loss}} = I_5 \leq &  C_2 \cdot ((W_g+1)\cdot M) \cdot \left\{\begin{array}{ll}
m^{-3 / 8} \log m, & d+r=2 \\
m^{-3 / 4(d+r)}, & d+r \geq 3
\end{array}\right. + C_3 \cdot \left\{\begin{array}{ll}
n^{-1 / 4} \log n, & d+r=2 \\
n^{-1 / 2(d+r)}, & d+r \geq 3
\end{array}\right. \\
& + \lambda \cdot C_4 \cdot (W_g \cdot M)^{\Tilde{C}D_g} \cdot  \sqrt{\frac{\log k}{2k}}\\
& + \min_{g \in \mathcal{G}} \{ \mathbf{Wass}_1(p_g(\mathbf{x},\mathbf{u}),p_{\Gamma}(\mathbf{x},\mathbf{u})) + \lambda \cdot \mathbb{E}_{\mathbf{x},\mathbf{z} \sim p_{\Omega}(\mathbf{x}),p(\mathbf{z})} \| \mathcal{L}g(\mathbf{x},\mathbf{z})-\mathbf{b}(\mathbf{x})\|_2^2 \}.
\end{split}
\end{equation}
If we further assume that Assumption \ref{large_generator} and \ref{assump_pinns} hold for our problem, then with probability of at least $1-m^{-1/4(d+r)}-n^{-1/2(d+r)}-2\cdot k^{-1}$, the empirical loss satisfies the following inequality,
\begin{equation}
\label{bound_emp_loss_2}
\begin{split}
    \widehat{\text{Loss}} = I_5 \leq &  C_2 \cdot \left\{\begin{array}{ll}
m^{-1 / 8} \log m, & d+r=2 \\
m^{-1 / 4(d+r)}, & d+r \geq 3
\end{array}\right. + C_3 \cdot \left\{\begin{array}{ll}
n^{-1 / 4} \log n, & d+r=2 \\
n^{-1 / 2(d+r)}, & d+r \geq 3
\end{array}\right. + \lambda \cdot \sqrt{\frac{\log k}{2}} \cdot  k^{-1/4} \\
 & + \min_{g \in \mathcal{G}} \{ \mathbf{Wass}_1(p_g(\mathbf{x},\mathbf{u}),p_{\Gamma}(\mathbf{x},\mathbf{u})) + \lambda \cdot \mathbb{E}_{\mathbf{x},\mathbf{z} \sim p_{\Omega}(\mathbf{x}),p(\mathbf{z})} \| \mathcal{L}g(\mathbf{x},\mathbf{z})-\mathbf{b}(\mathbf{x})\|_2^2 \},
\end{split}
\end{equation}
where constants $C_2$ and $C_3$ are independent of the model architecture and $m$, $n$ as well as $k$; constant $C_4$ depends on the differential operator $\mathcal{L}$ and $M_b$ while constant $\Tilde{C}$ depends on the differential operator $\mathcal{L}$.
\end{theorem}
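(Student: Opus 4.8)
The plan is to exploit that $\Tilde{g}$ is the empirical minimizer, so that $\widehat{\text{Loss}} = I_5 = \min_{g \in \mathcal{G}} \widehat{L}(g) \leq \widehat{L}(g)$ for every fixed $g \in \mathcal{G}$, where $\widehat{L}(g)$ denotes the empirical objective in (\ref{emp_loss_func}) evaluated at $g$. I would fix $g$ to be a (deterministic) minimizer of the approximation error $\mathbf{Wass}_1(p_g(\mathbf{x},\mathbf{u}),p_{\Gamma}(\mathbf{x},\mathbf{u})) + \lambda \cdot \mathbb{E}_{\mathbf{x},\mathbf{z}} \|\mathcal{L}g-\mathbf{b}\|_2^2$ and show that $\widehat{L}(g)$ exceeds this population quantity only by the three sampling gaps controlled in Lemmas \ref{bound_I2}, \ref{bound_I3} and \ref{bound_I4}. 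A key structural observation is that the discriminator-approximation term $I_1$ of Lemma \ref{bound_I1} does \emph{not} enter this bound: restricting the inner maximization from all $1$-Lipschitz functions to the smaller class $\mathcal{F}_{GS}$ can only decrease the empirical objective, so it never costs us anything when bounding $\widehat{\text{Loss}}$ from above. This is exactly why (\ref{bound_emp_loss_1}) carries no $I_1$ contribution.

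Next I would open up the discriminator part of $\widehat{L}(g)$. Let $f^\star \in \mathcal{F}_{GS}$ be the empirical maximizer for this fixed $g$; since every element of $\mathcal{F}_{GS}$ is $1$-Lipschitz, I insert population expectations and split
\begin{align*}
\hat{\mathbb{E}}^m f^\star(\mathbf{x},g) - \hat{\mathbb{E}}^n f^\star(\mathbf{x},\mathbf{u})
&= \bigl[\hat{\mathbb{E}}^m f^\star(\mathbf{x},g) - \mathbb{E} f^\star(\mathbf{x},g)\bigr] + \bigl[\mathbb{E} f^\star(\mathbf{x},g) - \mathbb{E} f^\star(\mathbf{x},\mathbf{u})\bigr] \\
&\quad + \bigl[\mathbb{E} f^\star(\mathbf{x},\mathbf{u}) - \hat{\mathbb{E}}^n f^\star(\mathbf{x},\mathbf{u})\bigr].
\end{align*}
The middle bracket is at most $\mathbf{Wass}_1(p_g(\mathbf{x},\mathbf{u}),p_{\Gamma}(\mathbf{x},\mathbf{u}))$ by Kantorovich--Rubinstein duality, since $f^\star$ is $1$-Lipschitz. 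The first and third brackets are sampling gaps; taking the supremum over all $1$-Lipschitz functions bounds them by $\mathbf{Wass}_1(\hat{p}^m_g,p_g)$ and $\mathbf{Wass}_1(p_\Gamma,\hat{p}^n_\Gamma)$ respectively, and these are exactly the quantities controlled by Lemma \ref{bound_I2} and Lemma \ref{bound_I3} (both of which rely only on the \emph{uniform} range bound over $\mathcal{G}$ and the $3$-moment Assumption \ref{cong_sol_dis}, hence apply verbatim at the fixed $g$).

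For the physics-informed term I write $\lambda \hat{\mathbb{E}}^k\|\mathcal{L}g-\mathbf{b}\|_2^2 \leq \lambda\,\mathbb{E}\|\mathcal{L}g-\mathbf{b}\|_2^2 + \lambda\bigl|\hat{\mathbb{E}}^k\|\mathcal{L}g-\mathbf{b}\|_2^2-\mathbb{E}\|\mathcal{L}g-\mathbf{b}\|_2^2\bigr|$ and bound the last gap by Lemma \ref{bound_I4} (Hoeffding with the uniform range $\bar C$). Assembling the three pieces gives, for the chosen $g$,
\[
\widehat{L}(g) \leq \mathbf{Wass}_1(p_g,p_\Gamma) + \lambda\,\mathbb{E}\|\mathcal{L}g-\mathbf{b}\|_2^2 + (\text{Lemma \ref{bound_I2}}) + (\text{Lemma \ref{bound_I3}}) + \lambda\cdot(\text{Lemma \ref{bound_I4}}),
\]
and since $g$ was selected to minimize the first two terms, they collapse to the approximation error $\min_{g\in\mathcal{G}}\{\cdots\}$ appearing in (\ref{bound_emp_loss_1}). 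A union bound over the three independent sampling events then yields the stated success probability $1-m^{-1/4(d+r)}-n^{-1/2(d+r)}-2k^{-1}$. The refined bound (\ref{bound_emp_loss_2}) follows by substituting the sharper estimates of Lemmas \ref{bound_I2} and \ref{bound_I4} that hold under Assumptions \ref{large_generator} and \ref{assump_pinns}, which absorb the architecture factors $(W_g+1)M$ and $\bar C$.

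The part requiring the most care is the direction of the argument. Because I must bound $\widehat{\text{Loss}}$ \emph{from above}, I evaluate the empirical objective at a single \emph{deterministic} approximation-minimizing $g$ rather than at the data-dependent $\Tilde{g}$. This choice is what lets me sidestep any uniform-convergence or covering-number machinery: the sampling gaps then concern empirical distributions of fixed distributions, so Proposition \ref{convg_W1} together with Markov's inequality applies directly at that single $g$, and the discriminator gaps are handled by Kantorovich--Rubinstein independently of which (possibly sample-dependent) $f^\star$ attains the empirical maximum. The one thing to verify carefully is that the estimates of Lemmas \ref{bound_I2}--\ref{bound_I4} are genuinely uniform over $\mathcal{G}$, depending on $g$ only through the class-wide bounds $\sqrt{d}M_x+\sqrt{r}(W_g+1)M$ and $\bar C$, so that the universal constants $C_2,C_3,C_4$ remain valid no matter which minimizer $g$ is plugged in.
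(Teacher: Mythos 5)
Your proposal is correct and follows essentially the same route as the paper's own proof: exploit empirical optimality of $\Tilde{g}$ to compare against a fixed approximation-error-minimizing $g \in \mathcal{G}$, enlarge/split the discriminator term via Kantorovich--Rubinstein duality into the population $\mathbf{Wass}_1$ loss plus two empirical-vs-population gaps (Lemmas \ref{bound_I2}, \ref{bound_I3}), handle the PINNs gap by Hoeffding (Lemma \ref{bound_I4}), and finish with a union bound. Your explicit remark that the lemmas are invoked at the fixed deterministic $g$ (so no uniform-convergence machinery is needed, and $I_1$ never enters) is exactly the mechanism implicit in the paper's chain of inequalities, stated a bit more carefully.
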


\begin{proof}
\begin{align*}
I_5 = & \max_{{\Tilde{f} \in \mathcal{F}_{GS}}} \{ \hat{\mathbb{E}}^{m}_{\mathbf{x},\mathbf{z} \sim p_{\Gamma}(\mathbf{x}),p(\mathbf{z})}\Tilde{f}(\mathbf{x},\Tilde{g}(\mathbf{x},\mathbf{z})) - \hat{\mathbb{E}}^{n}_{(\mathbf{x},\mathbf{u})\sim p_{\Gamma}(\mathbf{x},\mathbf{u})} \Tilde{f}(\mathbf{x},\mathbf{u}) + \lambda \cdot \hat{\mathbb{E}}^{k}_{\mathbf{x},\mathbf{z} \sim p_{\Omega}(\mathbf{x}),p(\mathbf{z})} \| \mathcal{L}\Tilde{g}(\mathbf{x},\mathbf{z})-\mathbf{b}(\mathbf{x})\|_2^2 \}  \\
\leq & \max_{\Tilde{f} \in \mathcal{F}_{GS}} \{ \hat{\mathbb{E}}^{m}_{\mathbf{x},\mathbf{z} \sim p_{\Gamma}(\mathbf{x}),p(\mathbf{z})}\Tilde{f}(\mathbf{x},g(\mathbf{x},\mathbf{z})) - \hat{\mathbb{E}}^{n}_{(\mathbf{x},\mathbf{u})\sim p_{\Gamma}(\mathbf{x},\mathbf{u})} \Tilde{f}(\mathbf{x},\mathbf{u}) + \lambda \cdot \hat{\mathbb{E}}^{k}_{\mathbf{x},\mathbf{z} \sim p_{\Omega}(\mathbf{x}),p(\mathbf{z})} \| \mathcal{L}g(\mathbf{x},\mathbf{z})-\mathbf{b}(\mathbf{x})\|_2^2 \}\\
& \hspace{1em} \text{ (the above inequality holds for all $g \in \mathcal{G}$)}\\
\leq & \max_{f \text{ is 1-Lipschitz}} \{ \hat{\mathbb{E}}^{m}_{\mathbf{x},\mathbf{z} \sim p_{\Gamma}(\mathbf{x}),p(\mathbf{z})}f(\mathbf{x},g(\mathbf{x},\mathbf{z})) - \hat{\mathbb{E}}^{n}_{(\mathbf{x},\mathbf{u})\sim p_{\Gamma}(\mathbf{x},\mathbf{u})} f(\mathbf{x},\mathbf{u}) + \lambda \cdot \hat{\mathbb{E}}^{k}_{\mathbf{x},\mathbf{z} \sim p_{\Omega}(\mathbf{x}),p(\mathbf{z})} \| \mathcal{L}g(\mathbf{x},\mathbf{z})-\mathbf{b}(\mathbf{x})\|_2^2 \}\\
= & \max_{f \text{ is 1-Lipschitz}} \{ \hat{\mathbb{E}}^{m}_{\mathbf{x},\mathbf{z} \sim p_{\Gamma}(\mathbf{x}),p(\mathbf{z})}f(\mathbf{x},g(\mathbf{x},\mathbf{z})) - \mathbb{E}_{\mathbf{x},\mathbf{z} \sim p_{\Gamma}(\mathbf{x}),p(\mathbf{z})}f(\mathbf{x},g(\mathbf{x},\mathbf{z})) + \mathbb{E}_{\mathbf{x},\mathbf{z} \sim p_{\Gamma}(\mathbf{x}),p(\mathbf{z})}f(\mathbf{x},g(\mathbf{x},\mathbf{z}))\\
& -\hat{\mathbb{E}}^{n}_{(\mathbf{x},\mathbf{u})\sim p_{\Gamma}(\mathbf{x},\mathbf{u})} f(\mathbf{x},\mathbf{u}) + \mathbb{E}_{(\mathbf{x},\mathbf{u})\sim p_{\Gamma}(\mathbf{x},\mathbf{u})} f(\mathbf{x},\mathbf{u}) - \mathbb{E}_{(\mathbf{x},\mathbf{u})\sim p_{\Gamma}(\mathbf{x},\mathbf{u})} f(\mathbf{x},\mathbf{u})\}\\
& + \lambda \cdot \hat{\mathbb{E}}^{k}_{\mathbf{x},\mathbf{z} \sim p_{\Omega}(\mathbf{x}),p(\mathbf{z})} \| \mathcal{L}g(\mathbf{x},\mathbf{z})-\mathbf{b}(\mathbf{x})\|_2^2 - \lambda \cdot \mathbb{E}_{\mathbf{x},\mathbf{z} \sim p_{\Omega}(\mathbf{x}),p(\mathbf{z})} \| \mathcal{L}g(\mathbf{x},\mathbf{z})-\mathbf{b}(\mathbf{x})\|_2^2 \\
& + \lambda \cdot \mathbb{E}_{\mathbf{x},\mathbf{z} \sim p_{\Omega}(\mathbf{x}),p(\mathbf{z})} \| \mathcal{L}g(\mathbf{x},\mathbf{z})-\mathbf{b}(\mathbf{x})\|_2^2\\
\leq & \max_{f \text{ is 1-Lipschitz}} \{ \hat{\mathbb{E}}^{m}_{\mathbf{x},\mathbf{z} \sim p_{\Gamma}(\mathbf{x}),p(\mathbf{z})}f(\mathbf{x},g(\mathbf{x},\mathbf{z})) - \mathbb{E}_{\mathbf{x},\mathbf{z} \sim p_{\Gamma}(\mathbf{x}),p(\mathbf{z})}f(\mathbf{x},g(\mathbf{x},\mathbf{z})) \} \\
& + \max_{f \text{ is 1-Lipschitz}} \{ \hat{\mathbb{E}}^{n}_{(\mathbf{x},u)\sim p_{\Gamma}(\mathbf{x},\mathbf{u})} f(\mathbf{x},\mathbf{u}) - \mathbb{E}_{(\mathbf{x},\mathbf{u})\sim p_{\Gamma}(\mathbf{x},\mathbf{u})} f(\mathbf{x},\mathbf{u}) \}\\
& + \lambda \cdot \hat{\mathbb{E}}^{k}_{\mathbf{x},\mathbf{z} \sim p_{\Omega}(\mathbf{x}),p(\mathbf{z})} \| \mathcal{L}g(\mathbf{x},\mathbf{z})-\mathbf{b}(\mathbf{x})\|_2^2 - \lambda \cdot \mathbb{E}_{\mathbf{x},\mathbf{z} \sim p_{\Omega}(\mathbf{x}),p(\mathbf{z})} \| \mathcal{L}g(\mathbf{x},\mathbf{z})-\mathbf{b}(\mathbf{x})\|_2^2\\
& + \max_{f \text{ is 1-Lipschitz}} \{ \mathbb{E}_{\mathbf{x},\mathbf{z} \sim p_{\Gamma}(\mathbf{x}),p(\mathbf{z})}f(\mathbf{x},g(\mathbf{x},\mathbf{z})) -  \mathbb{E}_{(\mathbf{x},\mathbf{u})\sim p_{\Gamma}(\mathbf{x},\mathbf{u})} f(\mathbf{x},\mathbf{u}) \} \\
& + \lambda \cdot \mathbb{E}_{\mathbf{x},\mathbf{z} \sim p_{\Omega}(\mathbf{x}),p(\mathbf{z})} \| \mathcal{L}g(\mathbf{x},\mathbf{z})-\mathbf{b}(\mathbf{x})\|_2^2,
\end{align*}
for all $g \in \mathcal{G}$.
Therefore,
\begin{align*}
    I_5 \leq & I_2 + I_3 + I_4 + \min_{g \in \mathcal{G}} \{ \mathbf{Wass}_1(p_g(\mathbf{x},\mathbf{u}),p_{\Gamma}(\mathbf{x},\mathbf{u})) + \lambda \cdot \mathbb{E}_{\mathbf{x},\mathbf{z} \sim p_{\Omega}(\mathbf{x}),p(\mathbf{z})} \| \mathcal{L}g(\mathbf{x},\mathbf{z})-\mathbf{b}(\mathbf{x})\|_2^2 \}.
\end{align*}
By Lemma \ref{bound_I2}, \ref{bound_I3} and \ref{bound_I4}, substituting $I_2$, $I_3$ and $I_4$ by the corresponding inequalities concludes (\ref{bound_emp_loss_1}) and  (\ref{bound_emp_loss_2}).
\end{proof}

\begin{remark}
The last term of the right hand side of (\ref{bound_emp_loss_1}) and (\ref{bound_emp_loss_2}):
$$
\min_{g \in \mathcal{G}} \{ \mathbf{Wass}_1(p_g(\mathbf{x},\mathbf{u}),p_{\Gamma}(\mathbf{x},\mathbf{u})) + \lambda \cdot \mathbb{E}_{\mathbf{x},\mathbf{z} \sim p_{\Omega}(\mathbf{x}),p(\mathbf{z})} \| \mathcal{L}g(\mathbf{x},\mathbf{z})-\mathbf{b}(\mathbf{x})\|_2^2 \}
$$
is called the approximation error in machine learning, representing the minimal error among all generators in the predefined class. 
There is still open question whether the approximation error of large scale (deep and wide) generators 
is small.
Recently, Ryck et al. \cite{de2021approximation} 
proved that shallow (only 1-hidden layer) but wide tanh neural networks can approximate Sobolev regular 
and analytic functions. His results reveal that wider neural networks have larger capacity. As we can see that in our previous analysis, the depth of the generator does not have effects on the range of generated data. More strikingly, shallow feed-forward neural networks with $tanh$ or $ReLU^q$ activation functions are not subsets of deeper neural networks, which implies that we cannot intuitively conclude larger capacity of deeper $tanh$ (and $ReLU^q$) neural networks. 
Therefore, we have no reasons to assume that the approximation error decays with the use of deeper neural networks.
\end{remark}

\subsection{Generalization Error}

By combining the above results, we establish 
the generalization and convergence property of our WGAN-PINNs.

\begin{theorem}
[Generalization of WGAN-PINNs] 
\label{main_theorem}
Suppose that Assumption \ref{decay}, \ref{large_capacity} and \ref{cong_sol_dis} hold for our problem and $\Tilde{g}$ is the optimal solution of empirical loss (\ref{emp_loss_func}),
then with probability of at least $1-m^{-1/4(d+r)}-n^{-1/2(d+r)}-2\cdot k^{-1}$ (over the choice of training data), the exact loss converges to the approximation error, i.e., 
\begin{equation}
\label{error_bound_1}
    \begin{split}
            & \mathbf{Wass}_1(p_{\Tilde{g}}(\mathbf{x},\mathbf{u}),p_{\Gamma}(\mathbf{x},\mathbf{u})) + \lambda \cdot \mathbb{E}_{\mathbf{x},\mathbf{z} \sim p_{\Omega}(\mathbf{x}),p(\mathbf{z})} \| \mathcal{L}\Tilde{g}(\mathbf{x},\mathbf{z})-\mathbf{b}(\mathbf{x})\|_2^2 \\
    \leq & \hspace{0.5em} C_1 \cdot ( 2^{-D_f/(2(d+r)^2)} \vee W_f^{-1/2(d+r)^2}) +  C_2 \cdot ((W_g+1)\cdot M) \cdot \left\{\begin{array}{ll}
m^{-3 / 8} \log m, & d+r=2 \\
m^{-3 / 4(d+r)}, & d+r \geq 3
\end{array}\right. \\
& + C_3 \cdot \left\{\begin{array}{ll}
n^{-1 / 4} \log n, & d+r=2 \\
n^{-1 / 2(d+r)}, & d+r \geq 3
\end{array}\right.  + \lambda \cdot C_4 \cdot (W_g \cdot M)^{\Tilde{C}D_g} \cdot  \sqrt{\frac{2 \log k}{k}} \\& \hspace{0.5em}  + \min_{g \in \mathcal{G}} \{ \mathbf{Wass}_1(p_g(\mathbf{x},\mathbf{u}),p_{\Gamma}(\mathbf{x},\mathbf{u})) + \lambda \cdot \mathbb{E}_{\mathbf{x},\mathbf{z} \sim p_{\Omega}(\mathbf{x}),p(\mathbf{z})} \| \mathcal{L}g(\mathbf{x},\mathbf{z})-\mathbf{b}(\mathbf{x})\|_2^2 \}.
\end{split}
\end{equation}
If we further assume that Assumption \ref{large_generator} and \ref{assump_pinns} hold for our problem,
then with probability of at least $1-m^{-1/4(d+r)}-n^{-1/2(d+r)}-2\cdot k^{-1}$ (over the choice of training data), the exact loss converges to the approximation error, i.e., 
\begin{equation}
\label{error_bound_2}
    \begin{split}
            & \mathbf{Wass}_1(p_{\Tilde{g}}(\mathbf{x},\mathbf{u}),p_{\Gamma}(\mathbf{x},\mathbf{u})) + \lambda \cdot \mathbb{E}_{\mathbf{x},\mathbf{z} \sim p_{\Omega}(\mathbf{x}),p(\mathbf{z})} \| \mathcal{L}\Tilde{g}(\mathbf{x},\mathbf{z})-\mathbf{b}(\mathbf{x})\|_2^2 \\
    \leq & \hspace{0.5em} C_1 \cdot ( 2^{-D_f/(2(d+r)^2)} \vee W_f^{-1/2(d+r)^2}) +  C_2 \cdot \left\{\begin{array}{ll}
m^{-1 / 8} \log m, & d+r=2 \\
m^{-1 / 4(d+r)}, & d+r \geq 3
\end{array}\right.\\
& + C_3 \cdot \left\{\begin{array}{ll}
n^{-1 / 4} \log n, & d+r=2 \\
n^{-1 / 2(d+r)}, & d+r \geq 3
\end{array}\right.  +  \lambda \cdot \sqrt{2 \log k} \cdot  k^{-1/4} \\& \hspace{0.5em} + \min_{g \in \mathcal{G}} \{ \mathbf{Wass}_1(p_g(\mathbf{x},\mathbf{u}),p_{\Gamma}(\mathbf{x},\mathbf{u})) + \lambda \cdot \mathbb{E}_{\mathbf{x},\mathbf{z} \sim p_{\Omega}(\mathbf{x}),p(\mathbf{z})} \| \mathcal{L}g(\mathbf{x},\mathbf{z})-\mathbf{b}(\mathbf{x})\|_2^2 \}.
\end{split}
\end{equation}
where constants $C_1$, $C_2$ and $C_3$ are independent of the model architecture and $m$, $n$ as well as $k$; constant $C_4$ depends on the differential operator $\mathcal{L}$ and $M_b$ while constant $\Tilde{C}$ depends on the differential operator $\mathcal{L}$.
\end{theorem}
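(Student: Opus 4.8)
The plan is to obtain both (\ref{error_bound_1}) and (\ref{error_bound_2}) by feeding the four term-wise estimates and the empirical-loss theorem into the master decomposition (\ref{five}). Recall that (\ref{five}) already bounds the exact loss, i.e.\ the left-hand side of (\ref{error_bound_1}), by $I_1+I_2+I_3+I_4+I_5$, where $I_5=\widehat{\text{Loss}}$. First I would apply Lemma \ref{bound_I1}, which under Assumptions \ref{decay} and \ref{large_capacity} controls the discriminator-approximation term $I_1$ deterministically by $C_1\,(2^{-D_f/(2(d+r)^2)}\vee W_f^{-1/2(d+r)^2})$; this is the sole source of the $C_1$ contribution and is exactly where the capacity requirement on the groupsort discriminator enters. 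Next I would bound $I_5$ by the Convergence-of-the-empirical-loss theorem, which replaces $\widehat{\text{Loss}}$ by a sum of the same $m$-, $n$- and $k$-rate terms plus the generator approximation error $\min_{g\in\mathcal G}\{\mathbf{Wass}_1(p_g,p_\Gamma)+\lambda\,\mathbb{E}\|\mathcal{L}g-\mathbf{b}\|_2^2\}$.

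It then remains to estimate the residual $I_2,I_3,I_4$ coming directly from (\ref{five}): I would invoke Lemma \ref{bound_I2} for $I_2$, Lemma \ref{bound_I3} (which needs the finite third moment, Assumption \ref{cong_sol_dis}) for $I_3$, and Lemma \ref{bound_I4} for $I_4$. Because each of $I_2,I_3,I_4$ now appears twice, once from (\ref{five}) and once inside the bound on $I_5$, adding the two copies merely doubles the corresponding rate terms. The factors of two are absorbed into the universal constants $C_2$ and $C_3$, while for the PINNs term the doubling is displayed explicitly, turning $\sqrt{\log k/(2k)}$ into $\sqrt{2\log k/k}$; this yields precisely (\ref{error_bound_1}). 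To obtain the sharper (\ref{error_bound_2}) I would simply substitute the tighter versions (\ref{error_bound_I2_2}) and (\ref{error_bound_I4_2}) of Lemmas \ref{bound_I2} and \ref{bound_I4}, available once Assumptions \ref{large_generator} and \ref{assump_pinns} are imposed, dropping the generator-capacity prefactor $(W_g+1)M$ and the PINNs prefactor $(W_g M)^{\Tilde{C}D_g}$ respectively.

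The last step is the probabilistic bookkeeping, which I expect to be the only genuinely delicate point. The estimates for $I_2,I_3,I_4$ hold with probabilities at least $1-m^{-1/4(d+r)}$, $1-n^{-1/2(d+r)}$ and $1-2k^{-1}$, so a single union bound over the three failure events (boundary inputs, solution data, interior collocation points) delivers the claimed joint probability $1-m^{-1/4(d+r)}-n^{-1/2(d+r)}-2k^{-1}$. The subtlety is that the terms arising from (\ref{five}) are evaluated at the data-dependent optimizer $\Tilde{g}$, whereas those arising inside the $I_5$ bound are evaluated at a fixed near-optimal $g\in\mathcal G$; to keep the failure budget at the stated level rather than doubling the $m$- and $k$-contributions, one must read the Wasserstein- and Hoeffding-type estimates of Lemmas \ref{bound_I2} and \ref{bound_I4} as holding uniformly over the whole generator class $\mathcal G$. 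This is legitimate because their right-hand sides depend on the generator only through the uniform capacity bounds $(W_g+1)M$ and $(W_g M)^{\Tilde{C}D_g}$, so the same high-probability event simultaneously controls the terms at $\Tilde{g}$ and at $g$. Making this uniformity precise, via the boundedness of the generated support and of $\|\mathcal{L}g-\mathbf{b}\|_2^2$ over $\mathcal G$, is the main thing I would verify carefully before declaring the proof complete.
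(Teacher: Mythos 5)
Your proposal is correct and follows essentially the same route as the paper: the paper's (implicit) proof is exactly the combination of the decomposition (\ref{five}) with Lemma \ref{bound_I1} for $I_1$, Lemmas \ref{bound_I2}--\ref{bound_I4} for $I_2,I_3,I_4$, the empirical-loss theorem for $I_5$, and a union bound over the three failure events, with the double-counted $I_2,I_3,I_4$ terms absorbed into $C_2,C_3$ and shown explicitly as $\sqrt{2\log k/k}$ in the PINNs term. Your closing remark about needing the Wasserstein and Hoeffding estimates to hold uniformly over $\mathcal{G}$ (since $\Tilde{g}$ is data-dependent) is a genuine subtlety that the paper itself glosses over rather than resolves, so flagging it puts your argument at or above the paper's own level of rigor.
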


\begin{remark}
The above theorem demonstrates that the exact loss (generalization error) of our obtained model from (\ref{emp_loss_func}) converges to the approximation error with a high probability when 
sufficient training data and strong (wide and deep) discriminators are employed, i.e.,
\begin{equation*}
\begin{split}
        & \lim_{(m,n,k) \to \infty} \lim_{(D_f,W_f) \to \infty} \mathbf{Wass}_1(p_{\Tilde{g}}(\mathbf{x},\mathbf{u}), p_{\Gamma}(\mathbf{x},\mathbf{u}))+\lambda \cdot \mathbb{E}_{\mathbf{x}, \mathbf{z} \sim p_{\Omega}(\mathbf{x}),  p(\mathbf{z})} \| \mathcal{L}\Tilde{g}(\mathbf{x},\mathbf{z})-\mathbf{b}(\mathbf{x}) \|_2^2 \\
    = & \min_{g \in \mathcal{G}} \{ \mathbf{Wass}_1(p_g(\mathbf{x},\mathbf{u}),p_{\Gamma}(\mathbf{x},\mathbf{u})) + \lambda \cdot \mathbb{E}_{\mathbf{x},\mathbf{z} \sim p_{\Omega}(\mathbf{x}),p(\mathbf{z})} \| \mathcal{L}g(\mathbf{x},\mathbf{z})-\mathbf{b}(\mathbf{x})\|_2^2 \}.
\end{split}
\end{equation*} 
According to Assumption \ref{decay} and \ref{large_capacity}, 
for stronger discriminators, 
we observe that the value of $I_1$ is small, and generators with larger capacities (i.e., larger $W_g$ and $M$) can be adopted to have a lower approximation error. 

\end{remark}

\subsection{Discussion}
\subsubsection{Remarks for generalization error bound}
From the deep learning theorey perspective, decaying rates of a preferred generalization error bound with respect to the number of training data $m$ and $n$ are independent of the data dimension. Observe that the error bound (\ref{bound_emp_loss_1}), (\ref{error_bound_1}) and (\ref{error_bound_2}) suffers from curse of dimensionality that the decaying rates will exponentially degrade if we increase the dimension $d+r$. Here, we provide another generalization error bound whose decaying rates are irrelevant to the dimension $d+r$. 
\begin{definition}
\label{def_expec_max}
For $n$ independent and identically distributed variables $X_1$, $\cdots$, $X_n$ from $\nu$, the function $F(\nu,n)$ is defined as the expectation of maximal values over all these $n$ variables. Mathematically, we define
\begin{equation}
    F(\nu, n) = \mathbb{E}_{X_1, \cdots, X_n \sim \nu} \max\{\|X_1\|_2, \cdots, \|X_n\|_2\}.
\end{equation}
For a bounded distribution $\nu$, $F(\nu, n)$ is bounded and irrelevant with $n$. For the distribution $\nu$ whose tail decays exponentially, $F(\nu, n)$ is proportional to $\log(n)$.  For example, if $\nu=\text{Unif}([0,1]^{r})$, then $F(\nu, n) \leq \sqrt{r}$; for standard gaussian distribution $\nu=\mathcal{N}(\mathbf{0},\mathbf{I}_r)$, we have $F(\nu, n) \leq \sqrt{2r \cdot \log(2n)}$.
\end{definition}
\begin{theorem}
\label{improved_generalization_bound}
If all conditions hold as in Theorem \ref{main_theorem}, then with probability of at least $1-m^{-1/4}-n^{-1/4}-2\cdot k^{-1}$ (over the choice of training data), the exact loss for the obtained generator $\Tilde{g}$ converges to the approximation error, i.e., 
\begin{equation}
\label{error_bound_3}
    \begin{split}
            & \mathbf{Wass}_1(p_{\Tilde{g}}(\mathbf{x},\mathbf{u}),p_{\Gamma}(\mathbf{x},\mathbf{u})) + \lambda \cdot \mathbb{E}_{\mathbf{x},\mathbf{z} \sim p_{\Omega}(\mathbf{x}),p(\mathbf{z})} \| \mathcal{L}\Tilde{g}(\mathbf{x},\mathbf{z})-\mathbf{b}(\mathbf{x})\|_2^2 \\
    \leq & \hspace{0.5em} C_1 \cdot ( 2^{-D_f/(2(d+r)^2)} \vee W_f^{-1/2(d+r)^2}) +  C_5 \cdot \left((W_g+1)\cdot M \right) \cdot \sqrt{Pdim(\mathcal{F}_{\text{GS}})\cdot \log m} \cdot m^{-1/4}\\& \hspace{0.5em} + C_6 \cdot F\left(p_{\Gamma}(\mathbf{x},\mathbf{u}),n \right ) \cdot \sqrt{Pdim(\mathcal{F}_{\text{GS}})\cdot \log n} \cdot n^{-1/4}  + \lambda \cdot C_4 \cdot (W_g \cdot M)^{\Tilde{C}D_g} \cdot  \sqrt{\frac{2 \log k}{k}} \\& \hspace{0.5em}  + \min_{g \in \mathcal{G}} \{ \mathbf{Wass}_1(p_g(\mathbf{x},\mathbf{u}),p_{\Gamma}(\mathbf{x},\mathbf{u})) + \lambda \cdot \mathbb{E}_{\mathbf{x},\mathbf{z} \sim p_{\Omega}(\mathbf{x}),p(\mathbf{z})} \| \mathcal{L}g(\mathbf{x},\mathbf{z})-\mathbf{b}(\mathbf{x})\|_2^2 \},
\end{split}
\end{equation}
where $\text{Pdim}(\mathcal{F}_{\text{GS}})$ represents the pseudo-dimension \cite{anthony1999neural} of the discriminator class $\mathcal{F}_{\text{GS}}$; constants $C_1$, $C_5$ and $C_6$ are independent of the model architecture and $m$, $n$ as well as $k$; constant $C_4$ depends on the differential operator $\mathcal{L}$ and $M_b$ while constant $\Tilde{C}$ depends on the differential operator $\mathcal{L}$.
\end{theorem}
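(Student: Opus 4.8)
The plan is to re-run the argument of Theorem \ref{main_theorem} essentially verbatim, modifying only the way the two sampling terms $I_2$ and $I_3$ are controlled. Concretely, I keep the decomposition (\ref{five}), instantiated with $f$ chosen as the optimal $1$-Lipschitz Kantorovich potential for $\mathbf{Wass}_1(p_{\Tilde{g}}(\mathbf{x},\mathbf{u}),p_\Gamma(\mathbf{x},\mathbf{u}))$, so that its left-hand side equals the exact loss I want to bound. The term $I_1$ is handled by Lemma \ref{bound_I1} (using Assumptions \ref{decay} and \ref{large_capacity}), the term $I_4$ by Lemma \ref{bound_I4} (producing the factor $\lambda C_4 (W_g M)^{\Tilde{C}D_g}\sqrt{2\log k/k}$ on an event of probability $1-2k^{-1}$), and the approximation error appears exactly as in the $I_5$ analysis of the preceding theorem. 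The only genuinely new ingredient is a dimension-free replacement for the bound (\ref{error_bound_I2_1}) and for Lemma \ref{bound_I3}, both of which were obtained through Proposition \ref{convg_W1} and therefore inherit the offending $m^{-1/(d+r)}$ and $n^{-1/(d+r)}$ rates.

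For $I_2$ I would stop reading it as a Wasserstein distance between $p_{\Tilde{g}}$ and its empirical version and instead treat it directly as the one-sided uniform empirical process
$$
I_2 = \sup_{f\in\mathcal{F}_{GS}} \Big( \mathbb{E}_{\mathbf{x},\mathbf{z}}\, f(\mathbf{x},\Tilde{g}(\mathbf{x},\mathbf{z})) - \hat{\mathbb{E}}^m_{\mathbf{x},\mathbf{z}}\, f(\mathbf{x},\Tilde{g}(\mathbf{x},\mathbf{z})) \Big).
$$
Since every generated point obeys $\|(\mathbf{x},\Tilde{g}(\mathbf{x},\mathbf{z}))\|_2 \leq \sqrt{d}M_x + \sqrt{r}(W_g+1)M$, and the composition of a fixed generator with $\mathcal{F}_{GS}$ has pseudo-dimension at most $\text{Pdim}(\mathcal{F}_{GS})$, a standard symmetrization-plus-chaining estimate (Dudley's entropy integral together with Haussler's covering-number bound for a pseudo-dimension-$p$ class) yields $\mathbb{E}\, I_2 \leq C\,\big((W_g+1)M\big)\,\sqrt{\text{Pdim}(\mathcal{F}_{GS})\,\log m}\;m^{-1/2}$. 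Markov's inequality then promotes this to $C_5 \big((W_g+1)M\big)\sqrt{\text{Pdim}(\mathcal{F}_{GS})\log m}\; m^{-1/4}$ on an event of probability at least $1-m^{-1/4}$, which is exactly the dimension-free term in (\ref{error_bound_3}).

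For $I_3$ the same symmetrization step applies, but now the data $(\mathbf{x},\mathbf{u})$ is unbounded, so there is no uniform envelope $(W_g+1)M$ to extract. Here I would exploit that each $f\in\mathcal{F}_{GS}$ is $1$-Lipschitz, so $|f(\mathbf{x},\mathbf{u})-f(\mathbf{0})|\leq \|(\mathbf{x},\mathbf{u})\|_2$; consequently the diameter of the function values realized on the sample is controlled by $\max_{1\leq i\leq n}\|(\mathbf{x}_i,\mathbf{u}_i)\|_2$, whose expectation is precisely $F(p_\Gamma(\mathbf{x},\mathbf{u}),n)$ of Definition \ref{def_expec_max}. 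Feeding this sample-dependent scale into the pseudo-dimension chaining bound gives $\mathbb{E}\, I_3 \leq C\, F(p_\Gamma(\mathbf{x},\mathbf{u}),n)\,\sqrt{\text{Pdim}(\mathcal{F}_{GS})\log n}\; n^{-1/2}$, and Markov again produces the $n^{-1/4}$ term on an event of probability $1-n^{-1/4}$. I would then assemble the pieces as in Theorem \ref{main_theorem}: a union bound over the three independent sample sets of sizes $m$, $n$, $k$ gives the overall confidence $1-m^{-1/4}-n^{-1/4}-2k^{-1}$, and the factors of two incurred by using each sampling estimate both in (\ref{five}) and in the $I_5$ bound are absorbed into the constants $C_4,C_5,C_6$.

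The main obstacle I anticipate is the rigorous treatment of $I_3$ over unbounded data. Controlling an empirical process of a pseudo-dimension-$p$ class under only a finite third moment (Assumption \ref{cong_sol_dis}) requires a truncation argument: cutting off at a radius $R$, bounding the truncated process by the pseudo-dimension estimate with envelope $R$, and separately controlling the tail contribution and the probability that some sample exceeds $R$ through the decaying-tail/moment conditions, before identifying the optimal scale with $F(p_\Gamma,n)$. A secondary but real subtlety is that $\Tilde{g}$ in $I_2$ depends on the very samples used in $\hat{\mathbb{E}}^m$; I would resolve this either by passing to a uniform bound over $g\in\mathcal{G}$ (verifying the composed pseudo-dimension stays of order $\text{Pdim}(\mathcal{F}_{GS})$) or by conditioning on $\Tilde{g}$ and treating the generated points as i.i.d.\ draws from $p_{\Tilde{g}}$, so that the pseudo-dimension of the integrand class remains that of $\mathcal{F}_{GS}$ alone.
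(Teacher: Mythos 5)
Your proposal matches the paper's own proof (Appendix~\ref{appendix_proof}) essentially step for step: the decomposition (\ref{five}) is retained, $I_1$, $I_4$ and the approximation error are handled by the existing lemmas, and $I_2$, $I_3$ are re-bounded by symmetrization to Rademacher complexity, Dudley's entropy integral combined with the pseudo-dimension covering bound, and Markov's inequality, with exactly the envelopes $\sqrt{d+r}\cdot(W_g+1)M$ and $F\left(p_{\Gamma}(\mathbf{x},\mathbf{u}),n\right)$ that you identify. The truncation you anticipate for the unbounded data in $I_3$ turns out to be unnecessary in the paper's treatment, because its Dudley bound is linear in the sample-dependent envelope $\max_i \|(\mathbf{x}_i,\mathbf{u}_i)\|_2$, so taking expectation over the sample directly yields the factor $F\left(p_{\Gamma}(\mathbf{x},\mathbf{u}),n\right)$.
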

Proof of Theorem \ref{improved_generalization_bound} is available in \ref{appendix_proof}. The main improvement for the derived error bound lies in for $I_2$ and $I_3$. In Theorem \ref{main_theorem}, upper bounds for $I_2$ and $I_3$ (in Lemma \ref{bound_I2} and Lemma \ref{bound_I3}) are derived by regarding the discriminator class as a subset of the class of 1-Lipschitz functions (i.e., $\mathcal{F}_{\text{GS}} \subset \mathcal{F}_{\text{Lip}}$) and using the convergence properties of $\mathbf{Wass}_1$ in optimal transport \cite{NEURIPS2020_2000f632, lei2020convergence}. In deep learning theory literature, the convergence is studied by dividing the function space into finitely several covering balls. However, the derived convergence error bound (\ref{error_bound_3}) 
is proportional to the complexity of function spaces (the complexity is related to the numbers of balls required to cover the space). It is reasonable because the upper bound consider the worst case that we may rarely achieve in practise. Moreover, the error bound (\ref{error_bound_1}) and (\ref{error_bound_2}) are monotonically decreasing when we enlarge the capacity of discriminators (i.e., larger $W_f$ and $D_f$), compatible with numerical results for GANs in \cite{arora2018gans} and for the proposed WGAN-PINNs (as are shown in Figure \ref{exp1_dis}) that stronger discriminators contribute to higher generation qualities. Interestingly,  although the error bound (\ref{error_bound_3}) escapes the curse of dimensionality for $m$ and $n$, it seems to be a contradiction to numerical observations that $Pdim(\mathcal{F}_{\text{GS}})$ are proportional to depth and width of $\mathcal{F}_{\text{GS}}$. When $D_f$ and $W_f$ are large that $\mathcal{F}_{\text{GS}}$ is close to $\mathcal{F}_{\text{Lip}}$, error bounds (\ref{error_bound_1}) and (\ref{error_bound_2}) are stricter than (\ref{error_bound_3}), where we derive the former by regarding $\mathcal{F}_{\text{GS}} \subset \mathcal{F}_{\text{Lip}}$ and the latter merely by the finite capacity of the neural networks class $\mathcal{F}_{\text{GS}}$. After all, the derived error bounds do not represent all behaviors of the exact generalization error. 

\subsubsection{Stabilizing the training procedure}
Note that the constant $(W_g \cdot M)^{\Tilde{C}D_g}$ in the derived error bound (\ref{bound_emp_loss_1}) and (\ref{error_bound_1}) are exponentially increasing w.r.t. the depth and width of generators, which is undesired, especially for generators with large capacities when $W_g \cdot M \gg 1$. Recall that to stabilize training, we usually adopt $L_2$ regularization to softly constrain the Frobenius norm of parameters, i.e., there exists $L>0$ such that $\sum_{i=1}^{D_g}\|\mathbf{W}_i\|_F^2 \leq L$. Therefore, we have $\sum_{i=1}^{D_g}\|\mathbf{W}_i\|_2^2 \leq L$ and  $\prod_{i=1}^{D_g}\|\mathbf{W}_i\|_2 \leq \sqrt{\left (\frac{L}{D_g}\right )^{D_g}}$ by Cauchy–Schwarz inequality. Here, the upper bound for the Lipschitzness of generators $\prod_{i=1}^{D_g}\|\mathbf{W}_i\|_2 \leq \sqrt{\left (\frac{L}{D_g}\right )^{D_g}} \to 0$ when $D_g \to \infty$, which implies that we allow larger $L$ when neural networks are deeper. The selection of hyperparameters for $L_2$ regularization term is tricky but essential, and depends on both the given PDEs problems and the predefined generator class. In other words, we have to not only control the Lipschitzness of neural networks for stable training but also maintain their expressive power. Similarly, to stabilize GANs training, Brock et al. \cite{brock2016neural} suggested to use orthonormal regularization $\sum_{i=1}^{D_g} (\|\mathbf{W}_i^T\mathbf{W}_i - \mathbf{I}\|_F^2$ for generators to softly constrain the Lipschitzness of neural networks. 

Other methods like Bjorck orthonormalization and spectral normalization, which are much stronger than $L_2$ and orthonormal regularization, are not recommended for generators in approximating solutions of PDEs without strong prior that the solution is 1-Lipschitz w.r.t. the variable $\mathbf{x}$ and the uncertainty $\mathbf{z}$. Otherwise, although we stabilize training, the expressive power of neural networks is weakened.

\subsubsection{Some limitations}
\label{discussion_limitations}
In this part, we mainly discuss some limitations of using the proposed method to do uncertainty quantifications for solutions of PDEs. Actually, some of them also exist for solving deterministic PDEs problems by deep learning (e.g., PINNs \cite{raissi2019physics}).

The reasonableness of the proposed model with boundary/initial samples and governing equations. As is mentioned in section \ref{proposed_wgan_pinns_model}, the core idea of the proposed model comes from traditional PDEs solvers that solve PDEs with given boundary/initial data and governing equations, where we do not require interior solution data. Empirically, it is not that easy to collect a large amount of solution data in the interior domain randomly. That is the main reason we propose the model (\ref{model_original}) which minimizes the distance of the generated data and the solution data on the boundary and try to propagate the uncertainty from the boundary to the interior. If we are capable to sample sufficient solution data in the interior domain $\Omega$, then we can directly transform the model (\ref{model_original}) to the following (\ref{model_original_solution}) which minimizes the distance of the generated data and the solution data in the interior domain with the governing PDEs constraints:
\begin{equation}
\label{model_original_solution}
    \begin{split}
        & \min_{g_{\theta} \in \mathcal{G}} \ 
        \mathbf{D}((\mathbf{x},g_{\theta}(\mathbf{x},\mathbf{z})), (\mathbf{x},\mathbf{u})), \hspace{1em} \mathbf{x} \sim p_{\Theta}(\mathbf{x}), \ \mathbf{z} \sim p(\mathbf{z}) \text{ and } \Theta = \Gamma \cup \Omega \\
        & s.t. \hspace{1em} \mathcal{L}g_{\theta}(\mathbf{x},\mathbf{z})=\mathbf{b}(\mathbf{x}), \hspace{0.5em} \forall \mathbf{x} \in \Omega  \text{ and } \mathbf{z} \sim p(\mathbf{z}).
    \end{split}
\end{equation}
where $\Theta = \Gamma \cup \Omega$ represents the union of boundary and interior domains.
The derived generalization error bound makes sense because it guarantees the quality of the generation in the interior domain.  As is discussed in \cite{raissi2019physics, yang2019adversarial}, the interior data definitely contributes to the model performance. We leave them as future works for further improvements and applications of the proposed model in solving forward (e.g., solving stochastic PDEs) and inverse problems (e.g., estimating coefficients in PDEs).

Approximation capabilities of neural networks to solutions. Similar worries also arise in solving deterministic PDEs by deep neural networks (PINNs). As a matter of fact, the approximation error of PDEs can be transformed into a kind of function approximation error. Specifically, if the solution $\mathbf{u}$ is governed by the spatial (and temporal) variable $\mathbf{x}$ and the uncertainty $\mathbf{z}$, i.e., $\mathbf{u} = \mathbf{u}(\mathbf{x},\mathbf{z})$, then \begin{align*}
    & \min_{g \in \mathcal{G}} \mathbf{Wass}_1(p_g(\mathbf{x},\mathbf{u}),p_{\Gamma}(\mathbf{x},\mathbf{u})) + \lambda \cdot \mathbb{E}_{\mathbf{x},\mathbf{z} \sim p_{\Omega}(\mathbf{x}),p(\mathbf{z})} \| \mathcal{L} g(\mathbf{x},\mathbf{z})-\mathbf{b}(\mathbf{x})\|_2^2\\
  = & \min_{g \in \mathcal{G}} \max_{f \text{ is 1-Lipschitz}} \{ \mathbb{E}_{\mathbf{x},\mathbf{z} \sim p_{\Gamma}(\mathbf{x}),p(\mathbf{z})}f(\mathbf{x},g(\mathbf{x},\mathbf{z})) -  \mathbb{E}_{\mathbf{x},\mathbf{z} \sim p_{\Gamma}(\mathbf{x}),p(\mathbf{z})} f(\mathbf{x},\mathbf{u}(\mathbf{x},\mathbf{z})) \}\\
  & + \lambda \cdot \mathbb{E}_{\mathbf{x},\mathbf{z} \sim p_{\Omega}(\mathbf{x}),p(\mathbf{z})} \| 
  \mathcal{L}g(\mathbf{x},\mathbf{z})-
  \mathcal{L}\mathbf{u}(\mathbf{x},\mathbf{z})\|_2^2\\
  \leq & \min_{g \in \mathcal{G}} \mathbb{E}_{\mathbf{x},\mathbf{z} \sim p_{\Gamma} (\mathbf{x}),p(\mathbf{z})} \left \|g(\mathbf{x},\mathbf{z})-\mathbf{u}(\mathbf{x},\mathbf{z})\right \|_2 + \lambda \cdot \mathbb{E}_{\mathbf{x},\mathbf{z} \sim p_{\Omega}(\mathbf{x}),p(\mathbf{z})} \left \|\mathcal{L} g(\mathbf{x},\mathbf{z})- \mathcal{L}\mathbf{u}(\mathbf{x},\mathbf{z})\right\|_2^2.
\end{align*}
If the deep neural networks have large capacities in approximating functions (e.g., there exists $g \in \mathcal{G}$, such that $g(\mathbf{x},\mathbf{z}) \approx \mathbf{u}(\mathbf{x},\mathbf{z})$ and $\mathcal{L} g(\mathbf{x},\mathbf{z}) \approx \mathcal{L} \mathbf{u}(\mathbf{x},\mathbf{z})$ ), then the approximation error is small. The approximation capability of deep ($tanh$ or $ReLU^{q}$) neural networks is an interesting and essential work for future research. Up to now, most of related works studies the approximation power of shallow neural networks \cite{luo2020two, de2021approximation, siegel2021high, siegel2020approximation, wojtowytsch2020representation}. 

The generalization analysis does not guarantee the quality of uncertainty quantification of the solution in the interior domain, where we match the data distribution on the boundary and the interior residual for differential equations. In other words, the generalization analysis does not reflect the role of physics-informed (PINNs) regularization term in uncertainty propagation. The derived generalization error bound is directly based on the generalization of GroupSort WGAN and PINNs. This is a considerable limitation. Back to PDEs problems, the distance $\mathbf{Wass}_1(p_{\Tilde{g}}(\mathbf{x},\mathbf{u}),p_{\Gamma}(\mathbf{x},\mathbf{u}))$ measures the difference (in terms of the distribution) between the generated data and the solution data on the  boundary and we call it the boundary residual. And similarly,  $\mathbb{E}_{\mathbf{x},\mathbf{z} \sim p_{\Omega}(\mathbf{x}),p(\mathbf{z})} \| \mathcal{L}\Tilde{g}(\mathbf{x},\mathbf{z})-\mathbf{b}(\mathbf{x})\|_2^2$ indicates the interior residual. For  deterministic problems, the quality of solutions is guaranteed for some typical PDEs which are continuously data dependent, where the error of solutions are controlled by the interior residual as well as the boundary residual \cite{mishra2020estimates,shin2020convergence}. However, for probabilistic models, the rigorous derivation of data dependency (in terms of distributions) for PDEs becomes really difficult. In traditional numerical methods for stochastic PDEs, prior knowledge for the exact distributions of random coefficients and boundary conditions are essential. However, in our setting, we have no priors but merely some random samples. WGAN aims to learn the random boundary conditions (uncertainty) from observed samples and PINNs propagates the uncertainty to the interior domain by PDEs constraints. In this case, the approximation of the random boundary conditions raises another problem of data dependency for stochstic PDEs in terms of distribution distances (e.g., Wasserstein distance etc.). That means even if we theoretically derive the generalization error bound (\ref{error_bound_2}) for the proposed model, the quality of the generated data in the interior domain which is our target, is not guaranteed. This is actually an unsolved problem for uncertainty quantification of PDEs, not for machine learning or deep learning. Moreover, the dependency of the solution on the boundary (in Wasserstein distance) and in the interior is unknown for probabilistic models, therefore, the selection of hyperparameters $\lambda$ has no theoretical supports, mainly by empirical experience (e.g., validation sets).

In our setting, we do not distinguish 
initial/boundary data. However, for PDEs, initial and boundary data have different contributions and solution error should have unequal weights for boundary and initial residuals. In deterministic problems, separating initial and boundary data in the model is applicable but requires additional hyperparameters. Moreover, two discriminators are needed in the proposed probabilistic model if we distinguish boundary and initial conditions, leading to training instability and even failure. Therefore, we do not recommend to distinguish the boundary and initial conditions without further information or corresponding algorithmic designs.

\section{Experimental Results}
\label{num_results}

Our theoretical analysis in the last section
conveys that with sufficient training data and discriminators of large capacity, the exact loss will converge to the approximation error (the minimal error). In this section, numerical experiments are conducted on different PDEs examples to verify our theories and show the effectiveness of our WGAN-PINNs model in solving PDEs with uncertain boundary conditions. Related codes and data of four examples below are available at the website\footnote{\url{https://github.com/yihang-gao/WGAN_PINNs}.}.

Firstly, we discuss the setting of our experimental results.
We set the number $m$ of samples on random input to generator is equal
to the number $n$ of samples on boundary data.
Here our observations are $\{(\mathbf{\bar{x}}_i,\mathbf{z}_i)\}_{i=1}^m$, $\{(\mathbf{\bar{x}}_i,\mathbf{u}_i)\}_{i=1}^m$ on the boundary and $\{(\mathbf{x}_j,\mathbf{b}_j)\}_{j=1}^k$ in the interior domain. Therefore, the empirical loss becomes
\begin{equation}
    \widehat{\text{Loss}} = \min_{g_{\theta} \in \mathcal{G}} \max_{f_{\alpha} \in \mathcal{F}} \frac{1}{m} \sum_{i=1}^m f_{\alpha}(\mathbf{\bar{x}}_i,g_{\theta}(\mathbf{\bar{x}}_i,\mathbf{z}_i)) - \frac{1}{m}\sum_{i=1}^m f_{\alpha}(\mathbf{\bar{x}}_i,\mathbf{u}_i) + \lambda \cdot \frac{1}{k} \sum_{j=1}^k \| \mathcal{L}g_{\theta}(\mathbf{x}_j,\mathbf{z}_j) - \mathbf{b}_j \|_2^2
\end{equation}
The following numerical method is used to solve 
min-max problem. 
We optimize two neural networks (generators and discriminators) in an alternating manner, i.e.,
\begin{equation*}
    \begin{split}
        & \max_{f_{\alpha} \in \mathcal{F}} \frac{1}{m} \sum_{i=1}^m f_{\alpha}(\mathbf{\bar{x}}_i,g_{\theta}(\mathbf{\bar{x}}_i,\mathbf{z}_i)) - \frac{1}{m}\sum_{i=1}^m f_{\alpha}(\mathbf{\bar{x}}_i,\mathbf{u}_i) \\
        & \min_{g_{\theta} \in \mathcal{G}} \frac{1}{m} \sum_{i=1}^m f_{\alpha}(\mathbf{\bar{x}}_i,g_{\theta}(\mathbf{\bar{x}}_i,\mathbf{z}_i)) +  \lambda \cdot \frac{1}{k} \sum_{j=1}^k \| \mathcal{L}g_{\theta}(\mathbf{x}_j,\mathbf{z}_j)-\mathbf{b}_j \|_2^2
    \end{split}
\end{equation*}
In our numerical method, we merely update the parameters several times alternately in each iteration. The overall computational cost of training can be reduced. 
Such strategy is adopted in 
\cite{goodfellow2014generative,arjovsky2017wasserstein} for adversarial learning.

The estimated Wasserstein distances between the target distributions and the generated distributions are computed by POT package \cite{flamary2021pot} 
on 10,000 samples while the residuals are estimated on 10,000 samples. For each experiment, we repeat ten times and calculate the averaged value to represent the corresponding expectation. To show the performance of our WGAN-PINNs model, we calculate the empirical mean value of $\Tilde{g}(\mathbf{x},\mathbf{z})$ (i.e. $\mu(\mathbf{x})=\frac{1}{N} \sum_{i=1}^N \Tilde{g}(\mathbf{x},\mathbf{z}_i)$) to estimate the solution $\mathbf{u}(\mathbf{x})$ without uncertainly.
Relative $L_2$ error $\mathcal{E}$ in (\ref{relative_error}) is utilized to measure the error between the empirical mean value $\mathbf{\mu}(\mathbf{x})$ and the solution $\mathbf{u}(\mathbf{x})$ without uncertainly:
\begin{equation}
    \label{relative_error}
    \mathcal{E} = \frac{\sqrt{\sum_{i=1}^N \|\mathbf{\mu}(\mathbf{x}_i)-\mathbf{u}(\mathbf{x}_i)\|_2^2}}{\sqrt{\sum_{i=1}^N \|\mathbf{u}(\mathbf{x}_i)\|_2^2}}
\end{equation}

Features of our interest are the number of training data (denoted as $m$, $n$ and $k$ consistent with those in theoretical analysis), as well as width ($W_g$ and $W_f$, respectively) and depth ($D_g$ and $D_f$, respectively) of both the generator and the discriminator. A relaxed (proximal) constraints on parameters of discriminators are adopted \cite{anil2019sorting,tanielian2021approximating}. Similarly, we use the Bjorck orthonormalization method to constrain parameters to be unit in $||\cdot||_2$ norm. For more details of Bjorck orthonormalization applied in Wasserstein GANs, please refer to \cite{bjorck1971iterative, anil2019sorting}. To improve the convergence and stability of both the generator and the discriminator, they are trained by Adam \cite{kingma2014adam} with default hyperparameters 
(listed in Table \ref{tab_hyperprm}) for each experiments.

\begin{table}
\footnotesize
\centering
\begin{tabular}{| c|c |} 
\hline
Operation & Features  \\
\hline
 Generator & \multirow{2}{*}{the architecture is changing in each experiments} \\ 
 Discriminator &  \\ 
\hline
 Bjorck iteration steps & 5\\
 Bjorck order & 2 \\
 \hline
 Optimizer & Adam: $\beta_1 = 0.9$, $\beta_2 = 0.99$ \\ 
 Learning rate (for both the generator and &  \multirow{2}{*}{$1 \times e^{-4}$}\\
 the discriminator) & \\
 \hline
\end{tabular}
\caption{Hyperparameters.}
\label{tab_hyperprm}
\end{table}

\subsection{A Pedagogical Example}
\label{pedagogical_example}
In the first example, 
we test our model on a non-linear 1-D ordinary differential equation to both verify our theory and show its effectiveness in solving uncertain differential equation problems. The ODE is
\begin{equation}
\label{exp1_ode}
        u_{xx} - u^2 u_x = f(x), \hspace{1em} x \in [-1,1]
\end{equation}
with random boundary conditions $u(-1) \sim \mathcal{N}(0, \sigma_1^2)$ and $u(1) \sim \mathcal{N}(0, \sigma_2^2)$.
Here the right hand side function is equal to $f(x) = - \pi^2 \sin (\pi x) - \pi \cos (\pi x) \sin^2 (\pi x)$. For deterministic problem, 
there is no uncertainly (i.e., $\sigma_1=\sigma_2=0$) and 
then the solution is $u(x)=\sin(\pi x)$. 


\begin{table}[!h]
\scriptsize
\centering
\begin{tabular}{c|c c|c c|c c|c c|c c}
  \multirow{2}{*}{\diagbox{$\sigma$}{$\lambda$}} & \multicolumn{2}{c|}{10} & \multicolumn{2}{c|}{50} & \multicolumn{2}{c|}{100} & \multicolumn{2}{c|}{500} & \multicolumn{2}{c}{1000} \\
  & $\mathcal{E}$ & Loss & $\mathcal{E}$ & Loss & $\mathcal{E}$ & Loss & $\mathcal{E}$ & Loss & $\mathcal{E}$ & Loss\\
  \hline
  0 & 2.97e-03 & 2.64e-03 & 1.76e-03 & 4.28e-03 & \textbf{1.02e-03} & 6.50e-03 & 2.13e-03 & 2.18e-02 & 1.88e-03 & 3.32e-2 \\
  0.05 & 4.79e-02 & 4.88e-02 & 2.87e-02 & 2.81e-02 & \textbf{1.89e-02} & 2.01e-02 & 2.09e-02 & 4.09e-02 & 2.19e-02 & 5.52e-02 \\ 
  0.1 & 9.47e-02 & 8.15e-02 & 3.80e-02 & 3.52e-02 & 3.13e-02 & 3.14e-02 & \textbf{2.87e-02} & 6.32e-02 & 3.33e-02 & 7.63e-02\\
  0.2 & 2.14e-01 & 2.03e-01 & 7.10e-02 & 6.12e-02 & 6.48e-02 & 6.15e-02 & \textbf{6.28e-02} & 1.42e-01 & 6.98e-02  & 1.68e-01 \\
\end{tabular}
\caption{\label{tab1}
The effect of parameter $\lambda$ and noise level $\sigma$ with 
$(D_g,W_g)=(D_f,W_f)=(3,50)$ and $m=n=40$, $k=100$.}
\end{table}

\begin{table}[!h]
\scriptsize
\centering
\begin{tabular}{c|c|c|c|c|c}
  \multirow{2}{*}{\diagbox{$\sigma$}{$\lambda$}} & 1 & 10 & 50 & 100 & 500 \\
  & $\mathcal{E}$  & $\mathcal{E}$ & $\mathcal{E}$  & $\mathcal{E}$  & $\mathcal{E}$\\
  \hline
  0 & 6.17e-03 & 2.63e-03 & \textbf{1.24e-03} & 1.70e-03 & 1.52e-03\\
  0.05 & 5.01e-02 & 5.67e-02 & \textbf{3.76e-02} & 6.45e-02 & 5.56e-02\\
  0.1 & 7.31e-02 & 1.21e-01 & 8.13e-02 & \textbf{7.35e-02} & 9.67e-02\\
  0.2 & 2.15e-01 & 1.89e-01 & 2.77e-01 & \textbf{1.31e-01} & 1.58e-01
\end{tabular}
\caption{\label{tab2}
The effect of parameter $\lambda$ and noise level $\sigma$ with $(D_g,W_g)=(D_f,W_f)=(3,50)$ and $m=n=40$, $k=100$.}.
\end{table}

\begin{figure*}[h!]
\makebox[\linewidth][c]{
  \centering
  \subfloat
  [$\sigma_1 = \sigma_2 = 0$] 
  {
     \label{exp1_pre_0}     
    \includegraphics[width=0.5\textwidth]{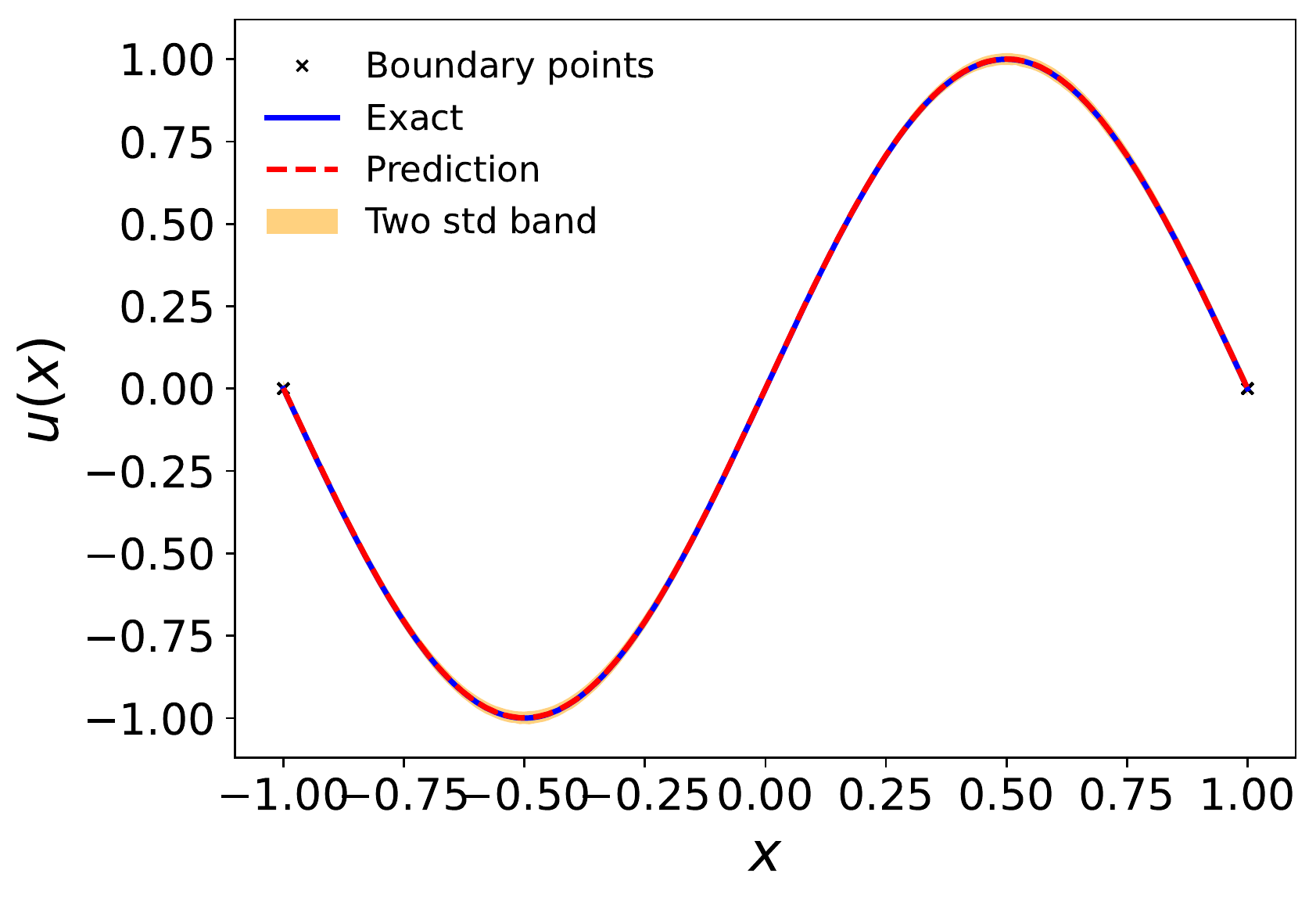}
    }\  
    \subfloat
    [$\sigma_1 = \sigma_2 = 0.05$] 
    {
    \label{exp1_pre_0.05}     
    \includegraphics[width=0.5\textwidth]{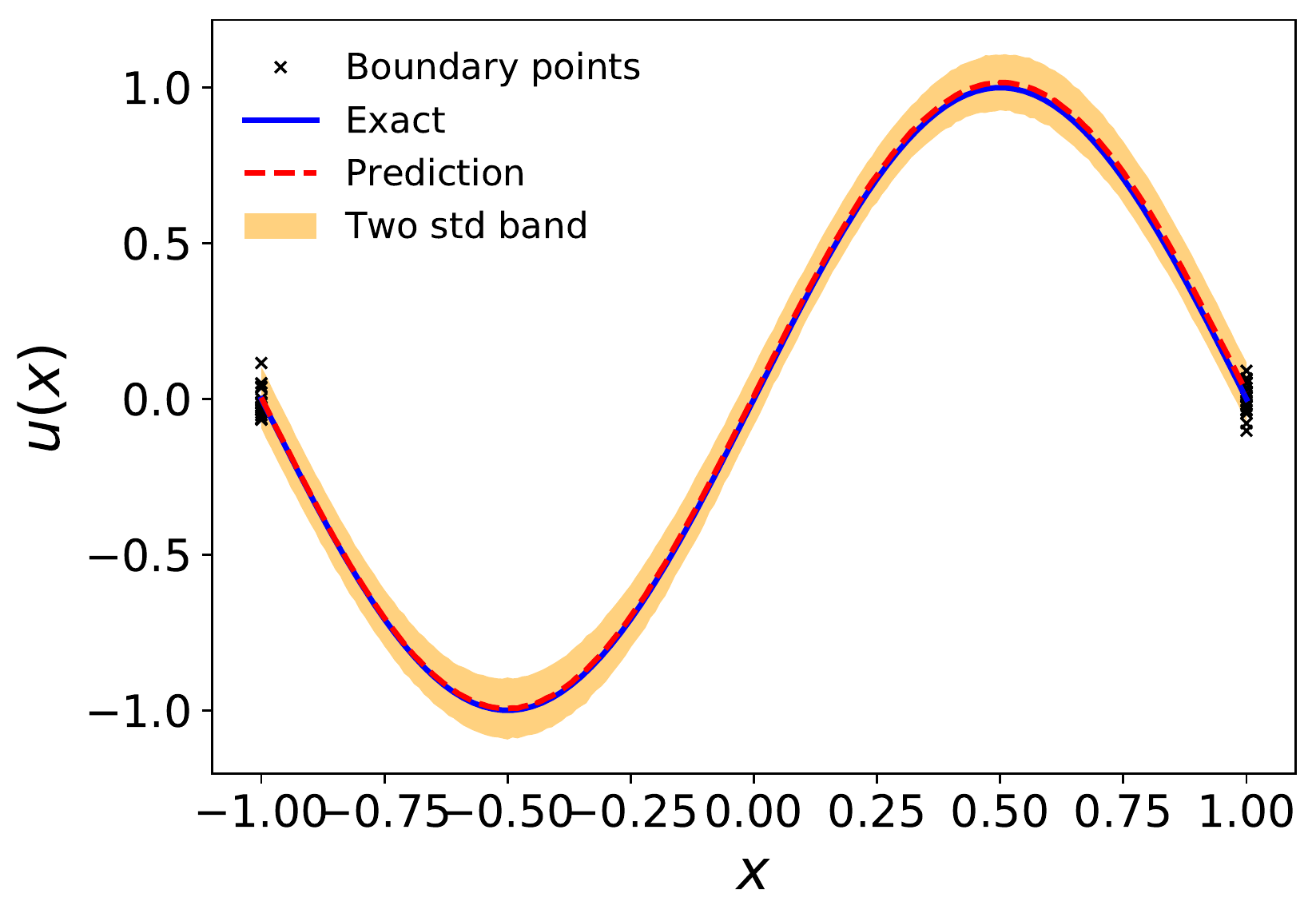}
    }\ 
    %
		}
    \makebox[\linewidth][c]{%
  \centering
  \subfloat
  [$\sigma_1 = \sigma_2 = 0.1$] 
  {
     \label{exp1_pre_01}     
    \includegraphics[width=0.5\textwidth]{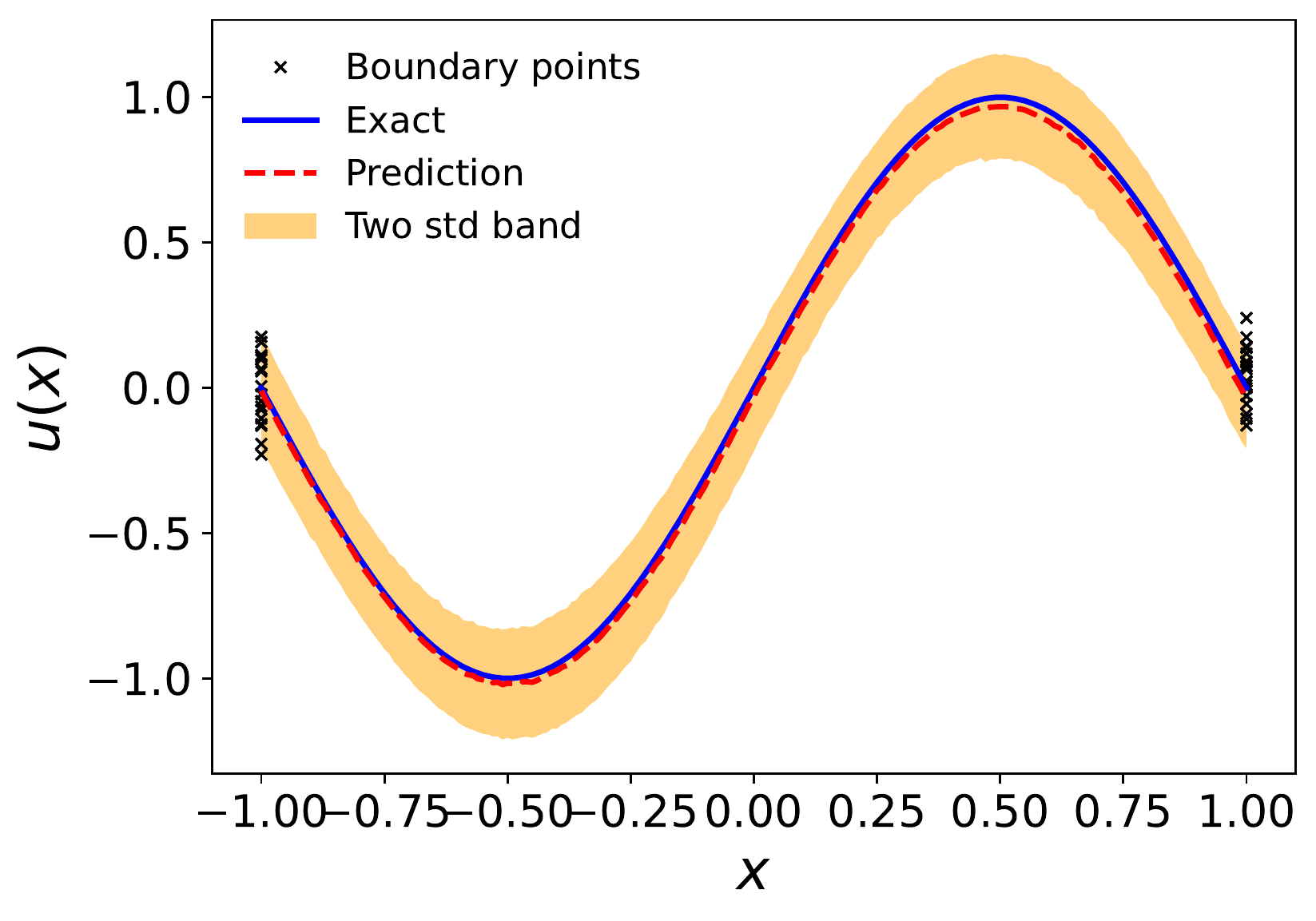}
    }\  
    \subfloat
    [$\sigma_1 =0$, \hspace{0.3em} $\sigma_2 = 0.2$] 
    {
    \label{exp1_pre_0.2}     
    \includegraphics[width=0.5\textwidth]{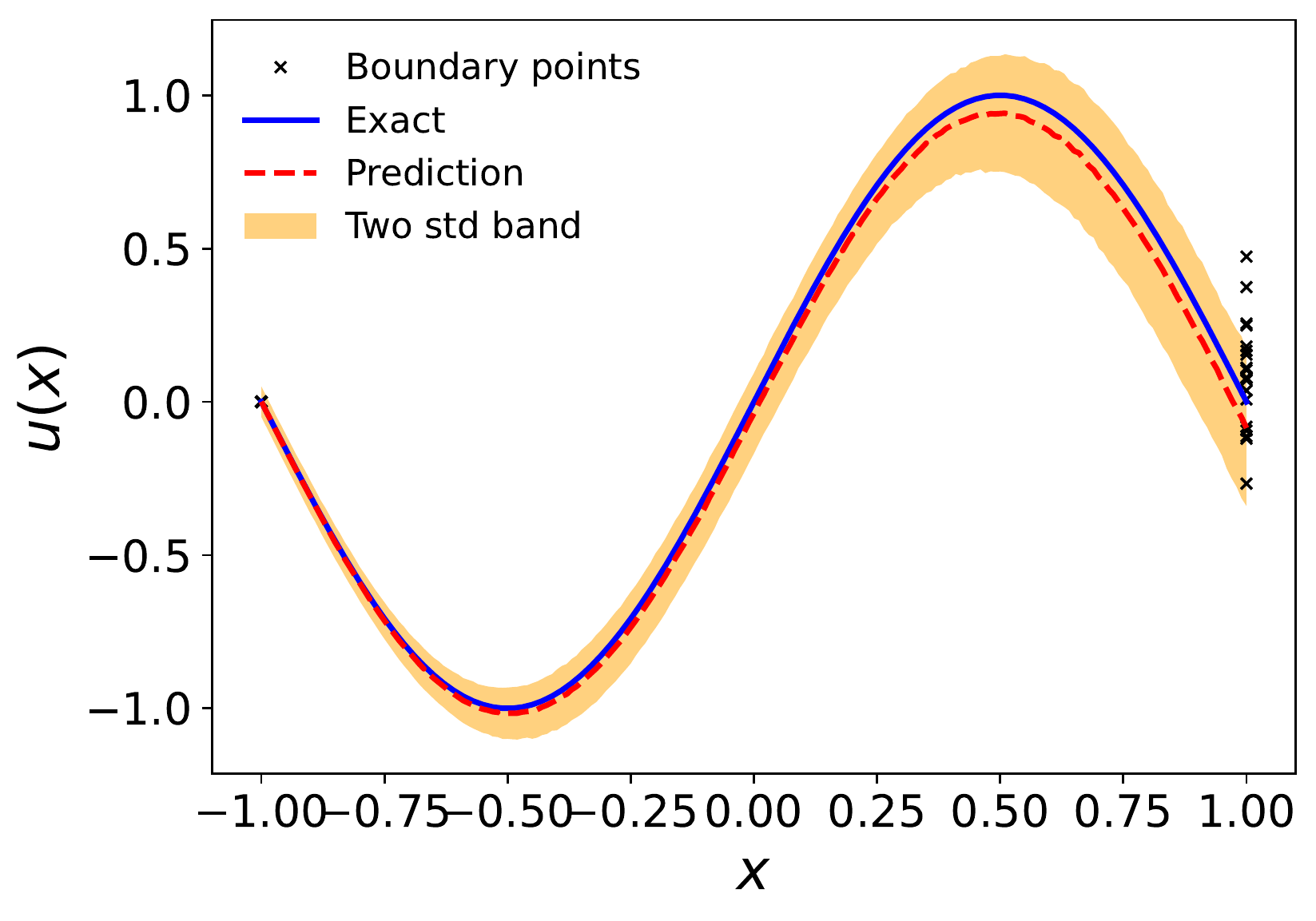}
    }\
    }
    \caption{		
The mean, the lower bound and upper bound of 
PDE solutions $p_{\Tilde{g}}(u)$ given $x$.
(a) $\text{Loss}=6.50\times 10^{-3}$, $\mathcal{E}=1.02\times 10^{-3}$; (b) $\text{Loss}=2.01\times 10^{-2}$, $\mathcal{E}=1.89\times 10^{-2}$; (c) $\text{Loss}=3.14\times 10^{-2}$, $\mathcal{E}=3.13\times 10^{-2}$; (d) $\text{Loss}=6.87 \times 10^{-2}$, $\mathcal{E}=5.98 \times 10^{-2}$. Here exact PDE solution (the blue line) refers to PDE solution without uncertainty.}	
    \label{exp1_visual_pre}
\end{figure*}

To show the effect of parameter $\lambda$ and noise levels $\sigma_1$ 
and $\sigma_2$ in the model,
we fix the model architecture and the training data and test the performance 
of the proposed WGAN-PINNs algorithm.
Here 
both the generator and the discriminator are neural networks of 3 hidden layers with 50 neurons in each layer. 
Without any priors and preferences, distributions of $\mathbf{x}$ on the boundary $\Gamma$ and in the interior domain $\Omega$ are uniform.
For two boundary conditions $u(-1)$ and $u(1)$, noise levels $\sigma_1 = \sigma_2$ are set to be from $0$ to $0.2$ and we sample 20 data points for each (i.e., $n=40$). 
Also the number $m$ of samples on such random input to generator is equal to 
$n$, i.e., $m=40$.
The number $k$ of samples on interior data is set to be 100.
A two dimensional standard normal prior for $\mathbf{z}$ is adopted, 
namely $\mathbf{z} \sim \mathcal{N}(\mathbf{0},\mathbf{I}_2)$. 

Numerical results related to the loss and the relative error are listed in Tables 
\ref{tab1} and \ref{tab2}. 
Larger noise level introduces more uncertainty in the system, leading to 
more difficulties in training.
Table \ref{tab1} reveals the role of PINNs as a regularization term that it accelerates the convergence and improves stability of WGANs.
An intuitive explanation is that PINNs term narrows the function space and enforces generators to approximate the implicit solution. 
We see from Tables \ref{tab1} and \ref{tab2} that the relative error by 
WGAN-PINNs is smaller than that by GAN-PINNs.
Visualized results of our generated solution with different noise levels when $\lambda=100$ are shown in Figure \ref{exp1_visual_pre}. Statistical comparisons between $p_{\Tilde{g}}(\mathbf{x},\mathbf{u})$ and  $p_{\Gamma}(\mathbf{x},\mathbf{u})$ at $\mathbf{x}=-1, 0, 1$ shown in Figure \ref{exp1_visual_his}
further demonstrate the effectiveness of WGAN-PINNs. In particular, the means $\hat{\mu}$ 
and the standard derivations $\hat{\sigma}$
of the generated distributions at the two boundary points 
are close to those $\mu=0$ and $\sigma_1$ (or $\sigma_2$) 
given by normal distributions. The mean and the standard derivation of the 
generated distributions at $\mathbf{x}=0$ are close to those 
($\mu_{est},\sigma_{est}$) by 
simulated partial differential equation solution data. 
Note that the exact distribution of partial differential equation solution is unknown.


\begin{figure*}[h!]
\makebox[\linewidth][c]{%
  \centering
  \subfloat
  [$u(x=-1)$] 
  {
     \label{exp1_his_1}     
    \includegraphics[width=0.3\textwidth]{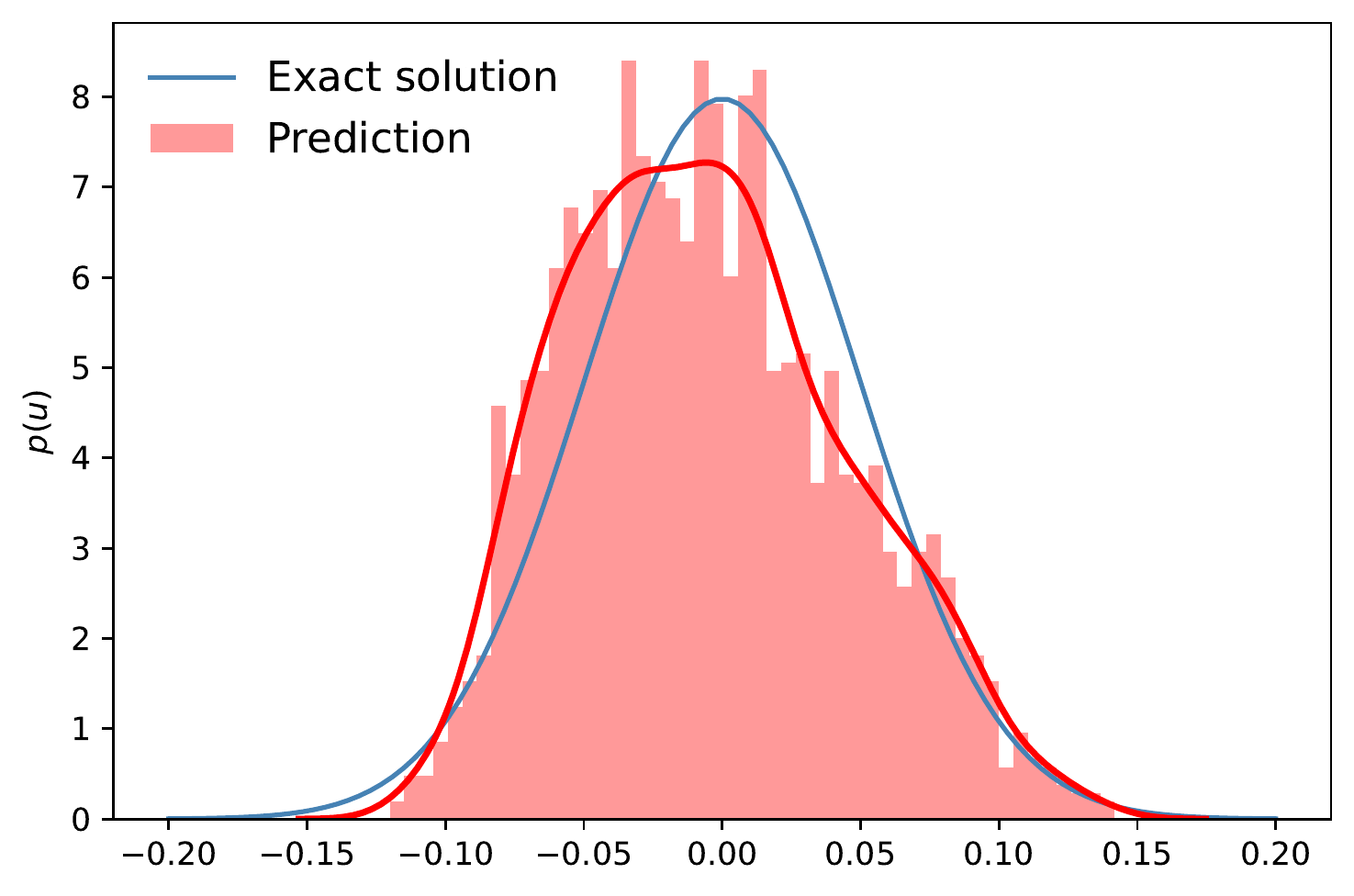}
    }\  
        \subfloat
    [$u(x=1)$] 
    {
    \label{exp1_his_5}     
    \includegraphics[width=0.3\textwidth]{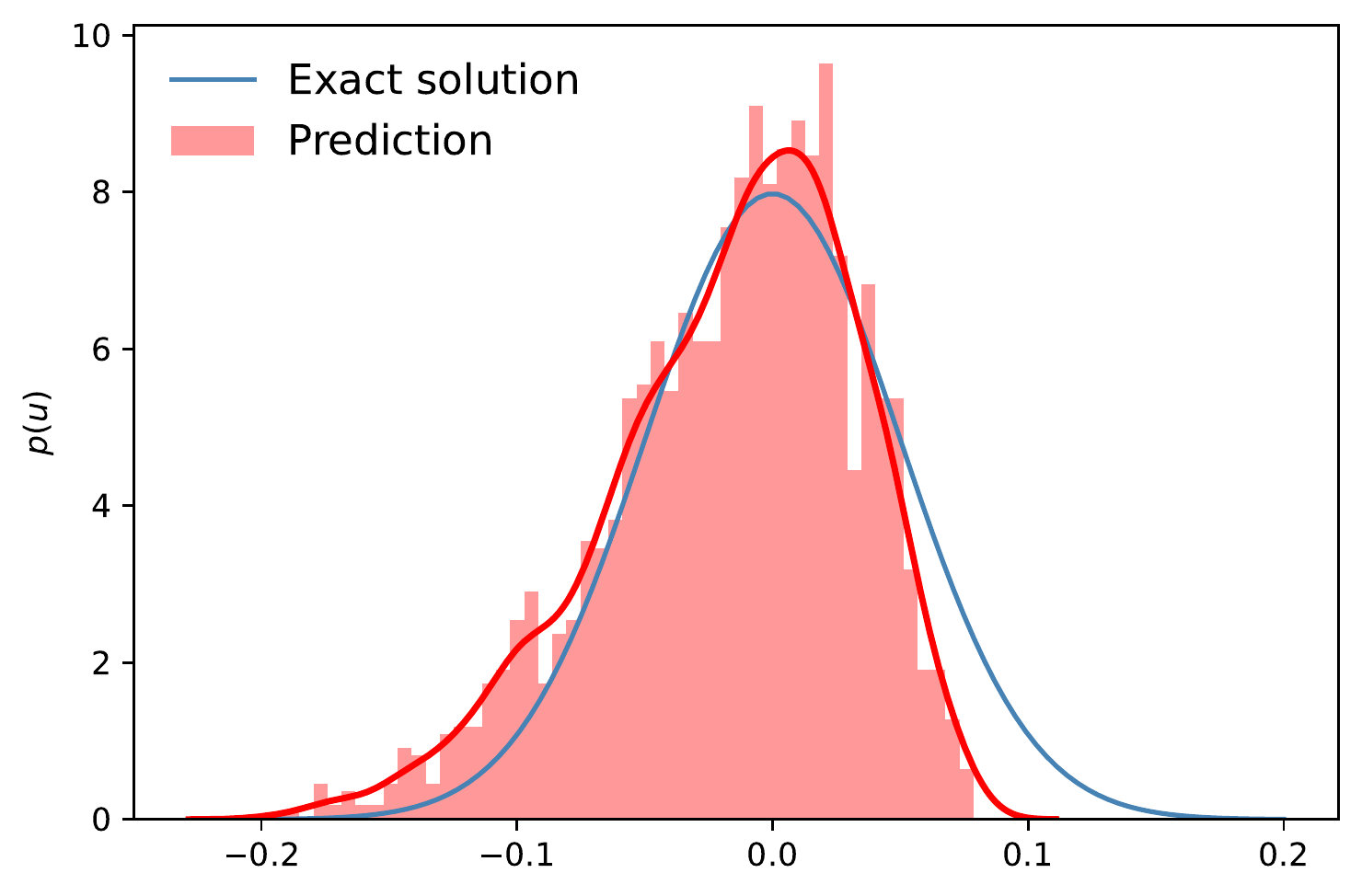}
    }
    }\\
    \makebox[\linewidth][c]{%
  \centering
   \subfloat
    [$u(x=-0.5)$] 
    {
    \label{exp1_his_2}     
    \includegraphics[width=0.3\textwidth]{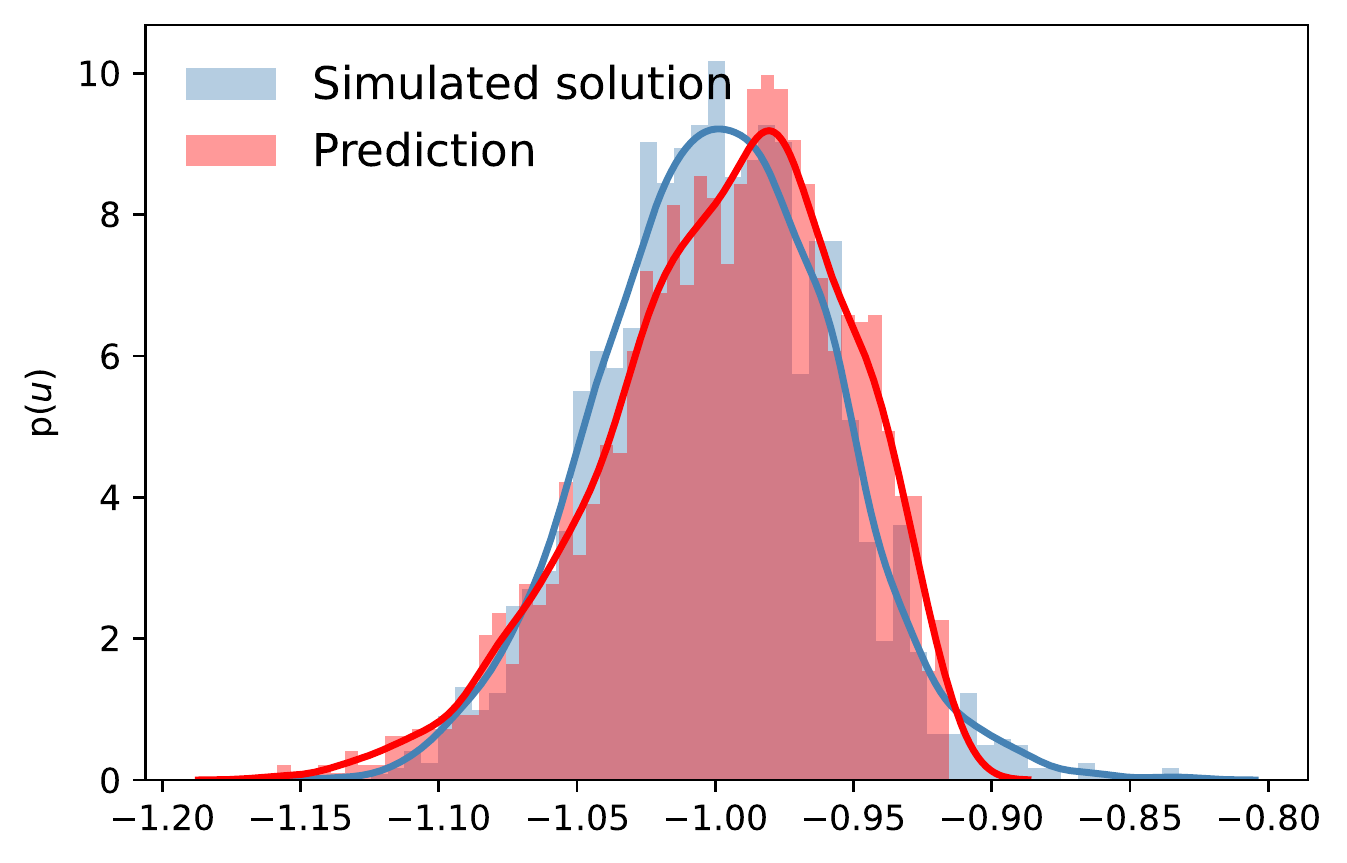}
    }\
    \subfloat
    [$u(x=0)$] 
    {
    \label{exp1_his_3}     
    \includegraphics[width=0.3\textwidth]{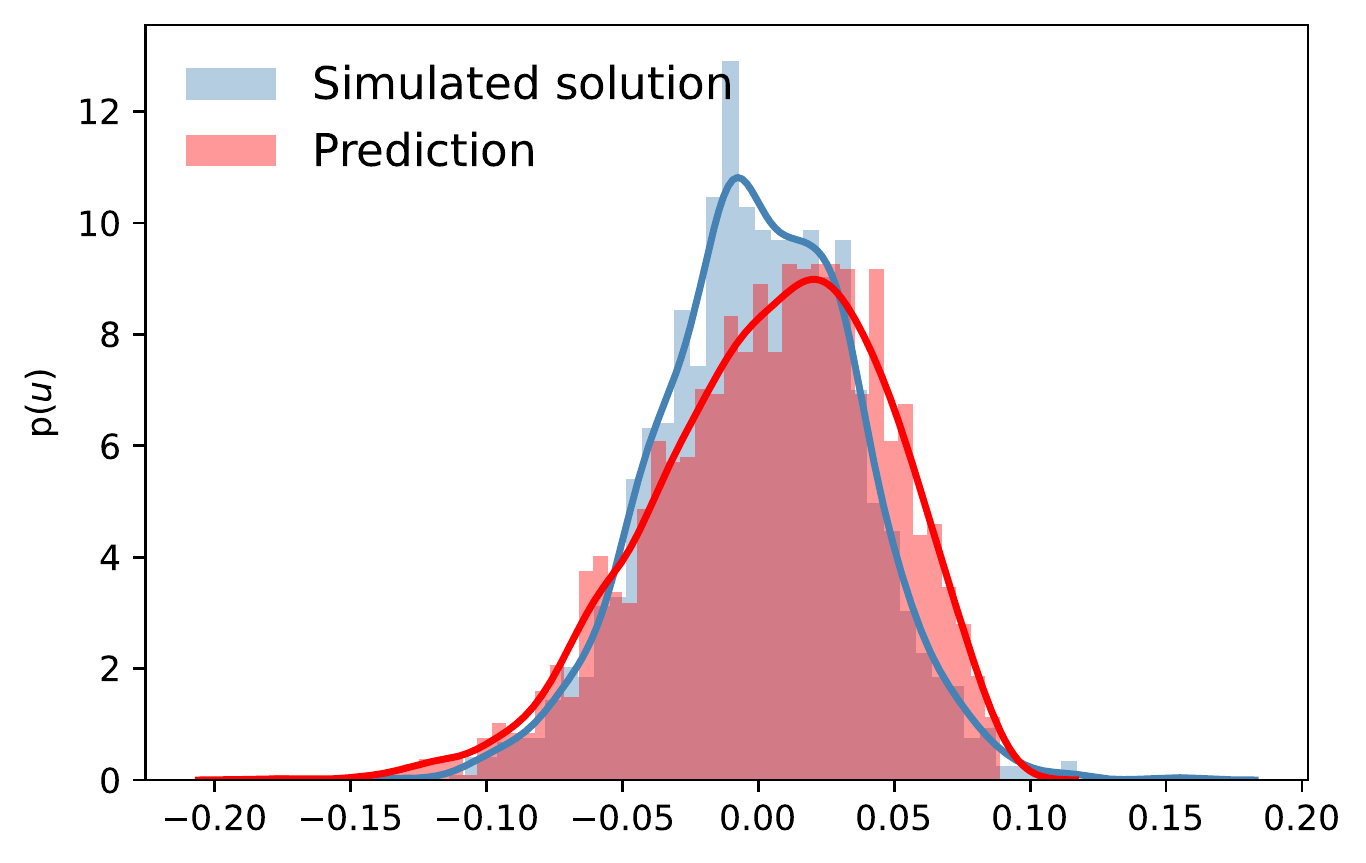}
    }\
    \subfloat
    [$u(x=0.5)$] 
    {
    \label{exp1_his_4}     
    \includegraphics[width=0.3\textwidth]{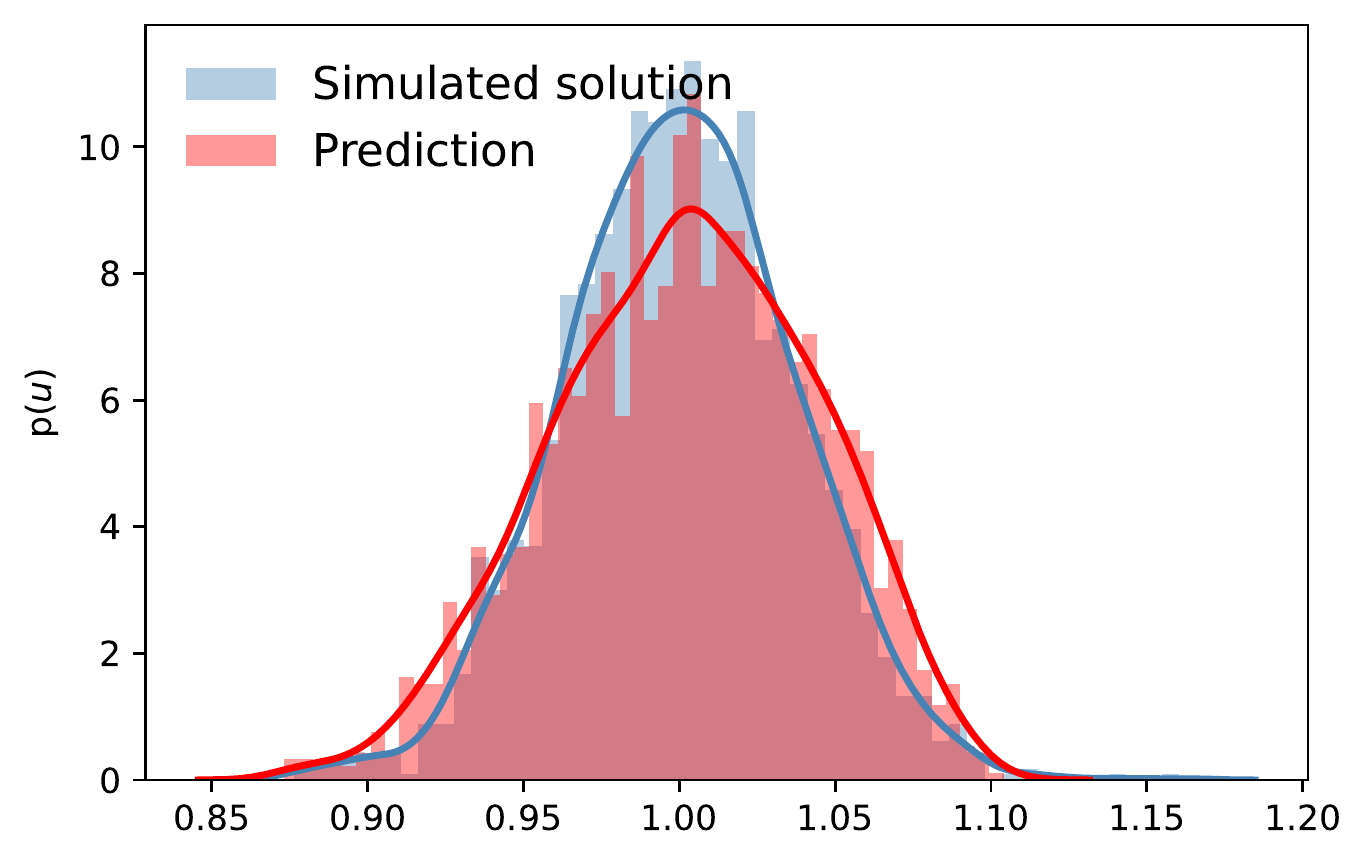}
    }\
    }
    \caption{		
	The histograms of the generated solution by WGAN-PINNs 
	and the simulated numerical solution at $x=-1$, $x=1$ (boundary) and $x=-0.5$, $x=0$, $x=0.5$  (interior) for $\sigma_1 = \sigma_2 = 0.05$. The predicted mean and standard deviation are 
	$(\hat{\mu},\hat{\sigma})$,
	the exact mean and standard deviation are $(\mu,\sigma)$, 
	and the simulated mean and standard derivation are $(\mu_{est},\sigma_{est})$.	
	(a) $(\hat{\mu},\hat{\sigma})=(-0.006,0.050)$, $(\mu,\sigma)=(0,0.05)$; (b) $(\hat{\mu},\hat{\sigma}) = (-0.016,0.049)$, $(\mu,\sigma)=(0,0.05)$; (c) $(\hat{\mu},\hat{\sigma}) = (-1.011, 0.047)$, $(\mu_{est},\sigma_{est})=(-1.000,0.042)$; (d) $(\hat{\mu},\hat{\sigma}) = (-0.014, 0.045)$, $(\mu_{est},\sigma_{est})=(0.000,0.037)$; (e) $(\hat{\mu},\hat{\sigma}) = (0.985, 0.045)$, $(\mu_{est},\sigma_{est})=(1.000,0.037)$.}	
    \label{exp1_visual_his}
\end{figure*}

\begin{figure*}[h]
\makebox[\linewidth][c]{%
  \centering
  \subfloat
  [$m=n$ boundary data] 
  {
     \label{exp1_loss_numtrain_mn}     
    \includegraphics[width=0.23\textwidth]{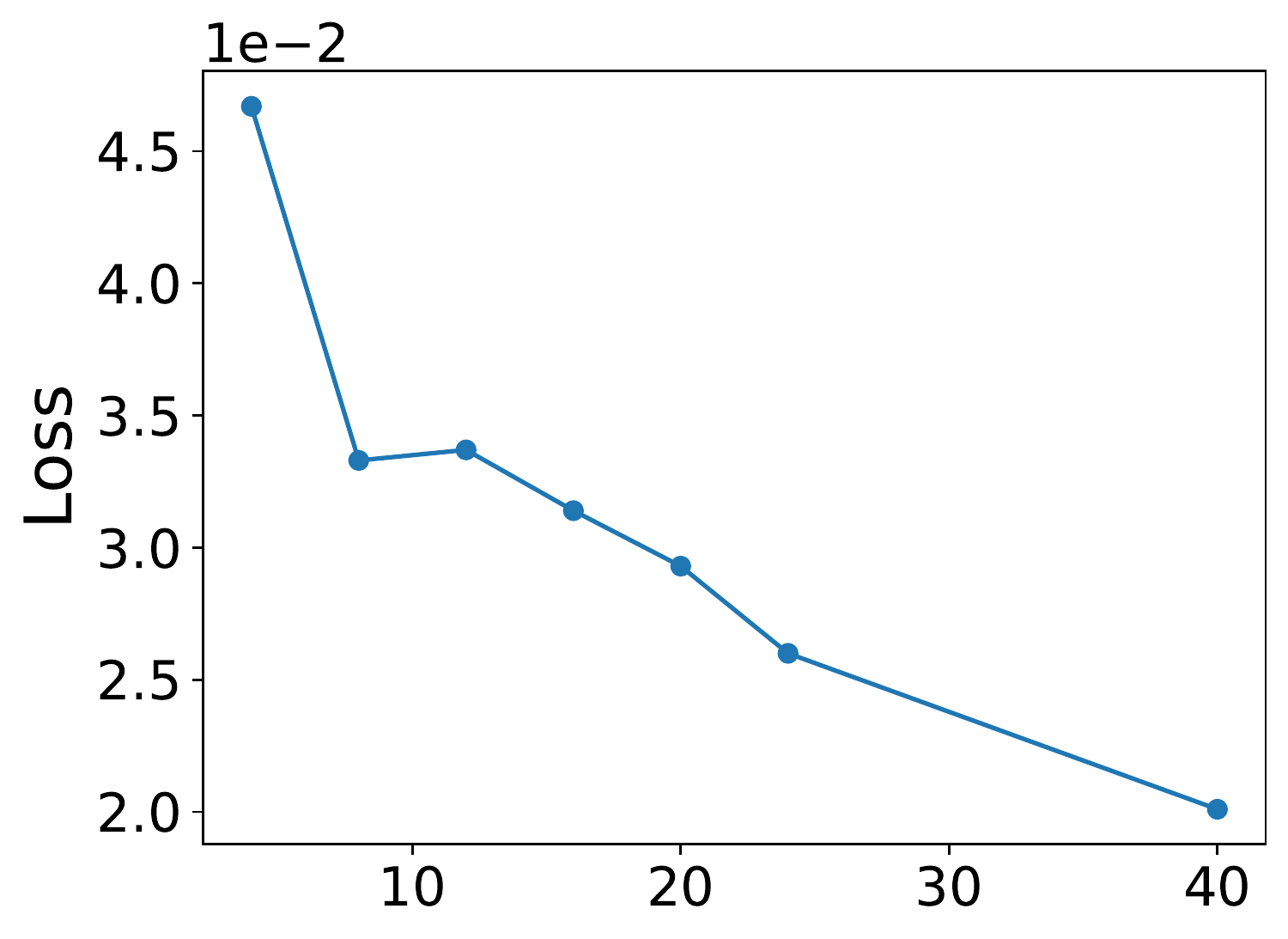}
    }\  
    \subfloat
    [$m=n$ boundary data] 
    {
    \label{exp1_re_numtrain_mn}     
    \includegraphics[width=0.23\textwidth]{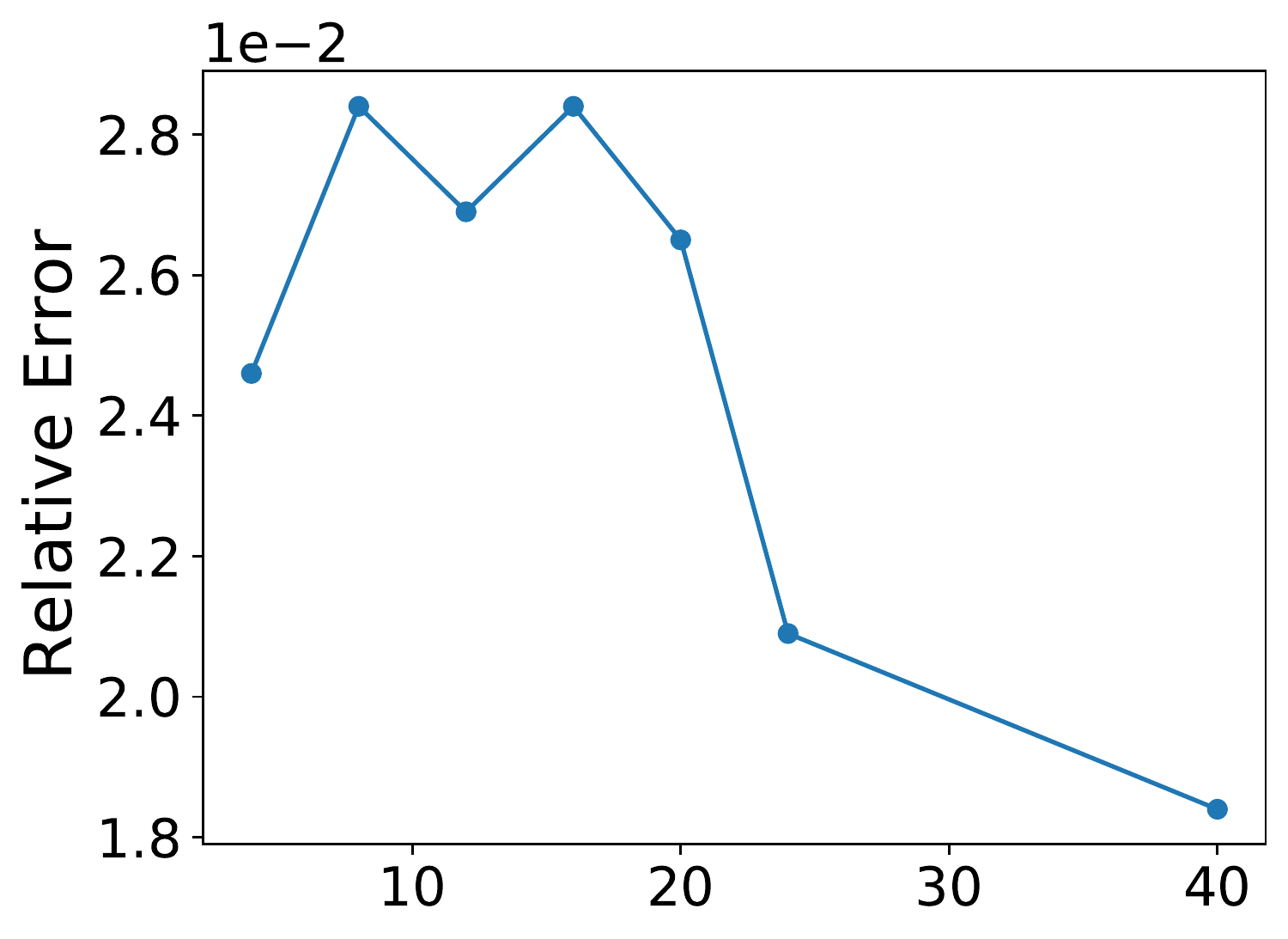}
    }\
    \subfloat
    [$k$ interior data] 
    {
    \label{exp1_loss_numtrain_k}     
    \includegraphics[width=0.230\textwidth]{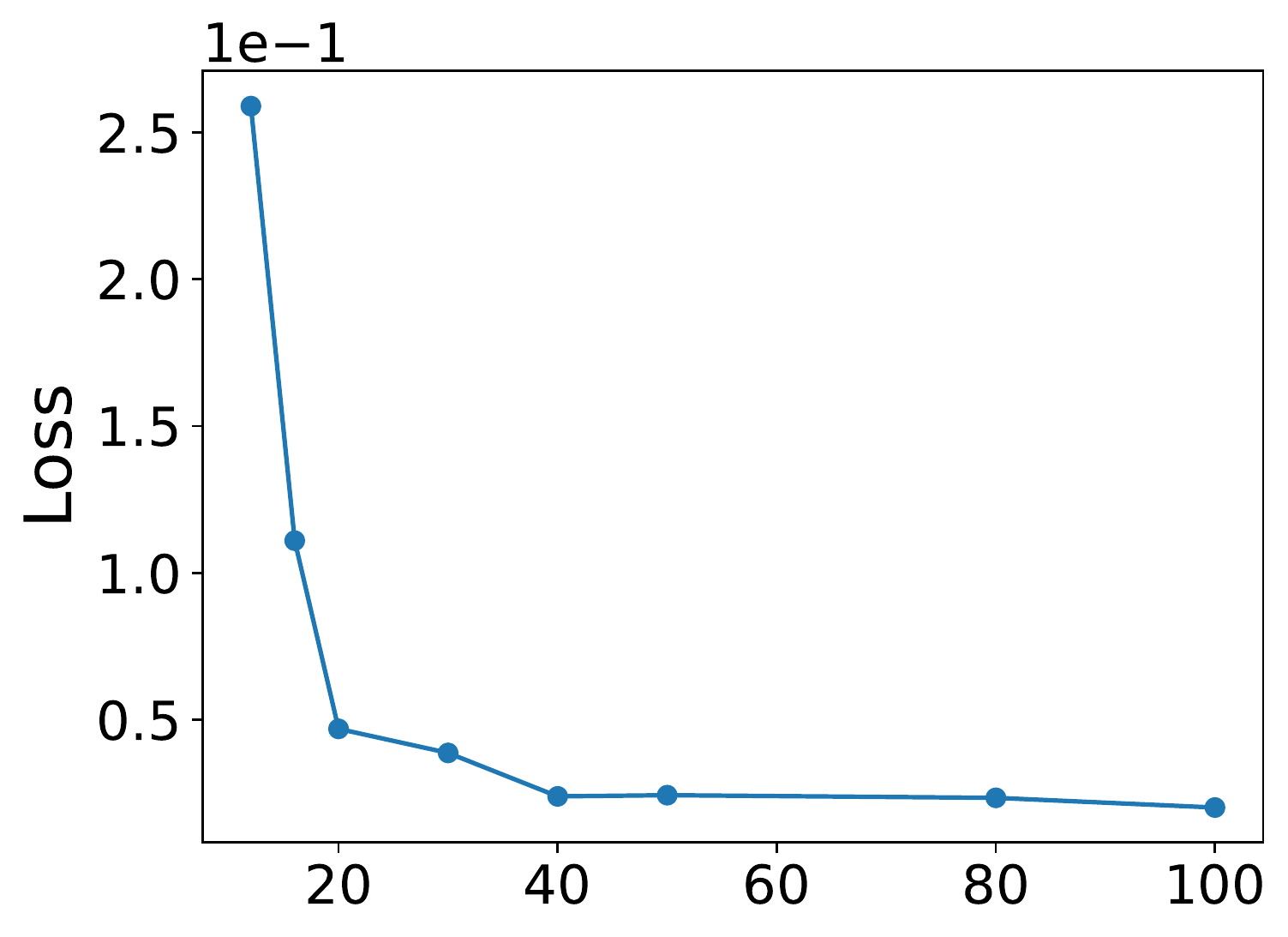}
    }\
    \subfloat
    [$k$ interior data] 
    {
    \label{exp1_re_numtrain_k}     
    \includegraphics[width=0.225\textwidth]{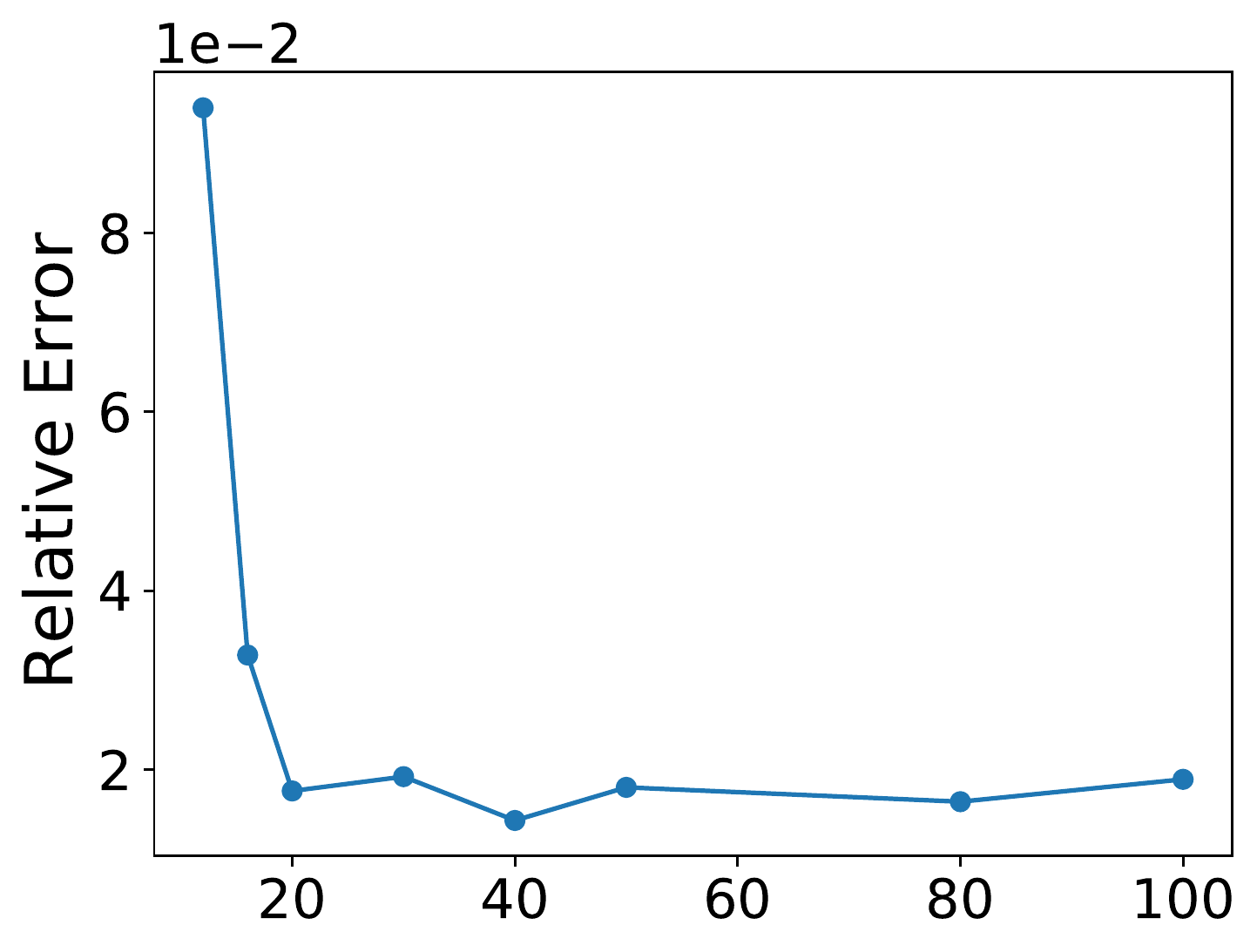}
    }\
    }\\
    \makebox[\linewidth][c]{%
  \centering
  \subfloat
  [$\log m$ ($\log n$) boundary data] 
  {
     \label{exp1_logloss_numtrain_mn}     
    \includegraphics[width=0.23\textwidth]{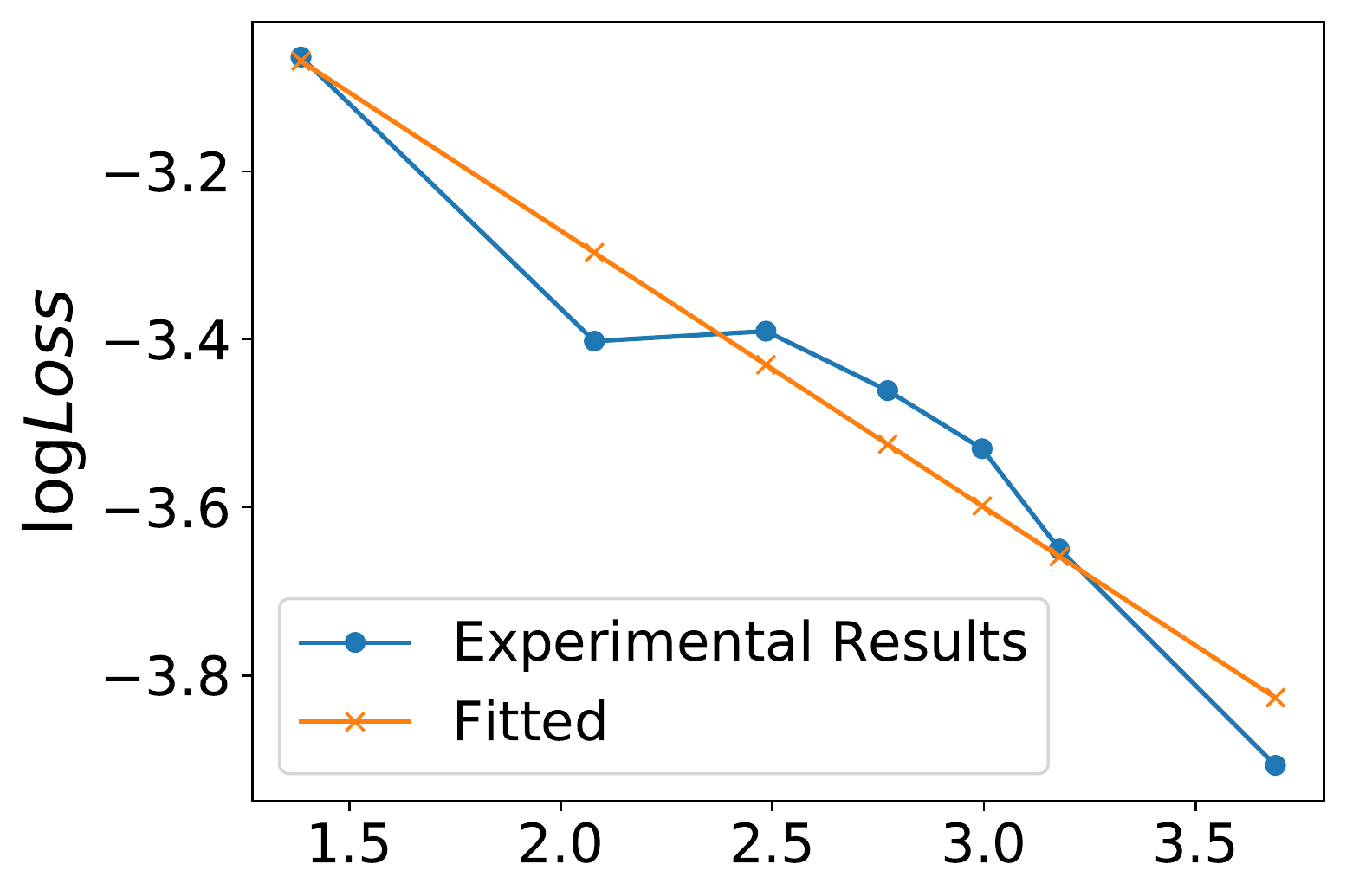}
    }\  
    \subfloat
    [$\log k$ boundary data] 
    {
    \label{exp1_logloss_numtrain_k}     
    \includegraphics[width=0.23\textwidth]{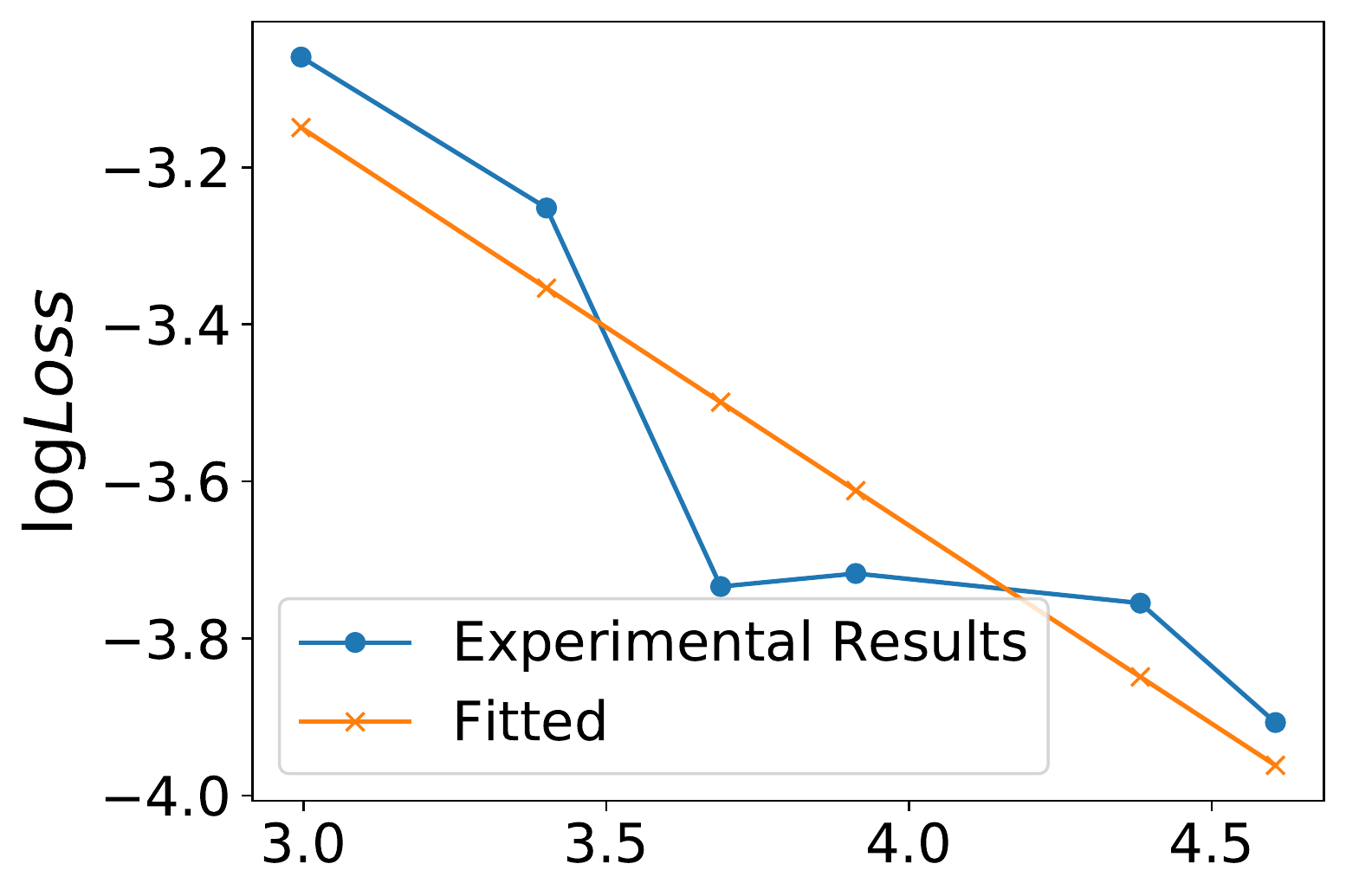}
    }\
    }
    \caption{		
		(a) The
		loss and (b) the relative error of the obtained model $\Tilde{g}$ by WGAN-PINNs with
		different numbers of training data on the boundary using $k=100$ interior data;
		(c) the loss and (d) the relative error of the obtained model $\Tilde{g}$ with
		different numbers of training data in the interior domain using $m=n=40$ 
		boundary data; (e) and (f) the loss of the obtained model $\Tilde{g}$ by WGAN-PINNs with
		different numbers of training data on the boundary and in the interior domain in log scale where $\log \text{Loss} \sim -0.33 \log m$ and $\log \text{Loss} \sim -0.50 \log k$. Generators and discriminators are of $(D_g, W_g)=(3,50)$ and $(D_f, W_f)=(3,50)$ respectively.}	
    \label{exp1_numtrain}
\end{figure*}

\begin{figure*}[h!]
\makebox[\linewidth][c]{%
  \centering
  \subfloat
  [$m=n$ boundary data] 
  {
     \label{exp1_gan_re_numtrain_mn}     
    \includegraphics[width=0.23\textwidth]{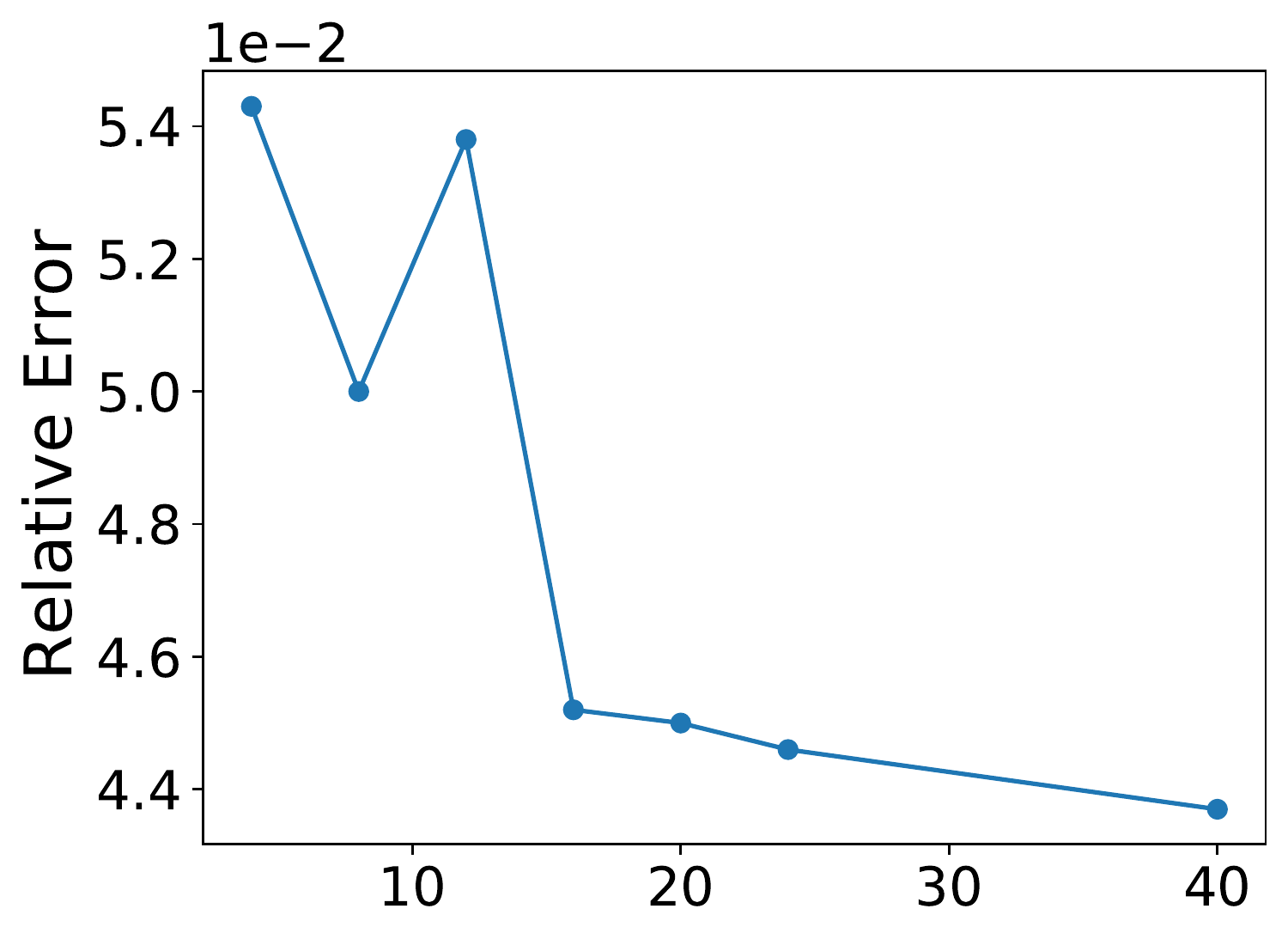}
    }\  
    \subfloat
    [$k$ interior data] 
    {
    \label{exp1_gan_re_numtrain_k}     
    \includegraphics[width=0.23\textwidth]{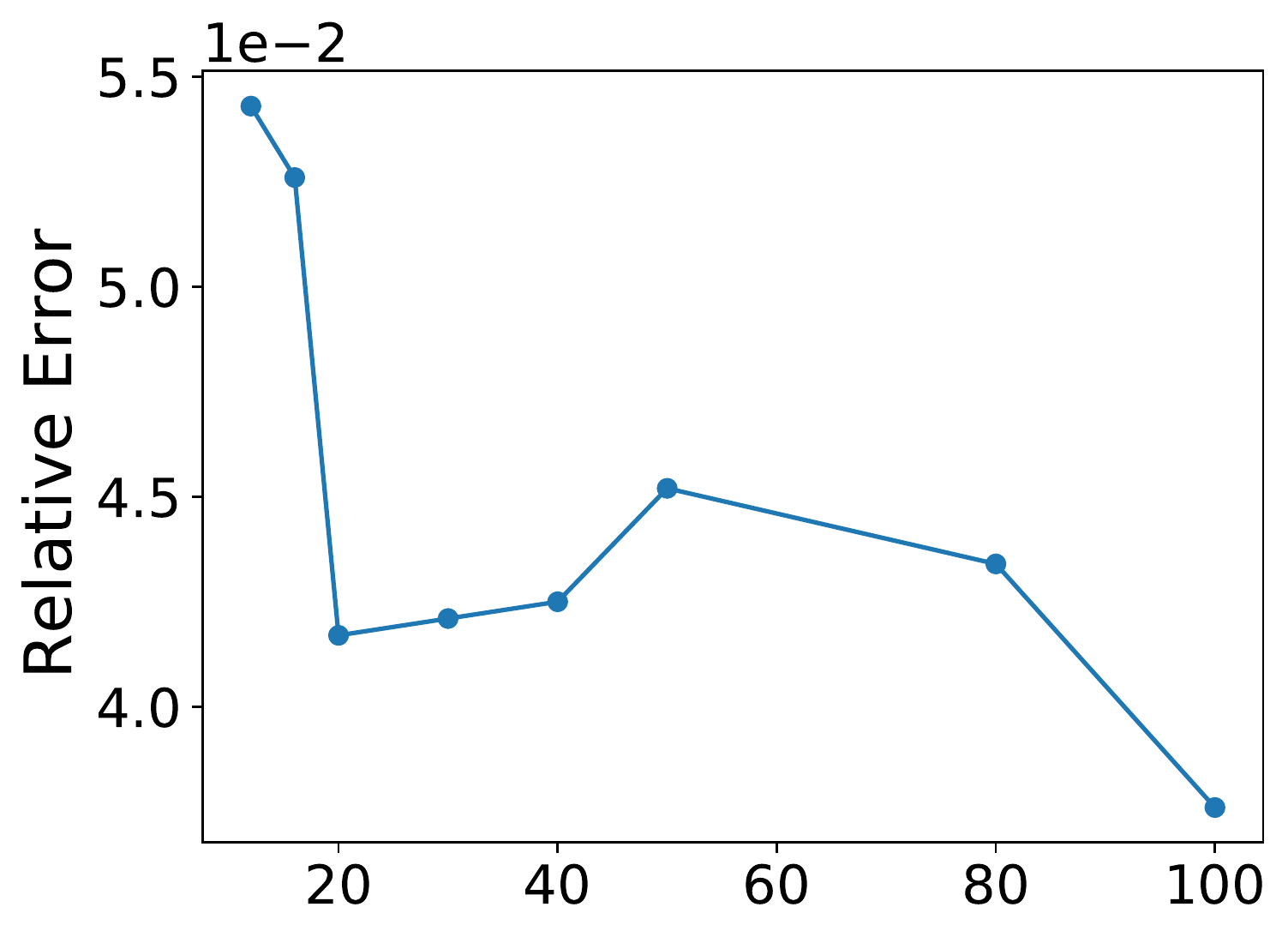}
    }\
    }
    \caption{		
		The relative error of the obtained model $\Tilde{g}$ by GAN-PINNs 
		with respect to (a) 
		different numbers of training data on the boundary using $k=100$ interior data;
		(b) different numbers of training data in the interior domain using $m=n=40$ boundary data. Generators and discriminators are of $(D_g, W_g)=(3,50)$ and $(D_f, W_f)=(3,50)$ respectively.}	
    \label{exp1_gan_numtrain}
\end{figure*}

\begin{figure*}[h]
\makebox[\linewidth][c]{%
  \centering
  \subfloat
  [discriminator width ($W_f$)] 
  {
     \label{exp1_loss_dis_width}     
    \includegraphics[width=0.23\textwidth]{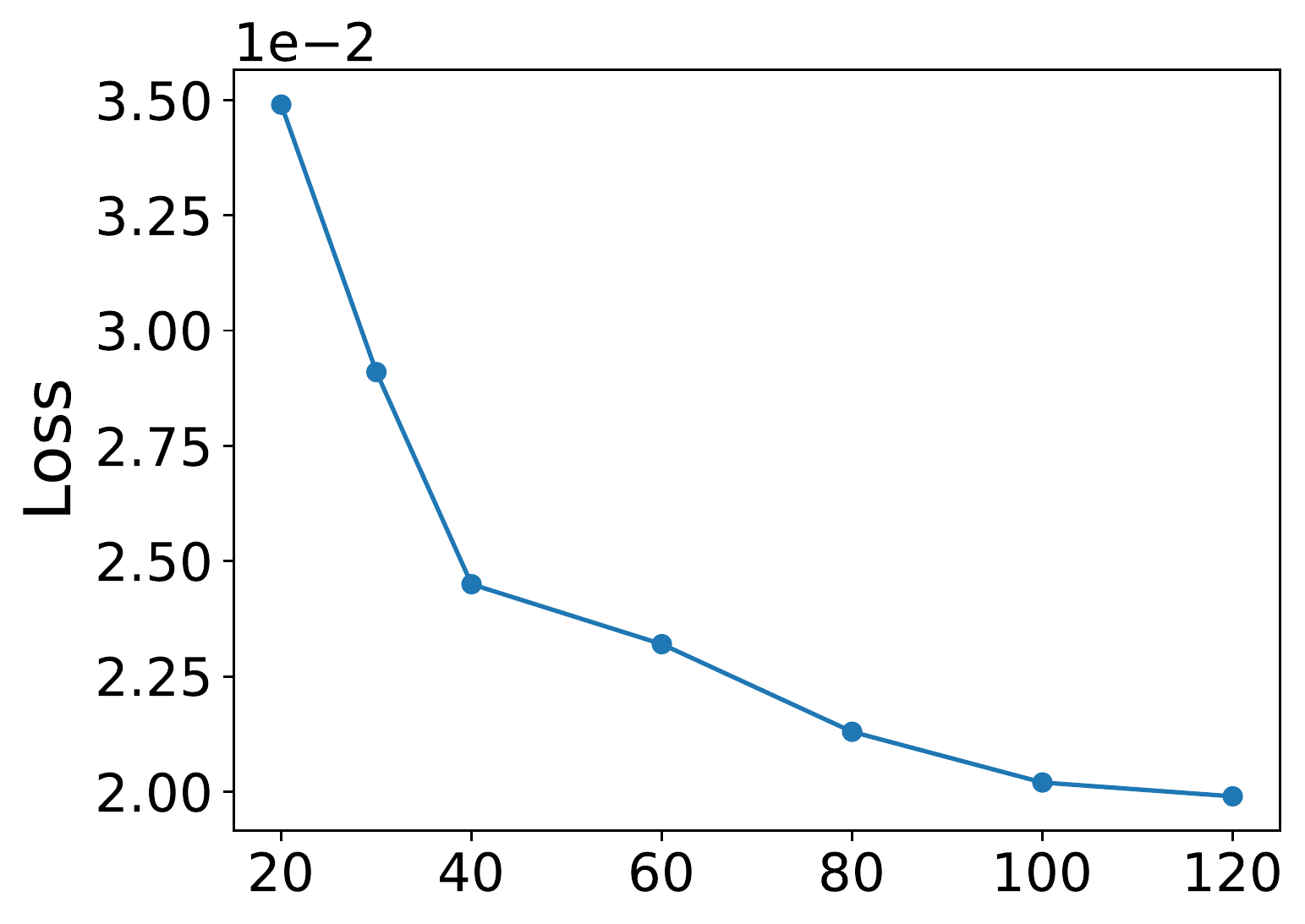}
    }\  
    \subfloat
    [discriminator width ($W_f$)] 
    {
    \label{exp1_re_dis_width}     
    \includegraphics[width=0.23\textwidth]{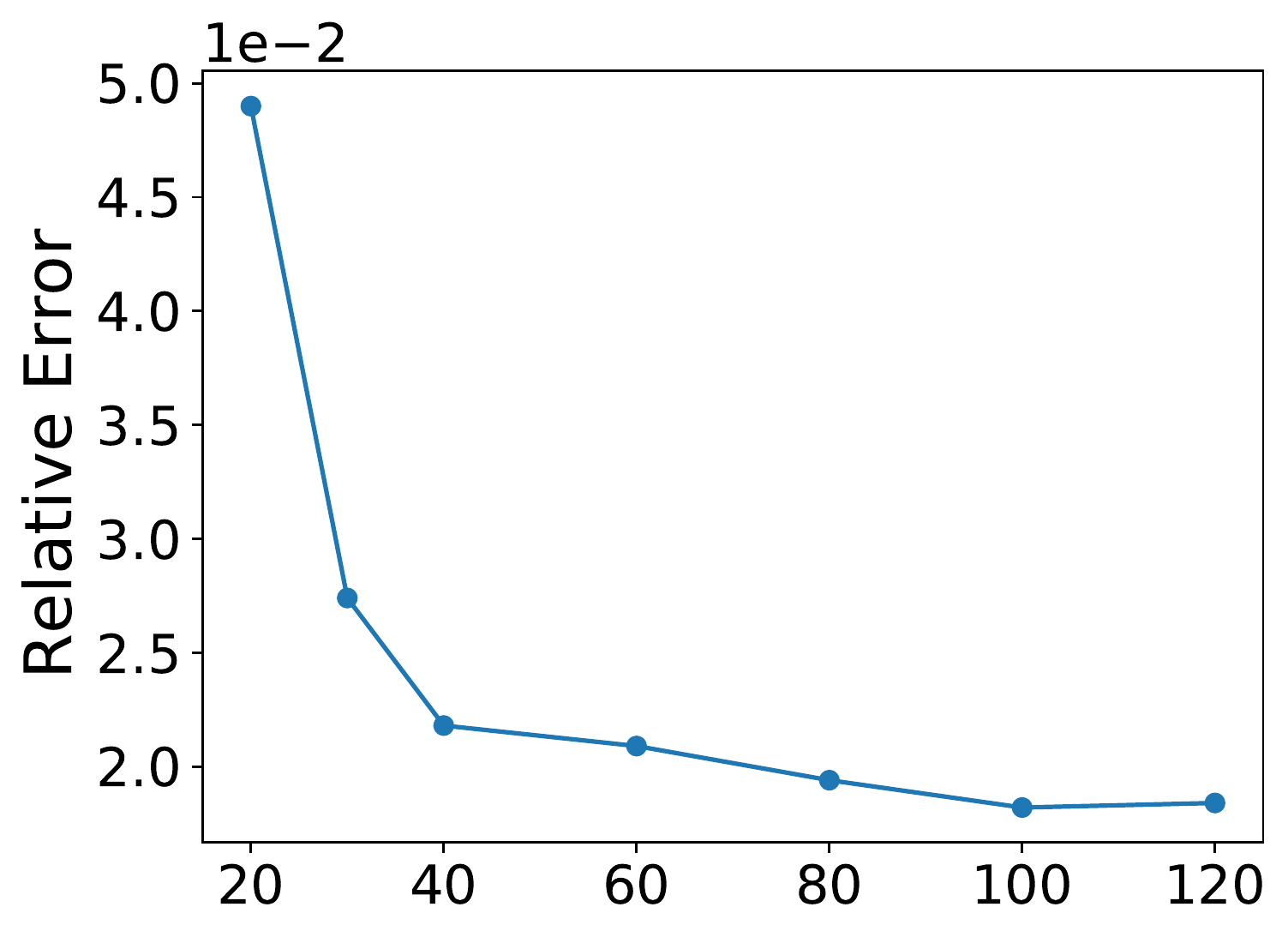}
    }\
    \subfloat
    [discriminator depth ($D_f$)] 
    {
    \label{exp1_loss_dis_depth}     
    \includegraphics[width=0.23\textwidth]{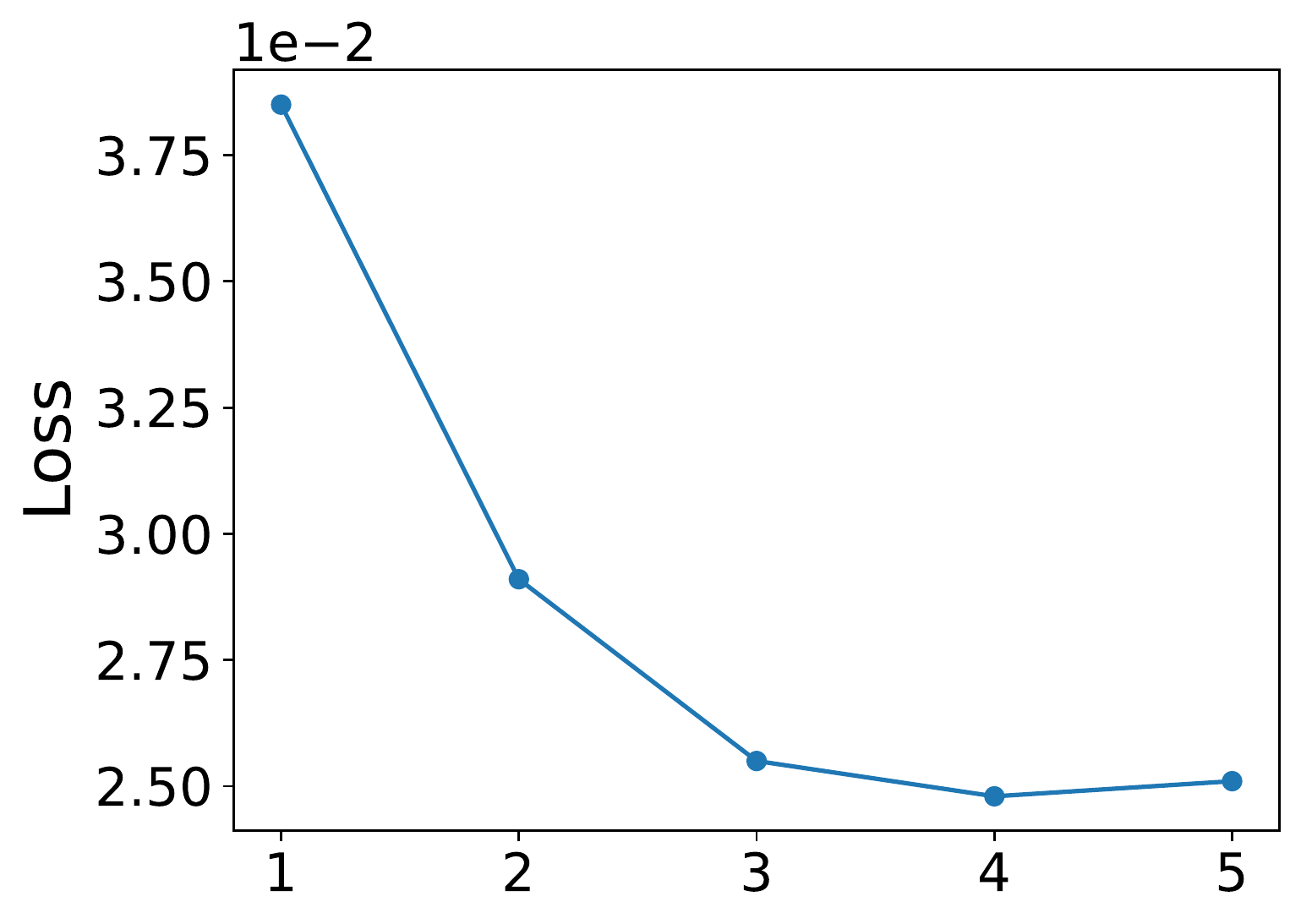}
    }\
    \subfloat
    [discriminator depth ($D_f$)] 
    {
    \label{exp1_re_dis_depth}     
    \includegraphics[width=0.23\textwidth]{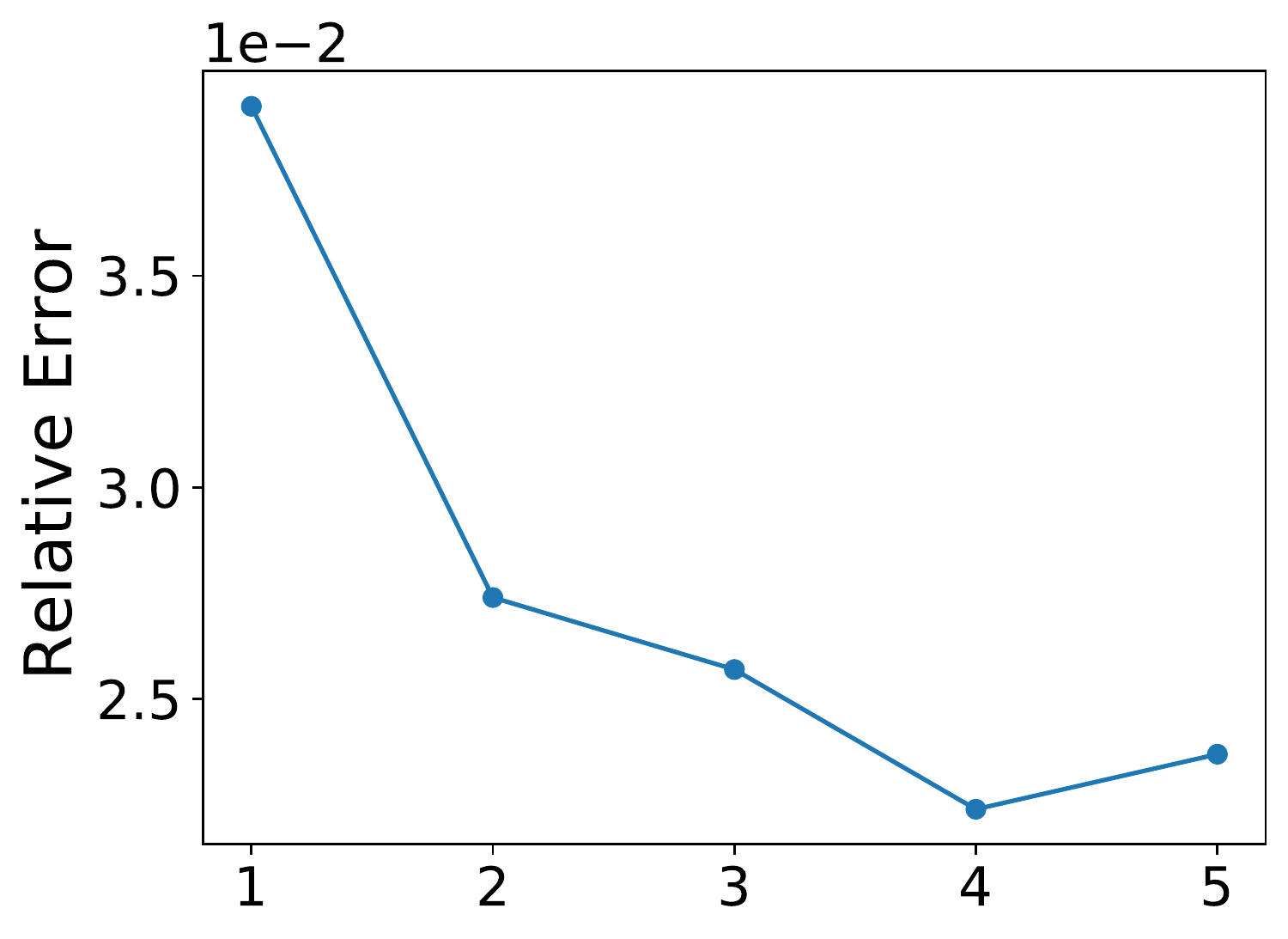}
    }\
    }\\
    \makebox[\linewidth][c]{%
  \centering
  \subfloat
  [discriminator width ($\log W_f$)] 
  {
     \label{exp1_logloss_dis_width}     
    \includegraphics[width=0.23\textwidth]{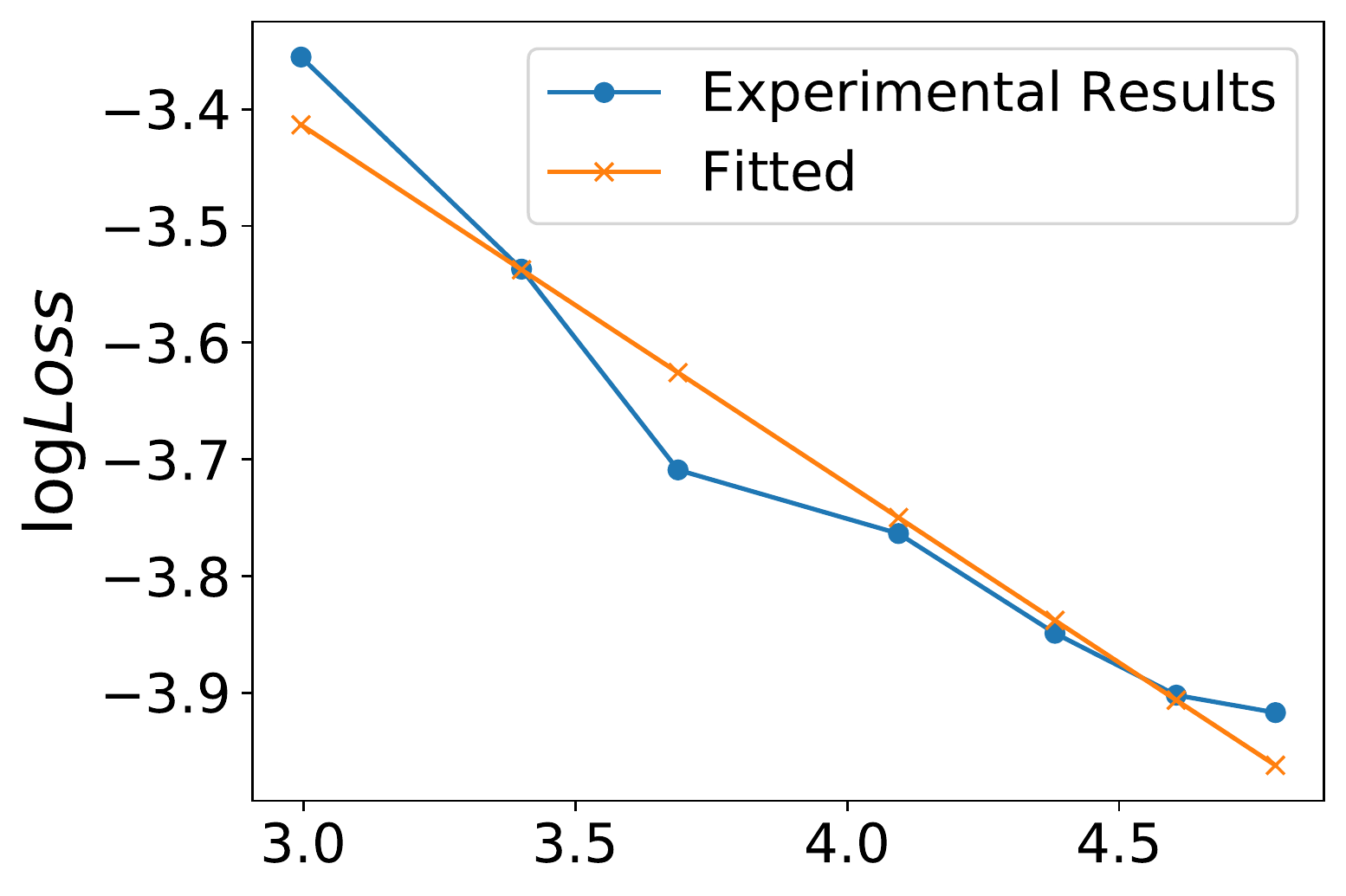}
    }\  
    \subfloat
    [discriminator depth ($D_f$)] 
    {
    \label{exp1_logloss_dis_depth}     
    \includegraphics[width=0.23\textwidth]{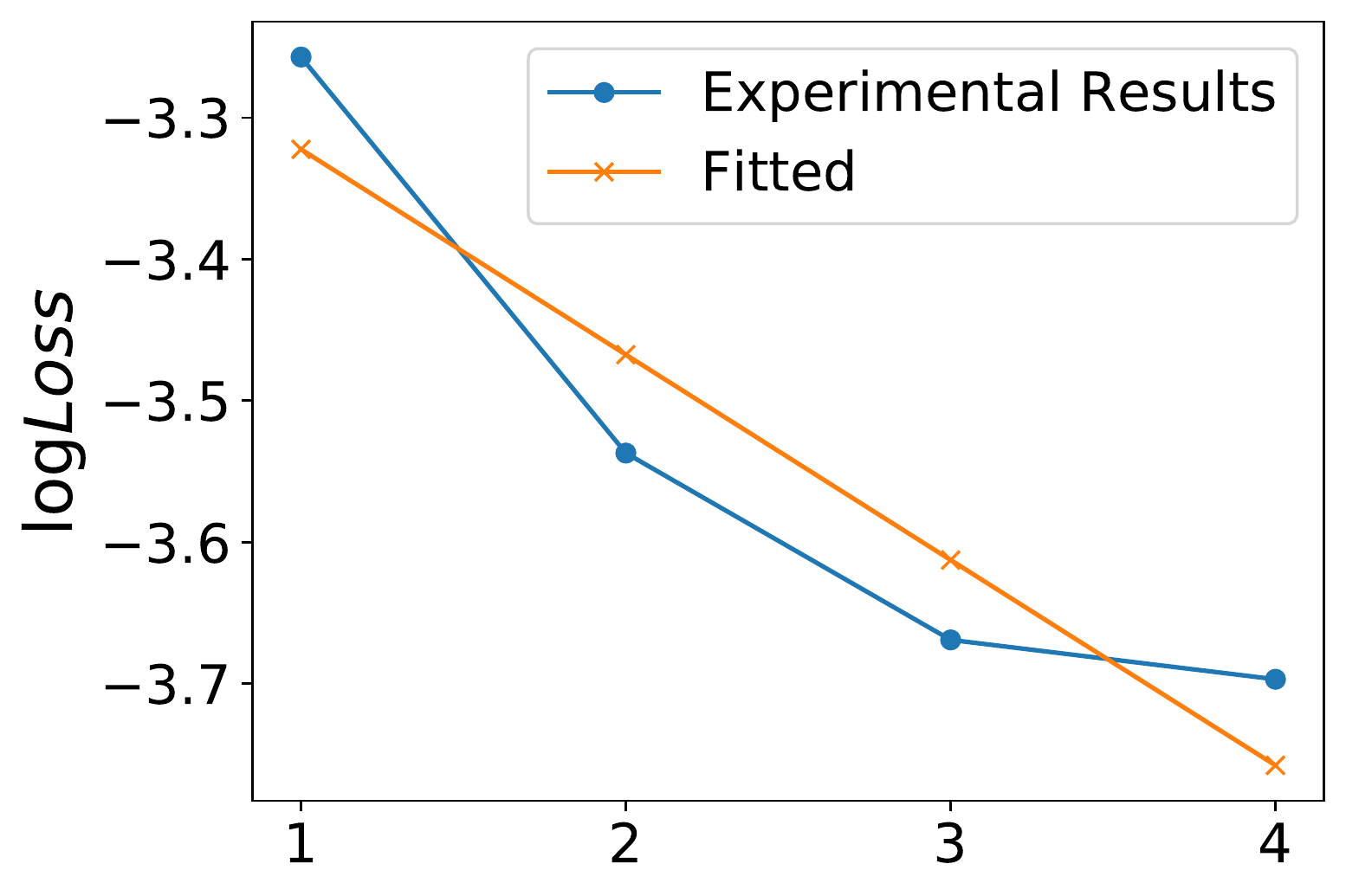}
    }\
    }
    \caption{		
		The loss and the relative error of the obtained model $\Tilde{g}$ by 
		WGAN-PINNs with respect to (a), (b): $W_f$ using  discriminators of $D_f=2$;
		(c), (d): $D_f$ using discriminators of $W_f = 30$;
		(e), (f): log scale of (a) and (b) with $\log \text{Loss} \sim -0.31 \log W_f$ and $\log \text{Loss} \sim -0.15 D_f$. For both two cases, $m=n=40$, $k=100$, and generators are of $(D_g, W_g)=(2,50)$.}	
    \label{exp1_dis}
\end{figure*}

\begin{figure*}[h!]
\makebox[\linewidth][c]{%
  \centering
  \subfloat
  [generator width ($W_g$)] 
  {
     \label{exp1_loss_gen_width}     
    \includegraphics[width=0.23\textwidth]{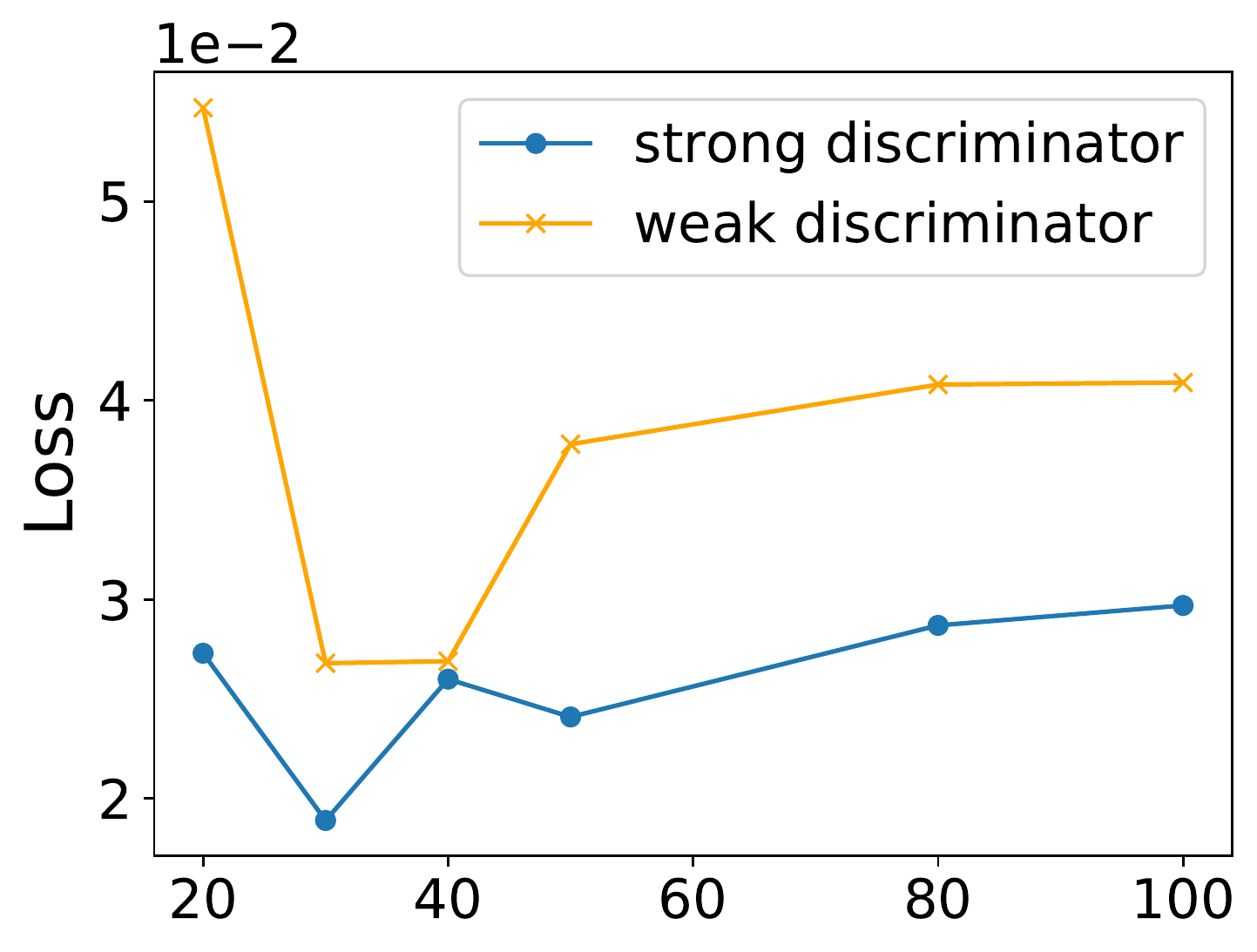}
    }\  
    \subfloat
    [generator width ($W_g$)] 
    {
    \label{exp1_re_gen_width}     
    \includegraphics[width=0.23\textwidth]{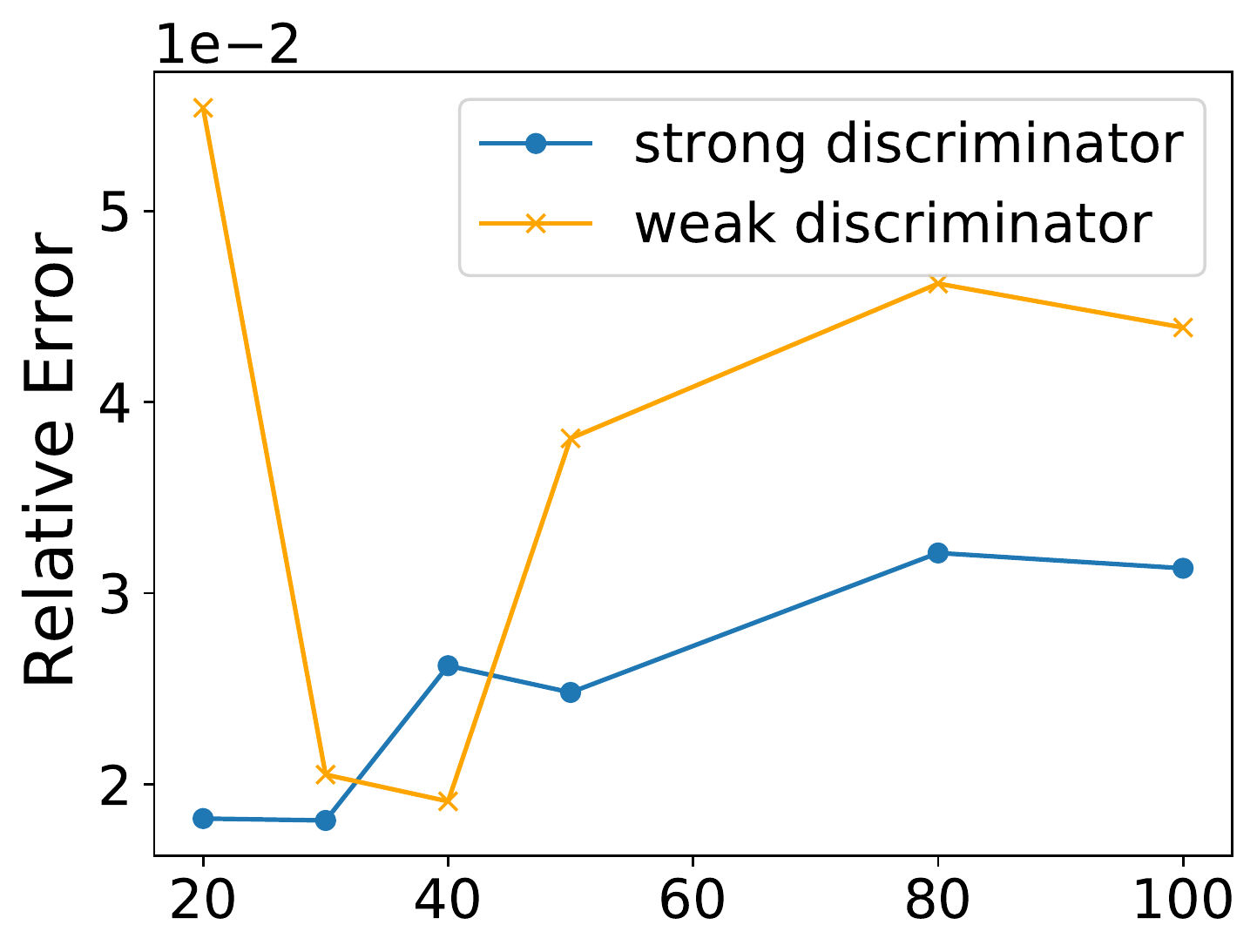}
    }\
    \subfloat
    [generator depth ($D_g$)] 
    {
    \label{exp1_loss_gen_depth}     
    \includegraphics[width=0.230\textwidth]{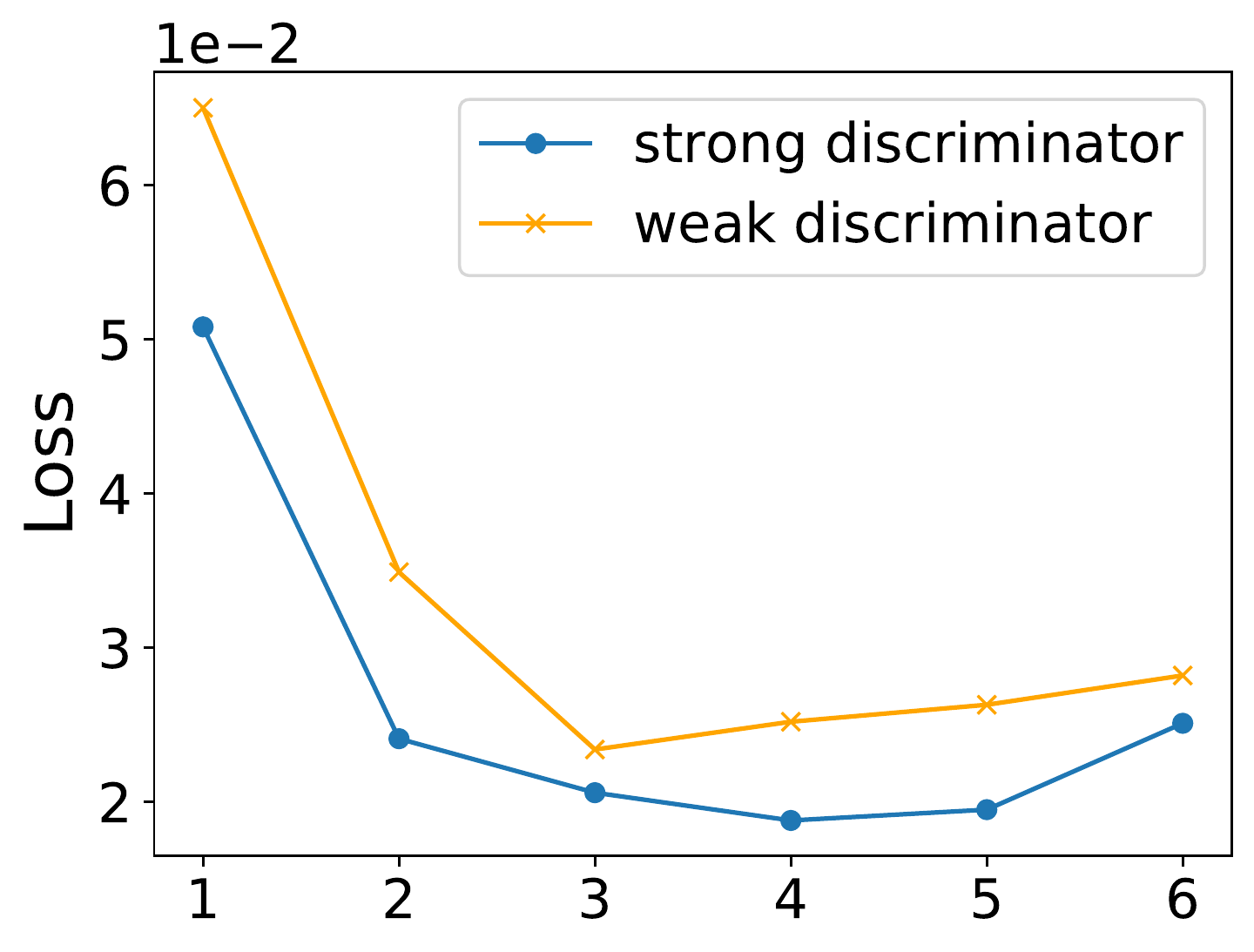}
    }\
    \subfloat
    [generator depth ($D_g$)] 
    {
    \label{exp1_re_gen_depth}     
    \includegraphics[width=0.23\textwidth]{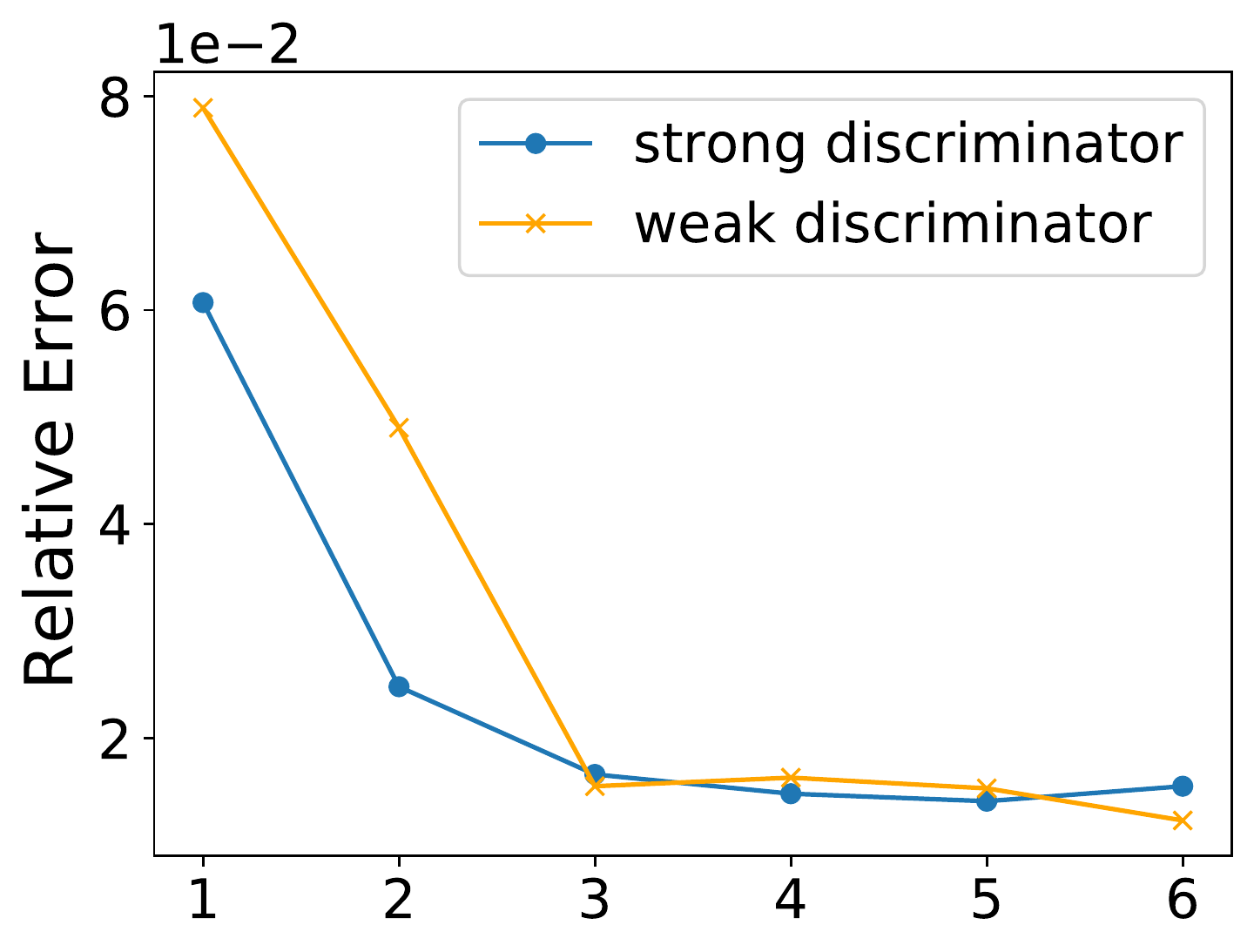}
    }\
    }
    \caption{		
		The loss and the relative error of the obtained model $\Tilde{g}$ by
		WGAN-PINNs with respect to (a), (b): $W_g$ using  generators of $D_g=2$;
		(c), (d): $D_g$ using generators of $W_g = 50$. For both two cases, $m=n=40$, $k=100$, strong discriminators $(D_f, W_f)=(2, 50)$ as well as weak discriminators $(D_f, W_f)=(2, 20)$.}	
    \label{exp1_gen}
\end{figure*}

\begin{figure*}[h!]
\makebox[\linewidth][c]{%
  \centering
  \subfloat
  [discriminator width ($W_f$)] 
  {
     \label{exp1_gan_re_dis_width}     
    \includegraphics[width=0.23\textwidth]{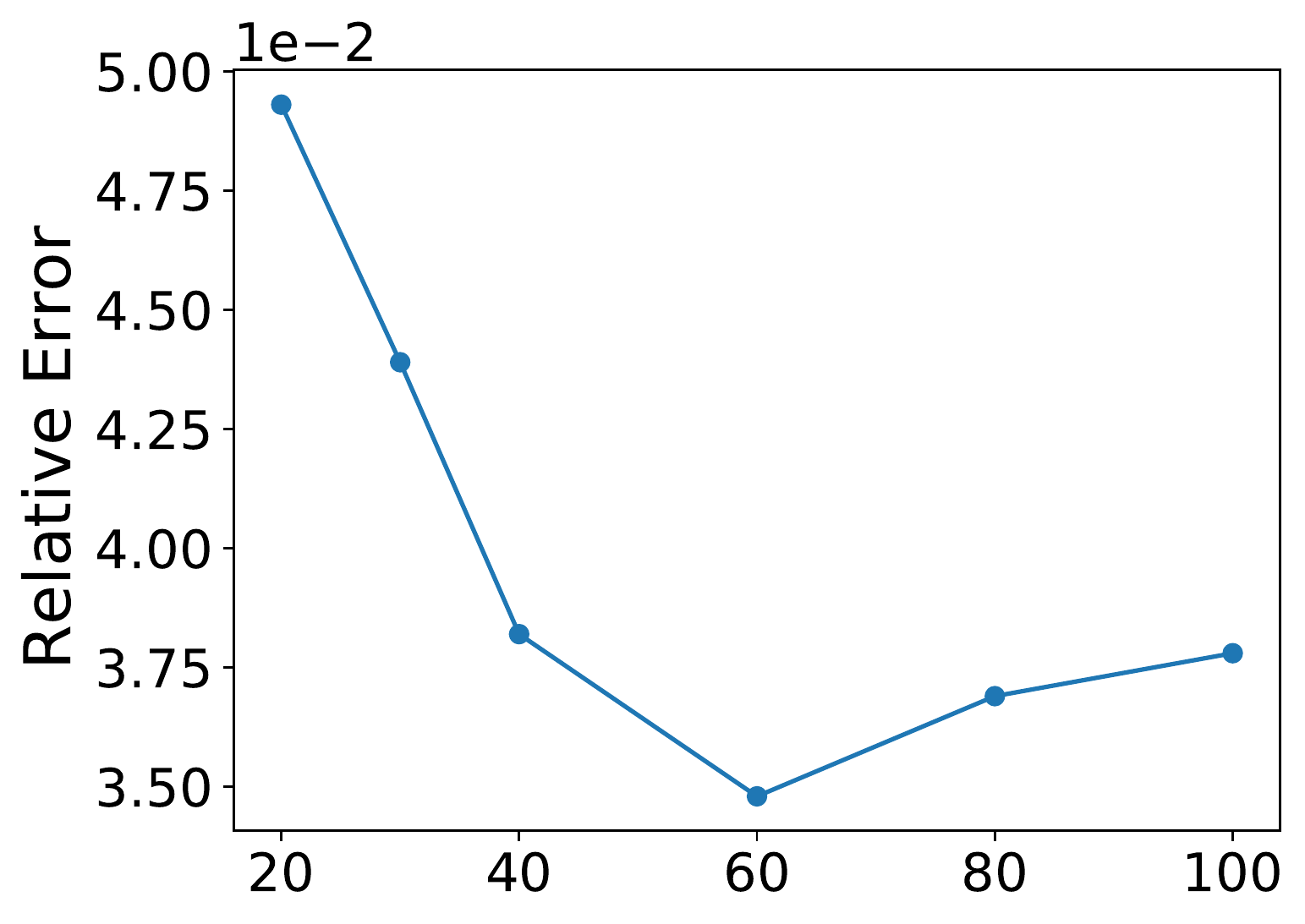}
    }\  
    \subfloat
    [discriminator depth ($D_f$)] 
    {
    \label{exp1_gan_re_dis_depth}     
    \includegraphics[width=0.23\textwidth]{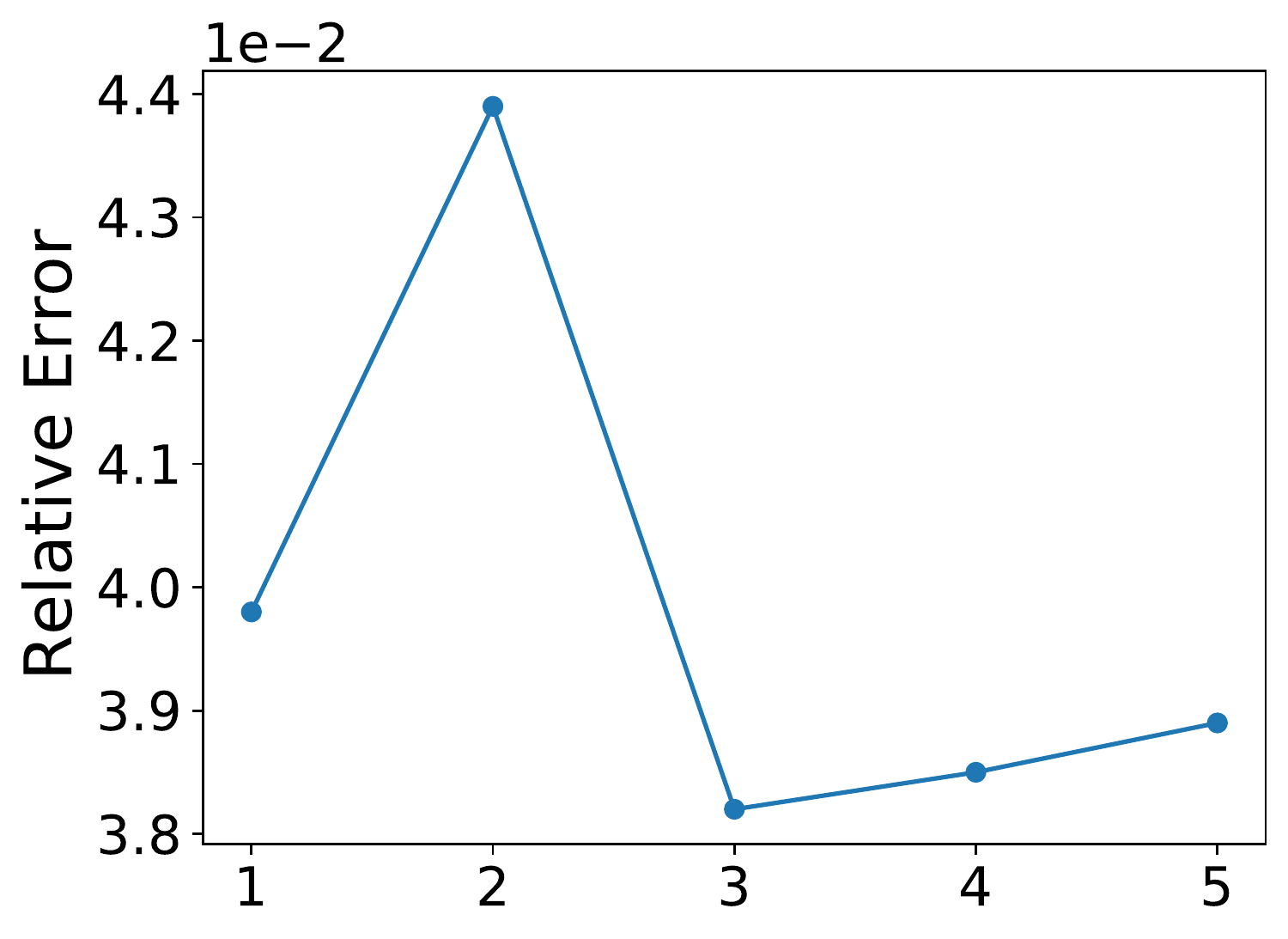}
    }\
    \subfloat
    [generator width ($W_g$)] 
    {
    \label{exp1_gan_re_gen_width}     
    \includegraphics[width=0.230\textwidth]{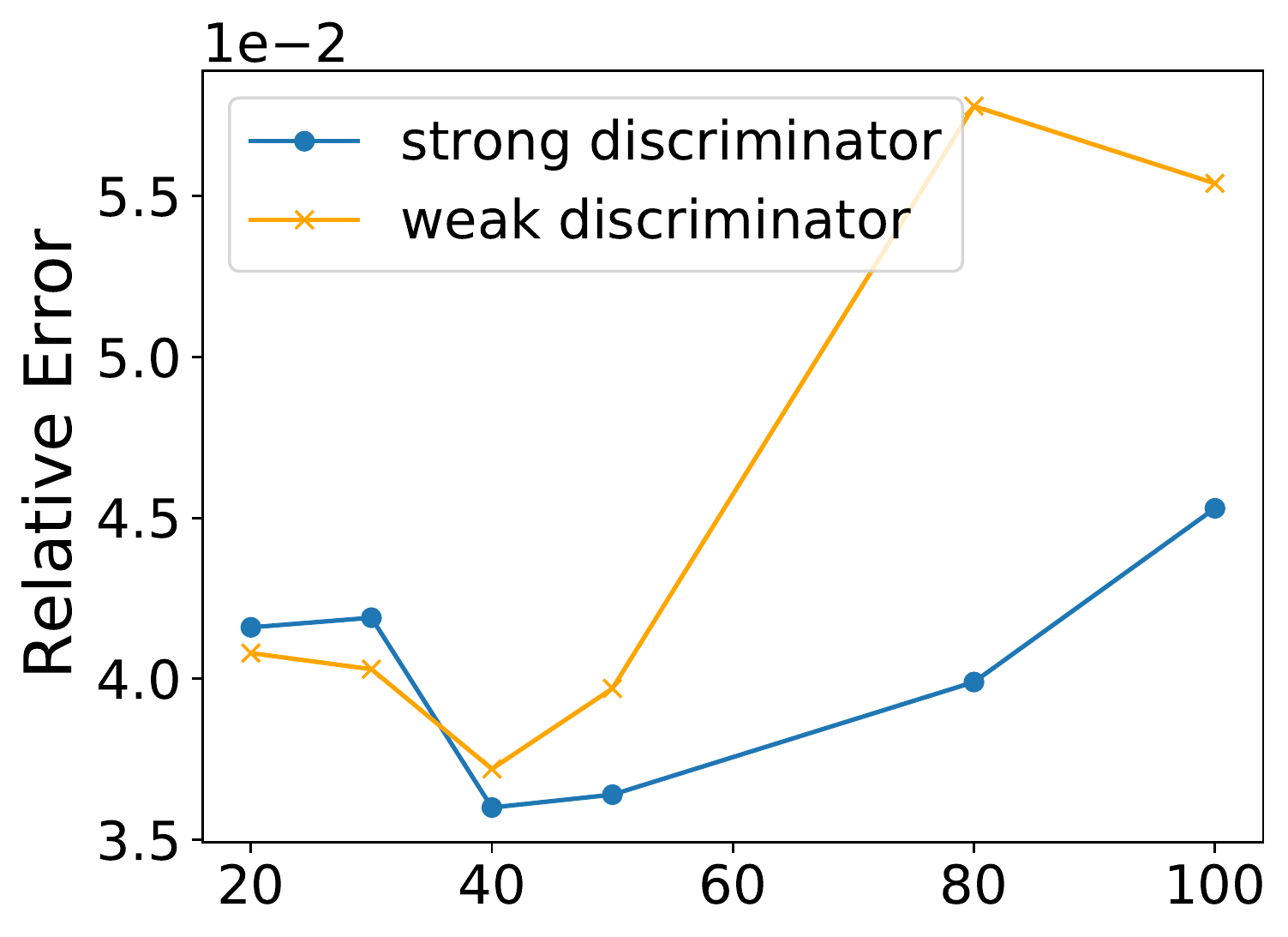}
    }\
    \subfloat
    [generator depth ($D_g$)] 
    {
    \label{exp1_gan_re_gen_depth}     
    \includegraphics[width=0.23\textwidth]{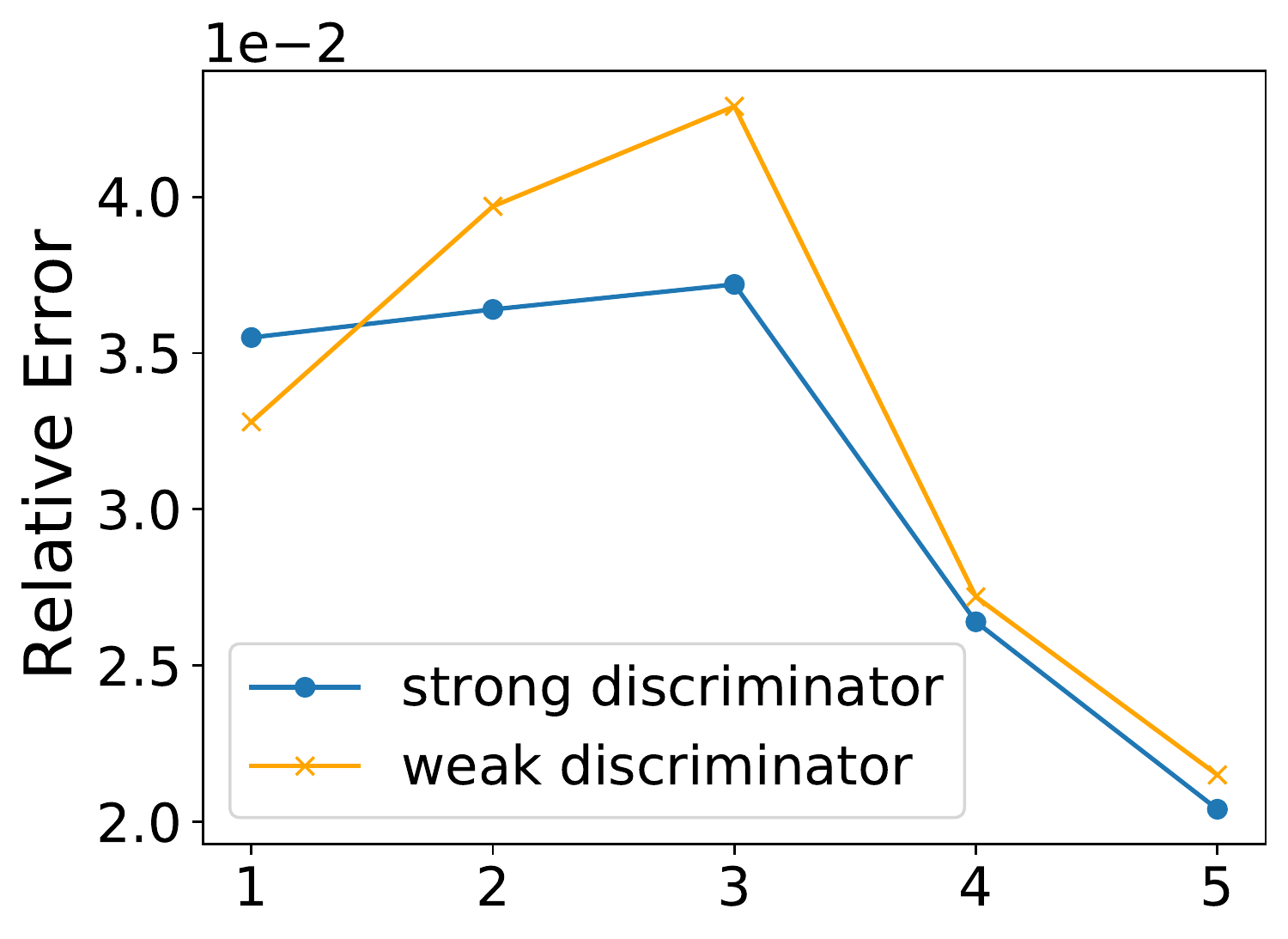}
    }\
    }
    \caption{		
		The relative error of the obtained model $\Tilde{g}$ by GAN-PINNs 
		with respect to (a): $W_f$ using  discriminators of $D_f=2$; (b): $D_f$ using  discriminators of $W_f=30$; For both two cases, $m=n=40$, $k=100$ and generators are of $(D_g,W_g)=(2,50)$;
		(c): $W_g$ using generators of $D_g = 2$; (d): $D_g$ using generators of $W_g = 50$; For both two cases, $m=n=40$, $k=100$ and strong discriminators $(D_f, W_f)=(2, 50)$ as well as weak discriminators $(D_f, W_f)=(2, 20)$.}	
    \label{exp1_gan_modelarch}
\end{figure*}

On the other hand, 
numerical results are compatible with our theoretical analysis, at least in trend. The noise levels are set to be $\sigma_1 = \sigma_2 = 0.05$. 
Figure \ref{exp1_numtrain} 
shows that more training data contributes to better convergence. Trained on $m=n=40$ boundary data and $k=100$ interior data, the generator are able to be of low loss. It suggests that our model improves with increasing numbers of training data. Moreover, We let $\mathbf{D}$ in (\ref{model_original}) to be $\text{JS}$ divergence and adopt deep ReLU feed-forward neural networks (with a $softmax$ activation function at the last layer) as discriminators, denoted as GAN-PINNs.
By comparing with WGAN-PINNs (Figure \ref{exp1_numtrain}) and 
GAN-PINNs (Figure \ref{exp1_gan_numtrain}), we see 
that the relative error by WGAN-PINNs is lower than that by GAN-PINNs.

In Figure \ref{exp1_dis},
we investigate the effects of width and depth of the discriminator with fixed $m=n=40$ data on the boundary and $k=100$ data in the interior domain. We observe that wider discriminators which have larger capacities lead to loss decreasing, consistent with the error bound for $I_1$. A mild rise of the loss in Figure \ref{exp1_loss_dis_depth} when $D_f > 4$ which may result from the optimization, does not contradict to our theory. To further validate the effectiveness of the derived error bound in theorem \ref{main_theorem}, we plot figures with relationships between Loss and $m$ ($n$), $k$, $W_f$ as well as $D_f$ in $\log$ scale, as are shown in Figure \ref{exp1_logloss_numtrain_mn}, \ref{exp1_logloss_numtrain_k}, \ref{exp1_logloss_dis_width} and \ref{exp1_logloss_dis_depth}. We found that $\log \text{Loss} \sim -0.33 (\log m \vee \log n)$, $\log \text{Loss} \sim -0.50 \log k$, $\log \text{Loss} \sim -0.31 \log W_f$ and $\log \text{Loss} \sim -0.15 D_f$, consistent with our theorem that $\text{Loss}$ decays polynomially with $m$, $n$, $k$ and $W_f$ but exponentially with $D_f$. The error bound is tight w.r.t. $m$, $n$ and $k$ but is relatively loose w.r.t. $W_f$ and $D_f$. This is within our expectation because the error bound (\ref{error_bound_1}) and (\ref{error_bound_2}) are applicable to all target distributions with finite 3-moment and the ODE (\ref{exp1_ode}) with Gaussian random boundary conditions is simple which does not require complicated discriminators.

After all, our results are somewhat ideal that $\Tilde{g}$ is the optimal solution of (\ref{emp_loss_func}), however, it is hard to achieve numerically. Deep neural networks are confronted with many challenges during training especially with adversarial networks. Another potential reason may lie in the proximal constraints of parameters of discriminators. Figure \ref{exp1_gen} displays the effects of width and depth of the generator in case of a strong and a weak discriminator. When the width of the generator increases, the loss first decreases, and then goes up again. Two reasons account for the phenomenon. Firstly, too wide generators may violate Assumption \ref{decay}, \ref{large_generator} and \ref{assump_pinns} that the generator cannot be controlled by the discriminator and PINNs regularization term. It is interesting to note that the generator trained with a strong discriminator are more stable with respect to the range of loss in Figure \ref{exp1_loss_gen_width} compared with that of with a weak discriminator. Little information is obtained from Figure \ref{exp1_loss_gen_depth}. Deeper $tanh$ neural networks do not have larger capacities theoretically based on a fact that shallow networks are not included in the set of deeper networks but may have better empirical performances sometimes. Although relative errors are not directly reflected by the loss, we observe that their movements are very similar because $\lambda=100$ is selected based on the results in Table \ref{tab1}. We are more confident with WGAN-PINNs based on the above analysis.

To further emphasize the performance of WGANs than traditional GANs, we do the same numerical experiments on our models with replacement of WGANs by GANs (the model is denoted by GAN-PINNs), although a lot of existing works have shown and explained the better performances of WGANs than GANs. Table \ref{tab2} lists relative errors of our predictions by GAN-PINNs with different noise levels and regularization parameter $\lambda$. our WGAN-PINNs performs much better than GAN-PINNs in the sense of relative errors. $\lambda=50$ is adopted in our later experiments for GAN-PINNs. More experimental details about the relationship between the numbers of training data, the model architecture as well as the relative errors are displayed in Figure \ref{exp1_gan_numtrain} and \ref{exp1_gan_modelarch}. Although KL (or JS) divergence does not hold the generalization properties w.r.t. finite number of training data, in practice, more training data leads to lower relative errors. Discriminators still paly important roles in GANs training. Deeper and wider ReLU neural networks, which have better approximation capabilities, help the model achieve lower relative error, as are shown in Figure \ref{exp1_gan_re_dis_width} and \ref{exp1_gan_re_dis_depth}. In \ref{exp1_gan_re_gen_width},  the relative errors of models with weak discriminators begin to rise up much earlier than that of with strong discriminators with increasing width of generators. Both Figure \ref{exp1_re_gen_depth} and \ref{exp1_gan_re_gen_depth} show that the depth of generators helps reduce the approximation error in this example. There are two significant differences between WGAN-PINNs and GAN-PINNs. Firstly, WGAN-PINNs achieves much lower relative errors than GAN-PINNs. Secondly, WGAN-PINNs is more stable than GAN-PINNs in training. We believe the most reason for the worse performance of GAN-PINNs is the non-generalization property of KL divergence.

\subsection{Heat Equation}
\label{1DHeatEq}

To test the performance of WGAN-PINNs, we solve a 
heat equation (including a temporal and a spatial coordinates). The PDE is as follows:
\begin{equation}
    \begin{split}
        & - u_t + \nu \cdot u_{xx} = 0, \hspace{1em} \text{(P.D.E.)}, \hspace{1em} (x,t) \in [-1,1] \times [0,1]\\
        & u(x, t=0) = sin(\pi x), \hspace{1em} \text{(I.C.)}\\
        & u(x=-1,t) = u(x=1,t) = 0, \hspace{1em} \text{(B.C.)}
    \end{split}
\end{equation}
where the thermal diffusivity is $\nu = \frac{1}{\pi^2}$. The unique solution for the above deterministic problem (without uncertainty) is $u(x,t)=\sin(\pi x) \cdot e^{-t}$. Here, we assume that the initial condition contain uncertainty, i.e., 
\begin{equation}
\label{exp2_random}
    \begin{split}
        & u(x,t=0)= \sin (\pi x) + \delta_1, \hspace{1em} \delta_1 = \frac{\epsilon_1}{\exp(3|x|-1)}, \hspace{1em} \epsilon_1 \sim \mathcal{N}(0,\sigma_1^2), \hspace{1em} \text{(I.C.)}\\
        & u(x=-1,t) = u(x=1,t) = 0, \hspace{1em} \text{(B.C.)}
    \end{split}
\end{equation}
Note that standard derivations are spatial-dependent. 
We use deep $tanh$ feed-forward neural networks (generator) $g(\mathbf{x},\mathbf{z})$ as a surrogate to the solution $\mathbf{u}(\mathbf{x},\mathbf{z})$. Our predefined architecture for both generators and discriminators are neural networks with 3 hidden layers and 50 neurons at each layer. The prior for the latent variable is chosen to be a 
two-dimensional Gaussian distribution, i.e., $\mathbf{z} \sim \mathcal{N}(\mathbf{0},\mathbf{I}_2)$. For more detailed hyper-parameters, please refer to Table \ref{tab_hyperprm}. 

In the test, we mainly consider the cases of noise-free and noise level $\sigma_1=0.05$. For both two cases, we use $m=n=1600$ boundary data (800 for initial condition plus 400 for each boundary condition) and $k=500$ interior data to train our probabilistic model. All data points are randomly and uniformly sampled. 
For the noise-free case, the relative error $\mathcal{E}=1 \times 10^{-3}$ while $\mathcal{E}=8 \times 10^{-3}$ for $\sigma_1=0.05$. We display the mean solution 
values as our prediction and the lower and upper bounds of solution values (shown as yellow bars) which are associated with uncertainties captured by our model. Visualized results are summarized in Figure \ref{exp2_visual_pre} and \ref{exp2_visual_his} for comparisons of both mean and standard derivations and 
histograms between the simulated solution and generated data. Our model is capable to capture the uncertainty of the initial/boundary data, and propagate it to the interior domain following physical laws.

\begin{figure*}[h!]
\makebox[\linewidth][c]{%
  \centering
  \subfloat
  [$\sigma_1 = 0$, \hspace{0.3em} $t=0$] 
  {
     \label{exp2_pre1}     
    \includegraphics[width=0.3\textwidth]{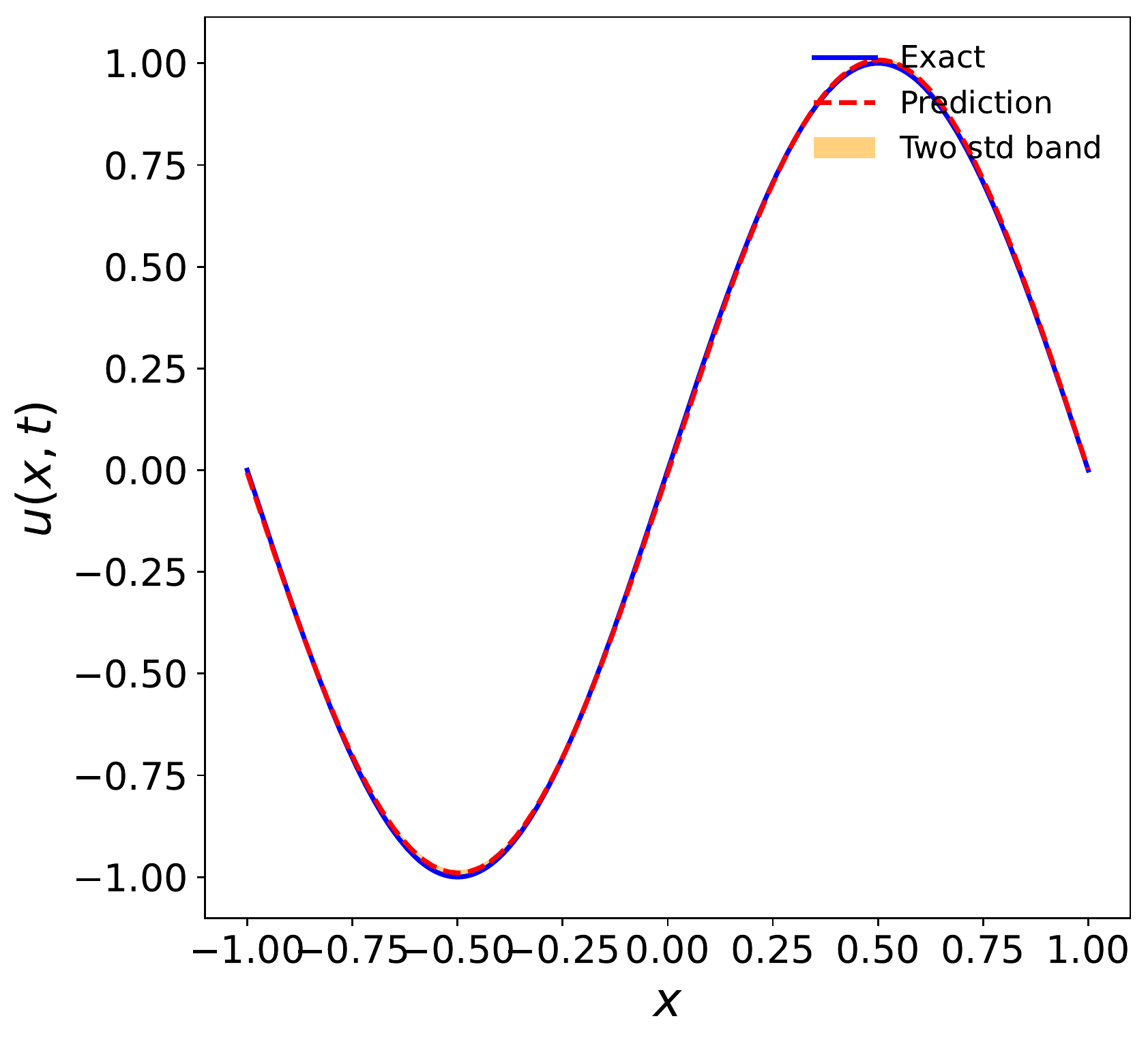}
    }\  
    \subfloat
    [$\sigma_1 = 0$, \hspace{0.3em} $t=0.5$] 
    {
    \label{exp2_pre2}     
    \includegraphics[width=0.3\textwidth]{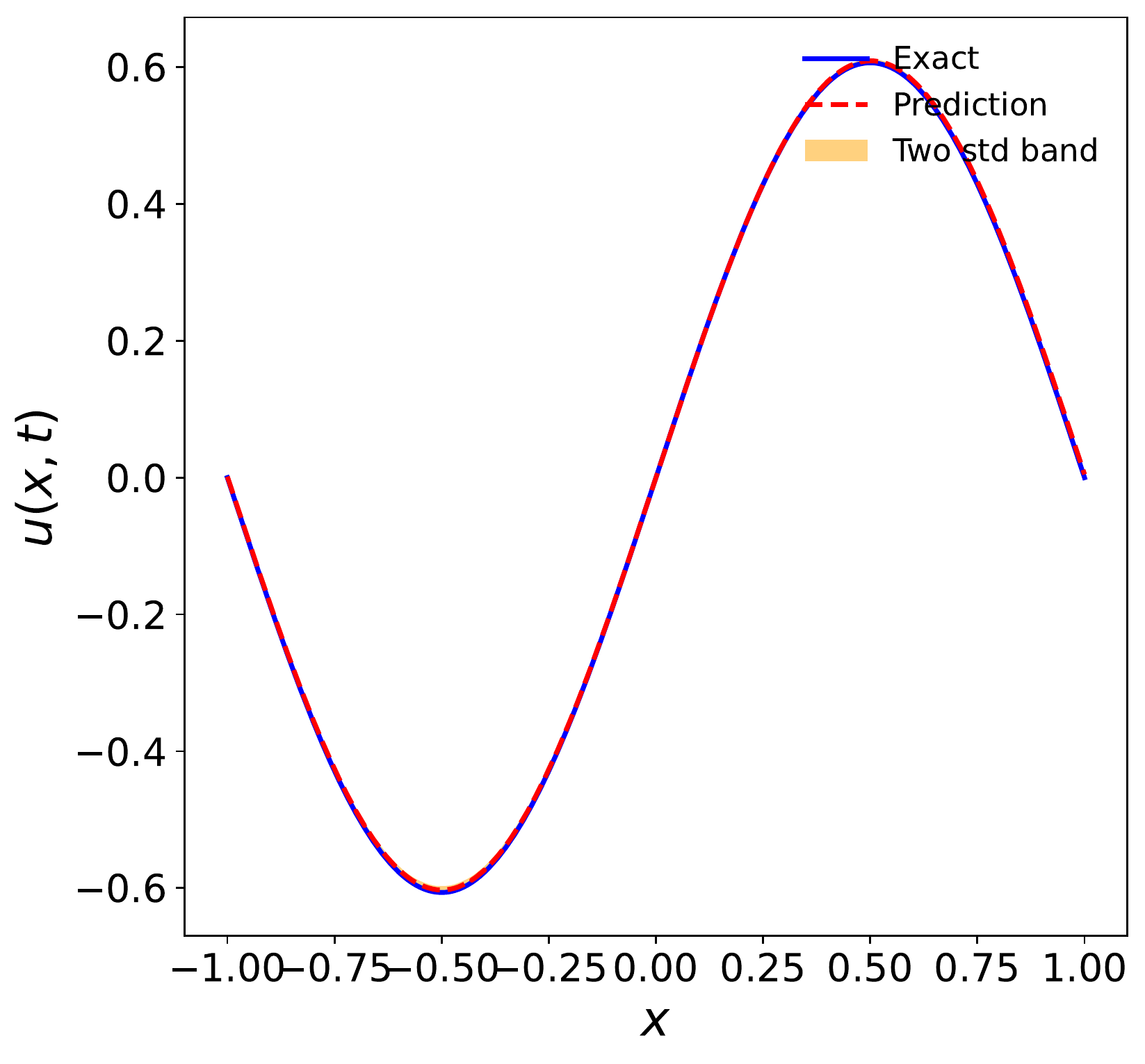}
    }\
        \subfloat
    [$\sigma_1 = 0$, \hspace{0.3em} $t=1$] 
    {
    \label{exp2_pre3}     
    \includegraphics[width=0.3\textwidth]{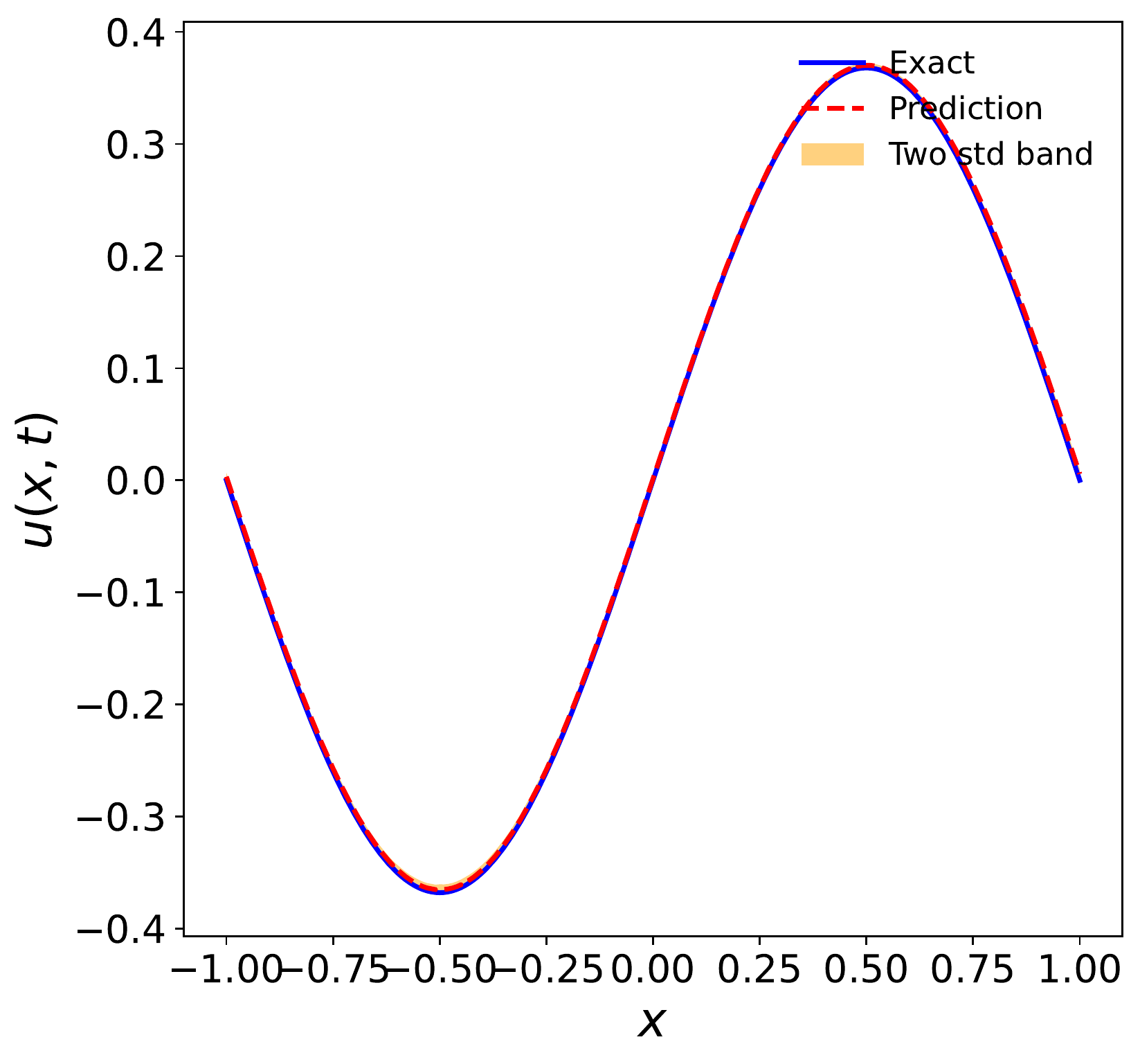}
    }\
    }\\
    
    \makebox[\linewidth][c]{%
  \centering
  \subfloat
  [$\sigma_1 = 0.05$, \hspace{0.3em} $t=0$] 
  {
     \label{exp2_pre4}     
    \includegraphics[width=0.3\textwidth]{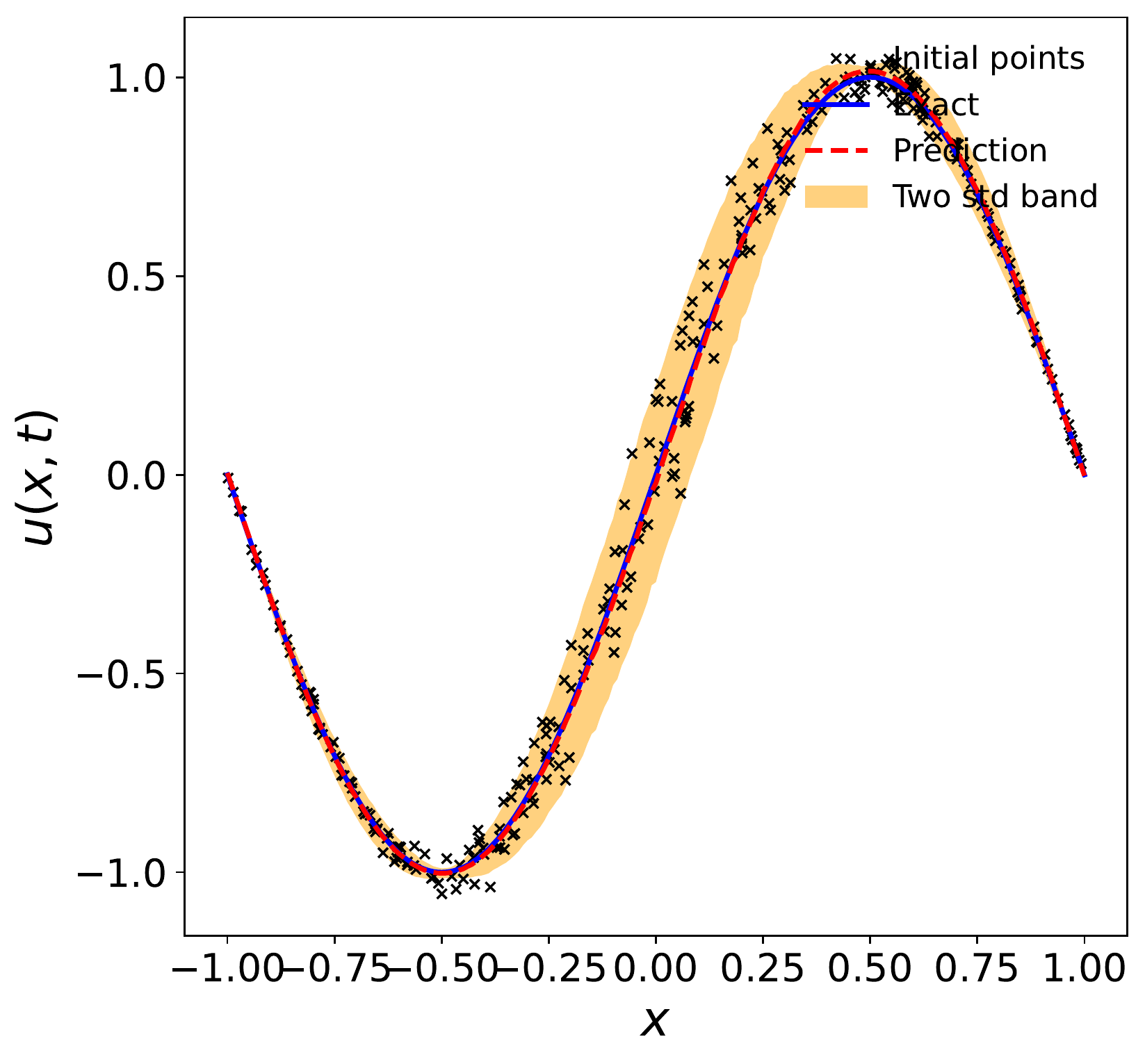}
    }\  
    \subfloat
    [$\sigma_1 = 0.05$, \hspace{0.3em} $t=0.5$] 
    {
    \label{exp2_pre5}     
    \includegraphics[width=0.3\textwidth]{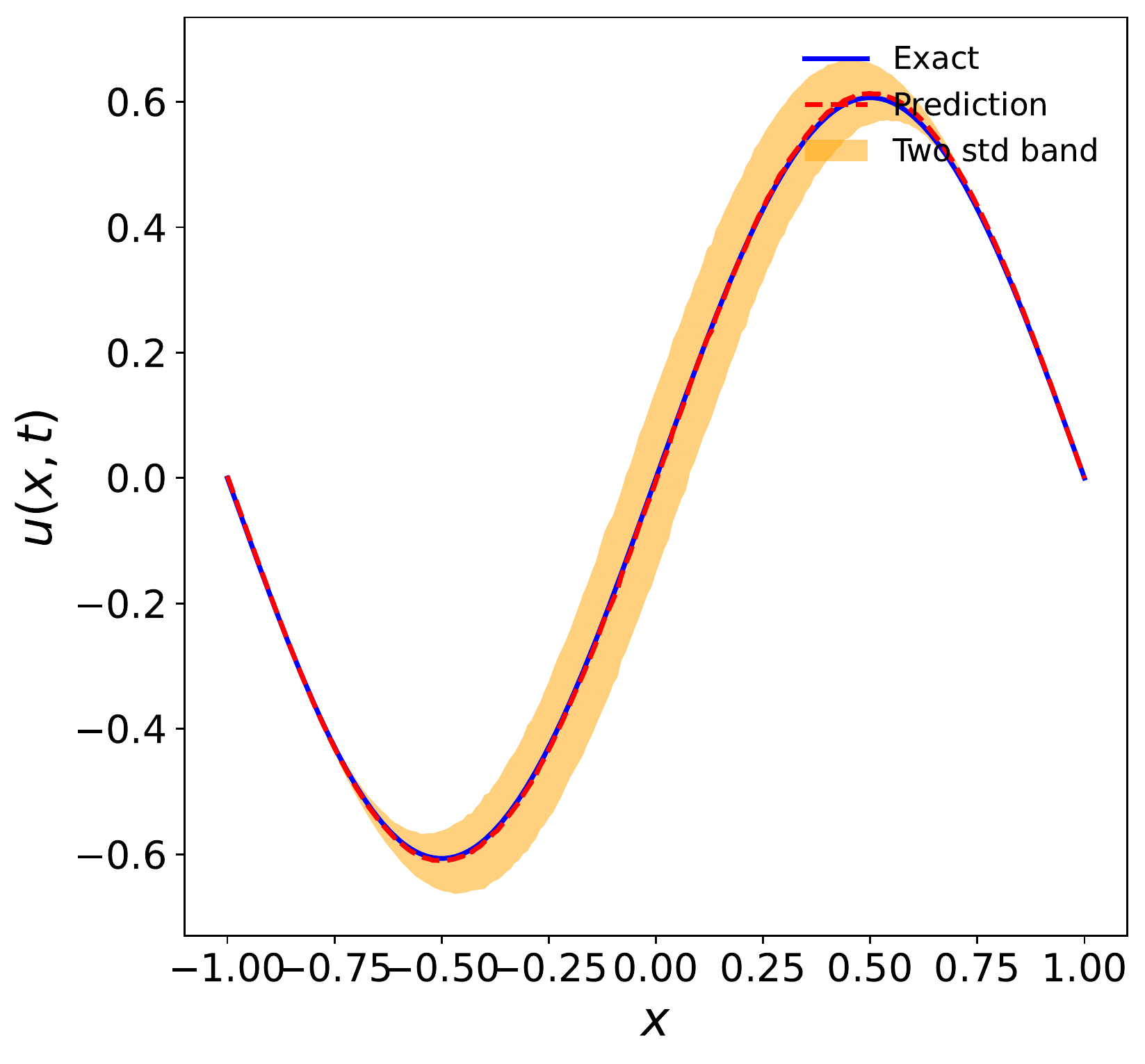}
    }\
        \subfloat
    [$\sigma_1 = 0.05$, \hspace{0.3em} $t=1$] 
    {
    \label{exp2_pre6}     
    \includegraphics[width=0.3\textwidth]{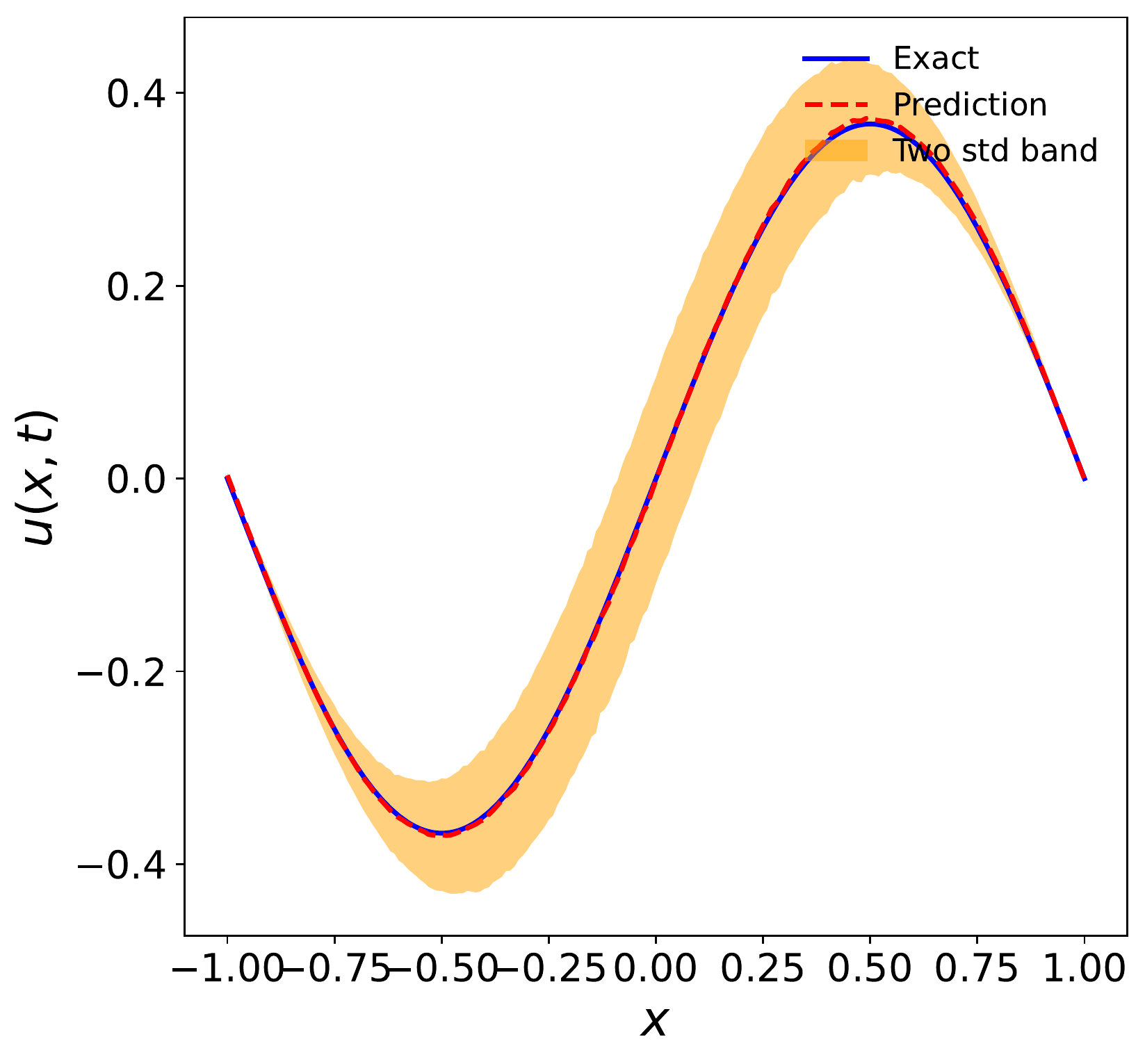}
    }\
    }
    \caption{		
	 Heat equation: the mean, the lower and the upper bounds of solution values
	 by $p_{\Tilde{g}}(u)$ given $(x,t)$.}	
    \label{exp2_visual_pre}
\end{figure*}

\begin{figure*}[h!]
\makebox[\linewidth][c]{%
  \centering
  \subfloat
  [$u(x=-0.25,t=0)$] 
  {
     \label{exp2_his1}     
    \includegraphics[width=0.3\textwidth]{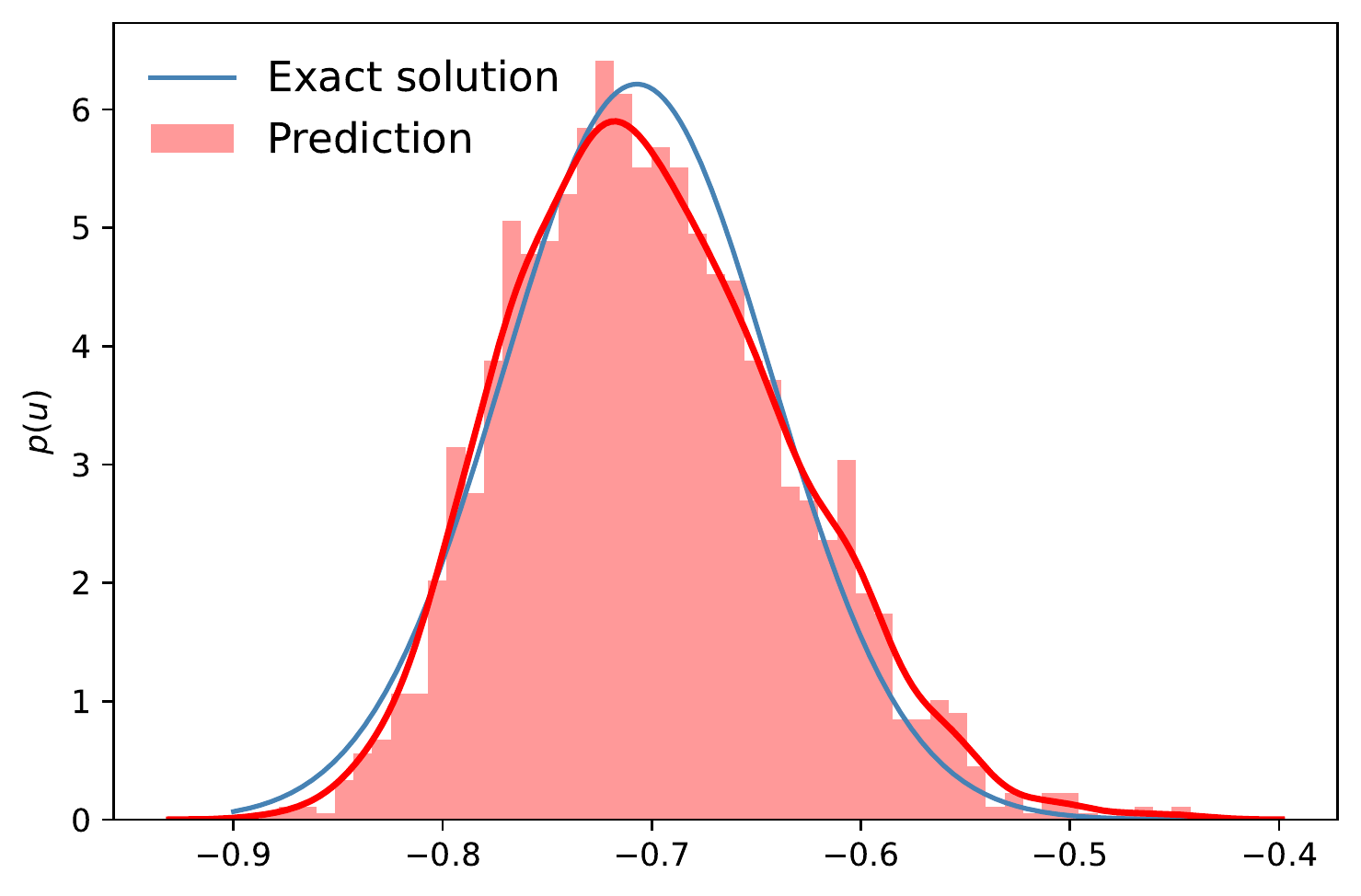}
    }\  
    \subfloat
    [$u(x=0,t=0)$] 
    {
    \label{exp2_his2}     
    \includegraphics[width=0.3\textwidth]{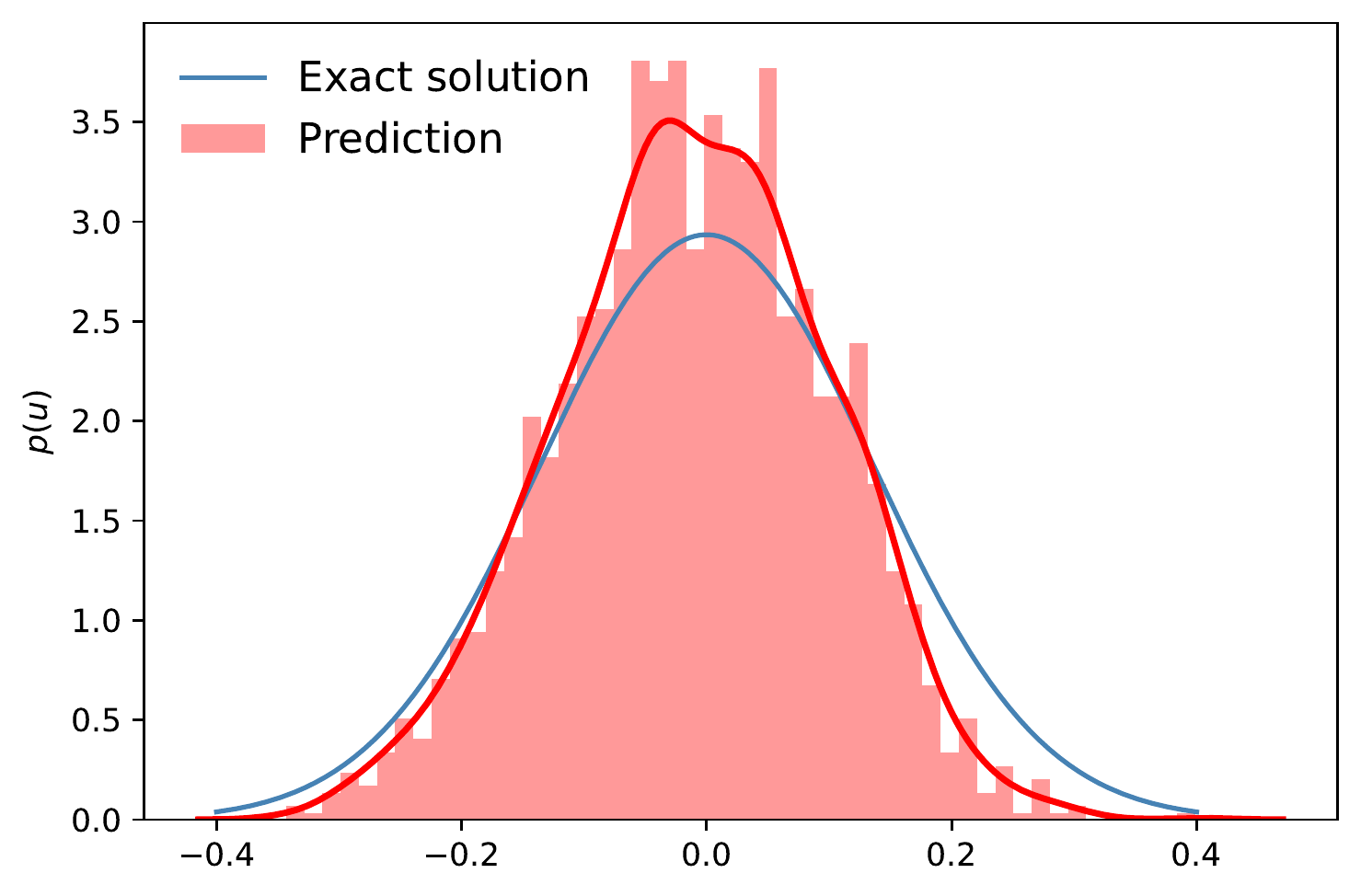}
    }\
    \subfloat
    [$u(x=0.25,t=0)$] 
    {
    \label{exp2_his3}     
    \includegraphics[width=0.3\textwidth]{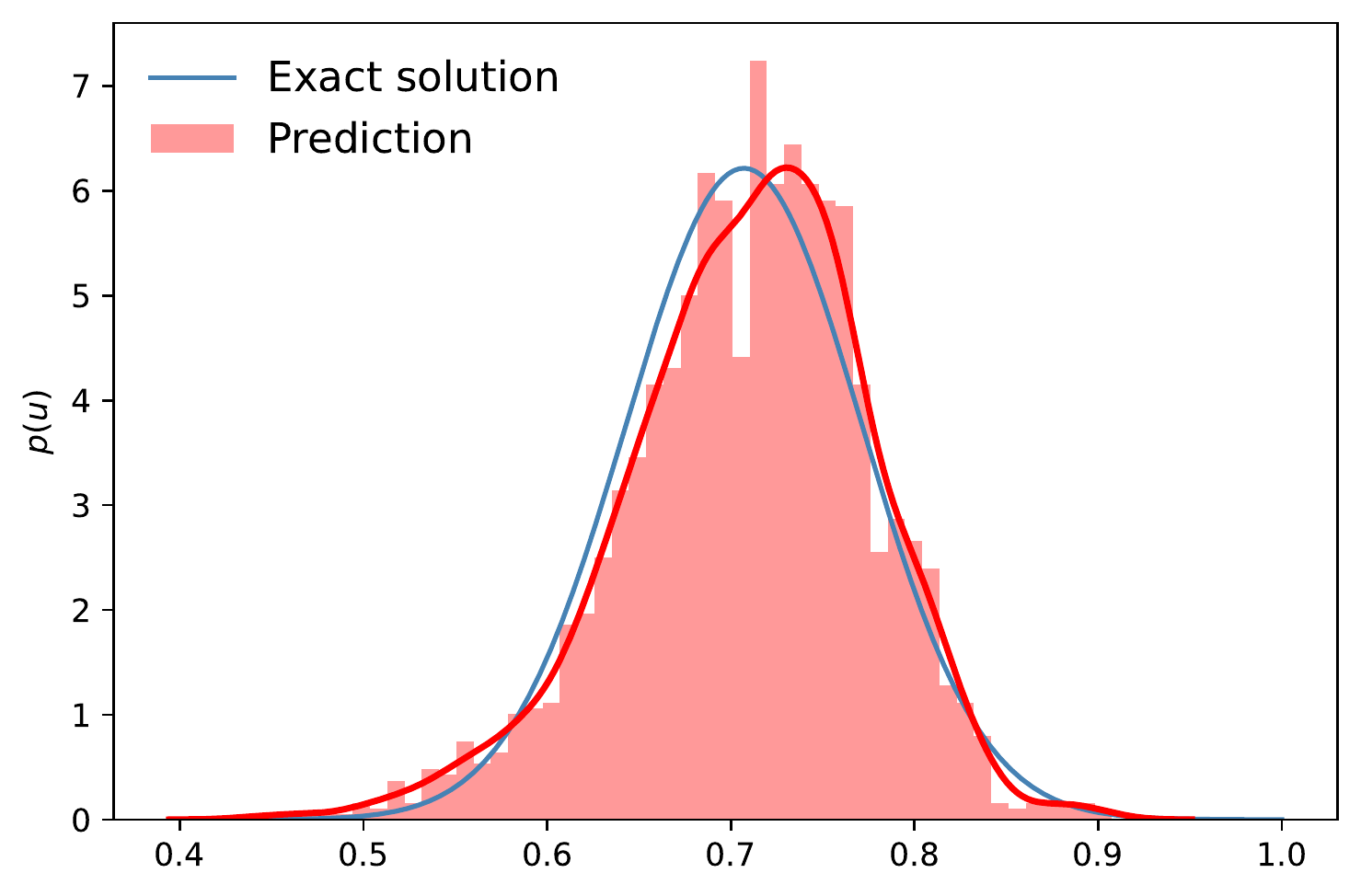}
    }\
    }
    \caption{		
	 Heat equation: the histograms of the generated data and the simulated solution at $(x=-0.25,t=0)$, $(x=0,t=0)$ and $(x=0.25,t=0)$ for $\sigma_1 = 0.05$. The 
	 predicted mean and standard deviation are  
	 $(\hat{\mu},\hat{\sigma})$, and the exact mean and standard deviation are $(\mu,\sigma)$. (a): $(\hat{\mu},\hat{\sigma})=(-0.701,0.066)$, $(\mu,\sigma)=(-0.707,0.064)$; (b): $(\hat{\mu},\hat{\sigma}) = (-0.010, 0.109)$, $(\mu,\sigma)=(0,0.136)$ ; (c): $(\hat{\mu},\hat{\sigma}) = (0.707, 0.065)$, $(\mu,\sigma)=(0.707,0.064)$.}	
    \label{exp2_visual_his}
\end{figure*}

\subsection{Burgers Equation}

\begin{figure*}[h!]
\makebox[\linewidth][c]{%
  \centering
  \subfloat
  [$\sigma_1 = \sigma_2 = \sigma_3 = 0$, \hspace{0.3em} $t=0$] 
  {
     \label{exp3_pre1}     
    \includegraphics[width=0.5\textwidth]{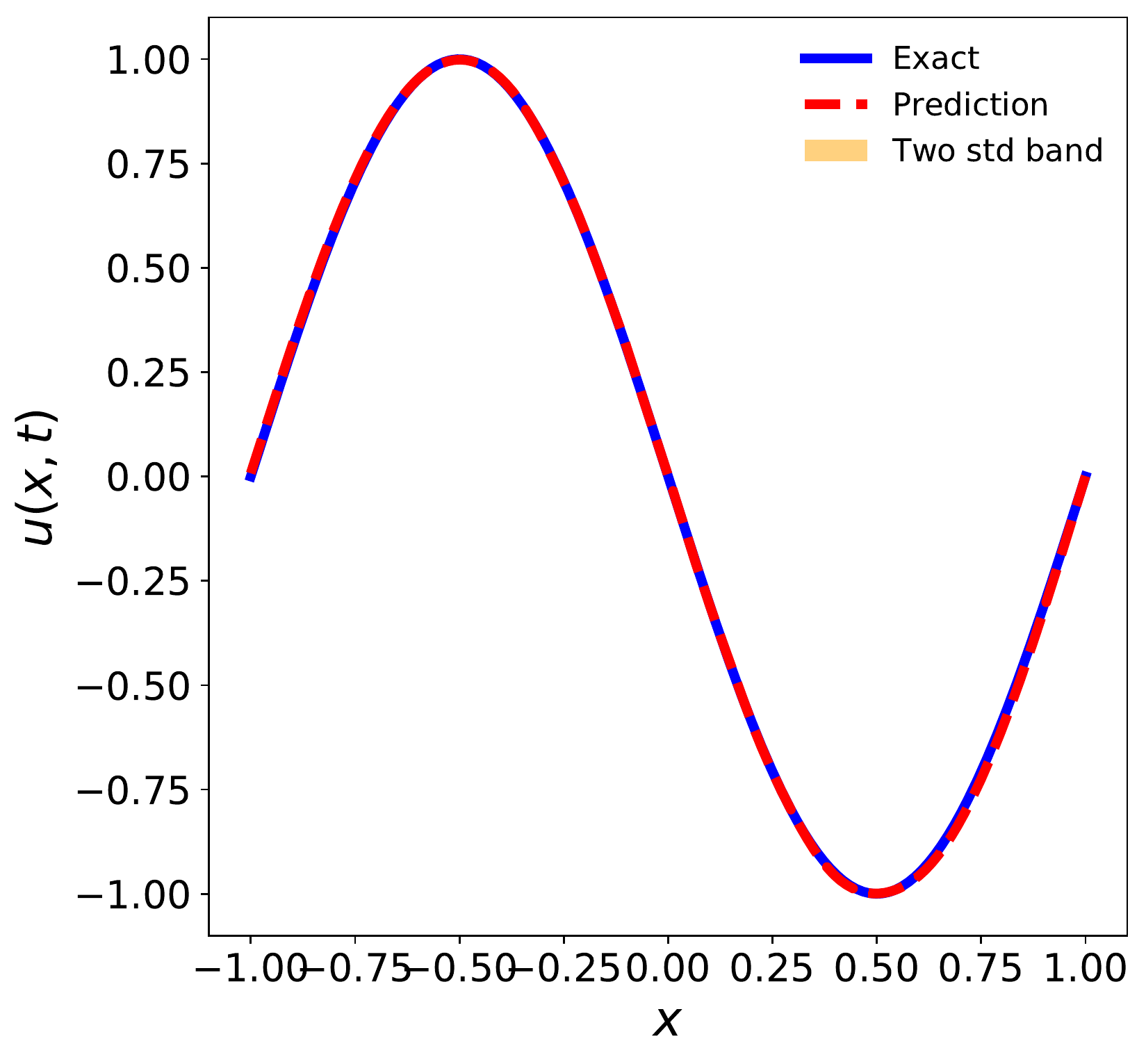}
    }\  
    \subfloat
    [$\sigma_1 = \sigma_2 = \sigma_3 = 0$, \hspace{0.3em} $t=0.25$] 
    {
    \label{exp3_pre2}     
    \includegraphics[width=0.5\textwidth]{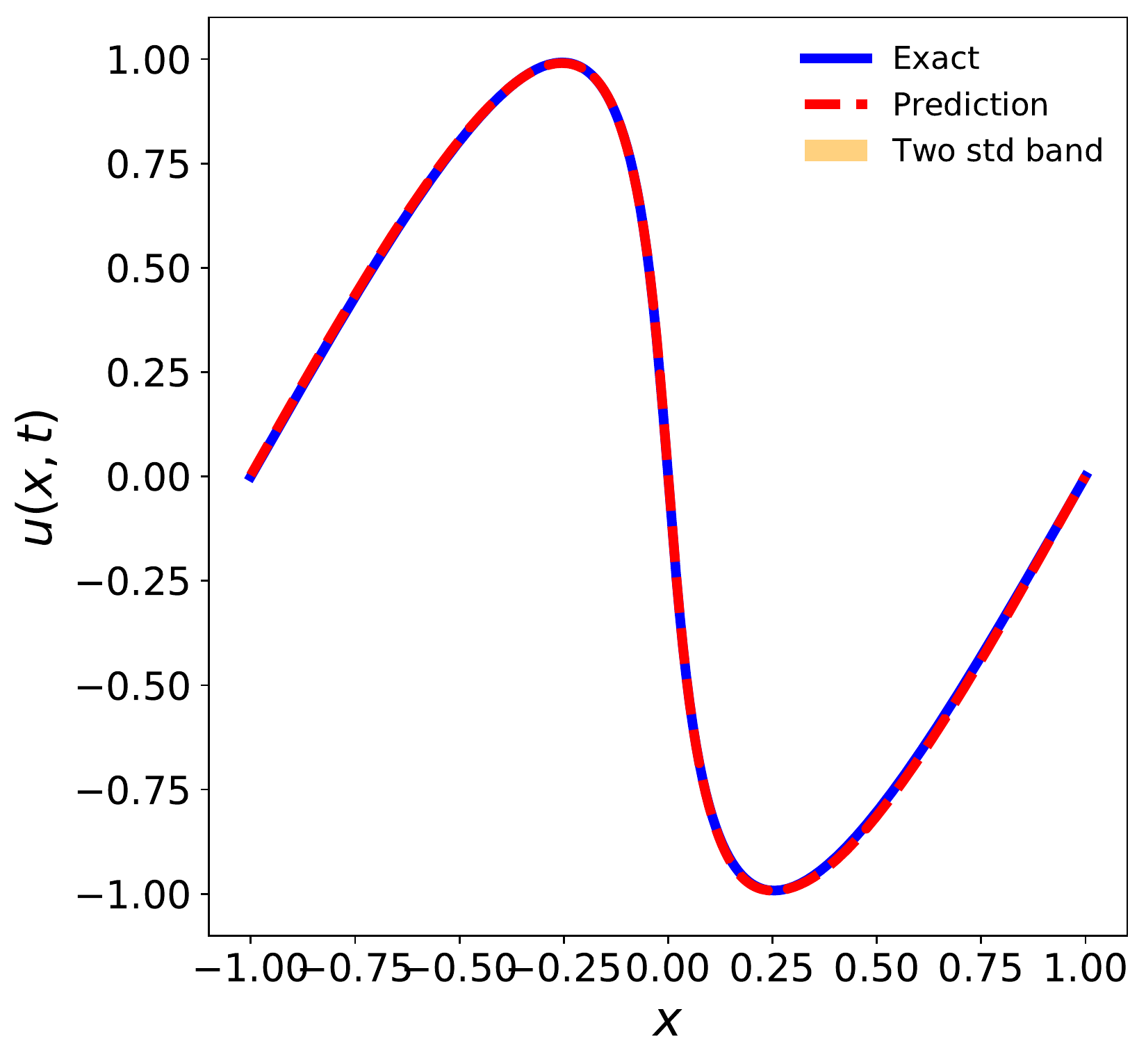}
    }\
    }\\
    
    \makebox[\linewidth][c]{%
  \centering
  \subfloat
  [$\sigma_1 = \sigma_2 = \sigma_3 = 0$, \hspace{0.3em} $t=0.5$] 
  {
     \label{exp3_pre3}     
    \includegraphics[width=0.5\textwidth]{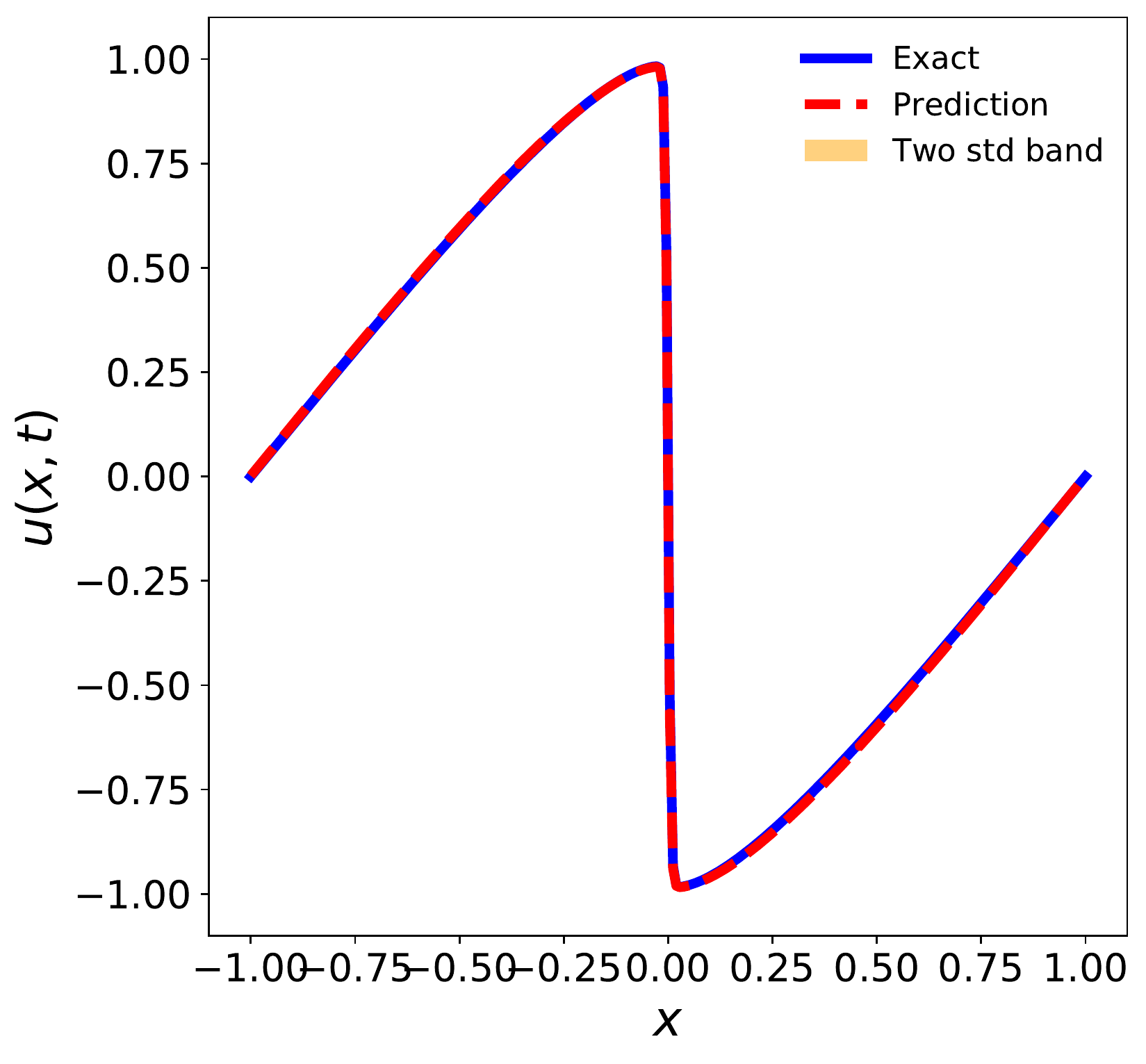}
    }\  
    \subfloat
    [$\sigma_1 = \sigma_2 = \sigma_3 = 0$, \hspace{0.3em} $t=0.75$] 
    {
    \label{exp3_pre4}     
    \includegraphics[width=0.5\textwidth]{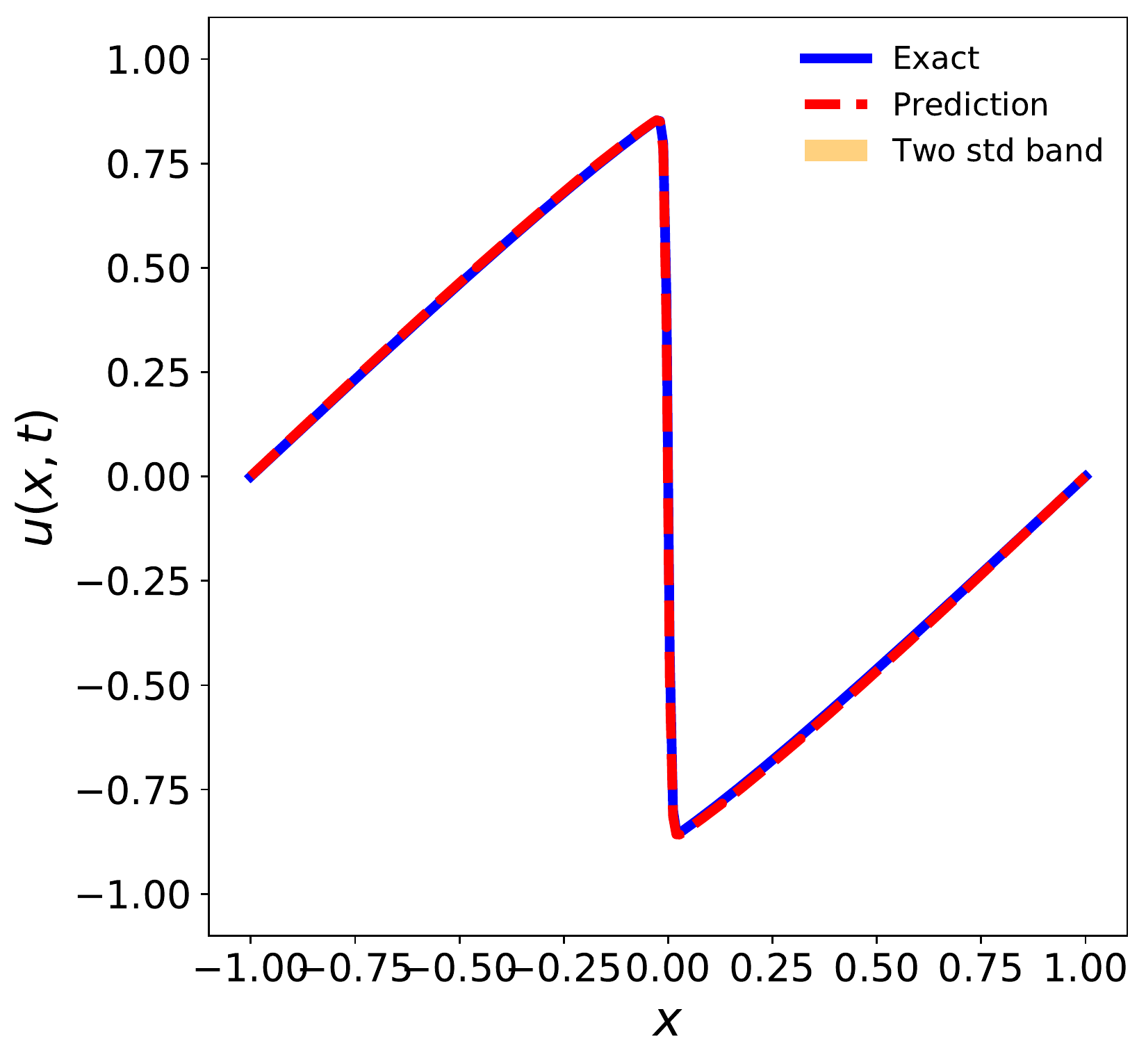}
    }\
    }
    \caption{		
     Burgers equation (noise-free): the mean, the lower and upper bounds 
	  of $p_{\Tilde{g}}(u)$ given $(x,t)$.}	
    \label{exp3_visual_pre1}
\end{figure*}

\begin{figure*}[h!]
\makebox[\linewidth][c]{%
  \centering
  \subfloat
  [$\sigma_1 = \sigma_2 = \sigma_3 = 0.1$, \hspace{0.3em} $t=0$] 
  {
     \label{exp3_pre5}     
    \includegraphics[width=0.5\textwidth]{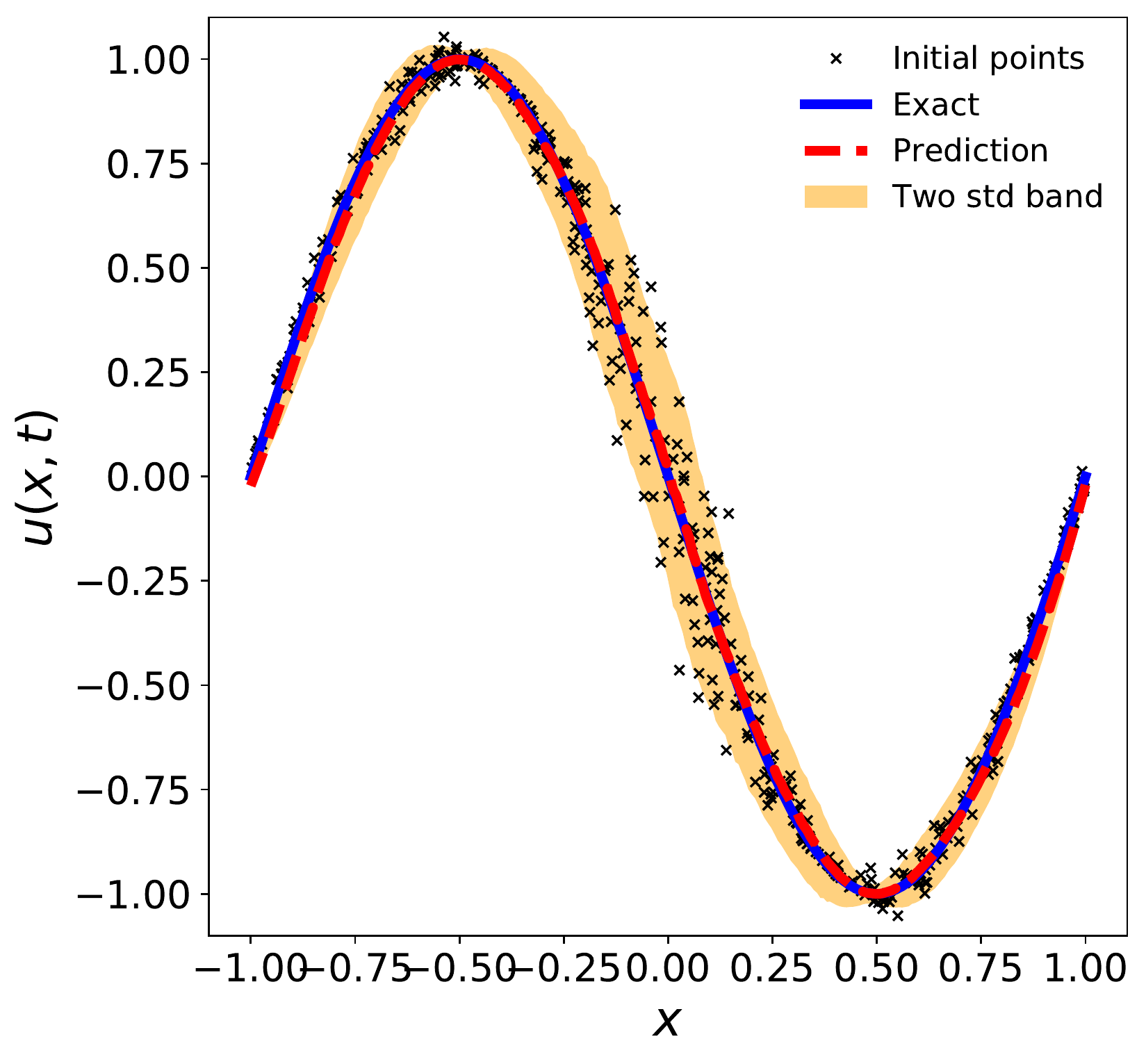}
    }\  
    \subfloat
    [$\sigma_1 = \sigma_2 = \sigma_3 = 0.1$, \hspace{0.3em} $t=0.25$] 
    {
    \label{exp3_pre6}     
    \includegraphics[width=0.5\textwidth]{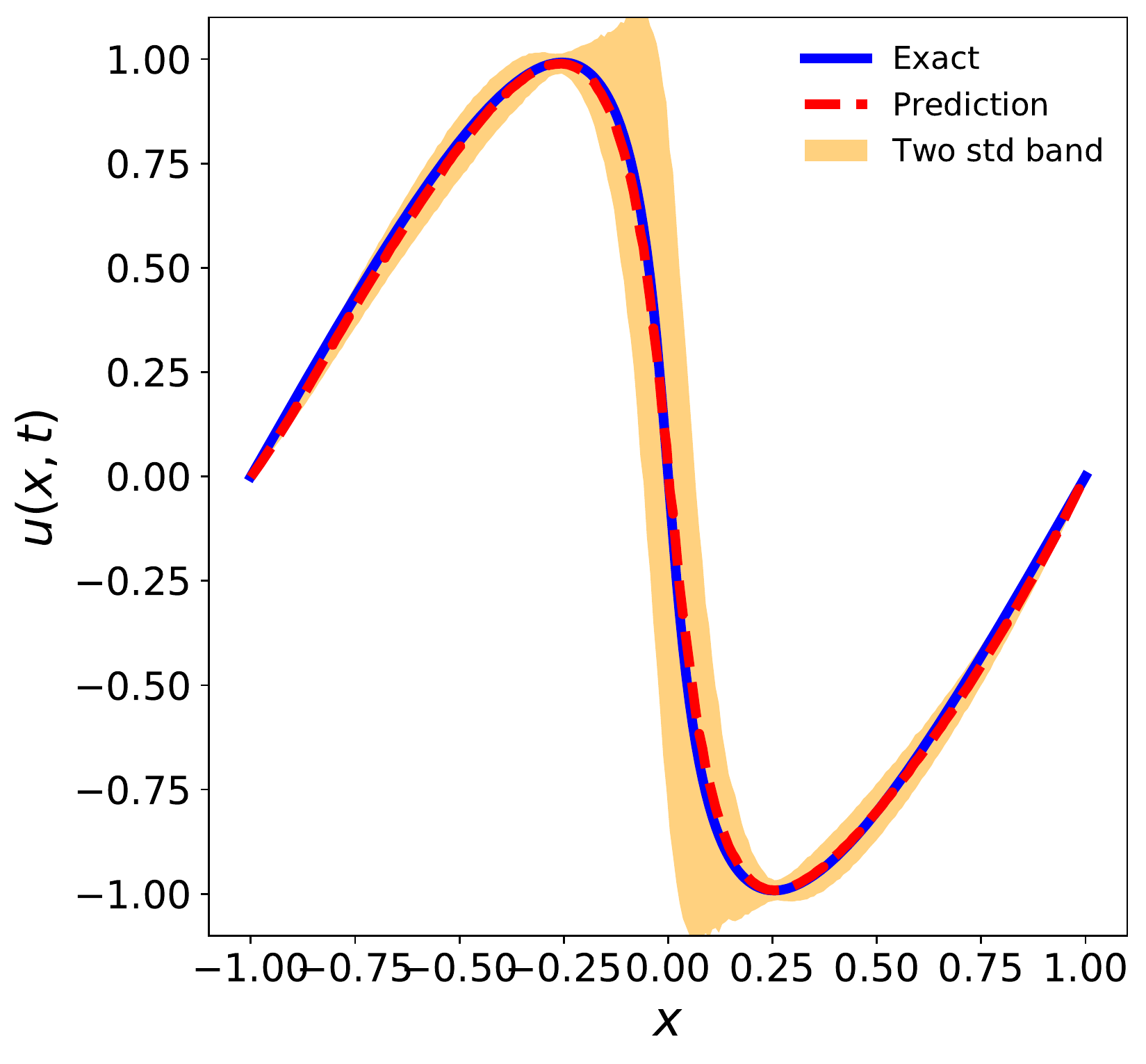}
    }\
    }\\
    
    \makebox[\linewidth][c]{%
  \centering
  \subfloat
  [$\sigma_1 = \sigma_2 = \sigma_3 = 0.1$, \hspace{0.3em} $t=0.5$] 
  {
     \label{exp3_pre7}     
    \includegraphics[width=0.5\textwidth]{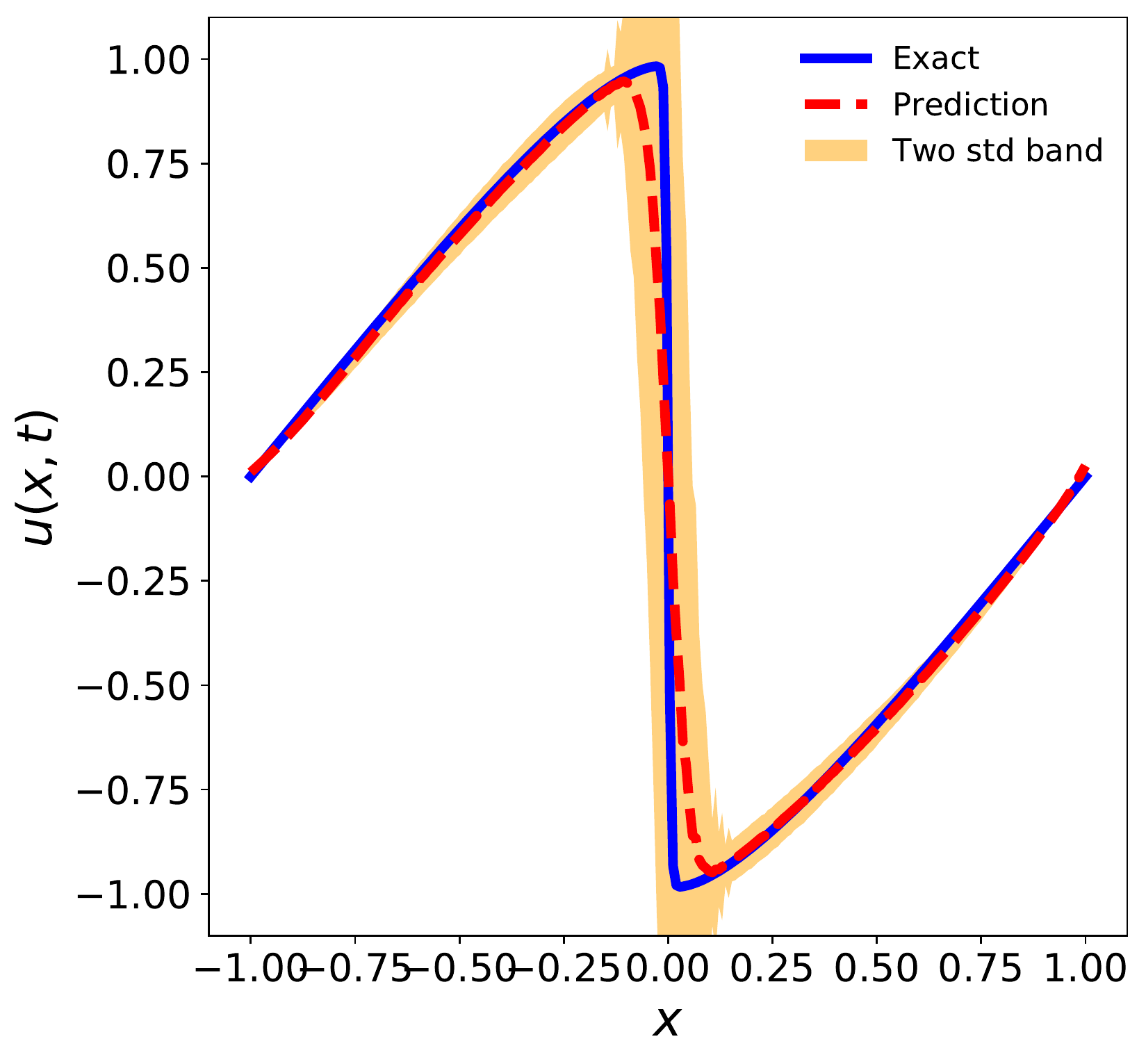}
    }\  
    \subfloat
    [$\sigma_1 = \sigma_2 = \sigma_3 = 0.1$, \hspace{0.3em} $t=0.75$] 
    {
    \label{exp3_pre8}     
    \includegraphics[width=0.5\textwidth]{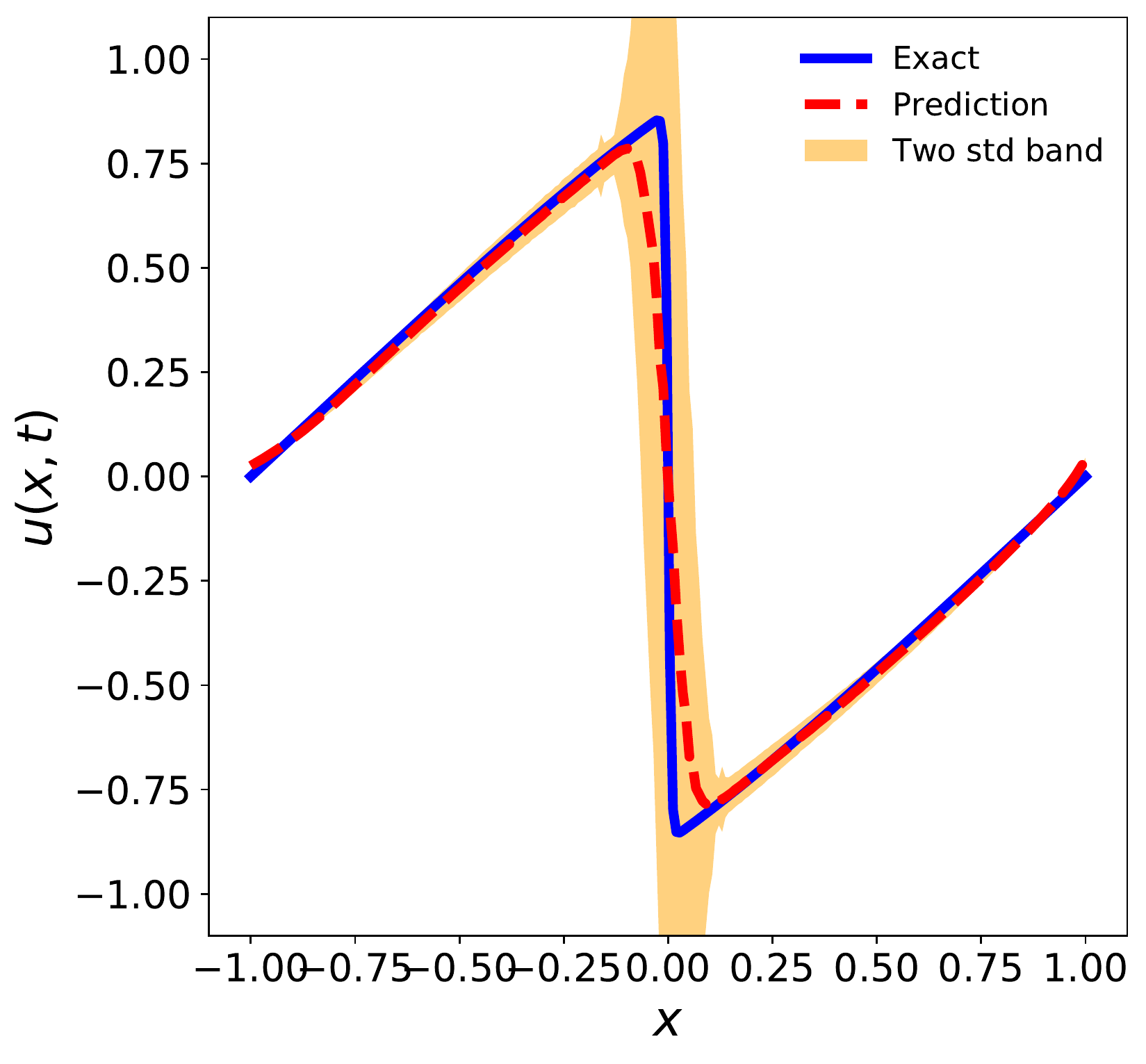}
    }\
    }
    \caption{		
	 Burgers equation ($\sigma=0.1$): the mean, the lower and upper bounds 
	  of $p_{\Tilde{g}}(u)$ given $(x,t)$.}	
    \label{exp3_visual_pre2}
\end{figure*}

\begin{figure*}[h!]
\makebox[\linewidth][c]{%
  \centering
    \subfloat
    [$u(x=-0.25,t=0)$] 
    {
     \label{exp3_his1}     
    \includegraphics[width=0.3\textwidth]{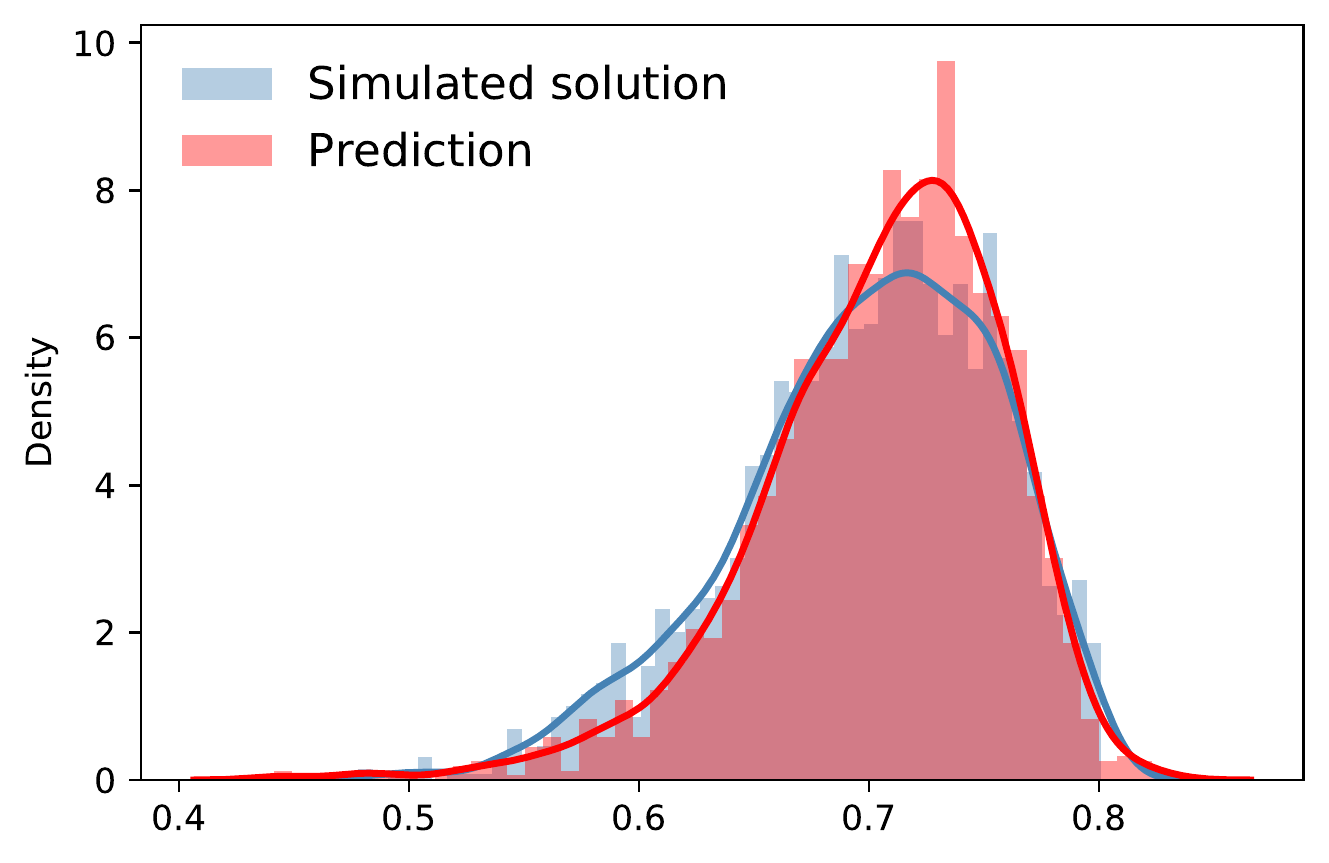}
    }\  
    \subfloat
    [$u(x=0,t=0)$] 
    {
    \label{exp3_his2}     
    \includegraphics[width=0.3\textwidth]{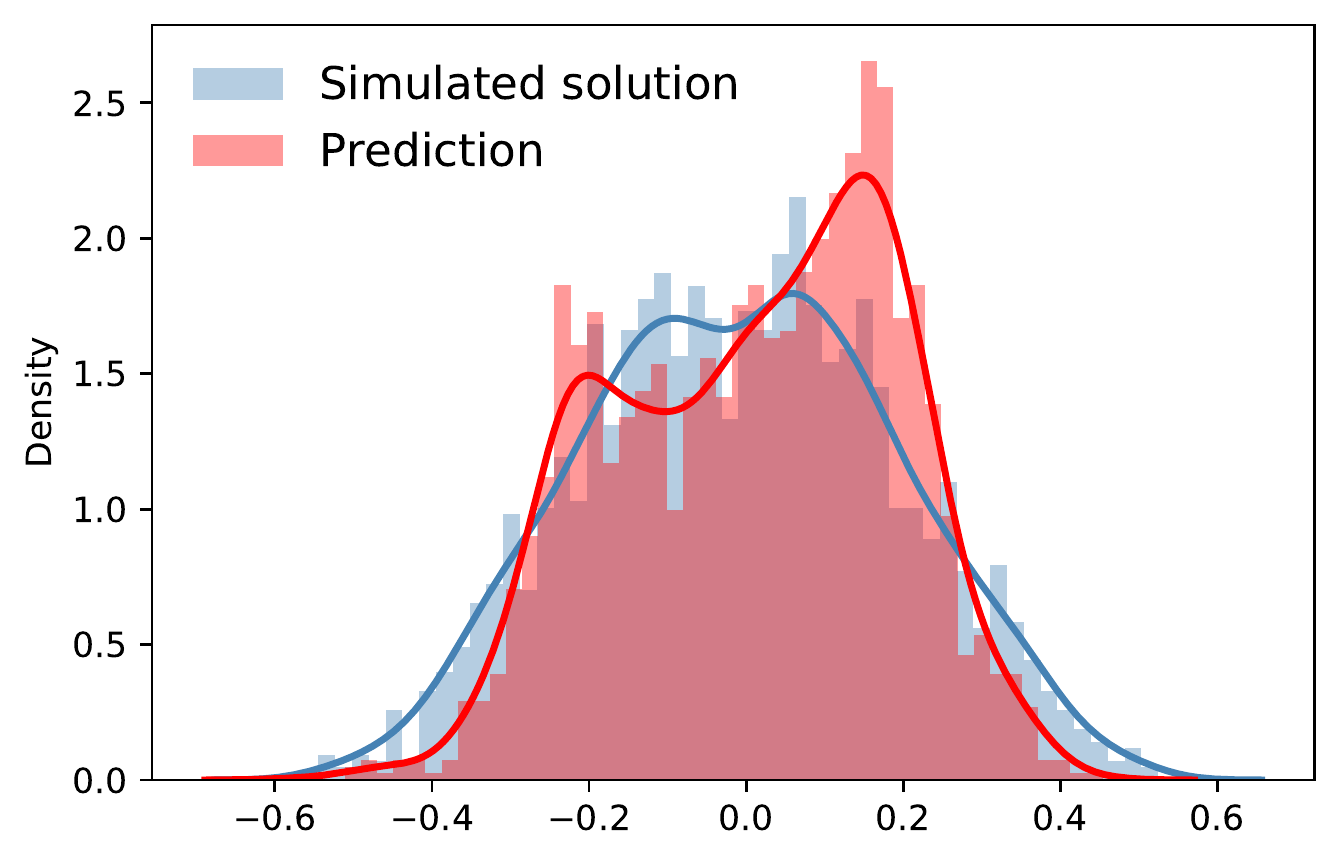}
    }\
    \subfloat
    [$u(x=0.25,t=0)$] 
    {
    \label{exp3_his3}     
    \includegraphics[width=0.3\textwidth]{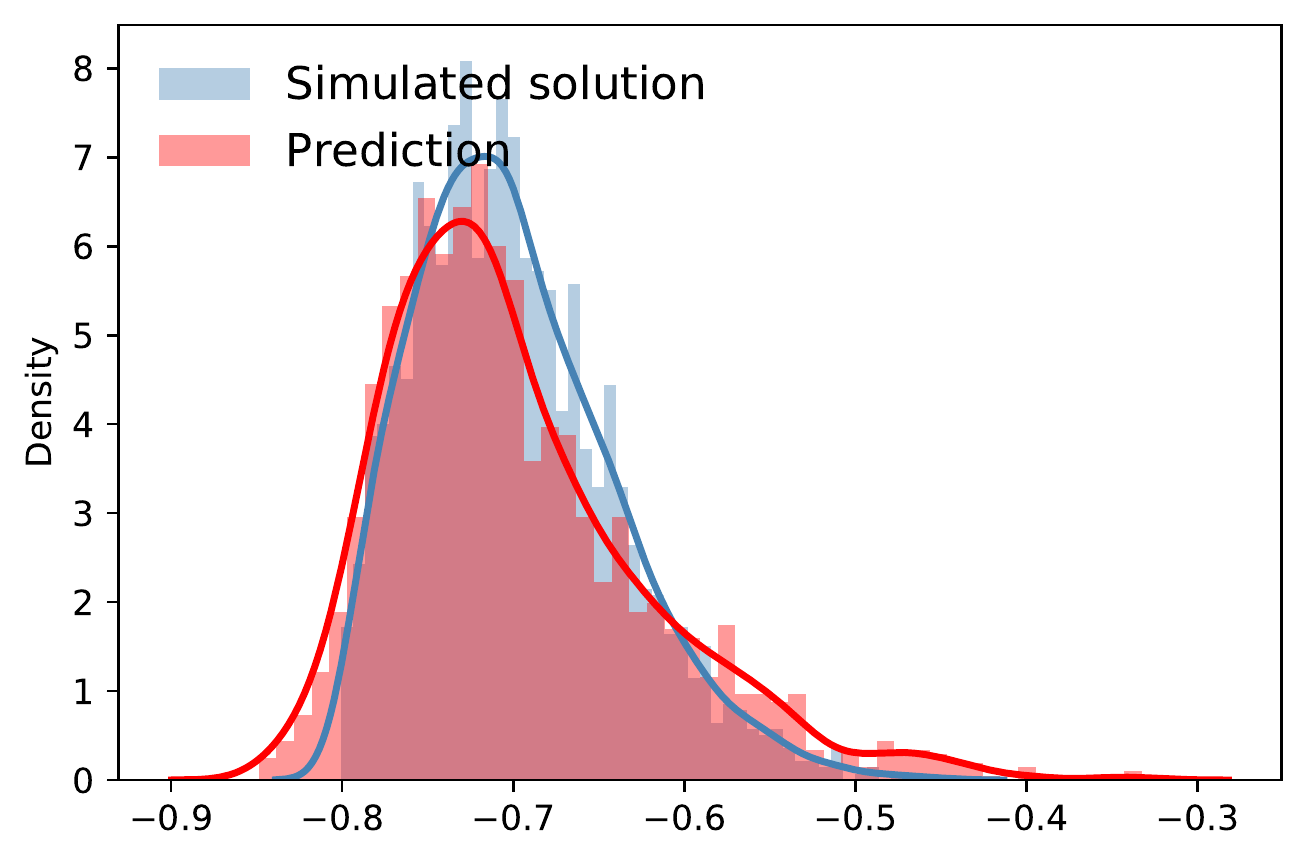}
    }\
    }
    \caption{		
	 Burgers equation: 
	 the histograms of the generated data and the simulated solution at $(x=-0.25,t=0)$, $(x=0,t=0)$ and  $(x=0.25,t=0)$ for $\sigma_1 = 0.1$. The 
	 predicted mean and standard deviation are $(\hat{\mu},\hat{\sigma})$ and the 
	 simulated mean and standard deviation are $(\mu_{est},\sigma_{est})$. (a) $(\hat{\mu},\hat{\sigma})=(0.705,0.053)$, $(\mu_{est},\sigma_{est})=(0.701,0.056)$; (b) $(\hat{\mu},\hat{\sigma}) = (0.011, 0.178)$, $(\mu_{est},\sigma_{est})=(-0.001,0.202)$; (c) $(\hat{\mu},\hat{\sigma}) = (-0.698, 0.077)$, $(\mu_{est},\sigma_{est})=(-0.698,0.058)$.}	
    \label{exp3_visual_his}
\end{figure*}

In this example, we test the effectiveness and robustness of WGAN-PINNs on solving a time-dependent nonlinear Burgers equation (one spatial dimension) which contains
a steep solution at $x=0$. The Burgers equation is given as follows:
\begin{equation}
\begin{split}
    & u_t + u u_{x} - \nu u_{xx} = 0, \hspace{1em} \text{(P.D.E.)}, \hspace{1em} (x,t) \in [-1,1] \times [0,1]\\
    & u(x,t=0) = - \sin(\pi x), \hspace{1em} \text{(I.C.)}\\
    & u(x=-1,t) = u(x=1,t)=0, \hspace{1em} \text{(B.C.)}
\end{split}
\end{equation}
where the viscosity parameter is chosen as $\nu = \frac{0.01}{\pi}$. 
The exact solution (without uncertainty) is solved through the Cole-Hopf transformation \cite{hopf1950partial}.
Here 
we adopt a more complicated non-Gaussian and spatially dependent random initial conditions, i.e.,
\begin{equation}
\label{exp3_random}
    \begin{split}
        & u(x,t=0)= \sin (\pi (x+\delta_1)) + \delta_1, \hspace{1em} \delta_1 = \frac{\epsilon_1}{\exp(3|x|)}, \hspace{1em} \epsilon_1 \sim \mathcal{N}(0,\sigma_1^2), \hspace{1em} \text{(I.C.)}\\
    \end{split}
\end{equation}

In the experiment, we merely display its predictive abilities. The generator and the discriminator are neural networks of $(D_g,W_g)=(4,50)$ and $(D_f,W_f)=(3,50)$ respectively. We have 500 samples for each boundary/initial conditions (therefore, $m=n=1000$ for 500 boundary data and 500 initial data) and 10,000 samples in interior domain for PINNs ($k=10,000$). We sample uniformly for $(x,t)$ on the boundary and in the domain. We find that the training will fail if we use small interior samples. 
The number of interior training data $k$ here is much larger than that required 
in the last two experiments. The steepness property of the exact solution at $x=0$ results in the phenomenon, consistent with Assumption \ref{assump_pinns} in section \ref{conv}. We adopt a two dimensional latent variable $\mathbf{z} \sim \mathcal{N}(\mathbf{0},\mathbf{I}_2)$ in the generative model. 

Figure \ref{exp3_visual_pre1} and \ref{exp3_visual_pre2} show visualized results for our predictions at $t=0$, $0.25$, $0.5$ and $0.75$ with noise levels $\sigma_1=0$ (noise-free) and $\sigma_1=0.1$. The prediction lines match the solution
without certainty lines well. Yellow bars represent the lower and upper bounds 
of generated solutions, cover
most of the initial random interior data as is shown in Figure \ref{exp3_pre5}. 
Statistical comparison of the generated data and the simulated solution are displayed in Figure \ref{exp3_visual_his}. The results further demonstrate that our model is able to capture non-Gaussian and data-dependent noise (uncertainty), which are more common in real applications. The remarkable prediction accuracy indicates the effectiveness and robustness of WGAN-PINNs.

\subsection{Nonlinear Allen-Cahn equation}

We further investigate the performance of WGAN-PINNs on the following 
two-dimensional nonlinear Allen-Cahn equation:
\begin{equation}
    \begin{split}
        & c \cdot ((u_x)^2 + (u_y)^2) + u(u^2-1) = f, \hspace{1em} \text{(P.D.E.),} \hspace{1em} (x,y) \in [-1,1] \times [-1,1]\\
        & u(x,y) = 0, \hspace{1em}  \text{(B.C.).}
    \end{split}
\end{equation}
where $c=0.01$ represents the mobility. The Allen-Cahn equation is widely applied for multi-phase flows. Here, the exact solution is set to be $u(x,y) = \sin(\pi  x) \cdot \sin (\pi  y)$ and the corresponding $f(x,y)$ can be obtained. Similarly, instead of deterministic boundary data, we consider random (noised) boundary conditions, i.e., 
\begin{equation}
    \begin{split}
         u(x,y=-1) \sim \mathcal{N}(0,\sigma_1^2), \hspace{0.5em}  \text{(B.C.1)} & \hspace{1em}
         u(x,y=1) \sim \mathcal{N}(0,\sigma_2^2), \hspace{0.5em}  \text{(B.C.2)}\\
         u(x=-1,y) \sim \mathcal{N}(0,\sigma_3^2), \hspace{0.5em} \text{(B.C.3)} &  \hspace{1em}  
         u(x=1,y) \sim \mathcal{N}(0,\sigma_4^2), \hspace{0.5em} \text{(B.C.4)}
    \end{split}
\end{equation}
and we let noise levels $\sigma_1 = \cdots = \sigma_4$. 

In this example, we adopt relatively shallow and narrow generators and 
discriminators of $(D_g, W_g)=(2,30)$ and $(D_f, W_f)=(2,30)$ respectively for lower computational cost. The latent variable is assumed to be $\mathbf{z} \sim \mathcal{N}(\mathbf{0},\mathbf{I}_2)$. Given $k=500$ uniformly distributed interior data and $m=n=800$ (equally 200 data for each boundary) uniformly distributed boundary data, WGAN-PINNs are able to solve the PDE with reasonable accuracy 
even with noisy boundary data. We test both the deterministic (noise free) and noisy boundaries cases. 
WGANs term enforces the generator $g$ to satisfy four boundary conditions while PINNs regularization term narrows down the function space. For deterministic boundary conditions, WGAN-PINNs can achieve $\mathcal{E}=5 \cdot 10^{-3}$ 
while $\mathcal{E}=2.2 \cdot 10^{-2}$ for noisy conditions. Figure \ref{exp4_visual} displays visualized results. Bright and dark regions represent the peaks and valleys of the solution. Figure \ref{exp4_visual_his} shows histograms for both the generated data and the simulated solution. Our predictions coincide well with the solution.

The above four experiments may also somewhat imply the concern from the theory that the uncertainty propagation can be entirely wrong and may not converge to a correct one in the interior. Histograms in Figure \ref{exp1_visual_his} show that the predicted uncertainty in the interior differs from the simulated solution in the interior domain, compared with excellent data matching on the boundary. Similar observations can also be found in Figure \ref{exp2_visual_his}, \ref{exp3_visual_his} and \ref{exp4_visual_his}. It seems that the uncertainty propagation is not done so well based on the experimental results of histograms, compared with boundary data matching, although their mean and standard deviation are close. This is consistent with our 
limitations discussed in Section \ref{discussion_limitations} for the quality of uncertainty propagation of the proposed model from the boundary to the interior domain. Moreover, the practical difficulties of uncertain propagation may come from the non-Gaussian uncertainty distributions in the interior. Therefore, it is an open problem and left as a future work to investigate the satisfactory uncertainty quantification in more complicated non-Gaussian and multi-modal distributions. 

\begin{figure*}[h!]
\makebox[\linewidth][c]{%
  \centering
  \subfloat
  [exact solution] 
  {
     \label{exp4_pre1}     
    \includegraphics[width=0.3\textwidth]{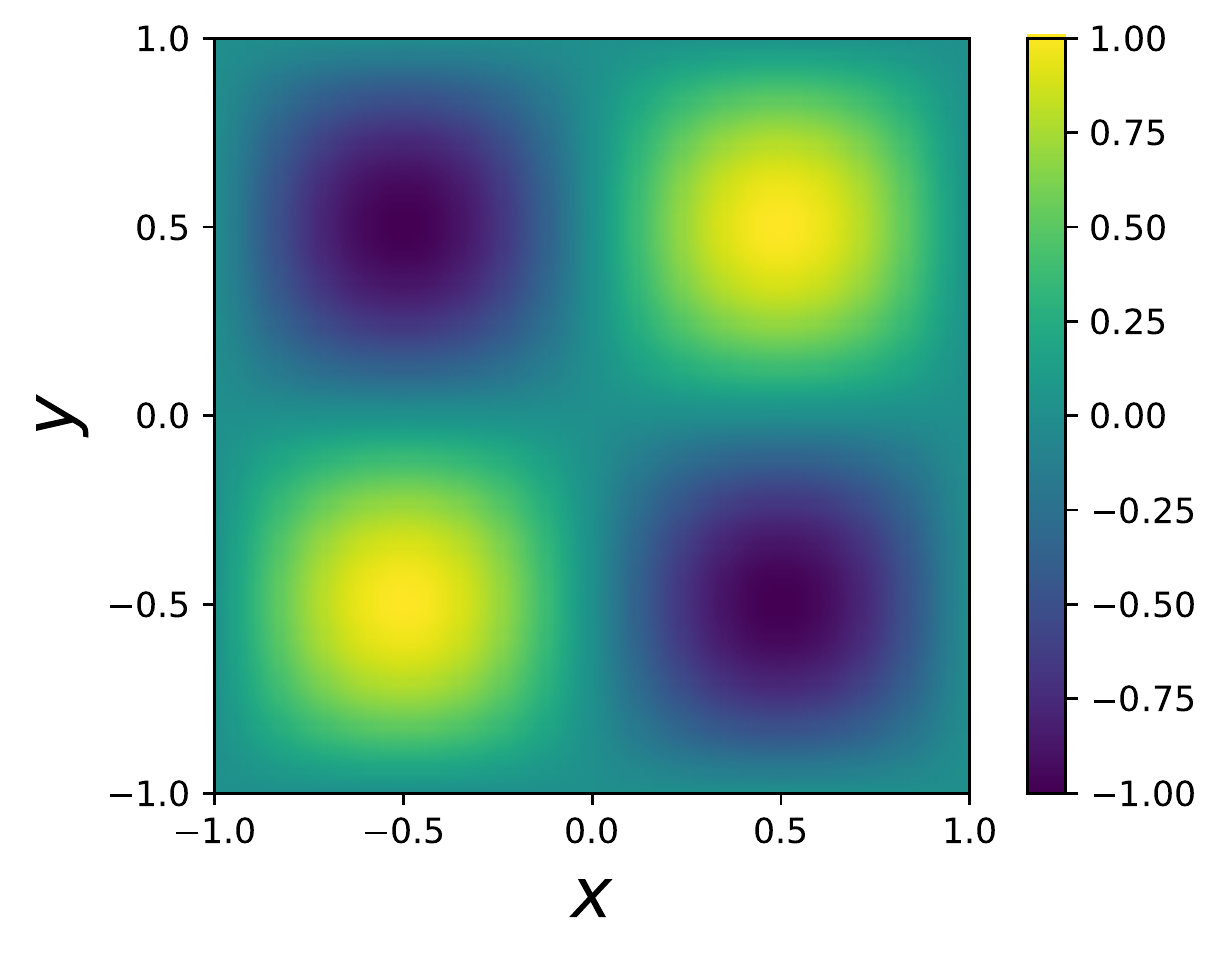}
    }\  
    \subfloat
    [prediction, $\sigma_1 =\cdots = \sigma_4 = 0$] 
    {
    \label{exp4_pre2}     
    \includegraphics[width=0.3\textwidth]{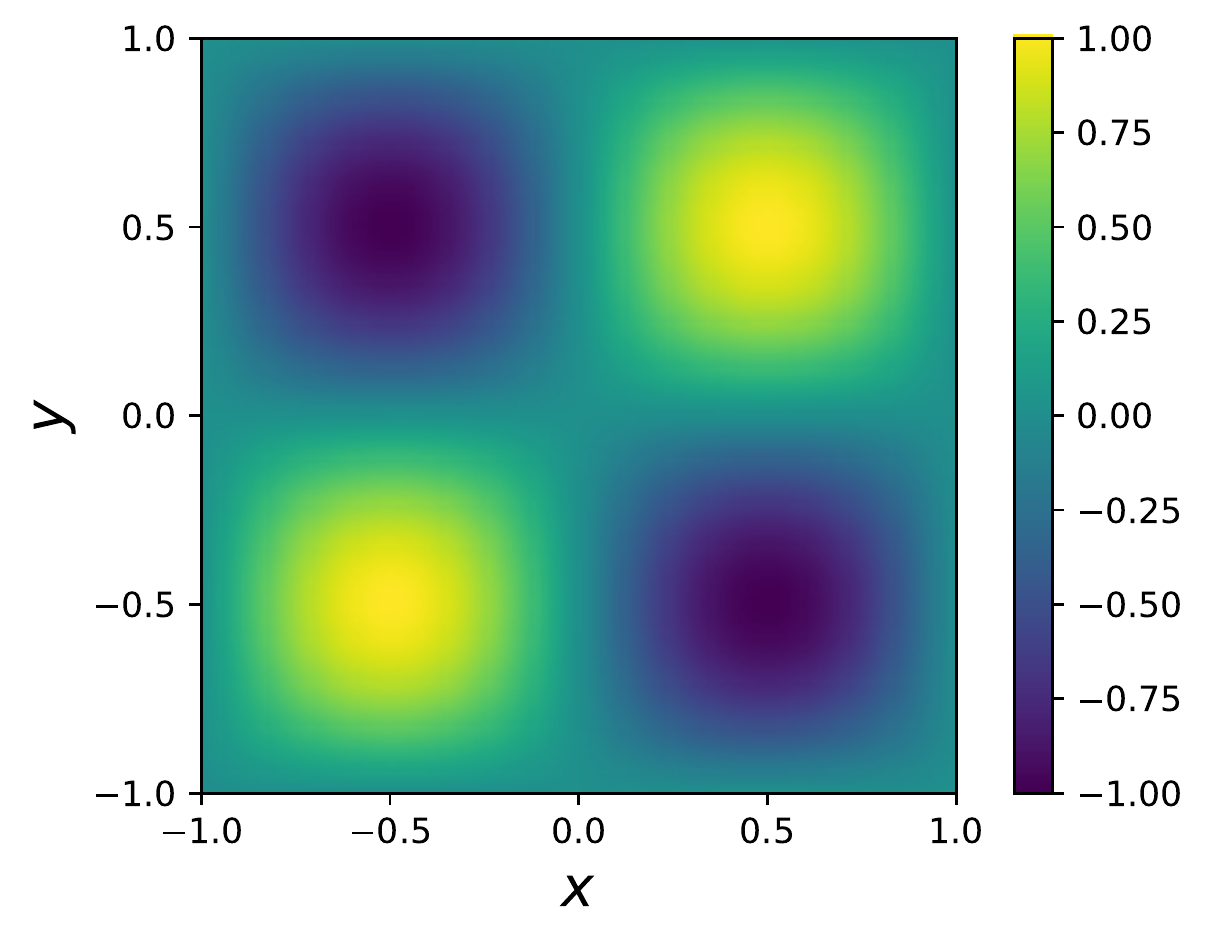}
    }\
     \subfloat
    [prediction, $\sigma_1 =\cdots = \sigma_4 = 0.05$] 
    {
    \label{exp4_pre3}     
    \includegraphics[width=0.3\textwidth]{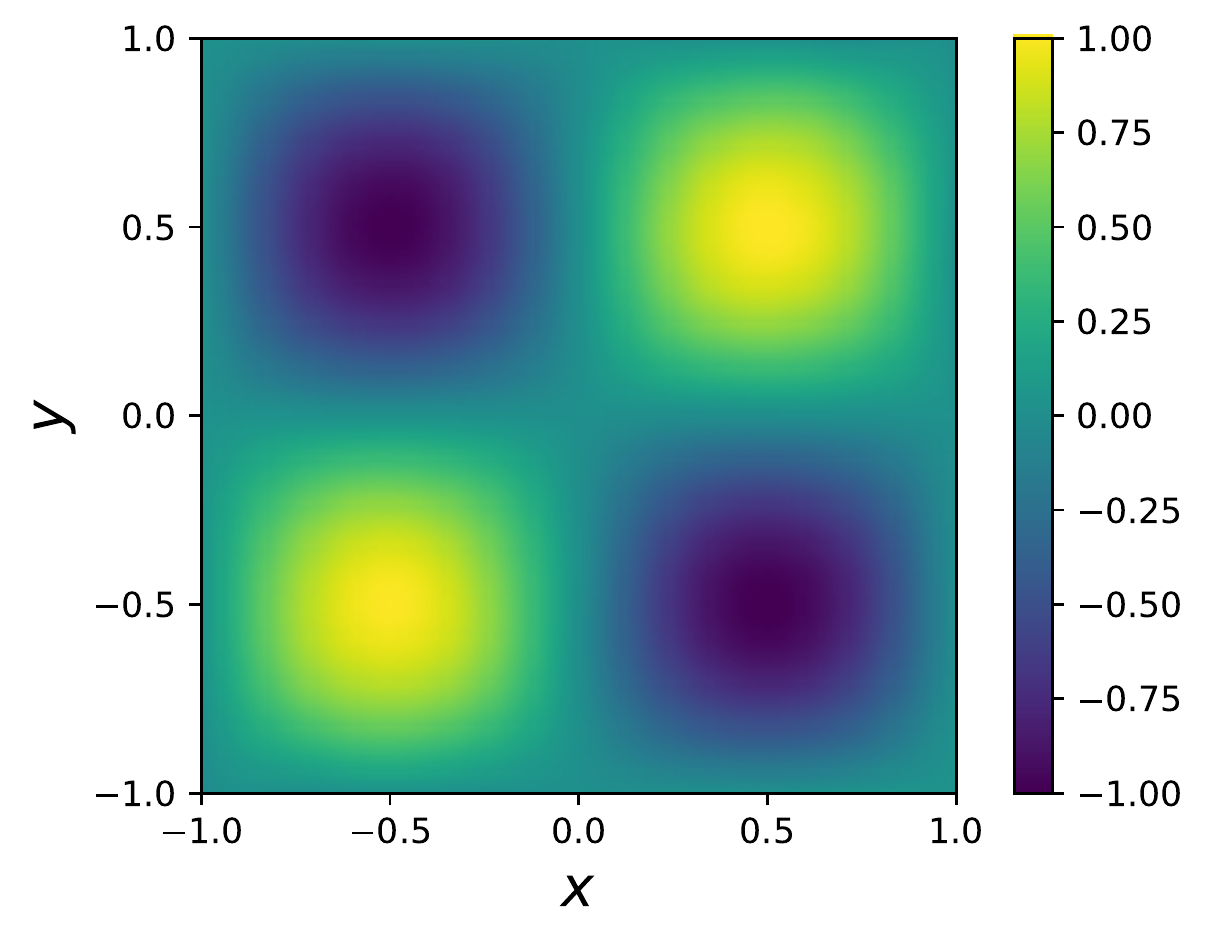}
    }\
    }\\
    
    \makebox[\linewidth][c]{%
  \centering
  \subfloat
  [err., $\sigma_1 =\cdots = \sigma_4 = 0$] 
  {
    \label{exp4_pre4}     
    \includegraphics[width=0.25\textwidth]{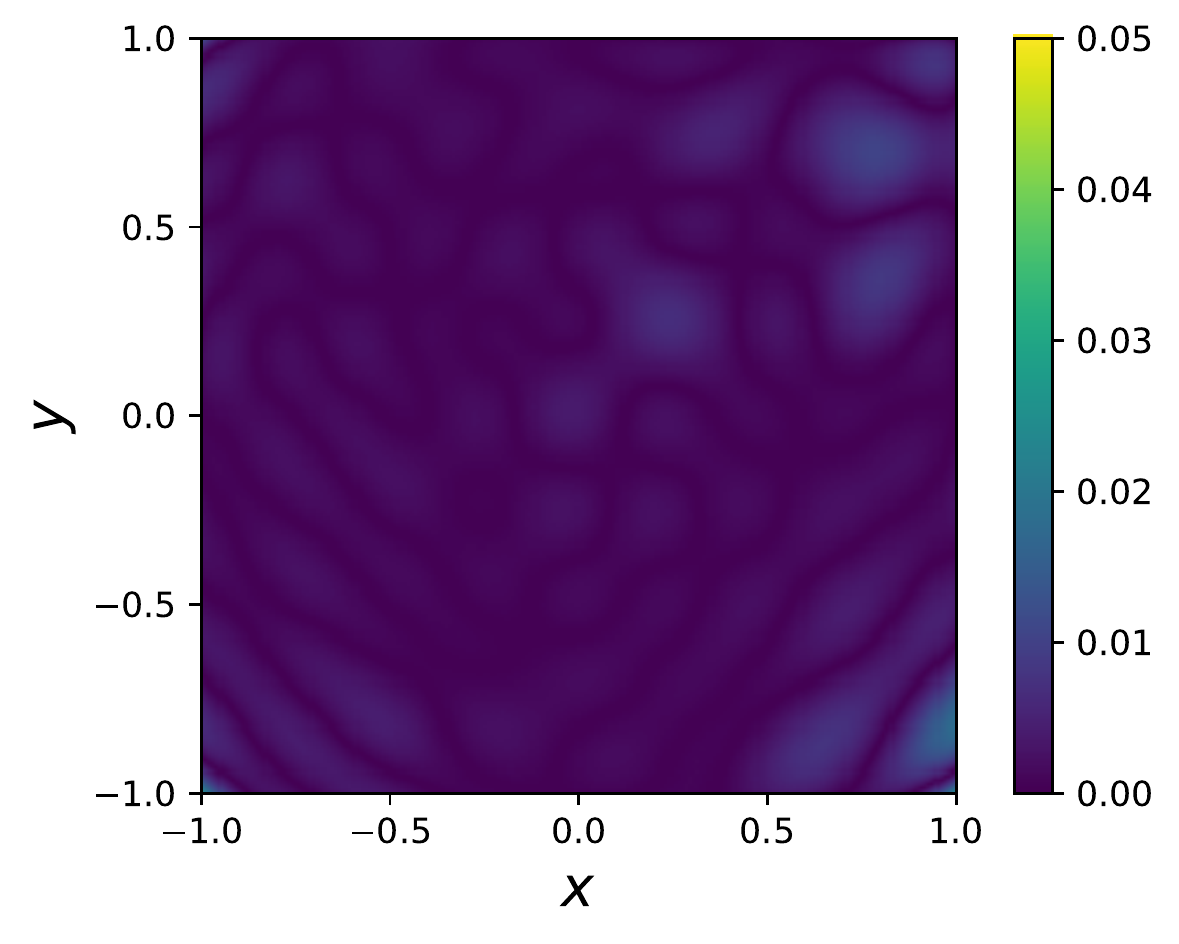}
    }\  
    \subfloat
    [err., $\sigma_1 =\cdots = \sigma_4 = 0.05$] 
    {
    \label{exp4_pre5}     
    \includegraphics[width=0.25\textwidth]{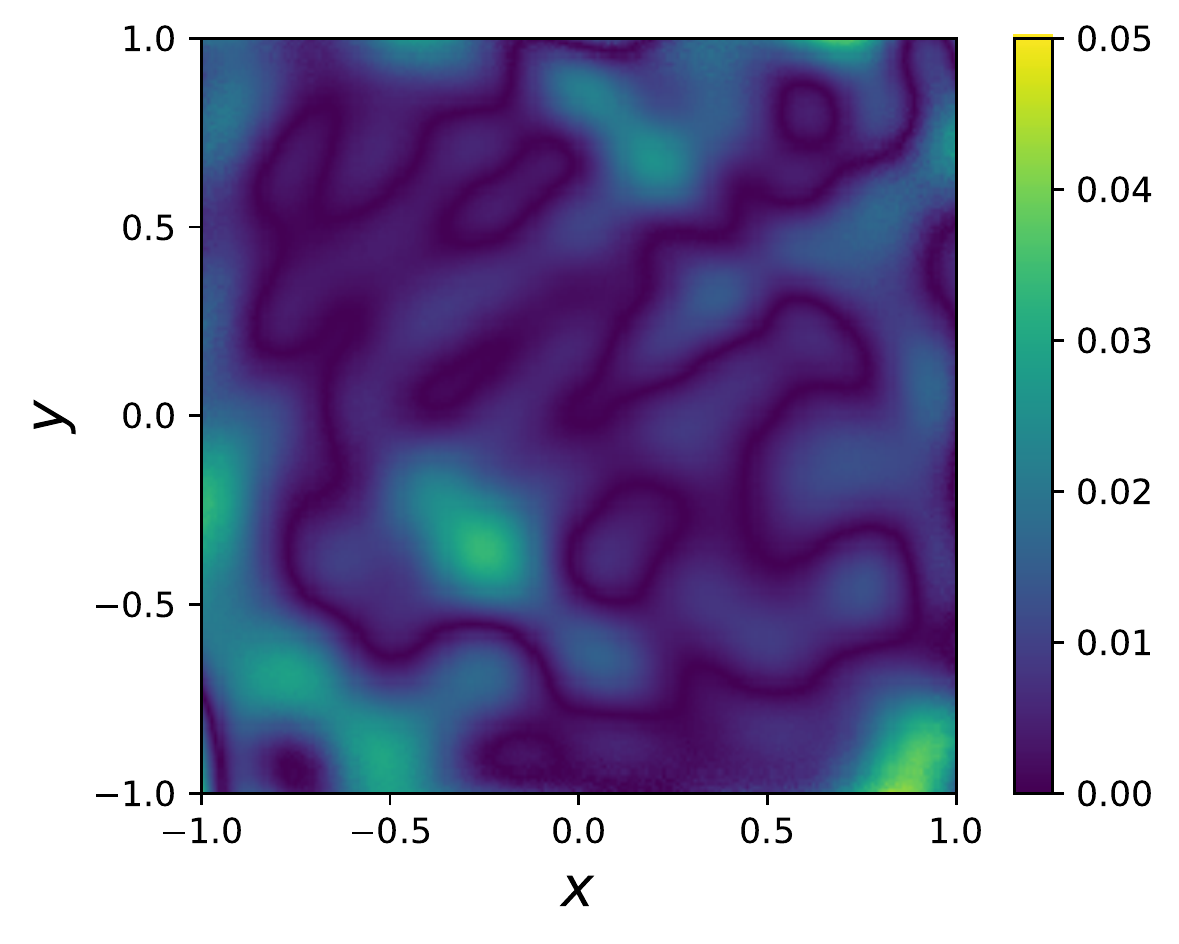}
    }\
    \subfloat
    [std, $\sigma_1 =\cdots = \sigma_4 = 0$] 
    {
    \label{exp4_pre6}     
    \includegraphics[width=0.25\textwidth]{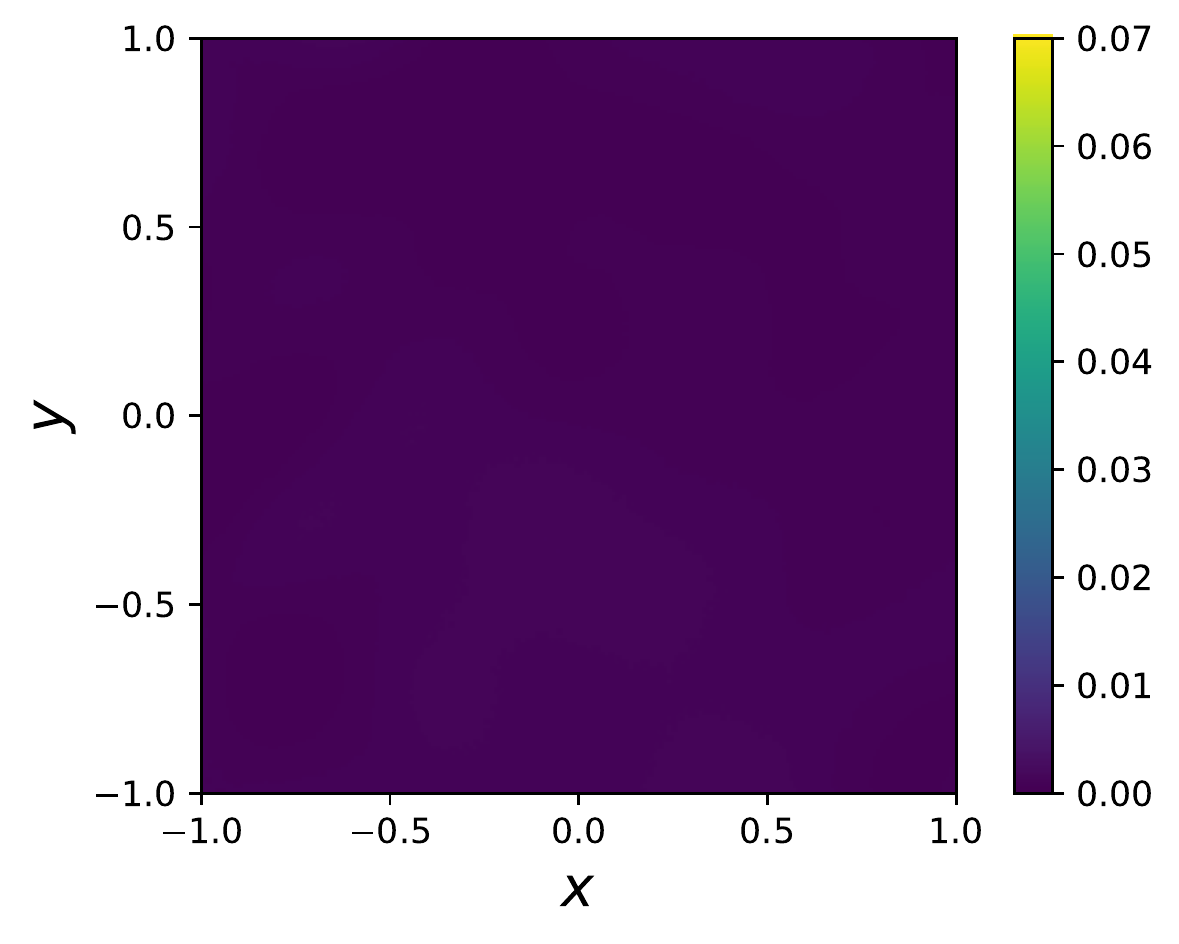}
    }\
    \subfloat
    [std, $\sigma_1 =\cdots = \sigma_4 = 0.05$] 
    {
    \label{exp4_pre7}     
    \includegraphics[width=0.25\textwidth]{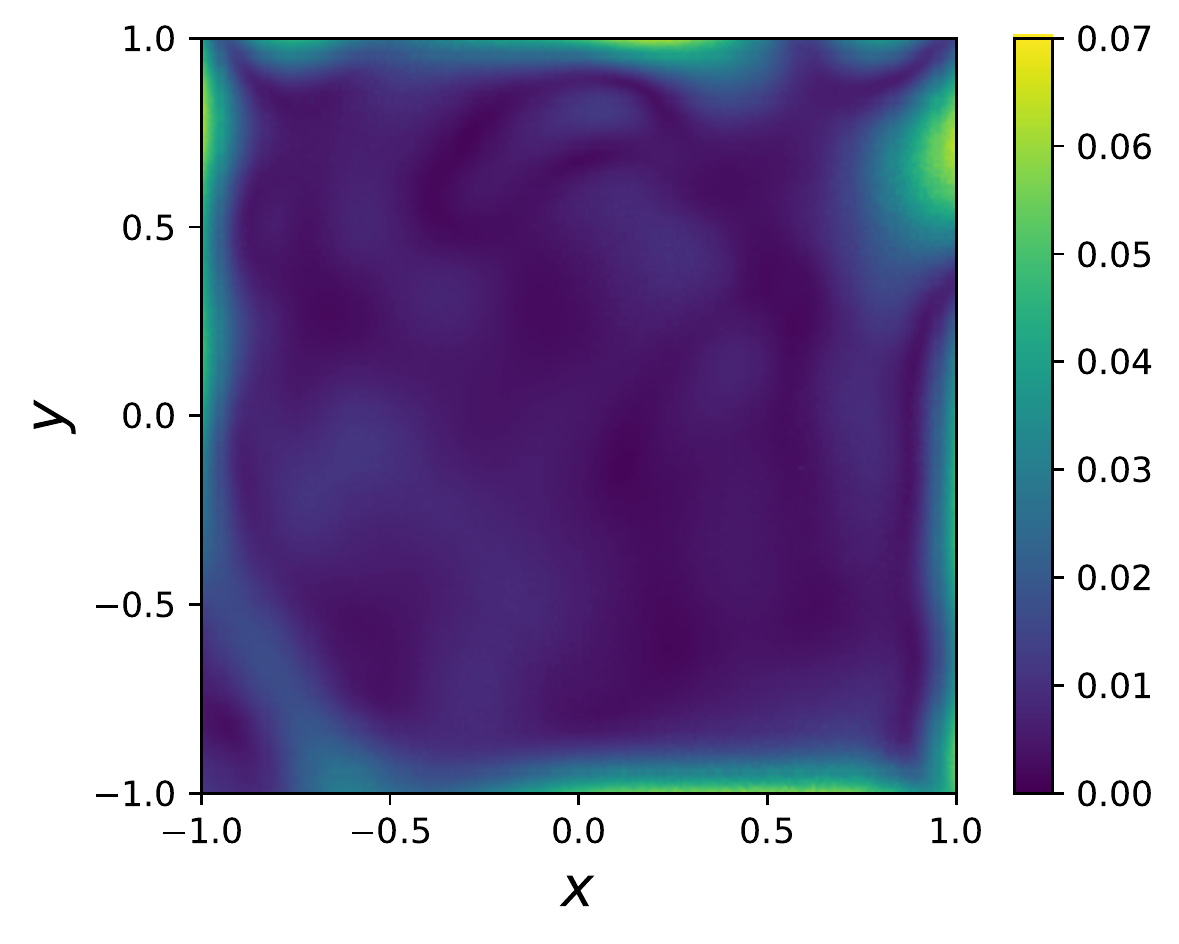}
    }\
    }
    \caption{		
	 Two-dimensional Allen-Cahn equation: errors and standard derivations of the obtained generator $\Tilde{g}$ at $(x,y)$.}	
    \label{exp4_visual}
\end{figure*}

\begin{figure*}[h!]
\makebox[\linewidth][c]{%
  \centering
  \subfloat
  [$u(x=0,y=-1)$] 
  {
     \label{exp4_his1}     
    \includegraphics[width=0.25\textwidth]{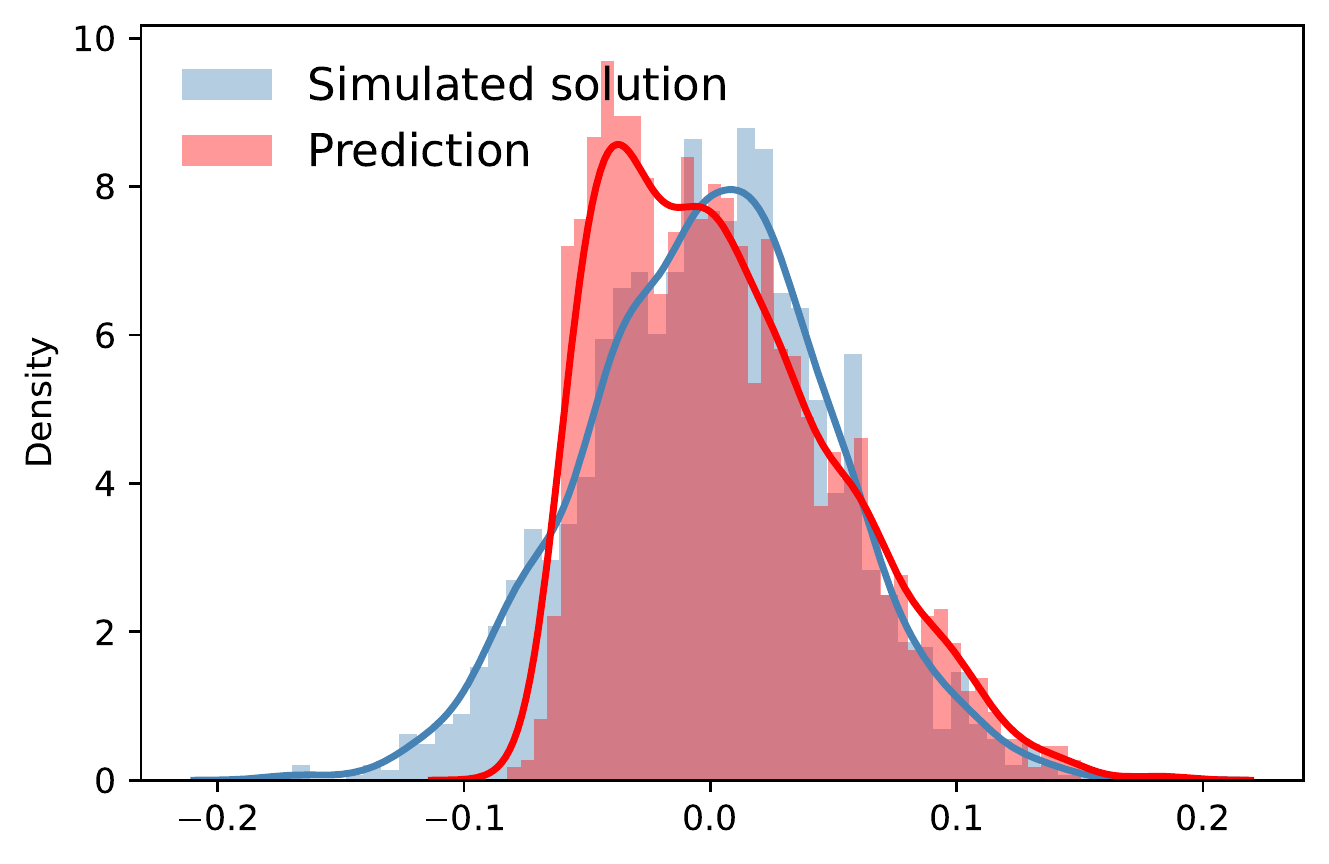}
    }\  
    \subfloat
    [$u(x=-1,y=0)$] 
    {
    \label{exp4_his2}     
    \includegraphics[width=0.25\textwidth]{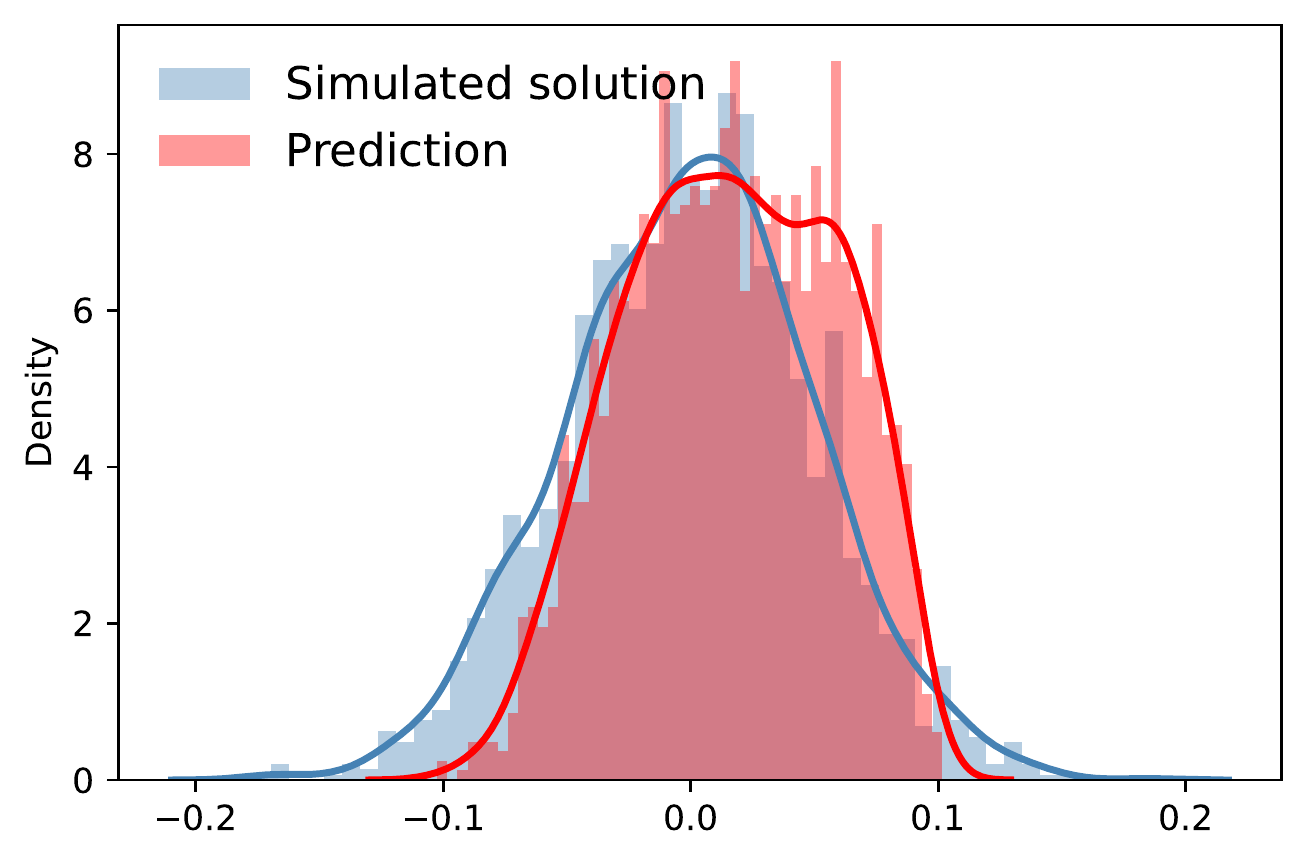}
    }\
    \subfloat
    [$u(x=1,y=0)$] 
    {
    \label{exp4_his3}     
    \includegraphics[width=0.25\textwidth]{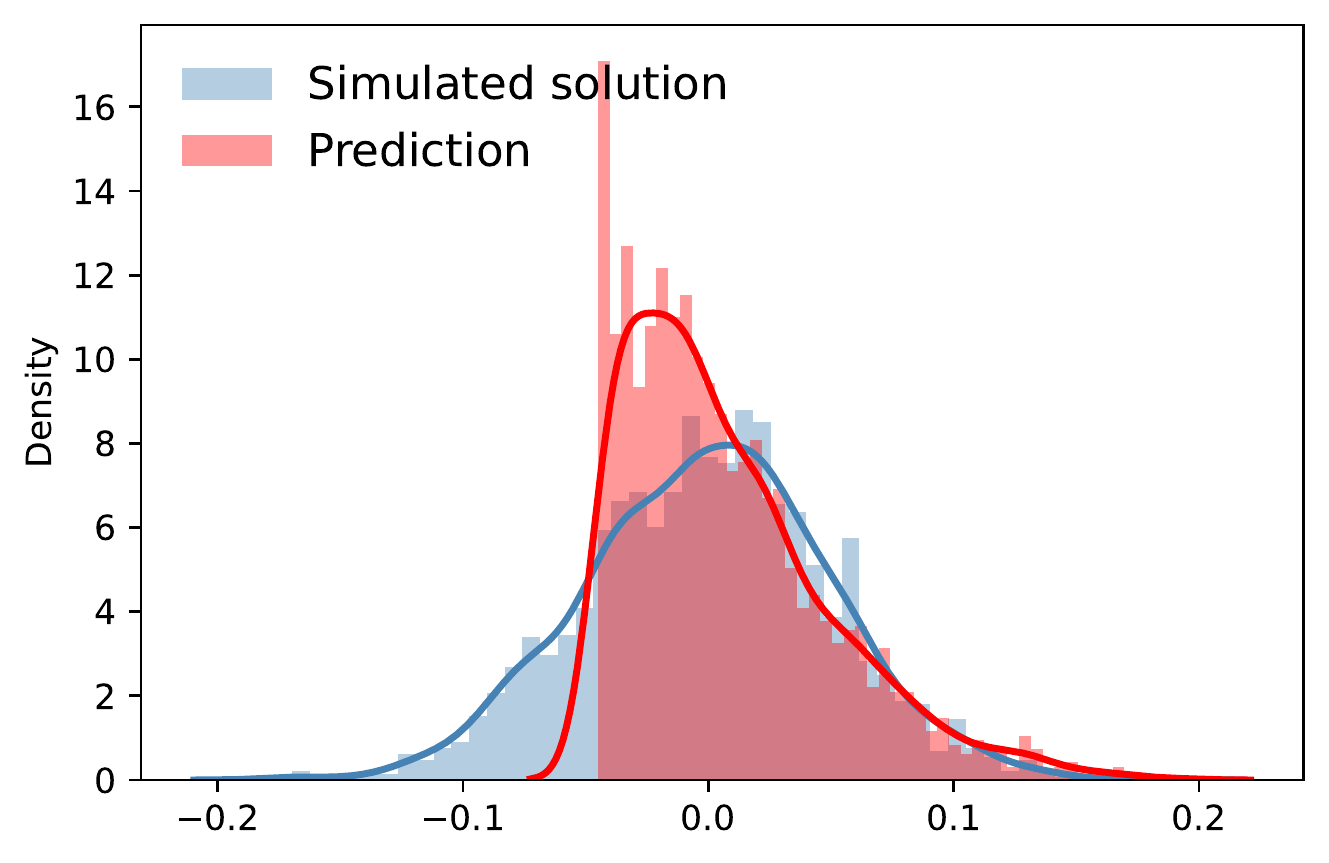}
    }\
    \subfloat
    [$u(x=0,y=1)$] 
    {
    \label{exp4_his4}     
    \includegraphics[width=0.25\textwidth]{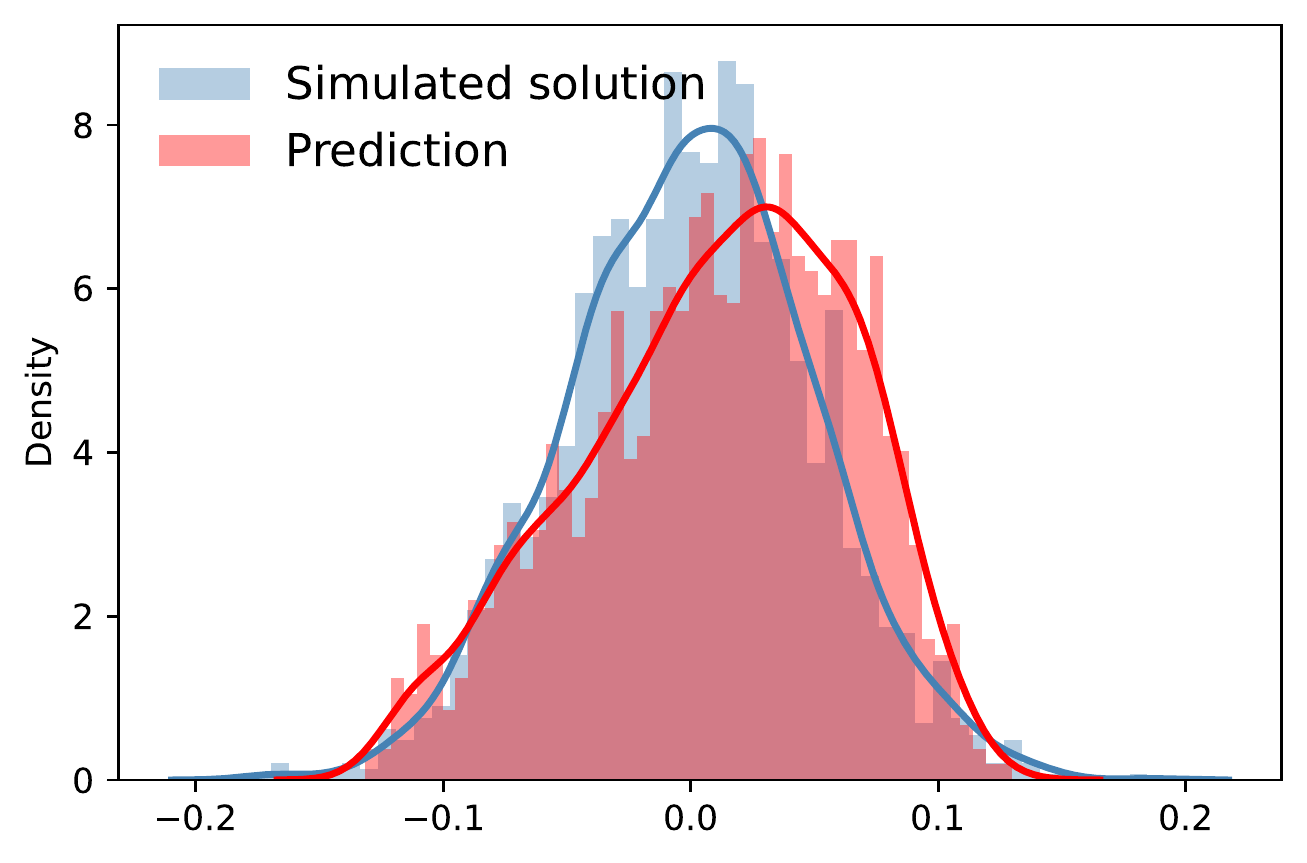}
    }\
    }
    \caption{		
	 Two-dimensional Allen-Cahn equation:
	 the histograms of the generated data and the simulated solution at 
	 $(x=0,t=0)$, $(x=0,y=-1)$, $(x=-1,y=0)$ and  $(x=0,y=1)$ for $\sigma_1 = \cdots \sigma_4 = 0.05$. The predicted mean and standard deviation are 
	 $(\hat{\mu},\hat{\sigma})$ and the simulated mean and standard deviation 
	 are $(\mu_{est},\sigma_{est})$. (a) $(\hat{\mu},\hat{\sigma})=(0.005,0.047)$, $(\mu_{est},\sigma_{est})=(0,0.05)$; (b) $(\hat{\mu},\hat{\sigma}) = (0.016, 0.042)$, $(\mu_{est},\sigma_{est})=(0,0.05)$; (c) $(\hat{\mu},\hat{\sigma}) = (0.007, 0.042)$,
	 $(\mu_{est},\sigma_{est})=(0,0.05)$; (d) $(\hat{\mu},\hat{\sigma}) = 
	 (0.009, 0.054)$, $(\mu_{est},\sigma_{est})=(0,0.05)$.}	
    \label{exp4_visual_his}
\end{figure*}

\section{Conclusion}

We present a class of generative physics-informed neural networks (WGAN-PINNs) that can generate observed uncertainty and propagate with physical laws (partial differential equations). An explicit error bound for the generalization and the convergence of WGAN-PINNs is developed. We show that with sufficient training data and strong (wide and deep) discriminators the (exact) loss of our obtained model converges to the approximation error (the minimal error among all generators in the predefined class). Numerical results verify our theories and demonstrate the extraordinary performance of WGAN-PINNs. 

However, there are still some less rigorous points (limitations) of the proposed model. Theoretical works are urgently needed to enhance the performance of neural networks in solving PDEs problems. Firstly, about the role of PINNs regularization term. We find in the numerical part that PINNs term accelerates the convergence of WGANs. An implicit explanation for the phenomenon lies in that PINNs narrows the function space. Besides the convergence of the model, the most important thing is the role of PINNs term for the quality of uncertainty propagation in the interior domain. As is shown in the experimental part, the uncertainty propagation is less than satisfactory compared with boundary distribution matching, which also reflects the concerns for the uncertainty propagation in the interior. We left it as a future research work to investigate whether the proposed probabilistic model guarantee the quality of uncertainty propagation or how to adjust it for satisfactory uncertainty quantification in more complicated non-Gaussian multi-modal problems. Secondly, The reasonableness of the proposed model with boundary/initial samples and governing equations. As is mentioned in section \ref{proposed_wgan_pinns_model}, the core idea of the proposed model comes from traditional PDEs solvers that solve PDEs with given boundary/initial data and governing equations, where we do not require interior solution data. We can simply extend the model to that with interior solution samples, which contributes to the accuracy of the uncertainty quantification in the interior. We left it as a future work to study the experimental performances of the new and simply extended model. Thirdly, from an optimization perspective. Our theories have an prerequisite that 
the obtained model $\Tilde{g}$ is the optimal solution of the empirical loss. 
In general, generative adversarial networks are especially difficult to train, where we are required to solve a min-max problem. PINNs, including differential operations to the input data, raises more challenges to adversarial networks training. Stagnating at a local optima, gradients vanishing especially for deep models, high order and high dimensional 
differential equations, as well as training instability bring about difficulties in training deep neural networks for solving PDEs problems. All requires further research in the next
step.
Explicit forms of PDEs are required, however, sometimes it is hard to mathematically formulate the differential equations governing systems. More generalized models that can learn equations (functional operators) totally from data sometimes also have wide applications \cite{lu2019deeponet,li2020neural,guo2016convolutional,bhatnagar2019prediction}. Final and the most important one is the generalization with respect to the prediction accuracy in the interior. Our theories derive the generalization of WGAN-PINNs with respect to the loss, but their approximations to solutions in the interior are actually our goals. How to appropriately measure the accuracy or quality of our probabilistic models (with respect to PDEs solutions) is still an open question. For more challenges and difficulties of PINNs related machine learning problems, please refer to \cite{raissi2019physics}.

\section*{Acknowledgments}

This work is supported by Hong Kong Research Grant Council 
GRF 12300519, 17201020,
17300021, C1013-21GF, C7004-21GF and 
N$\underline{\mbox{\hspace{2mm}}}$HKU769\/21. We thank all reviewers for their helpful comments and suggestions. 


\appendix

\section{Group size of groupsort neural networks}

Here, we extend the Theorem 2 and 3 in \cite{tanielian2021approximating} to the following Theorem to show that group size contributes to approximation capabilities of groupsort neural networks theoretically. 
\begin{theorem}
[Approximation of GroupSort neural networks to 1-Lipschitz functions]
For any 1-Lipschitz function $f$ defined on $[-M_u,M_u]^{d}$, there exists $f_{\alpha} \in \mathcal{F}_{\text{GS}}$ with $group_{size}=k$, $D_f=n+1$ and $W_f=k^n$, such that $\|f_{\alpha}-f\|_{L^{\infty}([-M_u,M_u]^{d})} \leq 2M_u \cdot \epsilon$, where $\epsilon = 2\sqrt{d} \cdot  W_f^{-1/d^2}$. Furthermore, let $group_{size}=\sqrt{W_f}$ and $D_f = 3$, then $\|f_{\alpha}-f\|_{L^{\infty}([-M_u,M_u]^{d})} \leq 2M_u \cdot \epsilon$.
\end{theorem}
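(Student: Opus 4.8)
The plan is to adapt the group-size-$2$ construction behind Proposition \ref{approx_dis} so that the comparator trees inside it branch $k$-fold rather than $2$-fold. The one structural fact I would exploit is that a single GroupSort layer with grouping size $k$ returns, for each group of $k$ coordinates, the entire sorted vector, and in particular its maximum and its minimum; hence the maximum (or minimum) of $k^{n}$ affine functions can be produced by a $k$-ary selection tree of depth $n$, each level applying a group-size-$k$ sort followed by an affine map that retains only the top (resp.\ bottom) entry of every group. This is precisely the origin of the two parameter relations in the statement: $W_f=k^{n}$ is the number of affine pieces fed in at the leaves (the width), and $D_f=n+1$ counts the $n$ tree levels plus one output affine map.

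First I would recall the approximation-theoretic input underlying Proposition \ref{approx_dis}: every $1$-Lipschitz $f$ on $[-M_u,M_u]^{d}$ is approximated in $L^{\infty}$ by a max--min lattice of $N$ affine $1$-Lipschitz functions with error of order $2M_u\cdot 2\sqrt d\cdot N^{-1/d^{2}}$, where the constant $2\sqrt d$ and the exponent $1/d^{2}$ are geometric quantities of the simplicial grid in $\mathbb{R}^{d}$ and are therefore independent of how the lattice is later realized by a network. I would take $N=W_f=k^{n}$ affine pieces from this bound, so that the target sup-norm error is already $2M_u\cdot 2\sqrt d\cdot W_f^{-1/d^{2}}=2M_u\epsilon$.

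Next I would realize this lattice as a group-size-$k$ network and track the two error sources. The leaf affine map $\mathbf V_0$ produces the $W_f$ affine functions; the $\ell_2$-Lipschitzness of $f$ forces each row to have $\ell_2$-norm at most $1$, so $\|\mathbf V_1\|_{2,\infty}\le 1$, while the selection maps that forward the extremal entry of each sorted group are $0/1$ matrices with $\|\cdot\|_{\infty}\le 1$; hence constraints (\ref{constraints}) hold and $f_\alpha\in\mathcal F_{\text{GS}}$ with grouping size $k$. Replacing the base $2$ of the depth term in Proposition \ref{approx_dis} by the branching factor $k$ (a $k$-ary tree of depth $D_f$ resolves $k^{D_f}$ pieces, against $2^{D_f}$ in the binary case) yields the generalized bound $\|f_\alpha-f\|_{L^{\infty}}\le 2M_u\cdot 2\sqrt d\,(k^{-D_f/d^{2}}\vee W_f^{-1/d^{2}})$. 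Since $D_f=n+1>n$ we have $k^{-D_f/d^{2}}<k^{-n/d^{2}}=W_f^{-1/d^{2}}$, so the width term dominates and the bound collapses to $2M_u\epsilon$, proving the first claim. The second claim is then the case $n=2$: choosing $group_{size}=k=\sqrt{W_f}$ gives $W_f=k^{2}$, hence $D_f=n+1=3$, and the same estimate applies verbatim.

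The main obstacle I anticipate is making the simulation step rigorous: I must verify that trading a depth-$\log_2(\#\text{pieces})$ binary comparator tree for a depth-$\log_k(\#\text{pieces})$ $k$-ary one can be carried out without inflating the width and, crucially, without violating the norm constraints (\ref{constraints})---in particular that the affine selection layers interleaved between successive group-size-$k$ sorts can be taken with $\|\cdot\|_{\infty}\le 1$. A secondary point to check is that the constant $2\sqrt d$ and the exponent $1/d^{2}$ genuinely survive unchanged, i.e.\ that enlarging the group size only reshapes the comparator network and leaves untouched the geometric Lipschitz-approximation estimate that produces them.
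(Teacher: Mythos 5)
Your proposal is correct and takes essentially the same route as the paper: the paper gives no explicit proof of this theorem, asserting it as a direct extension of Theorems 2 and 3 of Tanielian et al., and your decomposition into (i) the max--min lattice approximation of a $1$-Lipschitz function by $N=k^{n}$ affine pieces with error $2M_u\cdot 2\sqrt{d}\cdot N^{-1/d^{2}}$ and (ii) the realization of that lattice by a $k$-ary GroupSort selection tree of depth $n$ is precisely the construction underlying those cited theorems. The obstacle you flag resolves exactly as you anticipate: the interleaved selection maps are $0/1$ matrices with a single $1$ per row, hence $\|\mathbf{V}_i\|_{\infty}\le 1$, the first-layer rows inherit $\ell_2$-norm at most $1$ from the $1$-Lipschitz affine pieces so $\|\mathbf{V}_1\|_{2,\infty}\le 1$, and the constant $2\sqrt{d}$ and exponent $1/d^{2}$ come entirely from the lattice approximation step, untouched by the change of branching factor.
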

Note the the error $\epsilon =\mathcal{O}(W_f^{-1/d^2})$ with group size $k=\sqrt{W_f}$ and $\epsilon =\mathcal{O}(2^{-D_f/d^2} \vee W_f^{-1/d^2})$ with $k=2$ theoretically imply that large group size cannot reduce the error to be lower than $\mathcal{O}(W_f^{-1/d^2})$ for deep groupsort neural networks. Although theoretical results recommend to use larger group size in groupsort neural networks, empirical results in \cite{anil2019sorting} show slight improvement of groupsort neural networks with larger group sizes and they exactly outperform ReLU neural networks by a large margin.  Groupsort neural networks work well with group size of 2 and group size is not that essential empirically. Moreover, it is more convenient to fix the group size to be 2 since width $W_f$ must be a multiple of the group size.

\section{Constants in the derived error bound}
There are several constants independent of the model architecture (numbers of training data $m$ and $n$, generator class $\mathcal{G}$ and discriminator class $\mathcal{F}_{\text{GS}}$) in the derived error bound (\ref{bound_emp_loss_1})-(\ref{error_bound_2}) and (\ref{error_bound_3}). In this part, we will discuss them one by one to see whether they are dominant in the error bound.

For the constant $C_1$, recall the proof of Lemma \ref{bound_I1} that
\begin{equation*}
    \begin{split}
        I_1 & \leq \min_{\Tilde{f} \in \mathcal{F}_{GS}} \{ \| \Tilde{f}-f \|_{L^{\infty}([-M_u,M_u]^{d+r})} \cdot \left (\mathbb{P}(\|\mathbf{u}(\mathbf{x})\|_2 \leq M_u) +1 \right )\\
        & \hspace{4em} +  \mathbb{E}_{(\mathbf{x},\mathbf{u})\sim p_{\Gamma}(\mathbf{x},\mathbf{u})} [\Tilde{f}(\mathbf{x},\mathbf{u}) - f(\mathbf{x},\mathbf{u})] \cdot \mathbb{I}(\|\mathbf{u}(\mathbf{x})\|_2 \geq M_u) \}\\
        & \leq 2 \cdot \min_{\Tilde{f} \in \mathcal{F}_{GS}} \{ \| \Tilde{f}-f \|_{L^{\infty}([-M_u,M_u]^{d+r})} \cdot \left (\mathbb{P}(\|\mathbf{u}(\mathbf{x})\|_2 \leq M_u) +1 \right )\\
        & \hspace{4em} + \mathbb{E}_{(\mathbf{x},\mathbf{u})\sim p_{\Gamma}(\mathbf{x},\mathbf{u})} [(3M_u + \|\mathbf{u}(\mathbf{x})\|_2) \cdot \mathbb{I}(\|\mathbf{u}(\mathbf{x})\|_2 \geq M_u)] \}\\
        & \leq C_{11} \cdot M_u \cdot ( 2^{-D_f/(d+r)^2} \vee W_f^{-1/(d+r)^2})  + C_{12} \cdot M_u^{-s+1},
    \end{split}
\end{equation*}
where $C_{11} \approx 8 \sqrt{d+r}$ according to Proposition \ref{approx_dis} and $C_{12} = 4$ by Assumption \ref{decay}, due to the equality that $\int_{0}^{+\infty} \mathbb{P}(|X| \geq x) \hspace{0.2em} dx = \mathbb{E}[|X|]$. Therefore, let $s=2$ and $M_u = (C_{12}/C_{11})^{1/2} \cdot ( 2^{-D_f/2(d+r)^2} \vee W_f^{-1/2(d+r)^2})$ which satisfies Assumption \ref{large_capacity}, we have
$$
I_1 \leq C_1 \cdot ( 2^{-D_f/2(d+r)^2} \vee W_f^{-1/2(d+r)^2}),
$$
where $C_1=\sqrt{C_{11}\cdot C_{12}} \approx 4\sqrt{2} \cdot (d+r)^{1/4}$.

For the constant $C_2$ in Lemma \ref{bound_I2}, we have $C_2 = 2\sqrt{d+r} \cdot C^{\prime}$, where we assume that $M_x \leq ((W_g+1)\cdot M)$ without loss of generality and $C^{\prime}$ is a constant defined in Proposition \ref{convg_W1}. After checking Proposition 3.1 in \cite{NEURIPS2020_2000f632}, Theorem 2.3 and Theorem 3.1 in \cite{lei2020convergence} with $(p,q)=(1,3)$, we have $C^{\prime} \approx \frac{64}{3} \sqrt[3]{M_3} $  in Proposition \ref{convg_W1}, when the dimension $d \ll log n$ and $M_3$ is the 3-moment for $\nu$. Therefore, we have $C_2 \approx \frac{128}{3} \sqrt{d+r}$. Similarly, $C_3 \approx \frac{128}{3} \sqrt[3]{M_3}$, where $M_3$ is the 3-moment for the solution data $(\mathbf{x},\mathbf{u}(\mathbf{x}))$ in Assumption \ref{cong_sol_dis}. As is discussed in \ref{appendix_proof}, $C_5 \approx 24 \sqrt{d+r}$ when $\log m \gg 1$ while $C_6 \approx 24$ when $\log n \gg 1$.

Both constants $C_4$ and $\Tilde{C}$ depend on the given PDE itself. In the remark right after Lemma \ref{bound_I4}, we provide a simple example to illustrate values of constants $C_4$ and $\Tilde{C}$. For the given PDE:
\begin{equation*}
        u_{xx} - u^2 u_x = b(x), \hspace{1em} x \in [-1,1],
\end{equation*}
with the pre-defined generator class (\ref{generator_class}), we have $\Tilde{C}=2$ and $C_4 \approx 3$. Theoretically, considering the uniform upper bound, $\Tilde{C}$ is close to the order of the PDE and $C_4$ is proportional to the number of terms in the given PDEs. For more details, please refer to the remark of Lemma \ref{bound_I4}.

\section{Proof of Theorem \ref{improved_generalization_bound}}
\label{appendix_proof}

The proof for Theorem \ref{improved_generalization_bound} mainly consists of deriving new error bounds for $I_2$ and $I_3$ due to the finite capacity of $\mathcal{F}_{\text{GS}}$. To fully understand the details, some definitions (e.g., covering balls, covering numbers, Rademacher complexity, pseudo-dimension, etc.) and theorems (e.g., Dudley's theorem, etc.) in deep learning theory are required. Please refer to \cite{ma2021lecturenotes} for details of these basic tools. 

We use $\mathcal{R}_{\mathcal{S}}(\mathcal{F})$ and $\mathcal{R}_{n}(\mathcal{F})$ to denote the empirical Rademacher complexity of the function space $\mathcal{F}$ on the independent and identically distributed data $\mathcal{S}=\{X_1, \cdots, X_n\}$ and the corresponding Rademacher complexity with $\mathcal{R}_{n}(\mathcal{F})=\mathbb{E}_{\mathcal{S}} \mathcal{R}_{\mathcal{S}}(\mathcal{F})$ respectively. Here, the empirical Rademacher complexity is defined as 
$$
\mathcal{R}_{\mathcal{S}}(\mathcal{F}):=\mathbb{E}_{\epsilon} \sup _{f \in \mathcal{F}} \frac{1}{n} \sum_{i=1}^{n} \epsilon_{i} f\left(X_{i}\right),
$$
where $\epsilon=\left(\epsilon_{1}, \ldots, \epsilon_{n}\right)$ are i.i.d. Rademacher random variables. The next three lemmas show that the generalization error is bounded by the Rademacher complexity of the function class and then is further controlled by the pseudo-dimension.

\begin{lemma}
[Translating the generalization error to the Rademacher complexity, Theorem 4.13 in \cite{ma2021lecturenotes}] Let $\mathcal{S}=\left \{X_1, \cdots, X_n\right \}$ be a set of $n$ i.i.d. samples, then we have
$$
\mathbb{E}_{\mathcal{S}} \left [ \sup_{f \in \mathcal{F}} \left [ \frac{1}{n} \sum_{i=1}^{n} f(X_i) - \mathbb{E}_{X} \left [f(X) \right ]  \right ] \right ]\leq 2 \mathcal{R}_{n}(\mathcal{F}).
$$
\end{lemma}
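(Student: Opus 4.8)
The plan is to prove this by the classical \emph{symmetrization} (ghost-sample) argument. First I would introduce an independent copy $\mathcal{S}' = \{X_1', \ldots, X_n'\}$ of the sample, drawn i.i.d.\ from the same distribution and independent of $\mathcal{S}$. The point of the ghost sample is that it lets me represent the population mean as an empirical mean over $\mathcal{S}'$: for each fixed $f$, $\mathbb{E}_X[f(X)] = \mathbb{E}_{\mathcal{S}'}[\frac{1}{n}\sum_{i=1}^n f(X_i')]$. Substituting this inside the supremum and using that the supremum of an expectation is dominated by the expectation of the supremum (Jensen / convexity of $\sup$), I can pull the $\mathcal{S}'$-expectation outside, yielding
\begin{equation*}
\mathbb{E}_{\mathcal{S}}\left[\sup_{f \in \mathcal{F}} \left(\frac{1}{n}\sum_{i=1}^n f(X_i) - \mathbb{E}_X[f(X)]\right)\right] \leq \mathbb{E}_{\mathcal{S},\mathcal{S}'}\left[\sup_{f \in \mathcal{F}} \frac{1}{n}\sum_{i=1}^n \bigl(f(X_i) - f(X_i')\bigr)\right].
\end{equation*}

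Next I would introduce Rademacher variables $\epsilon_1, \ldots, \epsilon_n$ independent of everything else. The key observation is that for each index $i$, the pair $(X_i, X_i')$ is exchangeable: swapping $X_i \leftrightarrow X_i'$ leaves the joint law of $(\mathcal{S}, \mathcal{S}')$ unchanged. Such a swap flips the sign of the $i$-th summand $f(X_i) - f(X_i')$. Because this holds simultaneously and independently over all $i$, inserting the signs $\epsilon_i$ in front of each term does not change the value of the expectation, so that
\begin{equation*}
\mathbb{E}_{\mathcal{S},\mathcal{S}'}\left[\sup_{f \in \mathcal{F}} \frac{1}{n}\sum_{i=1}^n \bigl(f(X_i) - f(X_i')\bigr)\right] = \mathbb{E}_{\mathcal{S},\mathcal{S}',\epsilon}\left[\sup_{f \in \mathcal{F}} \frac{1}{n}\sum_{i=1}^n \epsilon_i\bigl(f(X_i) - f(X_i')\bigr)\right].
\end{equation*}
Finally, by the subadditivity of the supremum I would split the bracket into two terms, bound the supremum of the sum by the sum of the suprema, and observe that by the symmetry of the Rademacher distribution the term involving $\mathcal{S}'$ with signs $-\epsilon_i$ has the same expectation as the term involving $\mathcal{S}$ with signs $\epsilon_i$. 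Each of these equals $\mathcal{R}_n(\mathcal{F}) = \mathbb{E}_{\mathcal{S}}\mathcal{R}_{\mathcal{S}}(\mathcal{F})$ by definition, giving the factor $2$ and completing the bound.

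The only genuinely delicate step is the symmetrization identity in the second display: one must argue carefully that exchangeability of each pair $(X_i, X_i')$ justifies inserting the signs \emph{inside} the supremum, where the same $f$ couples all indices together. The cleanest way to make this rigorous is to condition on $\epsilon$ and note that for any fixed sign pattern, relabelling $X_i \leftrightarrow X_i'$ at the indices where $\epsilon_i = -1$ is a measure-preserving transformation of $(\mathcal{S},\mathcal{S}')$, so the expectation is invariant; averaging over $\epsilon$ then yields the claimed equality. Everything else is Jensen's inequality, linearity, and the definition of Rademacher complexity.
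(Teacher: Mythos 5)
Your proof is correct. Note that the paper does not prove this lemma at all --- it is imported verbatim as Theorem 4.13 of the cited lecture notes \cite{ma2021lecturenotes} and used as a black box --- so there is no paper-internal argument to compare against; your ghost-sample symmetrization (introduce an independent copy $\mathcal{S}'$, apply $\sup_f \mathbb{E}_{\mathcal{S}'}[\cdot] \leq \mathbb{E}_{\mathcal{S}'}[\sup_f \cdot]$, insert Rademacher signs via the measure-preserving swap $X_i \leftrightarrow X_i'$ at indices with $\epsilon_i = -1$, then split the supremum and use $-\epsilon \overset{d}{=} \epsilon$) is exactly the standard proof of this result, including the careful justification of the one genuinely delicate step.
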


\begin{lemma}
[Translating the Rademacher complexity to the covering number, Dudley’s Theorem, Theorem 4.26 in \cite{ma2021lecturenotes}] If $\mathcal{F}$ is a function class from $\mathcal{X} \to \mathbb{R}$, where $\mathcal{S}$ is sampled from $\mathcal{X}$. For $B>0$, we assume that $\max_{i} |f(X_i)| \leq B$ for all $f \in \mathcal{F}$. Then,
\begin{equation}
\label{bound_rademacher_complexity}
    R_{S}(\mathcal{F}) \leq 12 \int_{0}^{B} \sqrt{\frac{\log N\left(\epsilon, \mathcal{F}, \|\cdot\|_{2,n} \right)}{n}} d \epsilon,
\end{equation}
where $\|f\|_n = \sqrt{\frac{1}{n}\sum_{i=1}^{2,n}\left(f(X_i)\right)^2}$ and $N \left(\epsilon, \mathcal{F}, \|\cdot\|_{2,n} \right)$ is the covering number for $\mathcal{F}$ with radius $\epsilon$ and metric $\|\cdot\|_{2,n}$.
\end{lemma}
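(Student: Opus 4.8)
The plan is to prove this via the \emph{chaining} technique (Dudley's entropy integral), approximating each $f \in \mathcal{F}$ by progressively finer covers and summing the resulting telescoped increments. First I would fix dyadic scales $\epsilon_j = B \cdot 2^{-j}$ for $j = 0, 1, 2, \ldots$ and, for each $j$, let $C_j$ be a minimal $\epsilon_j$-cover of $\mathcal{F}$ in the $\|\cdot\|_{2,n}$ metric, so that $|C_j| = N(\epsilon_j, \mathcal{F}, \|\cdot\|_{2,n})$. Since $\max_i |f(X_i)| \le B$ forces $\|f\|_{2,n} \le B$, the whole class lies in the $B$-ball around the zero function, so I may take $C_0 = \{0\}$ and write $\pi_j(f) \in C_j$ for a nearest element. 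The telescoping identity $f = \pi_J(f) + (f - \pi_J(f))$ with $\pi_J(f) = \sum_{j=1}^{J}(\pi_j(f) - \pi_{j-1}(f))$ (using $\pi_0(f) = 0$) then decomposes the empirical Rademacher average into a finite sum of increment terms plus a residual term.

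Second, I would bound each increment term. The telescoped differences satisfy $\|\pi_j(f) - \pi_{j-1}(f)\|_{2,n} \le \epsilon_j + \epsilon_{j-1} = 3\epsilon_j$ by the triangle inequality, and the number of distinct increment vectors $(\pi_{j-1}(f), \pi_j(f))$ is at most $|C_{j-1}|\,|C_j| \le N(\epsilon_j)^2$. Viewing each increment as a vector in $\mathbb{R}^n$ and applying Massart's finite-class maximal inequality $\mathbb{E}_\epsilon \max_{v \in V} \langle \epsilon, v \rangle \le (\max_{v}\|v\|_2)\sqrt{2\log|V|}$, together with the relation $\|v\|_2 = \sqrt{n}\,\|v\|_{2,n}$, yields the per-level bound $\frac{6\epsilon_j}{\sqrt{n}}\sqrt{\log N(\epsilon_j)}$ after dividing by $n$.

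Third, I would convert the chaining sum into the entropy integral. Because $N(\cdot)$ is nonincreasing in its radius and $\epsilon_j = 2(\epsilon_j - \epsilon_{j+1})$, one has $\epsilon_j \sqrt{\log N(\epsilon_j)} \le 2\int_{\epsilon_{j+1}}^{\epsilon_j}\sqrt{\log N(\epsilon)}\,d\epsilon$; summing over $j$ telescopes the domain of integration to $[0,B]$ and supplies the extra factor of $2$, giving $\sum_{j\ge 1}\frac{6\epsilon_j}{\sqrt{n}}\sqrt{\log N(\epsilon_j)} \le \frac{12}{\sqrt{n}}\int_0^B \sqrt{\log N(\epsilon)}\,d\epsilon$. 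The leftover residual $\frac{1}{n}\sum_i \epsilon_i(f - \pi_J(f))(X_i)$ is controlled uniformly in $f$ by Cauchy--Schwarz as $\le \|f - \pi_J(f)\|_{2,n} \le \epsilon_J \to 0$ as $J \to \infty$, so it drops out in the limit, leaving exactly the claimed bound $R_S(\mathcal{F}) \le 12\int_0^B \sqrt{\log N(\epsilon, \mathcal{F}, \|\cdot\|_{2,n})/n}\,d\epsilon$.

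The main obstacle is the bookkeeping in the chaining step: I must choose the nearest-point maps $\pi_j$ consistently so that the increments genuinely range over a set of cardinality $\le N(\epsilon_j)^2$, and I must track constants through Massart's inequality and the sum-to-integral passage so as to land on the precise factor $12$ rather than a looser constant. A secondary subtlety is justifying that the residual term vanishes uniformly in $f$ before the supremum is resolved; this follows from $\|f - \pi_J(f)\|_{2,n} \le \epsilon_J$, but it should be stated explicitly since the supremum over $\mathcal{F}$ is taken prior to sending $J \to \infty$.
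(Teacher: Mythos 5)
Your proposal is correct: the dyadic chaining decomposition, the per-level bound via Massart's finite-class lemma (with the increment norms $3\epsilon_j$ and cardinality $N(\epsilon_j)^2$ giving $\tfrac{6\epsilon_j}{\sqrt{n}}\sqrt{\log N(\epsilon_j)}$), the sum-to-integral comparison supplying the final factor of $12$, and the uniform control of the residual by $\epsilon_J$ are all accurate. Note that the paper itself offers no proof of this lemma --- it is imported verbatim as Theorem 4.26 of the cited lecture notes \cite{ma2021lecturenotes} --- and your argument is exactly the standard proof of that cited result, so there is nothing to reconcile between the two.
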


\begin{lemma}
[Translating the covering number to the pseudo-dimension \cite{anthony1999neural}] For $B>0$, assume that  for all $f \in \mathcal{F}$, we have $\max_{i} |f(X_i)| \leq B$. Define the metric $\|\cdot\|_{\infty,n}$ as $\|f\|_{\infty,n}=\max_{i\in \{1, \cdots, n\}}|f(X_i)|$. Then,
\begin{equation}
\label{bound_cover_number_1}
    N\left(\epsilon,\mathcal{F}, \|\cdot\|_{\infty,n}\right) \leq\left(\frac{2 e B \cdot n}{\epsilon \cdot \operatorname{Pdim}(\mathcal{F})}\right)^{\operatorname{Pdim}(\mathcal{F})}
\end{equation}
for any $\epsilon>0$, where $\operatorname{Pdim}(\mathcal{F})$ denotes the pseudo-dimension of $\mathcal{F}$ and $N \left(\epsilon, \mathcal{F}, \|\cdot\|_{\infty,n} \right)$ is the covering number for $\mathcal{F}$ with radius $\epsilon$ and metric $\|\cdot\|_{\infty,n}$.
\end{lemma}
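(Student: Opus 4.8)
The plan is to reduce the covering-number bound to a Sauer--Shelah count governed by the pseudo-dimension, by first passing to the packing number and then quantizing the range. Since the metric $\|\cdot\|_{\infty,n}$ only sees the values of $f$ on $X_1,\dots,X_n$, I would first observe that $N(\epsilon,\mathcal{F},\|\cdot\|_{\infty,n})$ equals the $\ell_\infty$-covering number of the projected set
$$
V=\{(f(X_1),\dots,f(X_n)):f\in\mathcal{F}\}\subseteq[-B,B]^{n}.
$$
Using the standard inequality $N(\epsilon)\le \mathcal{M}(\epsilon)$ between covering and packing numbers (a maximal $\epsilon$-separated subset of $V$ is automatically an $\epsilon$-cover), it then suffices to bound the $\ell_\infty$-packing number $\mathcal{M}(\epsilon)$ of $V$.

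Next I would quantize. Let $q(t)=\lfloor t/\epsilon\rfloor$, which takes at most $K:=\lceil 2B/\epsilon\rceil$ distinct values on $[-B,B]$. If two vectors of $V$ are $\epsilon$-separated in $\ell_\infty$, they differ by more than $\epsilon$ in some coordinate, and on that coordinate $q$ returns distinct integers; hence coordinatewise quantization is injective on any $\epsilon$-separated set, and $\mathcal{M}(\epsilon)$ is at most the number of distinct quantized vectors $(q(f(X_1)),\dots,q(f(X_n)))$ as $f$ ranges over $\mathcal{F}$.

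The heart of the argument is to count these quantized vectors via the pseudo-dimension. I would use that each quantized value $q(f(X_i))$ is determined by the threshold bits $\mathbb{I}[f(X_i)>\ell\epsilon]$ over the $K-1$ levels $\ell\epsilon$, so the entire quantized vector is determined by the dichotomy that the subgraph family $\{(x,t)\mapsto\mathbb{I}[t<f(x)]:f\in\mathcal{F}\}$ induces on the $n(K-1)$ augmented points $(X_i,\ell\epsilon)$. By definition $\operatorname{Pdim}(\mathcal{F})=d$ is exactly the VC dimension of this subgraph family, so Sauer--Shelah bounds the number of induced dichotomies on $m=n(K-1)$ points by $\sum_{i=0}^{d}\binom{m}{i}\le (em/d)^{d}$. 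Since $K-1\le 2B/\epsilon$, this yields
$$
N(\epsilon,\mathcal{F},\|\cdot\|_{\infty,n})\le\mathcal{M}(\epsilon)\le\Bigl(\frac{e\,n(K-1)}{d}\Bigr)^{d}\le\Bigl(\frac{2eBn}{\epsilon d}\Bigr)^{d},
$$
which is the claimed bound.

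I expect the main obstacle to be the third step: making the passage from the quantized growth function to the VC dimension of the subgraph class fully rigorous, and in particular verifying that the range width $2B$, the sample size $n$, and the alphabet bound $K-1\le 2B/\epsilon$ combine to produce precisely $2eBn/(\epsilon d)$ inside the $d$-th power. The surrounding technicalities—strict versus non-strict separation in the packing, the rounding direction of $q$ (so that each bin is determined by the threshold indicators), and the condition $m\ge d$ needed for $\sum_{i=0}^{d}\binom{m}{i}\le(em/d)^{d}$—are routine, but they are exactly where the constants are pinned down.
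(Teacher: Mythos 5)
The paper does not actually prove this lemma: it is quoted as a black box from Anthony and Bartlett \cite{anthony1999neural} (in essence Haussler's covering-number bound, their Theorem 12.2) and then combined with Dudley's theorem in the proof of Theorem \ref{improved_generalization_bound}. So the only meaningful comparison is with the textbook proof, and your proposal is essentially a reconstruction of it: project $\mathcal{F}$ onto the sample, pass from covering to packing, quantize the range into cells of width $\epsilon$, and count quantized patterns by applying Sauer--Shelah to the subgraph class $\{(x,t)\mapsto\mathbb{I}[t<f(x)]\}$, whose VC dimension is by definition $\operatorname{Pdim}(\mathcal{F})$. That skeleton is sound and produces a bound of exactly the stated form, so your route matches the source the paper is citing.

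Two of the points you flagged as ``routine'' do require exactly the care you anticipated, and one of them exposes a defect in the statement itself rather than in your argument. First, your quantizer $q(t)=\lfloor t/\epsilon\rfloor$ can take $\lfloor B/\epsilon\rfloor+\lceil B/\epsilon\rceil+1$ distinct values on $[-B,B]$ (three values already when $B=\epsilon$), which exceeds your claimed $K=\lceil 2B/\epsilon\rceil$; with this grid the number of threshold levels can be $2B/\epsilon+1$, and the final constant comes out slightly worse than $2eBn/(\epsilon d)$. The fix is to anchor the grid at $-B$, i.e.\ take $q(t)=\lfloor (t+B)/\epsilon\rfloor$, which needs only $\lfloor 2B/\epsilon\rfloor\le 2B/\epsilon$ thresholds and reproduces the stated constant; the injectivity of $q$ on $\epsilon$-separated sets is unaffected by the shift. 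Second, the requirement $m=n\lfloor 2B/\epsilon\rfloor\ge \operatorname{Pdim}(\mathcal{F})$ for the estimate $\sum_{i\le d}\binom{m}{i}\le(em/d)^{d}$ is not cosmetic: the lemma as transcribed in the paper (``for any $\epsilon>0$'') is in fact false without some such restriction, since for $n=1$, $\epsilon=B$ and $\operatorname{Pdim}(\mathcal{F})=d$ large the right-hand side $(2e/d)^{d}$ drops below $1$ while any covering number of a nonempty class is at least $1$. The original theorem in \cite{anthony1999neural} carries hypotheses of this type ($\epsilon$ not too large, sample size at least the pseudo-dimension), which the paper silently dropped; your proof is valid precisely in the regime where the inequality actually holds.
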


Obviously $N\left(\epsilon,\mathcal{F}, \|\cdot\|_{2,n}\right) \leq N\left(\epsilon,\mathcal{F}, \|\cdot\|_{\infty,n}\right)$, since $\|f\|_{2,n} \leq \|f\|_{\infty,n}$ for all $f \in \mathcal{F}$. Therefore,
\begin{equation}
\label{bound_cover_number_2}
    \log N\left(\epsilon,\mathcal{F}, \|\cdot\|_{2,n}\right) \leq \operatorname{Pdim}(\mathcal{F}) \cdot \log \left( \frac{2eB \cdot n}{\epsilon \cdot \operatorname{Pdim}(\mathcal{F})} \right) \leq \operatorname{Pdim}(\mathcal{F}) \cdot \log \left(\frac{2e B \cdot n}{\epsilon} \right),
\end{equation}
where the last inequality comes from $\operatorname{Pdim}(\mathcal{F}) \geq 1$. Combining with (\ref{bound_rademacher_complexity}) and (\ref{bound_cover_number_2}), we have
\begin{equation*}
    \begin{split}
        R_{S}(\mathcal{F}) & \leq 12 \int_{0}^{B} \sqrt{\frac{\operatorname{Pdim}(\mathcal{F})}{n} \cdot \log \left(\frac{2e B \cdot n}{\epsilon} \right) } d \epsilon \leq 12B \int_{0}^{1}\sqrt{\frac{\operatorname{Pdim}(\mathcal{F})}{n} \cdot \log \left(\frac{2e \cdot n}{\epsilon} \right) } d \epsilon\\
        & \leq 12B \int_{0}^{1}\sqrt{\frac{\operatorname{Pdim}(\mathcal{F})}{n}} \cdot \left(\sqrt{\log(2e) + \log n} + \sqrt{\log\left(\frac{1}{\epsilon}\right)}\right) d \epsilon\\
        & \leq 12B \int_{0}^{1}\sqrt{\frac{\operatorname{Pdim}(\mathcal{F})}{n}} \cdot \left(\sqrt{\log(2e) + \log n}  + 1 \right) d \epsilon\\
        & \leq 12B C_{51} \cdot \sqrt{\frac{\operatorname{Pdim}(\mathcal{F}) \cdot \log n}{n}},
    \end{split}
\end{equation*}
where the constant $C_{51}$ satisfies $\sqrt{\log(2e) + \log n}  + 1 \leq C_{51} \cdot \sqrt{\log n} $ and $C_{51} \approx 1$ if $\log n \gg 1$. Equipped with those results, we have
\begin{equation*}
    \begin{split}
        \mathbb{E}I_2 & = \mathbb{E} \max_{{\Tilde{f} \in \mathcal{F}_{GS}}} \{ \mathbb{E}_{\mathbf{x},\mathbf{z} \sim p_{\Gamma}(\mathbf{x}),p(\mathbf{z})}\Tilde{f}(\mathbf{x},\Tilde{g}(\mathbf{x},\mathbf{z})) - \hat{\mathbb{E}}^{m}_{\mathbf{x},\mathbf{z} \sim p_{\Gamma}(\mathbf{x}),p(\mathbf{z})}\Tilde{f}(\mathbf{x},\Tilde{g}(\mathbf{x},\mathbf{z})) \} \leq 2\mathbb{E} \mathcal{R}_{\mathcal{S}_m}(\mathcal{F})\\
        & \leq 24C_{51 }\sqrt{\frac{\operatorname{Pdim}(\mathcal{F}) \cdot \log m}{m}} \cdot \left (\sqrt{d+r} \cdot \left((W_g +1) \cdot M \right)\right),
    \end{split}
\end{equation*}
where $\mathcal{S}_m$ is the set of i.i.d. samples from the generated distribution and the inequality holds with the assumption that $M_x \leq (W_g + 1) \cdot M$, because $|\Tilde{f}(\mathbf{x}, g(\mathbf{x},\mathbf{z}))|=\|\Tilde{f}(\mathbf{x}, g(\mathbf{x},\mathbf{z})) - \Tilde{f}(\mathbf{0},\mathbf{0})\| \leq \|(\mathbf{x}, g(\mathbf{x},\mathbf{z}))\|_2$ for all $g$, $\mathbf{x}$ and $\mathbf{z}$. By Markov-Inequality, with probability of at least $1-m^{-1/4}$,
\begin{eqnarray}
    I_2 & \leq & 24C_{51 } \cdot \left (\sqrt{d+r} \cdot \left((W_g +1) \cdot M \right )\right) \cdot \sqrt{\operatorname{Pdim}(\mathcal{F}) \cdot \log m} \cdot m^{-1/4}\\
    & = & C_5 \cdot  \left ((W_g +1) \cdot M \right ) \cdot \sqrt{\operatorname{Pdim}(\mathcal{F}) \cdot \log m} \cdot m^{-1/4},
\end{eqnarray}
where $C_5 = 24 C_{51} \cdot \sqrt{d+r} \approx 24 \sqrt{d+r}$ if $\log m \gg 1$. Similarly, we derive the error bound for $I_3$ that
\begin{equation*}
    \begin{split}
        \mathbb{E}I_3 & = \mathbb{E} \max_{{\Tilde{f} \in \mathcal{F}_{GS}}} \{ \hat{\mathbb{E}}^{n}_{(\mathbf{x},\mathbf{u})\sim p_{\Gamma}(\mathbf{x},\mathbf{u})} \Tilde{f}(\mathbf{x},\mathbf{u}) - \mathbb{E}_{(\mathbf{x},\mathbf{u})\sim p_{\Gamma}(\mathbf{x},\mathbf{u})} \Tilde{f}(\mathbf{x},\mathbf{u}) \} \leq 2 \mathbb{E} \mathcal{R}_{\mathcal{S}_n}(\mathcal{F})\\
        & \leq 24C_{51 }\sqrt{\frac{\operatorname{Pdim}(\mathcal{F}) \cdot \log n}{n}} \cdot F\left(p_{\Gamma}(\mathbf{x},\mathbf{u}),n\right),
    \end{split}
\end{equation*}
where $\mathcal{S}_n$ is the set of i.i.d. samples from the solution $(\mathbf{x},\mathbf{u})$ and $F\left(p_{\Gamma}(\mathbf{x},\mathbf{u}),n\right)$ is defined in Definition \ref{def_expec_max}.   By Markov-Inequality, with probability of at least $1-n^{-1/4}$,
$$
I_3 \leq 24C_{51 }  \cdot F\left(p_{\Gamma}(\mathbf{x},\mathbf{u}),n\right) \cdot \sqrt{\operatorname{Pdim}(\mathcal{F}) \cdot \log n} \cdot n^{-1/4} = C_6  \cdot F\left(p_{\Gamma}(\mathbf{x},\mathbf{u}),n\right) \cdot \sqrt{\operatorname{Pdim}(\mathcal{F}) \cdot \log n} \cdot n^{-1/4},
$$
where $C_6 =24C_{51} \approx 24$ if $\log n \gg 1$.

\section{Additional experiments}
In this part, we further test the performance of the proposed model when the noise is large enough that it dominates the solution data. We utilize the same experimental setting as that of the pedagogical example in section \ref{pedagogical_example}. Here, we enlarge the noise level to be $\sigma_1=\sigma_2=0.3$ and $\sigma_1=\sigma_2=0.5$ where different numbers of training data $m=n$ are adopted. The relative errors and visualized results are displayed in Figure \ref{exp1_visual_pre_appendix}.

When the noise level is up to $0.3$ and $0.5$, the noise dominates the solution data as is shown in Figure \ref{exp1_visual_pre_appendix}. The prediction stays away from the exact deterministic solution if the numbers of training data $m=n$ are small because WGAN cannot catch the boundary uncertainty well, while the relative error decreases if we collect more boundary data, compatible with our theoretical analysis. The results further raise the confidence that the proposed model can detect the uncertainty and approximate the exact solution, even though the noise is dominant, if provided sufficient training data.

\begin{figure*}[h!]
\makebox[\linewidth][c]{
  \centering
  \subfloat
  [$\sigma_1 = \sigma_2 = 0.3$, $m=n=20$] 
  {
     \label{exp1_pre_03_20}     
    \includegraphics[width=0.5\textwidth]{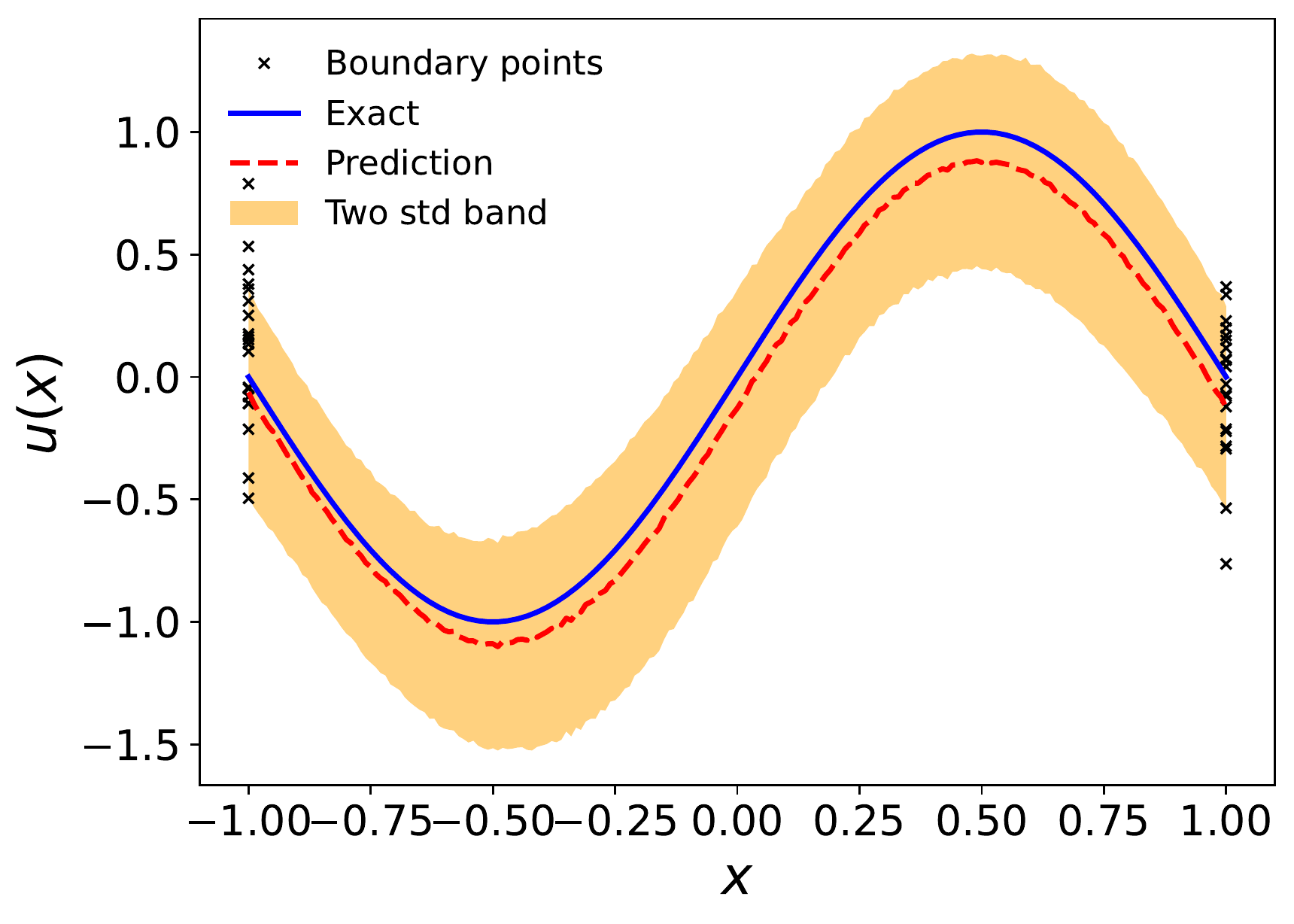}
    }\  
    \subfloat
    [$\sigma_1 = \sigma_2 = 0.3$, $m=n=50$] 
    {
    \label{exp1_pre_03_50}     
    \includegraphics[width=0.5\textwidth]{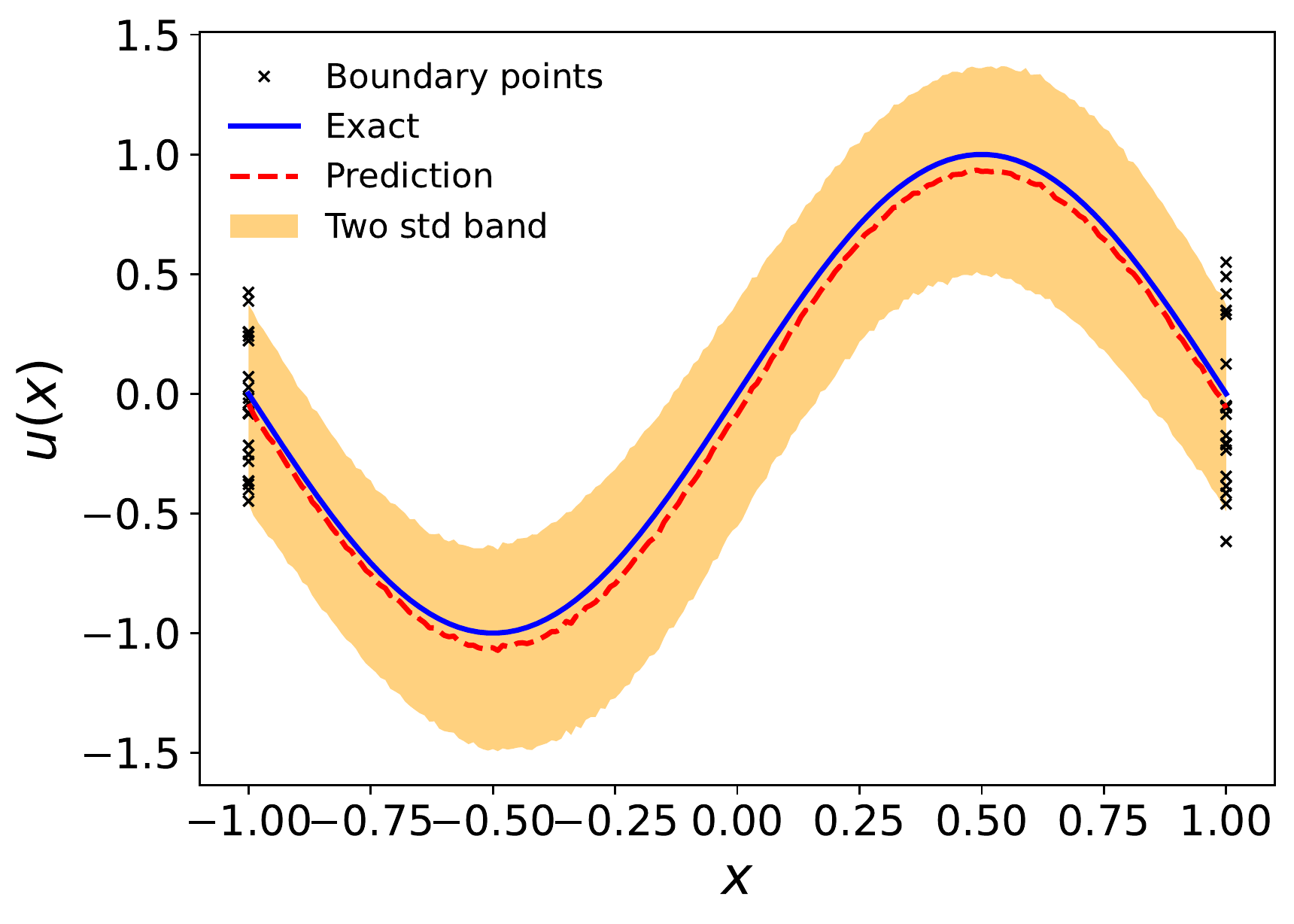}
    }\ 
    %
		}
    \makebox[\linewidth][c]{%
  \centering
  \subfloat
  [$\sigma_1 = \sigma_2 = 0.3$, $m=n=80$] 
  {
     \label{exp1_pre_03_80}     
    \includegraphics[width=0.5\textwidth]{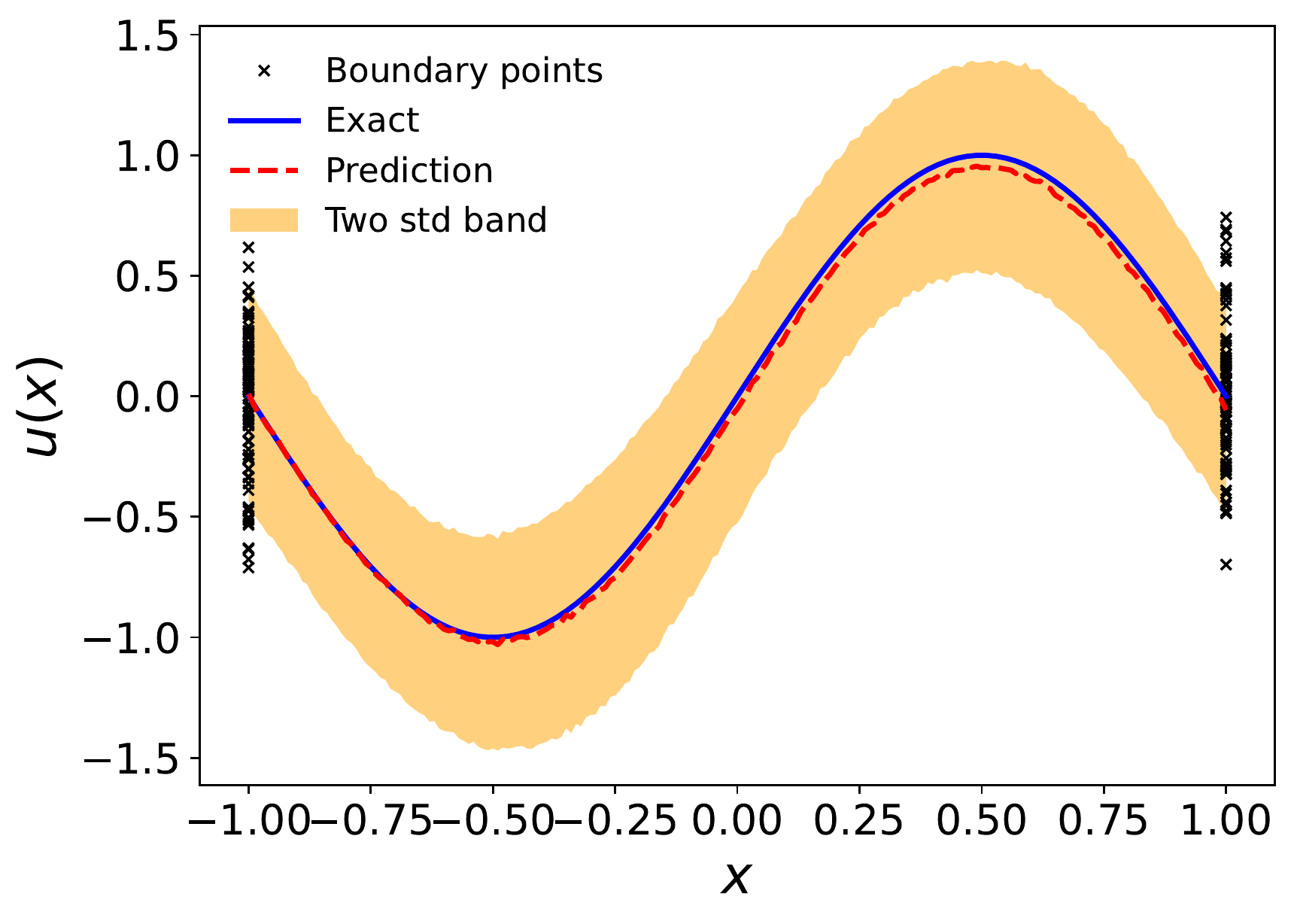}
    }\  
    \subfloat
    [$\sigma_1 = \sigma_2 = 0.3$, $m=n=100$] 
    {
    \label{exp1_pre_03_100}     
    \includegraphics[width=0.5\textwidth]{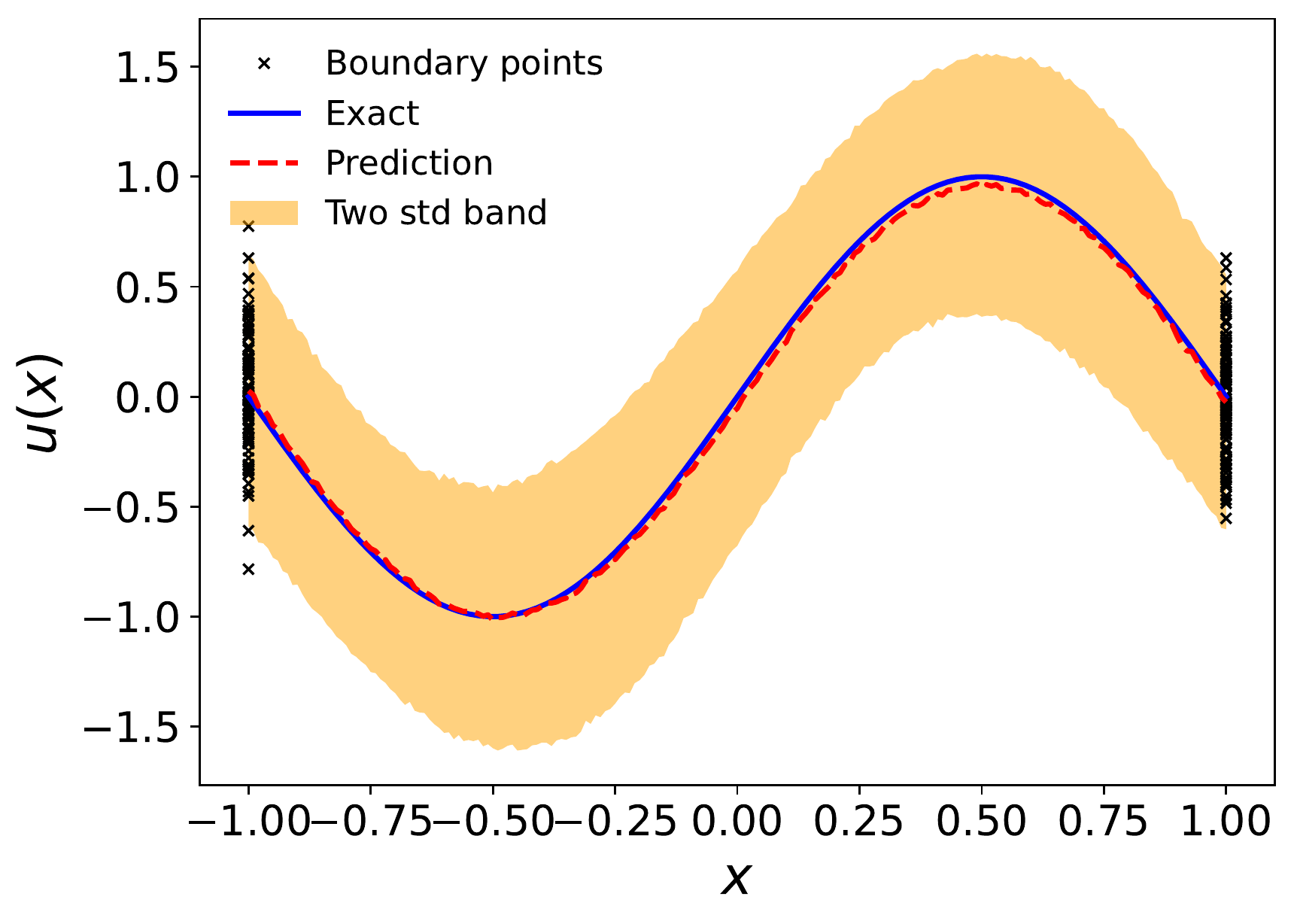}
    }\
    }
       \makebox[\linewidth][c]{%
  \centering
  \subfloat
  [$\sigma_1 = \sigma_2 = 0.5$, $m=n=100$] 
  {
     \label{exp1_pre_05_100}     
    \includegraphics[width=0.5\textwidth]{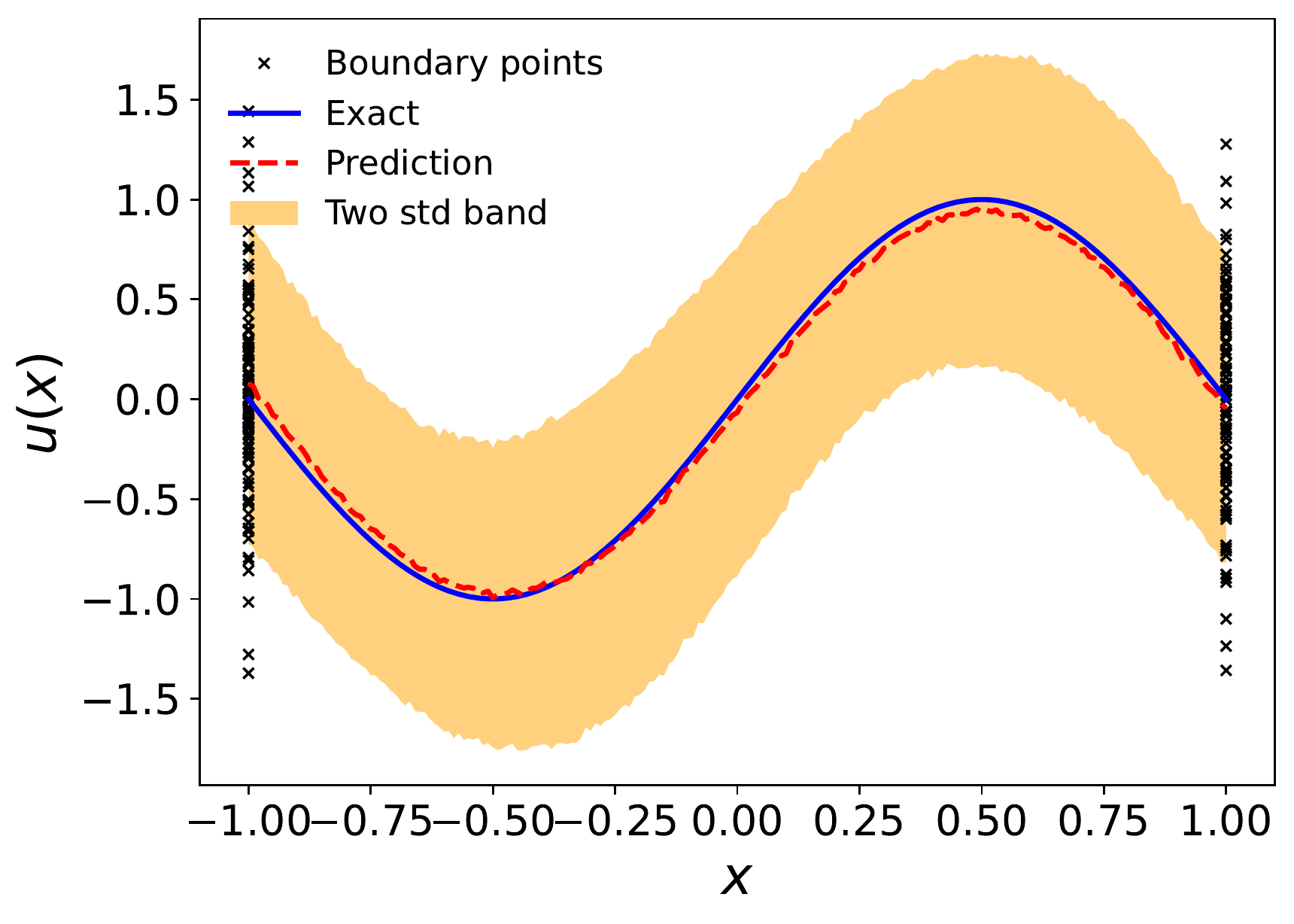}
    }\  
    \subfloat
    [$\sigma_1 = \sigma_2 = 0.5$, $m=n=160$] 
    {
    \label{exp1_pre_05_160}     
    \includegraphics[width=0.5\textwidth]{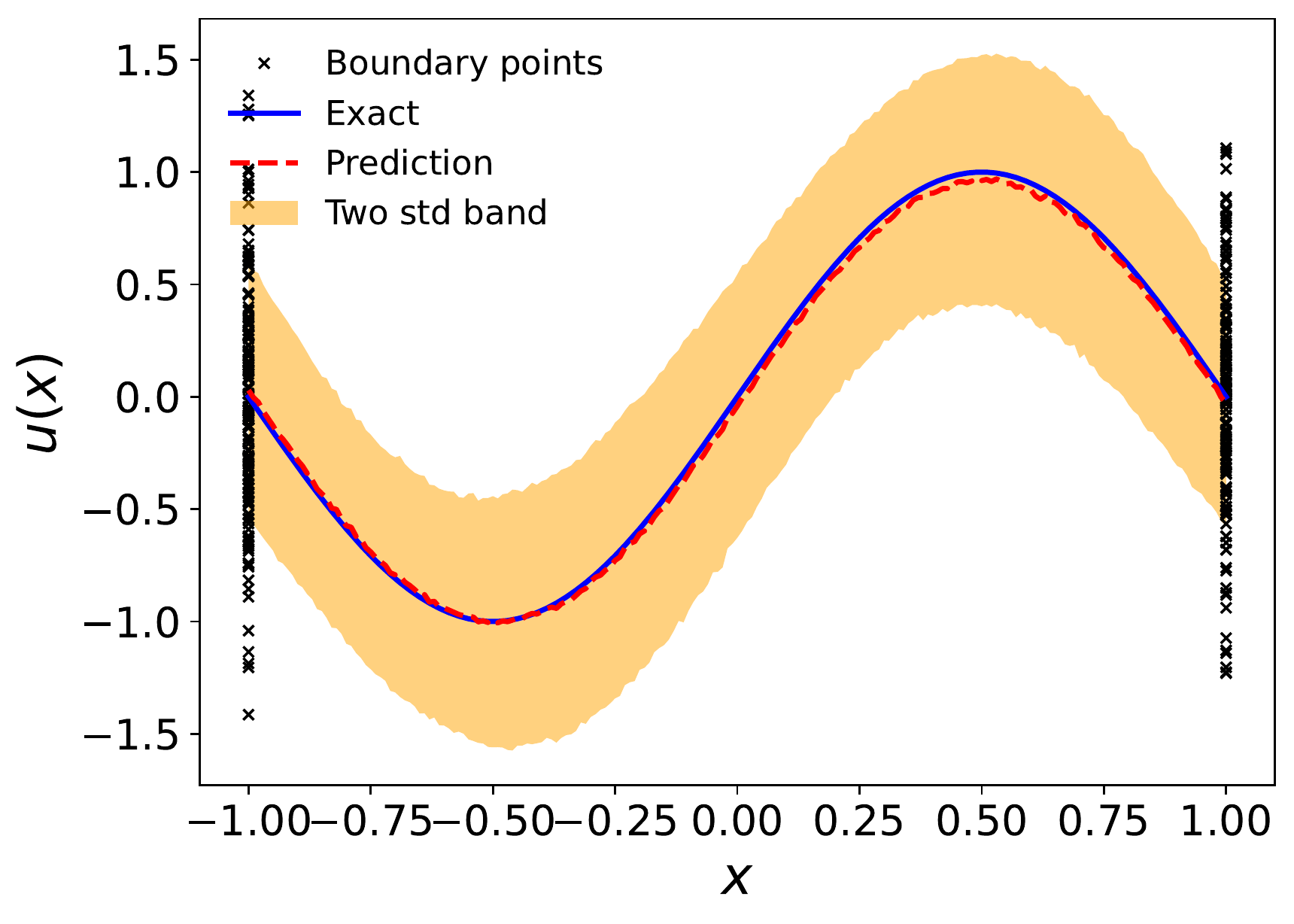}
    }\
    }
    \caption{		
The mean, the lower bound and upper bound of 
PDE solutions $p_{\Tilde{g}}(u)$ given $x$.
(a) $\mathcal{E}=1.55\times 10^{-1}$; (b) $\mathcal{E}=8.72\times 10^{-2}$; (c) $\mathcal{E}=5.44\times 10^{-2}$; (d) $\mathcal{E}=3.99 \times 10^{-2}$; (e) $\mathcal{E}=6.78 \times 10^{-2}$; (f) $\mathcal{E}=4.43 \times 10^{-2}$. Here exact PDE solution (the blue line) refers to PDE solution without uncertainty.}	
    \label{exp1_visual_pre_appendix}
\end{figure*}

\clearpage
\bibliographystyle{plain}
\bibliography{main}
\end{document}